\newcommand*{\laplace}{\mathop{}\!\vb*{\Delta}}
\renewcommand\footnotemark{}
\titleformat{\subsubsection}[runin]
{\normalsize\bfseries\itshape\sffamily}{{\normalfont\bfseries\itshape\S}\thesubsubsection.}{0.5em}{}[.\hspace*{0.5ex}]
\titleformat{\paragraph}[runin]
{\normalsize\bfseries\itshape\sffamily}{{\normalfont\bfseries\itshape\S}\theparagraph.}{0.5em}{}[.\hspace*{0.5ex}]
\theoremstyle{definition}
\newtheorem{defi}{\sffamily Definition}[section]
\theoremstyle{plain}
\newtheorem{theorem}[defi]{\sffamily Theorem}
\newtheorem{prop}[defi]{\sffamily Proposition}
\newtheorem{lemma}[defi]{\sffamily Lemma}
\newtheorem{cor}[defi]{\sffamily Corollary}
\theoremstyle{remark}
\newtheorem*{remark}{\sffamily Remark}
\providecommand{\proofnamestyle}{\itshape\sffamily\bfseries}
\renewenvironment{proof}[1][\proofname]{\par
	\pushQED{\qed}%
	\normalfont \topsep6\p@\@plus6\p@\relax
	\trivlist
	\item\relax
	{\proofnamestyle
		#1\@addpunct{.}}\hspace\labelsep\ignorespaces
}{%
	\par \popQED\endtrivlist\@endpefalse
}
\newcommand*{\wt}[1]{\widetilde{#1}}
\newcommand*{\R}{\mathbb{R}}
\providecommand\given{} 
\newcommand\SetSymbol[1][]{\nonscript\:#1\vert \allowbreak \nonscript\: \mathopen{}}
\DeclarePairedDelimiterX\Set[1]\{\}{ \renewcommand\given{\SetSymbol[\delimsize]} #1 }
\DeclareMathOperator*{\spt}{\operatorname{spt}}
\DeclareMathOperator*{\dist}{\operatorname{dist}}
\DeclareMathOperator*{\Div}{\operatorname{div}}
\DeclareMathOperator*{\Curl}{\operatorname{curl}}
\newcommand*{\const}{\operatorname{const.}}
\newcommand*{\Dt}{\mathop{}\!\vb{D}_t}
\newcommand*{\pd}{\partial}
\newcommand*{\vv}{{\vb{v}}}
\newcommand*{\vw}{{\vb{w}}}
\newcommand*{\Omt}{\Omega_t}
\newcommand*{\Gmt}{\Gamma_t}
\newcommand*{\cs}{c_{\text{s}}}
\newcommand*{\Blin}{{B_{\textup{lin}}}}
\newcommand*{\Elin}{{E_{\textup{lin}}}}
\newcommand*{\wtC}{{\widetilde{C}}}
\newcommand*{\Lip}{\textup{Lip}}
\newcommand*{\err}{\textup{err.}}
\newcommand*{\Ombd}{{\Omega_{t}^{\textup{bdry}}}}
\newcommand*{\calC}{\mathcal{C}}
\newcommand*{\scrJ}{\mathscr{J}}
\newcommand*{\calM}{\mathcal{M}}
\newcommand*{\bbH}{\mathbb{H}}
\newcommand*{\fkE}{\mathfrak{E}}
\newcommand*{\vom}{\vb*{\omega}}
\newcommand*{\cL}{\operatorname{\mathscr{L}}}
\newcommand*{\e}{\vb*{e}}
\newcommand*{\cE}{\mathcal{E}}
\newcommand*{\fka}{\mathfrak{a}}
\newcommand*{\scD}{{\mathscr{D}}}
\newcommand*{\cD}{\mathcal{D}}
\newcommand*{\fD}{{\mathfrak{D}}}
\newcommand*{\cH}{{\mathcal{H}}}
\newcommand*{\vn}{{\vb{N}}}
\newcommand*{\vbu}{{\vb{u}}}
\newcommand*{\Order}[2]{\mathcal{O}_{#1}({#2})}
\newcommand*{\fks}{\mathfrak{s}}
\newcommand*{\fkw}{\vb*{\mathfrak{w}}}
\newcommand*{\fkf}{\mathfrak{f}}
\newcommand*{\fkg}{\vb*{\mathfrak{g}}}
\newcommand*{\fkr}{\mathfrak{r}}
\newcommand*{\fkR}{\vb*{\mathfrak{R}}}
\newcommand*{\vps}{\vb*{\Psi}}
\newcommand*{\fkH}{\mathfrak{H}}
\newcommand*{\fkF}{\mathfrak{F}}
\newcommand*{\fkG}{\vb*{\mathfrak{G}}}
\newcommand*{\fkS}{\mathfrak{S}}
\newcommand*{\fkW}{\vb*{\mathfrak{W}}}
\newcommand*{\ala}{\frac{\alpha-1}{2}}
\newcommand*{\alb}{\frac{\alpha}{2}}
\newcommand*{\Hala}{H^{0, \ala}}
\newcommand*{\Omin}{\Omega_{0}^{\text{in}}}
\newcommand*{\kk}{k+1-\varkappa_0}
\numberwithin{equation}{section}
\begin{document}
	\title{%
		\bfseries\sffamily
		Physical Vacuum Problems for the Full Compressible {E}uler Equations:  Low-regularity {H}adamard-style Local Well-posedness %
		\footnote{{\bf\sffamily Date: }\today.}
		\footnote{{\bf\sffamily MSC(2020): }Primary 35Q35, 35Q31, 76N10; Secondary 35L60.}%
		\footnote{{\bf\sffamily Keywords: }compressible Euler equations, free boundary problems, vacuum states, local well-posedness, continuation criteria.}%
	}
	\author{%
		\textsc{Sicheng LIU} 
		\thanks{%
			\textbf{\sffamily Sicheng LIU (\textit{corresponding author}): } Department of Mathematics, University of Macau, Taipa, Macao, China.
			Email: \href{mailto:scliu@link.cuhk.edu.hk}{\nolinkurl{scliu@link.cuhk.edu.hk}}. 
		}
		\and
		\textsc{Tao LUO}
		\thanks{%
			\textbf{\sffamily Tao LUO: } Department of Mathematics, City University of Hong Kong, Kowloon, Hong Kong, China.
			Email: \href{mailto:taoluo@cityu.edu.hk}{\nolinkurl{taoluo@cityu.edu.hk}}.
		}
	}
	\date{
		} 
	\maketitle
	\begin{abstract}  
		This manuscript concerns the dynamics of non-isentropic compressible Euler equations in a physical vacuum. We establish the Hadamard-style local well-posedness in low-regularity weighted Sobolev spaces, where the gas-vacuum interface is allowed to have unbounded curvature, demonstrating existence, uniqueness, and continuous dependence on initial data.  Additionally, we prove sharp a priori energy estimates and continuation criteria.
		
		The approach is based on the framework of Eulerian coordinates, avoiding the regularity issues of the flow map and the high nonlinearity induced by the Lagrangian transformation.
	\end{abstract}
	{\sffamily\tableofcontents}
	
	\addtocontents{toc}{\protect\setcounter{tocdepth}{2}}	
	\section{Introduction}
	The non-isentropic compressible Euler equations in gas dynamics can be written as:
	\begin{equation}
		\begin{cases*}
			\pdv*{\rho}{t} + \Div\qty(\rho\vv) = 0,  \\
			\pdv*{(\rho \vv)}{t} + \Div\qty(\rho\vv \otimes \vv) + \grad p = \vb*{0},  \\
			\pdv*{(\rho S)}{t} + \Div\qty(\rho S \vv) = 0.
		\end{cases*}
	\label{euler}	
	\end{equation}
	Here, $\rho$ represents the density, $\vv$ is the velocity field, $p$ denotes the pressure, and $S$ is the specific entropy of the gas. We assume that the gas obeys a polytropic process, meaning that the pressure is given by the state equation:
	\begin{equation}\label{state eqn}
		p = \rho^{1+\beta} \e^{S} \qc \const \beta > 0,
	\end{equation}
	where $(1+\beta)$ is the adiabatic index.	We shall study the vacuum free boundary problems in this manuscript. Specifically, the gases are assumed to be contained within a time-dependent set $\Omt \subset \R^d \, (d \ge 1)$ having a free boundary denoted by $\Gmt \coloneqq \pd\Omt$. The boundary condition for gas dynamics, as opposed to hydrodynamics, is that the density vanishes at the free boundary, i.e.,
	\begin{equation*}
		\Gmt = {\Set*{x \given \rho (t, x) = 0}} \cap \overline{\Set*{x \given \rho(t, x) > 0}}.
	\end{equation*}
	In this manuscript, we will consider the \emph{physical vacuum} regime for the one-phase free boundary problems, which is characterized by the normal acceleration of the free boundary $\Gmt$ being bounded both from below and from above. Such scenarios naturally arise in the study of Euler's equations with damping \cite{liu1996comp}, gaseous stars \cite{luoxinzeng2014well}, and shallow water waves \cite{lannes2020model}. More precisely, let $\Dt$ denote the \emph{material derivative} along the particle paths:
	\begin{equation*}
		\Dt \coloneqq \pdv{t} + (\vv\vdot\grad{}),
	\end{equation*}
	where $\vv$ is the velocity field of the gas. This derivative represents the rate of change of a quantity as it moves with the flow, which is crucial for analyzing the dynamics. In this way, the compressible Euler system \eqref{euler} can be rewritten as
	\begin{equation*}
		\begin{cases*}
			\Dt \rho + \rho \divergence\vv = 0, \\
			\Dt \vv + \rho^{-1}\grad p = \vb*{0}, \\
			\Dt S = 0.
		\end{cases*}
	\end{equation*}
	The state equation \eqref{state eqn} implies that
	\begin{equation*}
		\rho^{-1}\grad{p} = \e^{\frac{S}{1+\beta}}\grad(\frac{1+\beta}{\beta}p^{\frac{\beta}{1+\beta}}).
	\end{equation*}
	In particular, as long as the specific entropy $S$ is uniformly bounded, the physical vacuum boundary condition reads
	\begin{equation}\label{bdry con phy vac}
		\rho = p = 0 \qand 0 < \abs{\grad(p^{\frac{\beta}{1+\beta}})} < \infty \qq{on} \Gmt.
	\end{equation}
	The \emph{sound speed} $\cs$ of the gas is defined through:
	\begin{equation*}
		\cs^2 \coloneqq \pdv{p}{\rho} = (1+\beta)\rho^{\beta}\e^{S} = (1+\beta)\e^{\frac{S}{1+\beta}}p^{\frac{\beta}{1+\beta}}.
	\end{equation*}
	In particular, the uniform boundedness of the specific entropy $S$ implies that the sound speed $\cs$ satisfies the following decay rate when approaching the vacuum boundary $\Gmt$:
	\begin{equation}\label{sound speed decay rate}
		\cs^2(x) \simeq \dist\qty(x, \Gmt) \qfor x\in\Omt \text{ with } \dist\qty(x, \Gmt) \ll 1.
	\end{equation}
	
	\subsection{Backgrounds and Related Works}
	The mathematical study of the compressible Euler equations has a rather long history, dating back to the foundational work of Leonhard Euler in the 18\textsuperscript{th} century. These equations are pivotal in describing the motion of a compressible fluid, such as a gas, and form a cornerstone of fluid dynamics. They encapsulate the principles of conservation of mass, momentum, and energy, providing a comprehensive framework for understanding fluid behaviors under various conditions. Over the years, significant progress has been made in analyzing these equations, including the understanding of the existence, uniqueness, and stability of their solutions.
	
	\subsubsection{Well-posedness of compressible Euler equations}
	In the absence of vacuum regions, the compressible Euler equations are routinely treated as a symmetric hyperbolic system, whose local well-posedness is well-established (see, for instance, \cite{kato1975cauchy,majda1984book,Ifrim-Tataru2023}). This local well-posedness can be demonstrated in the entire space $\R^d$ using energy methods, provided that the initial data belong to the Sobolev class $H^s(\R^d)$ with $s > 1 + \frac{d}{2}$. Recent findings suggest that these Sobolev indices can be further refined. For the isentropic irrotational flows, the compressible Euler system can be reformulated into nonlinear wave equations, whose sharp local well-posedness results are available (cf. \cite{smithtataru2005sharp,wang2017geo}). There have also been recent advances in understanding low-regularity solutions to rotational flows (see \cite{wang2022rough,disconzietal2022rough,andersson2022well}). Additionally, very recent research \cite{wang2024global} has addressed global solutions for irrotational flows.
	
	\subsubsection{Vacuum problems for ideal compressible flows}
	A vacuum state refers to regions where the fluid density is zero, which can occur naturally, such as in study of astrophysics, or in engineered systems where gases are evacuated. Understanding vacuum problems has practical applications in various fields, including astrophysics (e.g., star formation, supernova explosions, and the behavior of interstellar gases) and engineering (e.g., high-speed aerodynamics, vacuum technologies, and aerospace engineering). Studying such problems involves understanding how the fluid behaves near these regions, which can be challenging due to the degeneracy and singularity of the equations near vacuum states. Another difficulty is that the vacuum boundary (the interface separating the gas from the vacuum) evolves over time. The insights gained from the analysis of these problems can lead to advancements in various engineering fields and improve our understanding of natural phenomena.
	
	The mathematical investigation of vacuum states began with {\sc Liu and Smoller} \cite{LIU1980345}. The notion of physical vacuums is particularly motivated by the work of {\sc Liu} \cite{liu1996comp} in the study of isentropic compressible flows with damping. A critical aspect of the mathematical analysis is the behavior of the sound speed near the vacuum boundary. For clarity and brevity, we presently focus on the barotropic flows as an example, where the compressible Euler system and the state equation can be expressed as follows:
	\begin{equation*}
		\begin{cases*}
			\pdv*{\rho}{t} + \Div\qty(\rho\vv) = 0, \\
			\pdv*{(\rho\vv)}{t} + \Div(\rho\vv\otimes\vv) + \grad{p} = \vb*{0}, \\
			p = \rho^{1+\beta}.
		\end{cases*}
	\end{equation*}
	Recall that the sound speed $\cs$ is given by
	\begin{equation*}
		\cs^2 \coloneqq \pdv{p}{\rho} = (1+\beta)\rho^{\beta}.
	\end{equation*}
	Then, the Euler equations can be rewritten in the form of a symmetric hyperbolic system:
	\begin{equation*}
		\begin{cases*}\displaystyle
			\pdv{\overline{c}}{t} + (\vv\vdot\grad)\overline{c} + \frac{\beta}{2}\overline{c}\div{\vv} = 0, \\[0.5em]
			\displaystyle
			\pdv{\vv}{t} + (\vv\vdot\grad)\vv + \frac{\beta}{2}\overline{c}\grad{\overline{c}} = \vb*{0},
		\end{cases*}
	\end{equation*}
	where $\overline{c} \coloneqq (2\cs)/\beta$ is the renormalized sound speed. In particular, there are two major physical scenarios of the vacuum problems, depending on the behavior of the sound speed $\cs$ near the boundary.
	\begin{itemize}
		\item \textbf{Compressible liquids:}\quad The pressure $p$ is a non-vanishing constant on the vacuum boundary, indicating the balance of momentum. In this scenario, the sound speed $\cs$ converges to a non-zero constant as it approaches the boundary.
		\item \textbf{Gases:}\quad The density and, consequently, the sound speed $\cs$ converge to zero nearing the vacuum boundary.
	\end{itemize}
	The local well-posedness theories for the liquid-vacuum free interface problems in the context of the compressible Euler equations have been extensively studied. For detailed discussions, see \cite{lindblad2005,Trak09CPAM,lindbladluo2018apriori,luo2018motion,ginsberglindbladluo2020local,luozhang2022local,wangzhangzhao2022well} and the references therein.
	
	Gas flows can also be categorized into two cases based on whether the particle acceleration $\cs\grad{\cs}$ vanishes at the vacuum boundary. Specifically, the decay rate of the sound speed $\cs$ relative to the distance $d_{\Gmt}$ significantly influences the mathematical analysis. Assuming the decay of $\cs$ follows a power law with respect to $d_{\Gmt}$, i.e.,
	\begin{equation*}
		\cs \simeq (d_{\Gmt})^{\lambda} \qq{when} d_{\Gmt}(x) \ll 1,
	\end{equation*}
	it is natural to distinguish the following scenarios:
	\begin{itemize}
		\item \textbf{Rapid decay:}
		\begin{equation*}
			\lambda \ge 1.
		\end{equation*}
		In this case, the flow can heuristically be regarded as smooth up to the boundary. With the vanishing acceleration $\cs\grad{\cs} = \vb*{0}$ on $\Gmt$, particles at the front move freely, and internal waves cannot reach the boundary arbitrarily fast. This geometric structure will persist for at least a short duration, allowing the solutions to be constructed using classical methods \cite{makinoukaikawashima1986sur,chemin1990dyna}. For the one-dimensional analysis, see \cite{LiuYang1997comp,LiuYang2000comp, Xu-Yang2005, yang2006sing}. Moreover, the results by {\sc Liu and Yang} \cite{LiuYang1997comp} indicate that $\cs^2$ loses smoothness  after a finite time, which disrupts these structures.
		\item \textbf{Slow decay:}
		\begin{equation*}
			0 < \lambda < 1.
		\end{equation*}
		Here, the motion of wave fronts is strongly coupled with that of the internal waves.	The scenario $\lambda = \frac{1}{2}$ corresponds to the \emph{physical vacuum} regimes. If $0 < \lambda < \frac{1}{2}$ or $\frac{1}{2} < \lambda < 1$, the flow is anticipated to be unstable (see {\sc Jang and Masmoudi } \cite{jangmasmoudi2012well} and {\sc Jang, Liu, and Masmoudi} \cite{jang2025waiting}) , although the rigorous proofs for general data are still open.
	\end{itemize}
	We refer to {\sc Serre} \cite{serre2015expansion} and the surveys \cite{yang2006sing,jang-masmoudi2011vacuum,luosmoller2011on,luozeng2020} for more detailed discussions.
	
	\subsubsection{Physical vacuum problems for ideal gases}	
	Regarding the isentropic compressible Euler equations in a physical vacuum, where the specific entropy $S$ is constant and the local sound speed satisfies $\cs^2 \simeq d_{\Gmt}$, the local well-posedness was first established for the 1D case by {\sc Jang and Masmoudi} \cite{jang-masmoudi2009well} and {\sc Coutand and Shkoller} \cite{coutand-shkoller2011well}. The results for the 3D case were later proven by {\sc Jang and Masmoudi} \cite{jang-masmoudi2015well} and {\sc Coutand and Shkoller} \cite{coutand-shkoller2012well} using different approaches. (See also \cite{coutand-lindblad-shkoller2010apriori} for a priori estimates. For the well-posedness theory of 3D spherically symmetric motions in weighted low-regularity Sobolev spaces, without requiring compatibility conditions on the derivatives at the center of symmetry, we refer to \cite{luoxinzeng2014well}. The same work \cite{luoxinzeng2014well} also established a general uniqueness theorem for classical solutions in 3D when $0<\beta\le 1$, where $(1+\beta)$ is the adiabatic constant,  and the gas-vacuum interface is regular with bounded curvature.) All these results (except the uniqueness in \cite{luoxinzeng2014well}) relied on the Lagrangian coordinate framework, which transforms the free boundary problems into highly nonlinear fixed-boundary problems. Recently, {\sc Ifrim and Tataru} \cite{Ifrim-Tataru2024} demonstrated the Hadamard-style local well-posedness and continuation criteria in the Eulerian coordinates for all space dimensions, with results applicable to low-regularity classes. For global-in-time solutions, we refer to {\sc Had\v{z}i\'{c} and Jang} \cite{hadzic-jang2018expanding} and {\sc Shkoller and Sideris} \cite{shokoller-sideris2019global} for those near expanding affine motions. Non-isentropic problems were studied by {\sc Geng, Li, Wang, and Xu} \cite{geng-li-wang-xu2019well} (1D local well-posedness) and {\sc Rickard, Had\v{z}i\'{c}, and Jang} \cite{rickard2021global} (3D global existence near affine solutions), both formulated in the Lagrangian framework.
	
	In the case of compressible isentropic Euler equations with damping, {\sc Liu} \cite{liu1996comp} constructed a family of particular solutions time-asymptotically equivalent to the Barenblatt self-similar solutions of the porous medium equations (simplified via Darcy’s law). The global existence of smooth solutions and their time-asymptotic convergence to Barenblatt profiles for general initial data near these solutions were proven in {\sc Luo and Zeng} \cite{LUOZENG2016} (1D) and {\sc Zeng} \cite{ZENG2017} (3D, spherical symmetry); see also \cite{ZENG2023} for precise time-asymptotics of vacuum boundaries. Furthermore, {\sc Zeng} \cite{ZENG2022} established almost global existence for 3D perturbations without symmetry assumptions.	For compressible isentropic Euler-Poisson equations modeling gaseous stars, global existence of expanding solutions was shown by {\sc Had\v{z}i\'{c} and Jang} \cite{HADJANG2018} (3D, spherical symmetry) and {\sc Had\v{z}i\'{c}, Jang, and Lam} \cite{HADJANG2024} (without symmetry).
	
	\subsubsection{Main motivations}
	Given these significant advances, the study of vacuum dynamics for high-dimensional non-isentropic Euler equations remains an open problem. This is particularly so for the well-posedness of physical vacuum problems in low-regularity functional spaces allowing  gas-vacuum interfaces to have unbounded curvature.  In many physical scenarios, the specific entropy of gas cannot be treated as constant, making the study of non-isentropic problems essential. Because the specific entropy satisfies a transport equation, in the Lagrangian setting, the entropy term can be fully determined through the initial data and flow maps. Actually, in the Lagrangian framework, all thermal dynamical information (including densities, velocity fields, and entropy) can be reflected by the initial data and flow maps, which is a prominent advantage of such methods. On the other hand, as the flow map is an integral of velocity fields over a time interval, the spatial regularity of flow maps is the same as that of the velocity fields. However, as indicated in \cite[\S5.2]{Shatah-Zeng2008-Geo},  the flow maps are usually less smooth than the free boundary. Correspondingly, by scaling analysis (cf. \S\ref{sec scal ana}), the free boundary formally has $\frac{1}{2}$-more derivative than the velocity fields, which cannot be revealed through flow maps. Another limitation for the Lagrangian framework is that it will be rather difficult to establish the low-regularity theory, because one needs energies containing sufficiently high order derivatives to close the estimates. Searching for low-regularity solutions is significant both physically and mathematically. As there exist singular behaviors (e.g., cusps and angle points) on the free interface in reality, the study of free boundary problems in the low-regularity setting allowing free boundaries to have unbounded curvature can explain and predict more physical phenomena, and the well-posedness theories for rough solutions have garnered much attention in the mathematical community (cf. e.g., \cite{ABZ14Invent,BL15Inven,Wu19Invent,WZZZ21Memo,ifrim2023sharp,ifrim2024sharp} for the incompressible flows and \cite{wang2022rough,disconzietal2022rough,andersson2022well,Ifrim-Tataru2024,disconzi2022} for the compressible flows). We adopt in this paper the Eulerian method (without requiring flow maps) inspired by {\sc Ifrim and Tataru} \cite{Ifrim-Tataru2024} and the subsequent developments \cite{Ifrim-Tataru2023, ifrim2023sharp, disconzi2022} to study the low-regularity well-posedness theory (allowing the gas-vacuum interface to have unbounded curvature) of the physical vacuum problems for the non-isentropic Euler equations, establishing the existence, uniqueness, continuous dependence on initial data, and continuation criteria, in all space dimensions. One note that, in the Eulerian scheme, the non-constant entropy introduces substantial nonlinearity into the system, as the specific entropy contributes a fully nonlinear term in the momentum equation. Extending the analysis of vacuum free boundary problems from isentropic to non-isentropic Euler equations poses challenges that go beyond technical issues. More fundamentally, developing effective strategies for local dynamical motion becomes crucial for investigating the long-term dynamics of non-isentropic ideal gases. This includes understanding global solutions, tracking the evolution of entropy waves, and analyzing the formation of shocks and other singularities.
	
	 The uniqueness theorem obtained in this paper holds for a low-regularity solution class, where the density, velocity field, and specific entropy are Lipschitz continuous. Essentially, this uniqueness result applies to almost all classical solutions. The existence results apply to low-regularity solution classes, which means that the pointwise regularities of the vacuum interface and the boundary velocity may only be $C^{1.5+}$ and $C^{1+}$, respectively. In particular, the free boundary is allowed to have unbounded curvature. The a priori energy estimates are obtained in a neat and sharp manner without loss of derivatives, making the construction of low-regularity solutions and the establishment of continuation criteria feasible. To address the analytic difficulty due to the high nonlinearity caused by the non-constant entropy, we introduce the renormalized effective pressure and an equivalent entropy variable, which transform the fully nonlinear momentum equation into a bilinear one. This reformulation facilitates the exploitation of techniques from multilinear analysis. Additionally, the choices of adapted weighted Sobolev spaces are not directly deduced from the scaling or dimensional analysis, as the $L^2$-type conserved quantity for the specific entropy has a different physical dimension from that of the kinetic and internal energies. This specific choice is motivated by the conservation of (entropy-weighted) mass and the $L^2$-type energy functional of the linearized problems.
		
	We will next reformulate the problems and present our main results.
	\subsection{Reformulation of the Problems}
	Recall that the accelerations of fluid particles are given by:
	\begin{equation*}
		\Dt\vv = - \frac{1}{\rho}\grad{p} = - \e^{\frac{S}{1+\beta}}\grad(\frac{1+\beta}{\beta}p^{\frac{\beta}{1+\beta}}).
	\end{equation*}
	Thus, we define
	\begin{equation}\label{def q}
		q\coloneqq \frac{1+\beta}{\beta}p^{\frac{\beta}{1+\beta}} = \frac{1+\beta}{\beta}\rho^\beta\e^{\frac{\beta}{1+\beta}S}
		\qand
		\sigma \coloneqq \e^{\frac{S}{1+\beta}}.
	\end{equation}
	It is evident that the sets of variables $(\rho, \vv, S)$ and $(q, \vv, \sigma)$ are equivalent. Consequently, the full compressible Euler equations for the variables $(q, \vv, \sigma)$ can be expressed as follows:
	\begin{equation}\label{euler equiv2}
		\begin{cases*}
			\Dt q + \beta q (\divergence\vv) = 0 &in $\Omt$, \\
			\Dt \vv + \sigma\grad q = \vb*{0} &in $\Omt$, \\
			\Dt \sigma = 0 &in $\Omt$,
		\end{cases*}
	\end{equation}
	here $q(t, x) > 0$ in $\Omt$ and $q = 0$ on $\Gmt $. If one extends $q = 0$ outside $\Omt$, then $\Omt$ can be regarded as the support of a non-negative function $q(t)$, i.e., $\Omt = \Set{x \given q(t, x) > 0}.$ The boundary condition \eqref{bdry con phy vac} can be rewritten as
	\begin{equation}\label{phy vac bc}
		0 < \abs{\grad{q}} < \infty \qq{on} \Gmt = \overline{\qty{q > 0}} \cap \qty{q = 0},
	\end{equation}
	provided that $\sigma$ is uniformly bounded from above and below. 
	
	For the sake of conciseness, we denote
	\begin{equation}
		\alpha \coloneqq \beta^{-1} > 0.
	\end{equation}
	With the new variables $\qty(q, \vv, \sigma)$, the conserved physical energy (Hamiltonian) is
	\begin{equation}\label{def Ephy}
		E_{\text{phy}} = \int_{\Omt} q^{1+\alpha} + \frac{1+\beta}{2}q^\alpha \sigma^{-1} \abs{\vv}^2 \dd{x}.
	\end{equation}
	Indeed, it follows from the transport formula in fluid dynamics (see, for instance, \cite[p. 10]{chorin-marsden1993})
	\begin{equation}\label{trans formula}
		\dv{t} \int_{U_t} f \dd{x} = \int_{U_t} \Dt f + f \qty(\divergence\vv) \dd{x}
	\end{equation}
	and the Euler system \eqref{euler equiv2} that
	\begin{equation*}
			\dv{t} E_{\text{phy}} = - \beta \int_{\Omt} \div(q^{1+\alpha} \vv) \dd{x}.
	\end{equation*}
	The boundary condition $q = 0$ on $\Gmt$ yields the conservation law.
	
	We remark here that the finiteness of the physical energy \eqref{def Ephy} and the physical-vacuum type boundary condition \eqref{phy vac bc} imply that the gases should be compactly supported. Thus, in this manuscript, it is always assumed that $\Omt$ is a bounded open set (not necessarily connected).
	
	\subsection{Dimensional Analysis and Function Spaces}\label{sec scal ana}
	
	Denote the dimensions of lengths and times by $L$ and $T$, respectively. Notice that the dimension of the square of sound speed is given by:
	\begin{equation*}
		\qty[c_{\text{s}}^2] = \frac{L^2}{T^2}.
	\end{equation*}
	Thus, for the physical vacuum regimes, the condition \eqref{sound speed decay rate} yields
	\begin{equation*}
		\qty[c_{\text{s}}^2] = L,
	\end{equation*}
	which implies the relation:
	\begin{equation*}
		L = T^2.
	\end{equation*}
	Note that the specific entropy $S$ is non-dimensional, and the dimensions of $q$ and $\vv$ are respectively given by
	\begin{equation*}
		\qty[q] = \qty[c_{\text{s}}^2 \e^{-\frac{S}{1+\beta}}] = \qty[c_{\text{s}}^2] = L = T^2
	\qand
		\qty[\vv] = \frac{L}{T} = L^{\frac{1}{2}} = T.
	\end{equation*}
	In particular, the compressible Euler equations \eqref{euler equiv2} admit the scaling law:
	\begin{equation*}
		\pmqty{q(t, x) \\ \vv(t, x) \\ \sigma(t, x)} \to \pmqty{\tau^{-2}q(\tau t, \tau^2 x) \\ \tau^{-1}\vv(\tau t, \tau^2 x) \\ \sigma(\tau t, \tau^2 x)}.
	\end{equation*}
	In this manuscript, we use the convention that $\pdv*{x}$ has order $+1$, i.e.,
	\begin{equation*}
		\qty{\text{order} = +1} \iff \qty{\text{dimension} = L^{-1}}.
	\end{equation*}
	Therefore, one has a counting device based on the scaling analysis:
	\begin{itemize}
		\item $q$, $\vv$, and $\sigma$ have orders $-1$, $-\frac{1}{2}$, and $0$, respectively;
		\item $\pd_x$ and $\Dt$ have orders $+1$ and $+\frac{1}{2}$, respectively.
	\end{itemize}
	
	For the choice of state spaces, one can first infer from \eqref{def Ephy} that
	\begin{equation*}
		\int_{\Omt} q^{\alpha-1}\qty[\abs{q}^2 + \frac{1+\beta}{2}q\sigma^{-1}\abs{\vv}^2] \dd{x},
	\end{equation*}
	which is conserved for all time, can be viewed as a weighted $L^2$ norm of $(q, \vv)$. For the higher order regularities, first note that the second order evolution equation for $q$ is:
	\begin{equation*}
		\Dt^2 q = \beta q\sigma \laplace{q} + \beta q \grad{q}\vdot\grad{\sigma} + \beta q \qty[\beta(\divergence\vv)^2 + \tr(\grad\vv)^2],
	\end{equation*}
	which, at a leading order, can be viewed as a wave-type equation. Therefore, one can take
	\begin{equation*}
			 \beta q \sigma \laplace = c_{\text{s}}^2 \laplace
	\end{equation*}
	as a characteristic differential operator. Assume that the initial entropy $S_0$ is uniformly bounded. Then, the transport equation yields that the renormalized entropy $\sigma$ is bounded uniformly from above and below, for all time. In other words, given an initial entropy $\sigma_0$ so that $\norm{\sigma_0}_{L^\infty} + \norm*{\sigma_0^{-1}}_{L^\infty} < \infty$, one can now consider the weighted Sobolev norms of $(q, \vv)$ given by
	\begin{equation}\label{energy q v}
		\sum_{m = 0}^{k}\sum_{\abs{\gamma} = 0}^{2m} \int_{\Omt} q^{\alpha-1} \abs{q^m\pd^\gamma q}^2 + q^{\alpha} \abs{q^m \pd^\gamma \vv}^2 \dd{x}.
	\end{equation}
	One may also infer the corresponding norms of $\sigma$ from scaling (dimensional) analysis:
	\begin{equation*}
		\sum_{m = 0}^{k}\sum_{\abs{\gamma} = 0}^{2m} \int_{\Omt} q^{1+\alpha} \abs{q^m\pd^\gamma \sigma}^2 \dd{x}.
	\end{equation*}
	However, this would not be the final choice in this manuscript, since it will unnecessarily raise the regularity indices needed for the local well-posedness. Indeed, it can be derived from the compressible Euler equations \eqref{euler equiv2} that
	\begin{equation*}
		\dv{t} \int_{\Omt} q^{\alpha} \sigma^2 \dd{x} = 0.
	\end{equation*}
	When $\sigma = \const$, this is exactly the conservation of mass. Although the physical dimension of the above integral is different from that of energies, it can still serve as an $L^2$-type mathematical energy functional. Thus, one may consider the higher order norms of $\sigma$ given by:
	\begin{equation}\label{energy sig}
		\sum_{m = 0}^{k}\sum_{\abs{\gamma} = 0}^{2m} \int_{\Omt} q^\alpha \abs{q^m\pd^\gamma \sigma}^2 \dd{x}.
	\end{equation}
	Here we remark that the scaling property of \eqref{energy sig} does not coincide with that of \eqref{energy q v}.
	
	For the simplicity of notation, given a parameter $\lambda > -\frac{1}{2}$, denote by $H^{j, \lambda}_{q}(\Omega)$ the weighted Sobolev spaces
	\begin{equation}\label{def weighted sobolev spaces}
		H^{j, \lambda}_{q}(\Omega) \coloneqq \Set*{f \in \cD'(\Omega) \given \norm{f}_{H^{j, \lambda}_{q}(\Omega)}^2 \coloneqq \sum_{\abs{\gamma} \le j} \int_{\Omega}  \abs{q^{\lambda}\pd^\gamma f}^2 \dd{x} < \infty}.
	\end{equation}
	With this notation, the higher order norms of $(q, \vv, \sigma)$ are defined by:
	\begin{equation}
		\norm{(q, \vv, \sigma)}_{\fkH^{2j}_{q}} \coloneqq \qty(\norm{q}_{H^{2j, j+\ala}_q(\Omega)}^2 + \norm{\vv}_{H^{2j, j+\alb}_q(\Omega)}^2 + \norm{\sigma}_{H^{2j, j+ \alb}_q(\Omega)}^2)^{\frac{1}{2}}.
	\end{equation}
	For non-integer-valued indices, the above definitions can be extended by invoking complex interpolations (non-integral indices will be briefly discussed in \S\ref{sec freq env}, and one can consult \cite[\S2]{Ifrim-Tataru2024} and the references therein for more details).
	
	Here, note that the \emph{critical index} $\varkappa_0$ for the higher order norms of $(q, \vv)$ under scaling is given by the relation:
	\begin{equation*}
		2\varkappa_0 - \qty(\varkappa_0 + \alb) = \frac{1}{2} + \frac{d}{2},
	\end{equation*}
	where $d \ge 1$ is the space dimension. Namely, the critical index $\varkappa_0$ is defined as
	\begin{equation}
		\varkappa_0 \coloneqq \frac{1}{2} + \frac{d}{2} + \frac{1}{2\beta},
	\end{equation}
	for which $\beta$ is the positive index given in the state equation \eqref{state eqn}.
	
	\subsection{Main Results}
	
	Suppose that $\{\Omega_t\}_t$ is a family of bounded open sets (not necessarily connected), and $\varepsilon_* > 0$ is a fixed (arbitrarily small) constant. Assume further that, for each $t$, $q(t) > 0$ in $\Omega_t$, $q(t) = 0$ on $\Gmt = \pd\Omt$, and $\abs{\grad q(t)}>0$ on $\Gmt$. Define the following control parameters for states $(q, \vv, \sigma)$ by:
	\begin{equation}\label{def A*}
		A_* \coloneqq \norm{q}_{C^{1+\varepsilon_*}} + \norm{\vv}_{{C}^{\frac{1}{2}+\varepsilon_*}} + \norm{\sigma}_{C^{\frac{1}{2}+\varepsilon_*}} + \norm*{\sigma^{-1}}_{L^\infty}
	\end{equation}
	and
	\begin{equation}\label{def B}
		B \coloneqq \norm{\grad{q}}_{\wtC^{0, \frac{1}{2}}} + \norm{\grad\vv}_{L^\infty} + \norm{\grad{\sigma}}_{L^\infty},
	\end{equation}
	with the $\wtC^{0, \frac{1}{2}}$-norm given by
	\begin{equation*}
		\norm{f}_{\wtC^{0, \frac{1}{2}}} \coloneqq \norm{f}_{C^0} + \norm{f}_{\wtC^{\frac{1}{2}}}
	\qand
		\norm{f}_{\wtC^{\frac{1}{2}}} \coloneqq \sup_{x, y \in \Omt} \frac{\abs{f(x)-f(y)}}{q^{\frac{1}{2}}(x) + q^{\frac{1}{2}}(y) + \abs{x-y}^{\frac{1}{2}}}.
	\end{equation*}

	Here one may notice that the $\wtC^{\frac{1}{2}}$-seminorm scales like the $\dot{C}^{\frac{1}{2}}$-seminorm, but it is weaker in the sense that it only reflects the $L^\infty$ variations in the regions away from the boundary. We also use the notion $f \in \wtC^{1, \frac{1}{2}}(\Omega)$ if
	\begin{equation*}
		f \in C^1 \qand \norm{\grad f}_{\wtC^{\frac{1}{2}}} < \infty.
	\end{equation*}
	
	With these notations, we state the main results in this manuscript (here we remark that the gases are always assumed to be compactly supported).
	\begin{theorem}[Enhanced Uniqueness]\label{thm uni}
		For each initial data $(q_0, \vv_0, \sigma_0)$ in the Lipschitz class with $q_0$ satisfying the non-degeneracy condition $\abs{\grad{q_0}} > 0$ on $\Gamma_0$, the compressible Euler system \eqref{euler equiv2} admits at most one solution $(q, \vv, \sigma)$ in the class
		\begin{equation*}
			q \in \wtC^{1, \frac{1}{2}}_x \qc \vv \in C^1_x, \qand \sigma \in C^1_x.
		\end{equation*}
		More precisely, the uniqueness holds as long as $A' \in L^\infty_t$, $B \in L^1_t$, and $\inf_{\Gmt} \abs{\grad{q}} > 0$, here $A'$ is the weaker version of $A_*$ given by
		\begin{equation*}
			A' \coloneqq \norm{\grad{q}}_{L^\infty} + \norm{\vv}_{{C}^{\frac{1}{2}}} + \norm{\sigma}_{{C}^{\frac{1}{2}}} + \norm*{\sigma^{-1}}_{L^\infty}.
		\end{equation*}
	\end{theorem}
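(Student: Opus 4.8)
\emph{Strategy.}
Let $(q_i,\vv_i,\sigma_i)$, $i=1,2$, be two solutions on $[0,T]$ issued from the same data, and let $T^\ast$ be the largest time up to which they agree; near $t=T^\ast$ the gas domains $\Omega_t^1$ and $\Omega_t^2$ are close. I would prove $T^\ast=T$ by a single weighted energy estimate closed by Grönwall. Writing $s,\vw$ and $\zeta$ for the (suitably realigned, see below) differences of $q$, $\vv$ and $\sigma$, the plan is to build a functional $\mathcal{E}(t)$ comparable to $\int_{\Omega_t^1} q_1^{\alpha-1}|s|^2+q_1^{\alpha}|\vw|^2+q_1^{\alpha}|\zeta|^2$ --- these being the weights of the $L^2$-type conserved quantities and of the linearized energy isolated in \S\ref{sec scal ana} --- with $\mathcal{E}(T^\ast)=0$ and a differential inequality $\tfrac{d}{dt}\mathcal{E}(t)\le f(t)\,\mathcal{E}(t)$ where $f\in L^1_t$ is majorized by $A'$ and $B$. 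Grönwall then forces $\mathcal{E}\equiv0$ past $T^\ast$, contradicting maximality.

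\emph{Reconciling the two supports.}
Since $\|\grad\vv_1\|_{L^\infty}\in L^1_t$, the flow of $\vv_1$ is a bi-Lipschitz homeomorphism onto $\Omega_t^1$, and solving the transport ODE $\Dt q_1=-\beta q_1\divergence\vv_1$ along it keeps $q_1$ comparable to a fixed profile; moreover $q_i\simeq\dist(\cdot,\Gamma_t^i)$ because $\inf_{\Gamma_t}|\grad q|>0$ and $q_i\in\wtC^{1,\frac12}$. Using this I would construct a near-identity diffeomorphism transporting the second solution onto $\Omega_t^1$ and carrying $\Gamma_t^2$ to $\Gamma_t^1$, and define $s,\vw,\zeta$ as the differences of the first solution and the transported second solution on the single moving domain $\Omega_t^1$. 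The purpose of the realignment is that the density difference $s$ then vanishes on $\Gamma_t^1$, which is what will allow weighted Hardy inequalities; the deviation of this diffeomorphism from the identity, and the commutator errors it generates in the equations, are themselves $\mathcal{O}(\mathcal{E}^{1/2})$ and are harmless.

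\emph{Energy identity and routine terms.}
After realignment the difference system has the form $\Dt s+\beta q_1\divergence\vw=\mathrm{(l.o.t.)}$, $\Dt\vw+\sigma_1\grad s=\mathrm{(l.o.t.)}$, $\Dt\zeta=\mathrm{(l.o.t.)}$ with $\Dt=\pd_t+\vv_1\vdot\grad$, the lower-order terms each carrying a factor among $\grad q_2,\grad\vv_2,\grad\sigma_2,\divergence\vv_i$, the derivatives of the realigning map, or $\sigma_i^{\pm1}$. Testing the three equations against $q_1^{\alpha-1}\sigma_1^{-1}s$, $\beta q_1^{\alpha}\sigma_1^{-2}\vw$ and $q_1^{\alpha}\sigma_1^{-2}\zeta$, using $\Dt\sigma_1=0$ and the transport formula \eqref{trans formula}, the two top-order cross terms $-\beta\int q_1^{\alpha}\sigma_1^{-1}s\,\divergence\vw$ and $-\beta\int q_1^{\alpha}\sigma_1^{-1}\vw\vdot\grad s$ cancel after a single integration by parts --- this is precisely the approximate symmetric-hyperbolic (Hamiltonian) structure, and is why these particular weights must be chosen. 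What remains splits into: (i) bulk terms $\int(\text{weight})\cdot(\text{coefficient})\cdot(\text{quadratic in }s,\vw,\zeta)$, the coefficient bounded by one of $\|\grad q_2\|_{L^\infty}$, $\|\grad\vv_2\|_{L^\infty}$, $\|\grad\sigma_2\|_{L^\infty}$, $\|\divergence\vv_i\|_{L^\infty}$, $\|\sigma_i^{\pm1}\|_{L^\infty}$ (all inside $A'$ or $B$) times $\mathcal{E}$; and (ii) interface remainders generated by differentiating the weights, the prototype being $\int q_1^{\alpha-1}(\grad q_1\vdot\vw)\,s$. Wherever a bare $\grad s$ appears I would substitute the velocity equation $\sigma_1\grad s=-\Dt\vw-\mathrm{(l.o.t.)}$, turning it into a material derivative absorbed by $\tfrac{d}{dt}\mathcal{E}$, so that no derivative of the merely Lipschitz coefficients beyond those in $B$ is ever required.

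\emph{The main obstacle.}
The crux is the interface remainders of type (ii): a crude Cauchy--Schwarz bound of $\int q_1^{\alpha-1}|\grad q_1||\vw||s|$ against $\mathcal{E}$ costs a power $q_1^{-1}$, fatal near the vacuum interface and most acute when $\beta>1$ (so $\alpha-1<0$ and the weight itself is singular). The plan to overcome this rests on the physical-vacuum geometry: after realignment $s$ vanishes on $\Gamma_t^1$ and $q_1\simeq\dist(\cdot,\Gamma_t^1)$, so a Hardy inequality in the weighted spaces $H^{j,\lambda}_q(\Omega_t^1)$ upgrades $\int q_1^{\alpha-2}|s|^2$ to $\int q_1^{\alpha}|\grad s|^2$ modulo a term already controlled by $\mathcal{E}$; combining this with the velocity equation once more, and with the fact that $q_1^{\alpha}$ itself satisfies a continuity equation (so that the normal trace $\grad q_1\vdot\vw$ at the interface is governed by the motion of the realigned boundary discrepancy, hence by $\mathcal{E}^{1/2}$ and $A',B$), is designed to close the loop with a final bound $\lesssim(A'+B)\mathcal{E}$. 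Establishing this interface estimate uniformly in time --- with constants depending only on $A'$ and $\inf_{\Gamma_t}|\grad q|$ --- and producing the realigning diffeomorphism with the required regularity and smallness, is the technical heart of the proof; I expect this to be the principal difficulty, the remainder being bookkeeping, after which the Grönwall step and the continuation from $T^\ast$ are routine.
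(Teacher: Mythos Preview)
Your plan differs substantially from the paper's, and the step you yourself flag as ``the principal difficulty'' contains a genuine gap.

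The paper never realigns the two domains. It works on the intersection $\Omega_t=\Omega_t^1\cap\Omega_t^2$ with the \emph{symmetric} weight $\mu=q_1+q_2$, so that the difference functional is $\scD=\int_{\Omega_t}\mu^{\alpha-1}|\nu|^2+\mu^{\alpha}(\sigma_1+\sigma_2)^{-1}\bigl[\beta|\vv_1-\vv_2|^2+|\sigma_1-\sigma_2|^2\bigr]\,dx$ with $\nu=q_1-q_2$. Because $\mu$ does not vanish on $\partial\Omega_t$ (there $|\nu|=\mu$), the divergence cancellation fails on the boundary; the remedy is a degree-zero cutoff $\fka(\mu,\nu)$ supported in $\{|\nu|<\tfrac{9}{10}\mu\}$. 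One first proves the truncated functional $\fD$ is equivalent to $\scD$ (using only $A'$, the non-degeneracy, and the $C^{1/2}$ moduli of $\vv,\sigma$), then differentiates $\fD$: the divergence now integrates to zero, and the residue is a cubic integral $J=\int\mu^{\alpha-3}|\nu|^3|\vw|$, controlled by a bespoke boundary-layer decomposition indexed by a function $\lambda$ on $\Gamma_t$ built from $|\nu|$, $|\grad\nu|$ and $B'$. The $\wtC^{1,\frac12}$ hypothesis on $q$ enters exactly in this layer estimate.

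In your scheme, note first that the ``interface remainder'' $\int q_1^{\alpha-1}(\grad q_1\cdot\vw)s$ is not the obstacle: on $\Omega_t^1$ with weight $q_1$ the three top-order terms combine into $\beta\int\div(q_1^\alpha s\vw)$, which vanishes since $q_1=0$ on $\Gamma_t^1$ irrespective of whether $s$ does --- so your Hardy apparatus is aimed at a term that already cancels. The actual obstacle is the one you postpone: the commutators produced by the realigning diffeomorphism are not $O(\mathcal{E}^{1/2})$. Aligning $\Gamma_t^2$ to $\Gamma_t^1$ forces $\Phi_t-\mathrm{Id}$ to have size $\delta:=\dist(\Gamma_t^1,\Gamma_t^2)$ in a layer of that thickness, while $\grad\Phi_t-\mathrm{Id}$ is governed by the tangential variation of $\delta$, hence by $\grad(q_1-q_2)$, which is only $O(B)$ pointwise and carries no smallness in $\mathcal{E}$. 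The resulting source in (say) the $\vw$-equation is of order $|\grad\Phi-\mathrm{Id}|\cdot A'$ supported in $\{q_1\lesssim\delta\}$; tested against $q_1^\alpha\sigma_1^{-2}\vw$ it contributes at best $\lesssim A'\,\mathcal{E}^{1/2}\,\delta^{(1+\alpha)/2}$ to $\tfrac{d}{dt}\mathcal{E}$. The only a priori link between $\delta$ and $\mathcal{E}$ is $\delta^{\,1+\alpha+d}\lesssim\mathcal{E}$, giving $\tfrac{d}{dt}\mathcal{E}\lesssim A'\,\mathcal{E}^{\gamma}$ with $\gamma=\tfrac12+\tfrac{1+\alpha}{2(1+\alpha+d)}<1$; such an inequality admits solutions that grow from $\mathcal{E}(T^\ast)=0$, so Gr\"onwall does not close. (Separately, even if it were needed, your Hardy step ends at $\int q_1^{\alpha}|\grad s|^2$; substituting the velocity equation there yields a term \emph{quadratic} in $\Dt\vw$, which is neither part of $\mathcal{E}$ nor a total time derivative, so the ``absorbed by $\tfrac{d}{dt}\mathcal{E}$'' device --- valid for linear occurrences of $\grad s$ --- does not apply.)
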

	
	\begin{remark}
		There is a quantitative version of the uniqueness result. The energy estimates of some difference functionals characterizing the $L^2$-type distance of two solutions will be shown in \S\ref{sec uniqueness}.
	\end{remark}
	
	\begin{remark}
		Such approach can establish the uniqueness in a very general solution class, which covers almost all classical solutions with scaling compatibility. Particularly, the pointwise regularity for the free boundary is merely $C^{1.5}$, for which the classical curvature is not well-defined. Thus, one may not extend the solution to a regular one defined in the whole space, which disables the application of the relative entropy argument in \cite{luoxinzeng2014well} to prove the uniqueness. On the other hand, in the Lagrangian coordinates, the common way to show uniqueness is to establish the energy estimates for the linearized systems satisfied by the difference of two solutions. Due to the requirements for a priori estimates, the uniqueness requires one more derivative than the existence results, specifically, the uniqueness only holds for high-regularity solutions.
	\end{remark}
	
	\begin{defi}\label{def state space}
		The state space $\bbH^{2\varkappa}$ is defined for $\varkappa > \varkappa_0$ (i.e., above the scaling) as the collection of triples $(q, \vv, \sigma)$ satisfying the following:
		\begin{itemize}
			\item \textit{Domain: } $0 \le q \in C^{0}_{\text{c}}(\R^d)$, $\Omega \coloneqq \qty{q > 0}$, and $(\vv, \sigma)$ are functions well-defined in $\overline{\Omega}$.
			\item \textit{Boundary regularity: } $\Gamma \coloneqq \pd\Omega$ is composed of disjoint $C^{1+}$ hypersurfaces. In particular, $\Gamma$ contains no isolated points when the space dimension $d \ge 2$.
			\item \textit{Non-degeneracy: } $q \in C^{1+}(\overline{\Omega})$ and $q = 0, \abs{\grad{q}}>0$ on $\Gamma$. 
			\item \textit{Entropy bounds: } $\sigma \in C^0(\overline{\Omega})$, $\sigma > 0$ in ${\Omega}$, and $\norm{\sigma}_{L^\infty(\Omega)} + \norm*{\sigma^{-1}}_{L^\infty(\Omega)} < \infty$. 
			\item \textit{Function regularities: } $\norm{(q, \vv, \sigma)}_{\fkH^{2\varkappa}_q(\Omega)} < \infty$, i.e.,
			\begin{equation*}
				q \in H^{2\varkappa, \varkappa+\ala}_q(\Omega) \qc  \vv \in H^{2\varkappa, \varkappa+\alb}_q(\Omega), \qand \sigma\in H^{2\varkappa, \varkappa+\alb}_q(\Omega).
			\end{equation*}
		\end{itemize}
	\end{defi}
	
	\begin{remark}
		Since distinct triples in $\bbH^{2\varkappa}$ may be defined in different domains, $\bbH^{2\varkappa}$ is not a linear function space. Instead, it can be regarded as an infinite-dimensional manifold, whose topology will be given in \S\ref{sec state space}. The pointwise regularities of $q$, $\vv$, and $\sigma$ can be ensured by the Sobolev embeddings and the requirement that $\varkappa$ exceeds the scaling. These properties will be further discussed in \S\ref{sec func spaces}.
	\end{remark}
	
	\begin{theorem}[Local Well-posedness]\label{thm well-posedness}
		The full compressible Euler system \eqref{euler equiv2} is locally well-posed in the state space $\bbH^{2\varkappa}$ for any $\varkappa \in \R$ with
		\begin{equation*}
			\varkappa > \varkappa_0 + \frac{1}{2} = 1 + \frac{d}{2} + \frac{1}{2\beta}.
		\end{equation*}
		More precisely, given initial data in $\bbH^{2\varkappa}$, there hold
		\begin{itemize}
			\item existence of solutions in $C([0, T]; \bbH^{2\varkappa})$ for some $T > 0$;
			\item uniqueness of solutions in a larger class (cf. Theorem \ref{thm uni});
			\item weak Lipschitz dependence on the initial data with respect to a $L^2$-type distance functional (cf.Theorem \ref{thm uniqueness});
			\item continuous dependence of solutions on their initial data in the $\bbH^{2\varkappa}$ topology.
		\end{itemize}
	\end{theorem}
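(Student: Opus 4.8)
The plan is the by-now standard three-step scheme for Hadamard-style well-posedness in Eulerian coordinates: (i) sharp a priori energy estimates at the $\fkH^{2\varkappa}_q$ level together with a companion weak ($L^2$-type distance) estimate; (ii) construction of smooth solutions and propagation of the uniform bounds on a data-dependent time interval; (iii) a frequency-envelope / Bona--Smith rigidity argument that upgrades rough data to genuine solutions and delivers continuous dependence. Uniqueness is already covered by Theorem \ref{thm uni}, and the weak Lipschitz dependence will drop out of the difference estimate in step (i).

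\textbf{Step 1: reformulation and a priori estimates.} First I would recast the momentum equation $\Dt\vv + \sigma\grad q = \vb*{0}$ in the promised bilinear form, introducing the renormalized effective pressure and the equivalent entropy variable so that the genuinely nonlinear dependence on $\sigma = e^{S/(1+\beta)}$ is replaced by a product of one factor of fixed (low) regularity and one derivative of $q$. Combined with the second-order wave-type equation $\Dt^2 q = \beta q\sigma\laplace q + \text{(lower order)}$ and the transport equation $\Dt\sigma = 0$, the system becomes a degenerate wave--transport coupling whose characteristic operator is $\cs^2\laplace = \beta q\sigma\laplace$ and whose natural energy space is exactly $\fkH^{2\varkappa}_q$. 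For integer $k$ one differentiates $2k$ times, builds the weighted energies
\[
\mathcal{E}_{2k}(q,\vv,\sigma)\;\simeq\;\|q\|_{H^{2k,\,k+\ala}_q}^2+\|\vv\|_{H^{2k,\,k+\alb}_q}^2+\|\sigma\|_{H^{2k,\,k+\alb}_q}^2 ,
\]
and proves the differential inequality $\dv{t}\mathcal{E}_{2k}\lesssim (A_*+B)\,\mathcal{E}_{2k}$. The two analytic engines are: (a) degenerate elliptic/Hardy estimates for $q\laplace$ with weight $q^{\lambda}$, which trade spatial control of $\laplace q$ for two more derivatives of $q$ at the cost of one extra power of $q$, and which survive a merely $C^{1+}$ boundary; and (b) weighted product, Moser, and commutator estimates in the $H^{j,\lambda}_q$ scale, used to handle the bilinear entropy term and all commutators. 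The boundary contributions produced by integration by parts must cancel — this is exactly what pins down the non-scaling exponents $\ala,\alb$ and reconciles the dimensional mismatch between \eqref{energy sig} and \eqref{energy q v}. Non-integer $\varkappa$ then follows by complex interpolation, and the requirement $\varkappa > \varkappa_0 + \tfrac12$ is precisely what makes $A_*$ and $B$ (in particular $\|\grad q\|_{\wtC^{0,1/2}}$) controllable by $\mathcal{E}_{2\varkappa}$ through Sobolev embedding. Running the same computation on the difference of two solutions, one order lower, yields the weak Lipschitz / distance estimate and reproves Theorem \ref{thm uni}.

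\textbf{Step 2: existence; Step 3: continuous dependence.} For existence, regularize the initial data inside the manifold $\bbH^{2\varkappa}$ by a weighted mollification keeping $q\ge 0$, preserving non-degeneracy $\abs{\grad q}>0$ on the (now smooth) boundary, and depending continuously on the regularization parameter. Solve the regularized problem by a hyperbolic iteration — linearize about the current iterate, solve a degenerate linear wave equation for $q$ coupled to transport equations for $\vv$ and $\sigma$, and use the linear energy estimates to show the scheme is bounded in $\fkH^{2\varkappa}_q$ and contractive in the weak distance. The a priori bound of Step 1 then provides a uniform lifespan $T>0$ depending only on the size of the data in $\bbH^{2\varkappa}$. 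Passing to the limit in the regularization, the weak Lipschitz estimate identifies the limit as a solution in $C([0,T];\bbH^{2\varkappa})$ and gives uniqueness; finally, splitting the data into low and high frequencies via a frequency envelope and invoking the Bona--Smith / Tao argument (each piece converges, and the energy concentrates only at frequencies where the envelope is small) upgrades this to continuity of the solution map in the $\bbH^{2\varkappa}$ topology.

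\textbf{Main obstacle.} The crux is Step 1: closing the top-order estimates with no loss of derivatives despite the simultaneous presence of a principal operator degenerating at the free surface, a free boundary that is only $C^{1+}$ with possibly unbounded curvature (so that no Lagrangian boundary straightening is available), and a genuinely nonlinear entropy contribution. The bilinear recasting of the momentum equation is what brings multilinear analysis to bear, but one must still choose the weight exponents so that every integration-by-parts boundary term vanishes and the entropy energy — whose physical dimension differs from that of the kinetic and internal energies — fits consistently into the scheme. A secondary difficulty is ensuring that the regularized solutions of Step 2 remain in $\bbH^{2\varkappa}$ (the domains move) on a time interval that does not shrink to zero and depend continuously on the regularization parameter up to $t=T$.
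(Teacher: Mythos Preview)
Your overall architecture is the same as the paper's (a priori estimates, $L^2$-type difference functional, construct smooth solutions with uniform lifespan, frequency-envelope passage to rough data and continuous dependence), but two of your ingredients diverge in ways that matter.

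\textbf{Energy estimates.} Saying ``differentiate $2k$ times'' and close via weighted Moser/commutator estimates glosses over the main difficulty. The paper explicitly notes that taking $(q^k\pd^{2k}q,\,q^k\pd^{2k}\vv)$ as the linearized variables leaves uncontrollable sources in \eqref{simple lin sys}. Its remedy is to use \emph{material} derivatives and specific good unknowns $s_{2k}=\Dt^{2k}q-\grad q\cdot\Dt^{2k-1}\vv$, $\vw_{2k}=\Dt^{2k}\vv$, which satisfy the simplified linear system up to ``balanced'' errors (Definition~\ref{def adm err}); coercivity $\cE_{2k}\simeq\|(q,\vv,\sigma)\|_{\fkH^{2k}}^2$ is then recovered a posteriori from degenerate elliptic estimates for the self-adjoint operators $\cL_1,\cL_2+\cL_3,\cL_5$ (Lemmas~\ref{lem ellipic est}--\ref{lem l2k est}) combined with the recursion of Lemma~\ref{lem recur}. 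Without this device the top-order commutator budget does not close. Two further points: the propagation estimate is $\lesssim_{A_*}B\,\cE_{2k}$ (linear in $B$), not $\lesssim(A_*+B)\cE_{2k}$, and non-integer $\varkappa$ is \emph{not} obtained by interpolating the estimate itself---$\bbH^{2\varkappa}$ is a manifold, not a linear space, since $\Omega$ depends on $q$. The paper instead interpolates a posteriori: integer-order bounds on regularized solutions $(q^h,\vv^h,\sigma^h)$ plus the $\scD$-difference bound \eqref{diff functional est qh} are fed into a telescopic decomposition and Lemma~\ref{lem J interp decomp}.

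\textbf{Existence.} You propose a Picard-type hyperbolic iteration (linearize, solve the degenerate linear problem, iterate, contract in the weak metric). The paper deliberately avoids this route and instead runs an Euler-polygonal time-stepping scheme (Theorem~\ref{thm one step iterate}): at each step one first manufactures a regularized state via a \emph{two-scale} construction \eqref{def tilde quantities}, combining $\vps_{\le h_-}$ and $\vps_{\le h_+}$ with a correction $\chi_\varepsilon(\cL_j^-)$ defined by functional calculus, so that the $\fkH^{2k+2}$-norm is $\lesssim\varepsilon^{-1}$ while the energy increment stays $\le(1+C\varepsilon)$; then one takes a single discrete Euler step \eqref{def q1 v1 sigma1}. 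The two scales are essential---a single-scale mollification either violates the higher-order bound or the energy-increment bound. Your iteration would instead require solving the degenerate linear wave--transport system on a time-dependent domain with only $C^{1+}$ boundary, which the paper neither carries out nor needs (it remarks after \eqref{lin est} that the linear existence is obtainable by duality but is never used).
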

	
	\begin{remark}
		The well-posedness holds for a low-regularity class, which is much less regular than those obtained in the Lagrangian schemes. Moreover, we also showed the continuous dependence on initial data, which is not straightforward to obtain in the Lagrangian setting, as those function spaces are defined through complicated energy functionals and thus difficult to identify the topology/continuity.
	\end{remark}
	
	Furthermore, there holds the following continuation criterion, which is consistent with the classical results concerning the quasi-linear hyperbolic systems in the absence of a free boundary.
	\begin{theorem}[Continuation Criteria]
		Let $\varkappa  > \frac{1}{2} + \varkappa_0$ be a real number. Then, the $\bbH^{2\varkappa}$ solutions to the compressible Euler system \eqref{euler equiv2} given by Theorem \ref{thm well-posedness} can be continued whenever the following properties hold:
		\begin{itemize}
			\item Uniform non-degeneracy: there is a constant $c_0 > 0$ so that
			\begin{equation*}
				\inf_{\Gmt} \abs{\grad{q}} \ge c_0 > 0.
			\end{equation*}
			\item Control parameter bounds: for $A_*$ and $B$ given respectively by \eqref{def A*} and \eqref{def B}, there hold
			\begin{equation*}
				A_* \in L^\infty_t \qand B \in L^1_t.
			\end{equation*}
		\end{itemize}
	\end{theorem}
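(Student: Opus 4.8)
The plan is a standard contradiction‑based continuation argument, adapted to the moving‑domain setting. Suppose $T^*<\infty$ is the maximal time of existence of the $\bbH^{2\varkappa}$ solution $(q,\vv,\sigma)$ furnished by Theorem \ref{thm well-posedness}, and suppose that on $[0,T^*)$ all three hypotheses hold: $\inf_{\Gmt}\abs{\grad q}\ge c_0>0$, $A_*\in L^\infty_t$, and $B\in L^1_t$. I will show that $(q,\vv,\sigma)$ extends continuously to a solution on a slightly larger interval, contradicting maximality. The argument is the analogue of the classical continuation criterion for quasilinear hyperbolic systems, with the extra burden of tracking the moving domain $\Omt$ and the solution‑dependent, boundary‑degenerate weight $q(t)$ defining the spaces $\fkH^{2\varkappa}_q$.

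\emph{Step 1: propagation of the low‑order geometric structure.} First I would check that none of the defining properties of the state space $\bbH^{2\varkappa}$ (Definition \ref{def state space}) degenerate on $[0,T^*)$. From $\Dt\sigma=0$ one gets $\norm{\sigma(t)}_{L^\infty}+\norm{\sigma(t)^{-1}}_{L^\infty}$ constant in $t$. Since the gas is compactly supported and $\norm{\grad\vv}_{L^\infty}\le B\in L^1_t$, the velocity flow $\Phi_t$ of $\vv$ is bi‑Lipschitz with constant controlled by $\exp(\int_0^t B\,\dd{s})$, carries $\Omega_0$ onto $\Omt$, keeps all supports inside a fixed ball, and makes the weights $q(t)$ mutually comparable up to $t=T^*$ (using also $\Dt q+\beta q\,\divergence\vv=0$). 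Differentiating that transport equation gives one for $\grad q$ with right‑hand side linear in $\grad\vv$, so a Gr\"onwall estimate in $L^\infty$ bounds $\norm{\grad q(t)}_{L^\infty}$ uniformly on $[0,T^*)$, while the hypothesis $\inf_{\Gmt}\abs{\grad q}\ge c_0$, being uniform in $t$, passes to the limit. Consequently $\Omega^*\coloneqq\Omega_{T^*}$ is a bounded open set whose boundary $\Gamma^*$ is a disjoint union of $C^{1+}$ hypersurfaces, with $q^*=0$ and $\abs{\grad q^*}\ge c_0$ on $\Gamma^*$.

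\emph{Step 2: high‑order bound, limit, and restart.} Let $\fkE^{2\varkappa}(t)$ be the energy functional equivalent to $\norm{(q,\vv,\sigma)(t)}_{\fkH^{2\varkappa}_q(\Omt)}^2$ from the sharp a priori estimates established earlier, which lose no derivatives. These give a differential inequality
\begin{equation*}
\dv{t}\fkE^{2\varkappa}(t)\ \le\ \mathcal{C}(A_*(t),c_0)\,(1+B(t))\,\fkE^{2\varkappa}(t),
\end{equation*}
with $\mathcal{C}$ continuous and nondecreasing in its first argument; since $A_*\in L^\infty_t$ and $B\in L^1_t$ on $[0,T^*)$, Gr\"onwall's inequality gives $\sup_{[0,T^*)}\fkE^{2\varkappa}<\infty$. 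Next, using \eqref{euler equiv2} to control $\Dt q$, $\Dt\vv$, $\Dt\sigma$ in lower‑order weighted norms together with this uniform bound, one shows $(q(t),\vv(t),\sigma(t))$ is Cauchy as $t\uparrow T^*$ in a weaker weighted‑Sobolev topology; it therefore converges to some $(q^*,\vv^*,\sigma^*)$, and weak‑$*$ lower semicontinuity of the $\fkH^{2\varkappa}_q$ norm transfers the bound, so $(q^*,\vv^*,\sigma^*)\in\bbH^{2\varkappa}$ with $\inf_{\Gamma^*}\abs{\grad q^*}\ge c_0$. Feeding this datum into Theorem \ref{thm well-posedness} produces a solution on $[T^*,T^*+\delta]$ with $\delta>0$ depending only on $\norm{(q^*,\vv^*,\sigma^*)}_{\fkH^{2\varkappa}_q}$ and $c_0$, and Theorem \ref{thm uni} glues it to the original solution, contradicting the maximality of $T^*$.

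\emph{Main obstacle.} The delicate point is Step 2's limit: the weighted spaces $H^{2\varkappa,\lambda}_{q(t)}$ are built on the solution‑dependent weight $q(t)$, which degenerates along the moving interface $\Gmt$, so a uniform bound in the moving norms does not by itself place the limit in $\bbH^{2\varkappa}$, nor does it immediately yield $(q,\vv,\sigma)\in C([0,T^*];\bbH^{2\varkappa})$. Reconciling the convergence $\Omt\to\Omega^*$ with the comparability of the time‑dependent weights near the interface, and then upgrading the weak limit to strong $\bbH^{2\varkappa}$ convergence, is precisely where the transport control of $q$, $\grad q$, and $\sigma$ from Step 1 has to be combined with the frequency‑envelope and continuous‑dependence machinery developed in the proof of Theorem \ref{thm well-posedness}; once that is available the continuation follows routinely.
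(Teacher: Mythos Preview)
Your approach follows the classical template (uniform energy bound $\Rightarrow$ limit state at $T^*$ $\Rightarrow$ restart), but the crucial ingredient --- a differential inequality
\[
\dv{t}\fkE^{2\varkappa}(t)\le \mathcal{C}(A_*,c_0)\,(1+B)\,\fkE^{2\varkappa}(t)
\]
for the \emph{fractional} index $\varkappa$ --- is not directly available. The energy functionals $\cE_{2k}$, $\fkE_{2k}$ of \S\ref{sec energy est} are built from the good unknowns $s_{2k}=\Dt^{2k}q-\grad q\vdot\Dt^{2k-1}\vv$ and $\vw_{2k}=\Dt^{2k}\vv$, which only make sense for integer $k$; the extension of \eqref{energy est bound} to real $\varkappa$ alluded to in the remark following the a priori estimates theorem is itself a \emph{consequence} of the regularization and frequency-envelope machinery of \S\ref{sec rough sol}, and there only on a short interval whose length is governed by the pointwise size of $B$, not by $\int B$. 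Invoking it on all of $[0,T^*)$ is therefore circular.

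The paper takes a different route, and the genuine obstacle is not the one you identify. Rather than bound the rough solution directly, one works throughout with the family of regularized solutions $(q^h,\vv^h,\sigma^h)$ evolving from $\vps_{\le h}(q_0,\vv_0,\sigma_0)$, for which the integer-index estimates of Theorem~\ref{thm energy est} apply. The hypotheses control $A_*$ and $\int_0^T B$ for the rough solution, but propagating the $\fkH^{2N}$ norms of $(q^h,\vv^h,\sigma^h)$ requires control of $A^h_*$ and $\int_0^T B^h$, which are not a priori related. The heart of the proof is a bootstrap establishing
\[
B^h\;\le\;C_1\,B\;+\;C(\Theta,M_0)\,2^{-\delta_* h},
\]
obtained by comparing $(q^h,\vv^h,\sigma^h)$ with $\vps_{\le h}(q,\vv,\sigma)$ through a telescopic sum, the $L^2$-difference bounds of Theorem~\ref{thm uniqueness}, and the regularization estimates of Propositions~\ref{prop reg op}--\ref{prop freq env}. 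Once this closes, the regularized solutions extend uniformly past $T$, and continuous dependence transfers the extension to $(q,\vv,\sigma)$; your limit-and-restart step is bypassed entirely.
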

	\begin{remark}
		This criterion implicitly rules out the formation of splash singularities. Additionally, it is necessary to assume that two gas bubbles do not come into contact. While these singularities will not appear in a short period, the results presented in this manuscript cannot exclude their occurrence in the long-term dynamics.
	\end{remark}
	
	Finally, the energy estimates are given by
	\begin{theorem}[A Priori Estimates]
		Suppose that $k \ge 0$ is an integer and $(q, \vv, \sigma)$ is a solution to the compressible Euler system \eqref{euler equiv2} with $\inf_{\Gmt} \abs{\grad{q}} \ge c_0 >0$. Let $A_*$ and $ B$ be the control parameters defined by \eqref{def A*} and \eqref{def B}, respectively. Then, there exists an energy functional $E_{2k}$ satisfying the following properties:
		\begin{itemize}
			\item Coercivity:
			\begin{equation*}
				E_{2k} \simeq_{A_*, c_0} \norm{(q, \vv, \sigma)}_{\fkH^{2k}_q}^2.
			\end{equation*}
			\item Propagation estimate:
			\begin{equation*}
				\abs{\dv{t} E_{2k}} \lesssim_{A_*} B \norm{(q, \vv, \sigma)}_{\fkH^{2k}_q}^2.
			\end{equation*}
		\end{itemize} 
		In particular, Gr{\"o}nwall's inequality yields the bound
		\begin{equation}\label{energy est bound}
			\norm{(q, \vv, \sigma)(t)}_{\fkH^{2k}_{q(t)}(\Omt)}^2 \lesssim_{A_*, c_0} \exp(\int_0^t C(A_*)B(s) \dd{s}) \norm{(q, \vv, \sigma)(0)}_{\fkH^{2k}_{q_0}(\Omega_0)}^2.
		\end{equation}
	\end{theorem}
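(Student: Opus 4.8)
The strategy is to construct the energy functional $E_{2k}$ by induction on $k$ as a coercive quadratic form adapted to the degenerate geometry, to show that its time derivative is bounded by $B$ times itself, and then to invoke Grönwall's inequality. For the base case $k=0$ one uses the conservation laws already recorded above: by the transport identity \eqref{trans formula} and the system \eqref{euler equiv2}, both the Hamiltonian $E_{\textup{phy}}$ of \eqref{def Ephy} and the quantity $\int_{\Omt} q^{\alpha}\sigma^2\dd{x}$ are conserved, so $E_0\coloneqq E_{\textup{phy}}+\int_{\Omt}q^{\alpha}\sigma^2\dd{x}$ satisfies $\dv{t}E_0=0$; since $\Dt\sigma=0$ propagates the upper and lower bounds on $\sigma$ in terms of $A_*$, one gets $E_0\simeq_{A_*}\norm{(q,\vv,\sigma)}_{\fkH^0_q}^2$, which is the assertion with the $B$-term vacuous.

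For $k\ge 1$ I differentiate \eqref{euler equiv2}. The decisive structural point --- and the reason for working with $(q,\vv,\sigma)$ rather than $(\rho,\vv,S)$ --- is that the momentum equation $\Dt\vv+\sigma\grad q=\vb*{0}$ is \emph{bilinear}. Applying to \eqref{euler equiv2} the admissible combinations of $\pd_x$ and $q$-weighted spatial derivatives of total order $\le 2k$ underlying the norms \eqref{energy q v}--\eqref{energy sig}, together with the identity $\Dt q=-\beta q\divergence\vv$, produces for each block $(U_q,U_v,U_\sigma)$ of weighted top-order quantities a \emph{linear} wave--transport system of the schematic form
\begin{equation*}
	\Dt U_q + \beta q\,\divergence U_v = \mathcal{F}_q, \qquad \Dt U_v + \sigma\grad U_q = \mathcal{F}_v, \qquad \Dt U_\sigma = \mathcal{F}_\sigma,
\end{equation*}
whose sources $\mathcal{F}_\bullet$ collect all commutators. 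Counting orders with the degenerate weights in mind, each $\mathcal{F}_\bullet$ is a multilinear expression with at most one factor at top order; the remaining factors are placed in $L^\infty$ and estimated by $A_*$ and $B$, using $\norm{\grad\vv}_{L^\infty},\norm{\grad\sigma}_{L^\infty},\norm{\grad q}_{\wtC^{0,\frac{1}{2}}}\le B$ and the weighted Moser/product and interpolation inequalities valid in the $H^{j,\lambda}_q$ scale.

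I then take $E_{2k}$ to be a sum over blocks of symmetric-hyperbolic-type energies,
\begin{equation*}
	E_{2k} = \sum_{m,\gamma} \int_{\Omt} q^{\alpha-1}\abs{U_q}^2 + \beta\sigma^{-1}q^{\alpha}\abs{U_v}^2 + q^{\alpha}\abs{U_\sigma}^2 \dd{x} + (\text{weighted gradient and lower-order corrections}),
\end{equation*}
where the corrections and the inter-block coupling matter because the term $\beta q\divergence U_v$ in the block equations raises the spatial-derivative count by one, and this overshoot is not absorbed by a plain power of $q$. Controlling it requires the degenerate (Keldysh-type) elliptic estimates for the operator $\divergence(q\sigma\grad\,\cdot\,)$ against the weight $q^{\alpha-1}$, which gain one derivative in the weighted scale. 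Coercivity $E_{2k}\simeq_{A_*,c_0}\norm{(q,\vv,\sigma)}_{\fkH^{2k}_q}^2$ then follows by the induction on $k$, combining positivity of the quadratic form, the lower-orderness of the corrections, and these elliptic estimates; it is exactly here that $\inf_{\Gmt}\abs{\grad q}\ge c_0$ is used, since it forces $q\simeq\dist(\cdot,\Gmt)$ near $\Gmt$ and hence makes the degenerate-elliptic machinery uniform up to the free interface.

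Differentiating $E_{2k}$ in time and inserting the block equations, the terms split into three kinds: (i) those where $\Dt$ lands on a coefficient or a $q$-weight, producing $\Dt\sigma=0$ or factors $\divergence\vv,\grad\vv,\grad\sigma$ bounded by $B$; (ii) the symmetric exchange terms between the $U_q$- and $U_v$-slots, which cancel after one integration by parts, the boundary contributions vanishing because $q^{\alpha-1}$ and $q^{\alpha}$ vanish on $\Gmt$ (with $\alpha>0$ ensuring integrability and zero traces); and (iii) the pairings of $\mathcal{F}_q,\mathcal{F}_v,\mathcal{F}_\sigma$ against the top-order quantities, bounded by $B\,\norm{(q,\vv,\sigma)}_{\fkH^{2k}_q}^2$ by the source estimates. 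The only terms of (ii)--(iii) sitting exactly at the energy threshold are, by construction, those compensated by $\dv{t}$ of the corrections, so $\abs{\dv{t}E_{2k}}\lesssim_{A_*}B\,\norm{(q,\vv,\sigma)}_{\fkH^{2k}_q}^2\simeq_{A_*,c_0}B\,E_{2k}$, and Grönwall together with coercivity gives \eqref{energy est bound}. The main obstacle is the design of the lower-order corrections and the verification that the genuinely top-order commutators telescope, so that the estimate closes with no loss of derivatives: since in this degenerate scale a factor of $q$ trades for an extra spatial derivative, identifying which terms truly sit at top order is delicate; a secondary difficulty is establishing the weighted Keldysh/Hardy inequalities that underlie coercivity uniformly up to $\Gmt$, and bookkeeping the mismatched scalings of the $(q,\vv)$- and $\sigma$-energies when coupling the blocks.
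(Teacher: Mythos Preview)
Your overall architecture matches the paper's, but your choice of commuting operators differs substantively, and the part you leave vague is exactly where the paper's main idea sits. You differentiate \eqref{euler equiv2} with spatial derivatives and $q$-weights, producing blocks $(U_q,U_v,U_\sigma)$; the paper instead uses \emph{iterated material derivatives}, taking as good unknowns $s_{2k}\coloneqq\Dt^{2k}q-\grad q\cdot\Dt^{2k-1}\vv$ and $\vw_{2k}\coloneqq\Dt^{2k}\vv$ (see \eqref{def good unknown}). Since $[\Dt,\Dt^{2k}]\equiv 0$, the transport structure of the model system \eqref{simple lin sys} is reproduced exactly, and after the single explicit correction $-\grad q\cdot\Dt^{2k-1}\vv$ every remaining source term is automatically ``balanced'' (at least two factors of strictly positive order in the sense of Definition~\ref{def adm err}), hence controllable by the interpolation Lemmas~\ref{lem err est A}--\ref{lem err est B}. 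Your spatial-derivative scheme instead generates commutators $[\partial^\gamma,\vv\cdot\grad]$ with a top-order piece $\partial\vv\cdot\partial^\gamma(\cdot)$ in every block, together with the overshoot from $\beta q\,\divergence U_v$ that you flag; the ``telescoping'' and ``lower-order corrections'' you invoke to absorb these are precisely what must be designed, and you have not said what they are. Historically this is the step that pushes direct spatial-derivative arguments into Lagrangian coordinates or into higher regularity.

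For coercivity the two routes reconverge: the paper also relies on degenerate elliptic estimates (for $\cL_1$ and $\cL_2+\cL_3$ in \S\ref{sec elliptic est}) together with the recurrence $s_{2j}\approx\sigma\cL_1 s_{2(j-1)}$, $\vw_{2j}\approx\sigma\cL_2\vw_{2(j-1)}$ of Lemma~\ref{lem recur} to recover the full $\fkH^{2k}_q$ norm from $\norm{s_{2k}}_{H^{0,\frac{\alpha-1}{2}}}+\norm{\vw_{2k}}_{H^{0,\frac{\alpha}{2}}}$ plus separate vorticity and entropy norms, and your identification of this Keldysh-type step and of the role of $c_0$ is correct. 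The net effect of the paper's choice is that the propagation estimate becomes almost immediate from the linearized energy identity \eqref{est simp lin}, with all analytic work pushed into the static coercivity problem; in your scheme both halves carry real difficulty, and the propagation half is the one you have not resolved.
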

	\begin{remark}
		Although the energy functionals are initially constructed for integer-valued indices, the arguments in \S\ref{sec rough sol} extend the bound \eqref{energy est bound} applicable to any real index $\varkappa > 0$. In this context, the norms for non-integral indices are interpreted through interpolations.
	\end{remark}
	
	\begin{remark}
		This type of energy estimates enables one to establish the continuation criterion, which is rather difficult to obtain in the Lagrangian setting. Indeed, due to the high nonlinearity of equations, bounds for the time derivatives of energies in the Lagrangian framework are usually of polynomial type, which are difficult to apply to yield a continuation criterion.
	\end{remark}
	
	\subsection{Outline of the Arguments}
	
	This manuscript is structured in a modular fashion, minimizing cross-references between sections. We present technical preparations only when they are immediately used. Moreover, the arguments in \S\S\ref{sec energy est}-\ref{sec existence} are independent of \S\ref{sec uniqueness}.
	
	In \S\ref{sec lin abd space}, the estimates for linearized systems will be shown, which will also motivate the precise choices of weights for the $L^2$ norms. Following this, some basic properties of the weighted Sobolev spaces and state spaces relevant to the physical vacuum problems will be discussed.
	
	\S\ref{sec uniqueness} presents a quantitative uniqueness result, demonstrating the energy estimate of a difference functional. This functional, constructed in a similar manner to the $L^2$-type energies for the linearized problems, characterizes the $L^2$ distance between two solutions and will serve as a crucial tool in \S\ref{sec rough sol}.

	The a priori energy estimates will be given in \S\ref{sec energy est}, with the main theorem deferred until \S\ref{sec energies and control para}. Before that, the heuristics for identifying good unknowns will be explained in detail. Loosely speaking, in order to obtain energy estimates, it is essential to preserve the typical symmetry of the simplified linear systems while keeping the remainders (source terms) controllable. With the help of appropriate interpolations and good knowns, propagation estimates for the energy functionals would be rather simple. The energy coercivity requires adapted remainder controls and degenerate elliptic estimates. Critical manipulations would be more transparent when restricted to a small region close to the boundary, which is achievable through a partition of unity. The last subsection \S\ref{sec other energies} actually serves as a preparation for \S\ref{sec reg data}, whose placement follows the main themes of different sections.
	
	Solutions with sufficiently regular initial data will be constructed in \S\ref{sec existence}. The main philosophy is the application of Euler's polygonal methods to quasi-linear problems. In order to overcome the derivative loss and to control the energy increments simultaneously, one can take carefully prepared regularized data as an intermediate process. Roughly speaking, direct on-scale regularization might disrupt the orthogonality, causing untoward energy increments, while high-scale mollification offers better control on the energy increments but at the cost of exorbitant higher-order bounds. In other words, achieving acceptable behaviors of higher order norms and energy increments in a single step is rather difficult. A bi-scale mollification overcomes these challenges, combining low-scale regularized quantities with high-scale corrections. This would synthesize a philosophically on-scale regularization, ensuring admissible higher order bounds and energy increments. A highly symmetric energy functional is chosen to avoid unnecessary increments during the regularization, which is the reason why a new functional is introduced in \S\ref{sec other energies}. The construction of these regularized data is possibly the most intricate part of this manuscript, which will be discussed extensively in \S\ref{sec reg data}.
	
	Finally, \S\ref{sec rough sol} is devoted to conclude the well-posedness theory in the low-regularity spaces, i.e., to construct the solutions and to show their continuous dependence on the initial data as well as continuation criteria in  fractional state spaces. The crux is to utilize an accurate characterization of these spaces, which will be achieved through the J-method of interpolations and the notion of frequency envelopes. The energy estimates for the integer-valued indices, $L^2$-type difference estimates, multiple appropriate interpolations, and bootstrap arguments will be exploited to conclude the proof.
	


	\section{Linearized Systems and State Spaces}\label{sec lin abd space}

	\subsection{Linear Estimates}
	
	Suppose that $(q, \vv, \sigma)$ is a solution to the compressible Euler equations \eqref{euler equiv2}, and $\qty(s, \vw, \zeta)$ represents the linearized perturbations of $(q, \vv, \sigma)$. Then, $\qty(s, \vw, \zeta)$ satisfies the following linearized system of equations:
	\begin{equation}\label{eqn lin 2}
		\begin{cases*}
			\Dt s + \grad_\vw q+ \beta s \qty(\divergence\vv) + \beta q \qty(\divergence\vw) = 0, \\
			\Dt\vw + \grad_\vw \vv + \sigma\grad{s} + \zeta\grad{q} = \vb*{0}, \\
			\Dt\zeta + \grad_\vw \sigma = 0.
		\end{cases*}
	\end{equation}
	Here $\qty(s, \vw, \zeta)$ are interpreted as functions defined in the time-dependent domain $\Omt$.
	Assume further that the initial entropy $\sigma_0$ is uniformly bounded from above and below, then it follows from its evolution equation that
	\begin{equation}\label{bounds sigma}
		\sigma > 0 \qand \norm{\sigma}_{L^\infty} + \norm*{\sigma^{-1}}_{L^\infty} < \infty \quad \text{for all } t.
	\end{equation}
	The $L^2$-based energy for \eqref{eqn lin 2} can be defined as:
	\begin{equation}\label{def Elin}
		\Elin \coloneqq \frac{1}{2}\int_{\Omt} q^{\alpha-1}\abs{s}^2 + q^{\alpha}\sigma^{-1}\qty(\beta\abs{\vw}^2 + \abs{\zeta}^2) \dd{x}.
	\end{equation}
	We further denote the control parameter $\Blin$ by:
	\begin{equation}\label{def Blin}
		\Blin \coloneqq \norm{\grad{q}}_{L^\infty} + \norm{\grad\vv}_{L^\infty} + \norm{\grad{\sigma}}_{L^\infty}.
	\end{equation}
	Then, it follows from \eqref{euler equiv2}, \eqref{eqn lin 2}, and the transport formula \eqref{trans formula} that
	\begin{equation}\label{est energy lin2}
		\begin{split}
			\dv{t}\Elin &= -\int_{\Omt} q^{\alpha-1}s\vw\vdot\grad{q} + \beta q^{\alpha}\divergence\vw + \beta q^{\alpha}\vw\vdot\grad{s} \dd{x} + \order{\Blin}\Elin \\
			&= -\beta\int_{\Omt}\divergence(q^\alpha s \vw) \dd{x} + \order{\Blin}\Elin.
		\end{split}
	\end{equation}
	In particular, one obtains the following:
	\begin{prop}
		Let $\qty(q, \vv, \sigma)$ be a Lipschitz solution to the compressible Euler system \eqref{euler equiv2}, with the time-dependent domain denoted by $\Omt$. Assume further that $q$ vanishes on $\pd\Omt$, and that the condition \eqref{bounds sigma} holds. Then, any solution $(s, \vw, \zeta)$ to the linearized system \eqref{eqn lin 2} satisfies the following energy estimate:
		\begin{equation}\label{lin est}
			\abs{\dv{t} \Elin} \lesssim \Blin\Elin,
		\end{equation}
		where $\Elin$ and $\Blin$ are defined by \eqref{def Elin} and \eqref{def Blin}, respectively.
	\end{prop}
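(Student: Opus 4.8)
The plan is to differentiate $\Elin$ in time, using the transport formula \eqref{trans formula} to account for the motion of $\Omt$, and then to substitute the linearized equations \eqref{eqn lin 2} and the Euler equations \eqref{euler equiv2} for every material derivative that appears. The claim I would aim to verify is the displayed identity \eqref{est energy lin2}: after the substitutions, each resulting term is either pointwise dominated by $\Blin$ times the energy density of $\Elin$, or else it combines with the remaining top-order terms into the single perfect divergence $-\beta\divergence(q^\alpha s\vw)$. Once \eqref{est energy lin2} is in hand, the proposition follows from the vanishing of the associated boundary flux.

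In detail, I would write $\Elin = \txfrac{1}{2}\int_{\Omt} e\,\dd{x}$ with $e = q^{\alpha-1}\abs{s}^2 + q^\alpha\sigma^{-1}(\beta\abs{\vw}^2+\abs{\zeta}^2)$, so that \eqref{trans formula} gives $\dv{t}\Elin = \txfrac{1}{2}\int_{\Omt}\Dt e + e\,\divergence\vv\,\dd{x}$. The term $e\,\divergence\vv$ is already $\order{\Blin}\Elin$. For $\Dt e$ one uses $\Dt q = -\beta q\,\divergence\vv$, so that the differentiated weights $\Dt(q^{\alpha-1})$ and $\Dt(q^\alpha)$ only produce factors of $\divergence\vv$ (again $\order{\Blin}$ contributions), and --- crucially --- $\Dt\sigma = 0$, which forces $\Dt(\sigma^{-1}) = 0$ so that the $\sigma^{-1}$ weight generates no extra terms. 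Substituting the three equations of \eqref{eqn lin 2} into $q^{\alpha-1}s\,\Dt s$, $\beta q^\alpha\sigma^{-1}\vw\vdot\Dt\vw$, and $q^\alpha\sigma^{-1}\zeta\,\Dt\zeta$, every term that pairs undifferentiated $(s,\vw,\zeta)$ against one of $\grad q,\grad\vv,\grad\sigma$ is bounded by $\Blin\, e$ --- using the $L^\infty$ bounds collected in $\Blin$ together with Cauchy--Schwarz on the $\zeta\vw$ cross terms --- and the only terms left over are $-q^{\alpha-1}s\,\vw\vdot\grad q$, $-\beta q^\alpha s\,\divergence\vw$, and $-\beta q^\alpha\vw\vdot\grad s$. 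The first of these genuinely cannot be absorbed, since $q^{\alpha-1}\abs{\vw}^2$ is not controlled by $e$, so the cancellation below is essential rather than cosmetic.

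The key algebraic step is the identity
\begin{equation*}
	\beta\divergence(q^\alpha s\vw) = \beta\alpha\,q^{\alpha-1}s\,\vw\vdot\grad q + \beta q^\alpha\vw\vdot\grad s + \beta q^\alpha s\,\divergence\vw,
\end{equation*}
which, because $\alpha\beta = 1$, shows the three surviving terms sum exactly to $-\beta\divergence(q^\alpha s\vw)$; this establishes \eqref{est energy lin2}. Finally, $\int_{\Omt}\divergence(q^\alpha s\vw)\,\dd{x} = \int_{\pd\Omt} q^\alpha\,s\,\vw\vdot\vn\,\dd{S}$ vanishes because $q = 0$ on $\pd\Omt$ and $\alpha = \beta^{-1} > 0$; the merely Lipschitz regularity of the front and the degeneracy of the weight are handled by integrating over $\{q > \varepsilon\}$ and letting $\varepsilon\to 0^+$. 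Hence $\dv{t}\Elin = \order{\Blin}\Elin$, which is \eqref{lin est}.

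The one delicate point is the cancellation above: it is what dictates the choice of weights in \eqref{def Elin}, namely that the exponent on $\abs{s}^2$ be exactly one less than that on $\abs{\vw}^2$ and $\abs{\zeta}^2$, so that the term $-\beta q\,\divergence\vw$ appearing in $\Dt s$ contributes $q^\alpha s\,\divergence\vw$ with the higher weight and pairs up with the $-\beta q^\alpha\vw\vdot\grad s$ coming from $\Dt\vw$; the value $\alpha\beta = 1$ then makes the divergence close exactly. Everything else is routine bookkeeping with the $L^\infty$ bounds in $\Blin$ and the transport formula.
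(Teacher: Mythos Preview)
Your proposal is correct and follows precisely the paper's own argument: the paper's proof is just the computation \eqref{est energy lin2}, obtained via the transport formula, the linearized system, and the divergence identity you wrote out, with the boundary term vanishing because $q=0$ on $\Gmt$. If anything, your write-up is more explicit than the paper's (you spell out the role of $\alpha\beta=1$ and the $\varepsilon$-limiting argument for the boundary flux), but the method is identical.
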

	Here we remark that the existence of solutions to the linearized problem \eqref{eqn lin 2} in the linear energy space can be derived from \eqref{lin est} and its dual form by applying the Hahn-Banach theorem and duality arguments (see, for instance, \cite[Proposition 3.1]{Ifrim-Tataru2024}, \cite[Proposition 3.7]{Ifrim-Tataru2023}, \cite[Chapter 6]{Lax_book2006}, and \cite[\S24.1]{Hormander_bookIII}). Actually, only the linear estimates will be used later in this manuscript, as the solutions to the nonlinear problems will be constructed via Euler's polygonal methods.
	
	\subsection{Function Spaces}\label{sec func spaces}
	\subsubsection{Weighted Sobolev spaces}
	
	The $L^2$ energy for the linearized problems serves as another motivation for the exact choice of weights in the state spaces. Before giving their exact characterizations, we first consider the notions of weighted Sobolev spaces. One can refer to \cite[\S2.1]{Ifrim-Tataru2024} and the references therein for more details.
	
	Suppose that $\varepsilon_* > 0$ is a (small) constant, and $\Omega \subset \R^d$ is a bounded domain having $C^{1+\varepsilon_*}$ boundaries. Let $r$ be a non-degenerate defining function of $\Omega$ (i.e., $r$ is non-negative, $\Omega = \qty{r(x) > 0}$, $r \in C^{1+\varepsilon_*}(\overline{\Omega})$, and $\abs{\grad{r}} > 0$ on $\pd\Omega$). Then, one can define the weighted Sobolev spaces by:
	\begin{defi}
		Let $\lambda > -\frac{1}{2}$ and $j \ge 0$ be two constants. The space $H^{j, \lambda} = H^{j, \lambda}_{r}(\Omega)$ is defined as the collection of distributions on $\Omega$ satisfying
		\begin{equation*}
			\norm{f}_{H^{j, \lambda}}^2 \coloneqq \sum_{\abs{\nu} \le j} \norm{r^{\lambda}\pd^\nu f}_{L^2(\Omega)}^2 < \infty.
		\end{equation*}
	\end{defi}
	Through complex interpolations, spaces $H^{s, \lambda}$ for any $s \in \R_{\ge 0}$ and $\lambda > -\frac{1}{2}$ are well-defined.
	The weighted Sobolev spaces $H^{s, \lambda}$ satisfy the following properties:
	\begin{itemize}
		\item The value of $r(x)$ is proportional to the distance $\dist(x, \pd\Omega)$ at any point $x \in \Omega$ close to the boundary. Different choices of defining functions will induce the same space, with different but equivalent norms.
		\item The requirement that $\lambda > -\frac{1}{2}$ is due to the fact that no vanishing assumption on $\pd\Omega$ is made for functions in the spaces.
		\item When $\lambda = 0$, the spaces $H^{j, 0}$ are reduced to the classical Sobolev spaces $H^j(\Omega)$.
		\item When $j = 0$, the spaces $H^{0, \lambda}$ are the weighted $L^2$ spaces $L^2(\Omega; r^{2\lambda} \dd{x})$.
	\end{itemize}
	
	Furthermore, there hold the following Hardy-type results (\cite[Lemma 2.2]{Ifrim-Tataru2024}):
	\begin{lemma}\label{lem inclusion}
		Suppose that $s_1 > s_2 \ge 0$, $\lambda_1 > \lambda_2 > -\frac{1}{2}$, and $s_1 - \lambda_1 = s_2 - \lambda_2$. Then, it holds that
		\begin{equation*}
			H^{s_1, \lambda_1} \hookrightarrow H^{s_2, \lambda_2}.
		\end{equation*}
	\end{lemma}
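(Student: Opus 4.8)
The plan is to reduce everything, via the interpolation structure of the spaces $H^{s,\lambda}_r(\Omega)$ recorded in \cite[\S2]{Ifrim-Tataru2024}, to the single integer-order step
\begin{equation*}
	H^{j+1,\lambda+1}_r(\Omega)\hookrightarrow H^{j,\lambda}_r(\Omega)\qc j\in\mathbb{Z}_{\ge0}\qc\lambda>-\tfrac12 ,
\end{equation*}
and then to prove this step by a localized weighted Hardy inequality in the direction transversal to $\pd\Omega$. Granting the single step, the general statement follows by composing finitely many integer single-steps — each with target weight still $>-\tfrac12$, which holds because $\lambda_2>-\tfrac12$ and $\lambda_1-\lambda_2=s_1-s_2>0$ — possibly preceded or followed by a fractional single-step, obtained by interpolating an integer single-step with the identity map. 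In the single step itself, the only genuine difficulty is a collar of $\pd\Omega$: on any region $\{r>\delta\}$ one has $r^{\lambda}\simeq_{\delta}r^{\lambda+1}\simeq_{\delta}1$, hence $\norm{f}_{H^{j,\lambda}(\{r>\delta\})}\lesssim_{\delta}\norm{f}_{H^{j+1}(\{r>\delta/2\})}\lesssim_{\delta}\norm{f}_{H^{j+1,\lambda+1}(\Omega)}$.

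For the collar, the key structural point is that $\pd\Omega$ is only $C^{1+\varepsilon_*}$, so the standard boundary-flattening diffeomorphism is unavailable for $j\ge1$; instead I would foliate a neighbourhood of $\pd\Omega$ by straight lines in finitely many \emph{fixed} directions adapted to $\grad r$. Since $\grad r$ is continuous and $\abs{\grad r}>0$ on the compact set $\pd\Omega$, there exist $c_0>0$, boxes $Q_1,\dots,Q_M$ covering $\{0\le r\le\delta\}$, and unit vectors $e_i$ with one edge of $Q_i$ along $e_i$, such that $e_i\vdot\grad r\ge c_0$ on $Q_i\cap\overline\Omega$. Fix a partition of unity $\chi_0^2+\sum_i\chi_i^2\equiv1$ on $\overline\Omega$ with $\spt\chi_0\subset\{r>\delta/2\}$ and $\spt\chi_i\subset Q_i$. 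For a multi-index $\nu$ with $\abs\nu\le j$ put $g=\pd^\nu f$. Along a line $\ell_y=\{y+te_i\}$ inside $Q_i$ the function $r$ is strictly increasing (as $\pd_{e_i}r\ge c_0$), it vanishes where $\ell_y$ meets $\pd\Omega$ when it does, and there it is comparable, uniformly in $y$, to the arclength distance along $\ell_y$ to $\pd\Omega$; lines not meeting $\pd\Omega$ stay in $\{r\gtrsim_{\delta}1\}$ and fall under the interior estimate. Applying to $h=(\chi_i g)|_{\ell_y}$ — which is compactly supported on $\ell_y$, so no interior-endpoint term arises — the scalar inequality stated below, and then integrating in $y$ (a plain Fubini, the parametrization being an isometry), gives
\begin{equation*}
	\int_{Q_i}\abs{r^{\lambda}\chi_i g}^2\dd{x}\ \lesssim\ \int_{Q_i}\abs{r^{\lambda+1}\pd_{e_i}(\chi_i g)}^2\dd{x}.
\end{equation*}
Expanding $\pd_{e_i}(\chi_i g)=\chi_i\pd_{e_i}\pd^\nu f+(\pd_{e_i}\chi_i)\pd^\nu f$: the first term contributes $\lesssim\norm{f}_{H^{j+1,\lambda+1}}^2$ since $\abs\nu+1\le j+1$; the commutator is supported on $\spt\grad\chi_i$, but it already carries the \emph{improved} weight $r^{\lambda+1}$ together with at most $j$ derivatives of $f$, hence is $\lesssim\norm{f}_{H^{j,\lambda+1}}^2\le\norm{f}_{H^{j+1,\lambda+1}}^2$ even where $\spt\grad\chi_i$ touches $\pd\Omega$. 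Summing over $i$ and over $\abs\nu\le j$, together with the interior estimate for the $\chi_0$-piece, closes the integer single-step.

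It remains to record the scalar weighted Hardy inequality: if $\mu>-\tfrac12$, $R>0$, and $h\in H^1_{\mathrm{loc}}((0,R])$ vanishes near $R$ with $\int_0^R t^{2\mu+2}\abs{h'}^2\dd{t}<\infty$, then $\int_0^R t^{2\mu}\abs{h}^2\dd{t}\le\frac{8}{(2\mu+1)^2}\int_0^R t^{2\mu+2}\abs{h'}^2\dd{t}$. One integrates $\int_\varepsilon^R t^{2\mu}\abs h^2\dd{t}$ by parts against $t^{2\mu}=\frac1{2\mu+1}(t^{2\mu+1})'$: the boundary term at $R$ vanishes because $h$ does, the boundary term at $\varepsilon$ has a favourable sign since $2\mu+1>0$, and the cross term is handled by Cauchy–Schwarz and absorbed, yielding a bound uniform in $\varepsilon$; letting $\varepsilon\to0$ by monotone convergence simultaneously proves the finiteness of $\int_0^R t^{2\mu}\abs h^2\dd{t}$ and the inequality. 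The hypothesis $\lambda_2>-\tfrac12$ is exactly what keeps $2\mu+1>0$ for every intermediate weight $\mu\in\{\lambda_2,\lambda_2+1,\dots\}$ produced when composing the single-steps; equivalently, iterating this scalar inequality $n=s_1-s_2$ times along the foliation reproves the multi-step integer case directly.

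The main obstacles are therefore twofold. First, the low regularity of $\pd\Omega$ rules out the usual flattening and forces the transversal-line foliation with constant directions, with the attendant care about the commutators $(\pd_{e_i}\chi_i)\pd^\nu f$ — which is resolved only by observing that they already sit at the better weight $r^{\lambda+1}$. Second, the degeneracy of the weight $r^{2\mu}$ at $\pd\Omega$, where the quantity being bounded is not a priori finite, must be handled by the $\varepsilon$-truncation and uniform absorption rather than by a naive integration by parts. The passage from integer to arbitrary real indices $s_1>s_2\ge0$ is then a routine application of the interpolation properties of the $H^{s,\lambda}$-scale.
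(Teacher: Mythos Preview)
The paper does not supply its own proof of this lemma; it simply quotes the result as \cite[Lemma 2.2]{Ifrim-Tataru2024}. Your argument is essentially correct and furnishes what the paper omits: reduce to the integer single step $H^{j+1,\lambda+1}\hookrightarrow H^{j,\lambda}$, establish that step by a one-dimensional weighted Hardy inequality along straight lines transversal to $\pd\Omega$ (with the cutoffs $\chi_i$ absorbing the low boundary regularity), and pass to arbitrary real indices by interpolation. This is the standard route and is what lies behind the cited reference.

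One small imprecision: the assertion that ``lines not meeting $\pd\Omega$ stay in $\{r\gtrsim_\delta 1\}$'' need not hold for a generic choice of boxes --- such a line can lie entirely in the collar with $r$ small. This does not damage the proof, however, because $h=\chi_i g$ already vanishes at \emph{both} endpoints of $\ell_y\cap Q_i$; the same integration by parts (now with both boundary terms zero) delivers the Hardy bound on those lines as well. Equivalently, extend $h$ by zero to $[0,R]$ and apply your scalar inequality unchanged. Alternatively, simply arrange the boxes so that the face of each $Q_i$ in the direction of decreasing $r$ lies outside $\overline\Omega$, so that every line through $Q_i\cap\Omega$ in direction $e_i$ meets $\pd\Omega$ and the issue never arises.
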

	As a direct corollary, one has the following embeddings into the classical spaces:
	\begin{cor}
		Assume that $s \ge \lambda > 0$. Then, it follows that
		\begin{equation*}
			H^{s, \lambda} \hookrightarrow H^{s-\lambda},
		\end{equation*}
		where $H^{s-\lambda}$ is the standard Sobolev space.
		In particular, there hold the Morrey-type embeddings:
		\begin{equation*}
			H^{s, \lambda} \hookrightarrow C^{\gamma} \qfor 0 \le \gamma \le s-\lambda-\frac{d}{2},
		\end{equation*}
		where the second equality can hold only if $s-\lambda-\frac{d}{2} \notin \mathbb{N}$.
	\end{cor}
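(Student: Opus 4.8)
The plan is to derive both the Sobolev embedding $H^{s,\lambda}\hookrightarrow H^{s-\lambda}$ and the Morrey-type consequences directly from Lemma \ref{lem inclusion}, the classical (unweighted) Sobolev embedding theorem, and standard interpolation. The only genuinely new input beyond Lemma \ref{lem inclusion} is the endpoint $\lambda=0$ of the weighted scale, so most of the work is bookkeeping along the line $s-\lambda=\const$.

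First I would prove the embedding into the classical Sobolev space. Fix $s\ge\lambda>0$ and set $\mu\coloneqq s-\lambda\ge 0$. The pair $(s_1,\lambda_1)=(s,\lambda)$ and $(s_2,\lambda_2)=(\mu,0)$ satisfies $s_1>s_2$ (since $\lambda>0$), $\lambda_1>\lambda_2$, $\lambda_2=0>-\tfrac12$, and $s_1-\lambda_1=s_2-\lambda_2=\mu$; hence Lemma \ref{lem inclusion} applies verbatim and gives $H^{s,\lambda}=H^{s,\lambda}\hookrightarrow H^{\mu,0}=H^{s-\lambda}$, which is the first claim. (If $s=\lambda$ one must instead invoke $\lambda_1>\lambda_2=0$ with $s_1=\lambda_1>0=s_2$, which is still covered, so no separate degenerate case arises; the reduction $H^{j,0}=H^j$ for integer $j$ extends to real indices by the complex interpolation used to define $H^{s,\lambda}$.)

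Next, for the Morrey-type embeddings, I would simply compose the inclusion just obtained with the classical Sobolev embedding theorem on the bounded Lipschitz (indeed $C^{1+\varepsilon_*}$) domain $\Omega$: for $0\le\gamma\le s-\lambda-\tfrac d2$ with $s-\lambda-\tfrac d2\notin\mathbb N$ at the endpoint, one has $H^{s-\lambda}(\Omega)\hookrightarrow C^{\gamma}(\overline\Omega)$ by the standard Morrey/Sobolev embedding (extension to $\R^d$ via the boundedness and $C^{1+\varepsilon_*}$-regularity of $\partial\Omega$, then Sobolev embedding on $\R^d$, then restriction). Chaining this with $H^{s,\lambda}\hookrightarrow H^{s-\lambda}$ yields $H^{s,\lambda}\hookrightarrow C^\gamma$, with the usual caveat that the top exponent $\gamma=s-\lambda-\tfrac d2$ is admissible only when it is not a nonnegative integer (the failure of the borderline Sobolev embedding into $C^k$).

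The main obstacle—such as it is—is purely at the level of the endpoint $\lambda_2=0$ in Lemma \ref{lem inclusion}: one must check that the Hardy-type inequality underlying that lemma degenerates correctly as $\lambda_2\downarrow 0$, i.e.\ that no logarithmic loss appears and the hypothesis $\lambda_2>-\tfrac12$ is met with room to spare; since $0>-\tfrac12$ strictly, this is immediate and there is nothing delicate. Everything else is a direct composition of known embeddings, so the corollary follows with essentially no additional computation.
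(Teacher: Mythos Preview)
Your proposal is correct and matches the paper's intent exactly: the paper presents this as a direct corollary of Lemma~\ref{lem inclusion} with no separate proof, and your argument---taking $(s_1,\lambda_1)=(s,\lambda)$, $(s_2,\lambda_2)=(s-\lambda,0)$ in the lemma and then composing with the classical Sobolev--Morrey embedding on the bounded $C^{1+\varepsilon_*}$ domain---is precisely the intended derivation. Your parenthetical about the case $s=\lambda$ is slightly overcautious (the lemma already allows $s_2=0$, and $s_1=s\ge\lambda>0=s_2$ holds strictly), but this does not affect correctness.
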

	
	\subsubsection{State spaces}\label{sec state space}
	Although the state spaces $\bbH^{2\varkappa}$ given by Definition \ref{def state space} are not linear function spaces, one can still regard them as infinite dimensional manifolds and introduce their topology as the following (cf. \cite[\S2.3]{Ifrim-Tataru2024}):
	\begin{defi}
		A sequence of states $(q_n, \vv_n, \sigma_n)$ converges to $(q, \vv, \sigma)$ in $\bbH^{2\varkappa}$ provided that the following conditions hold:
		\begin{itemize}
			\item \textit{Uniform non-degeneracy: } $\abs*{\grad{q_n}} \ge c_0 > 0$ on $\Gamma_n$, for some generic constant $c_0$.
			\item \textit{Uniform entropy bounds: } $\sup_{n} (\norm{\sigma_n}_{L^\infty(\Omega_n)} + \norm*{\sigma_n^{-1}}_{L^\infty(\Omega_n)}) < \infty$.
			\item \textit{Domain convergence: } $\norm{q_n - q}_{\text{Lip}} \to 0$.
			\item \textit{Norm convergence: } $\forall \epsilon > 0$, there are compactly supported smooth functions $(\wt{q}_n, \wt{\vv}_n, \wt{\sigma}_n)$ and $(\wt{q}, \wt{\vv}, \wt{\sigma})$, so that
			\begin{equation*}
				\norm{(\wt{q}_n, \wt{\vv}_n, \wt{\sigma}_n) - (q_n, \vv_n, \sigma_n)}_{\fkH^{2\varkappa}_{q_n}(\Omega_n)} < \epsilon \qc
				\norm{(\wt{q}, \wt{\vv}, \wt{\sigma}) - (q, \vv, \sigma)}_{\fkH^{2\varkappa}_{q}(\Omega)} < \epsilon,
			\end{equation*}
			and
			\begin{equation*}
				(\wt{q}_n, \wt{\vv}_n, \wt{\sigma}_n) \to (\wt{q}, \wt{\vv}, \wt{\sigma}) \qin C^\infty(\R^d).
			\end{equation*}
		\end{itemize}
	\end{defi}
	With the above definition of convergence, one can establish the continuity of solution maps and the continuous dependence on initial data in the state spaces $\bbH^{2\varkappa}$.
	
	It can be derived from the Sobolev embeddings that the control parameters given by \eqref{def A*} and \eqref{def B} respectively satisfy
	\begin{equation*}
		A_* \lesssim \norm{(q, \vv, \sigma)}_{\fkH^{2\varkappa}_{q}} \qfor \varkappa > \varkappa_0 + \varepsilon_*,
	\end{equation*}
	and
	\begin{equation*}
		B \lesssim \norm{(q, \vv, \sigma)}_{\fkH^{2\varkappa}_q} \qfor \varkappa > \varkappa_0 + \frac{1}{2}.
	\end{equation*}
	
	Moreover, one can infer the regularities on the free boundary by further invoke the trace theorems:
	\begin{equation*}
		\Gmt \in H^{\varkappa - \frac{\alpha}{2}} \hookrightarrow C^{(\varkappa-\varkappa_0)+1}
	\qand
		\vv \in H^{\varkappa - \frac{\alpha+1}{2}}(\Gmt) \hookrightarrow C^{(\varkappa-\varkappa_0)+\frac{1}{2}},
	\end{equation*}
	provided that $(\varkappa-\varkappa_0) \notin 2^{-1}\mathbb{N} $.	Actually, the aforementioned regularities on the free boundary are provided merely for the reference and will not be utilized in the arguments. Given that the vacuum boundary is characteristic, the evolution of the free interface cannot be considered an independent process, distinct to the typical manipulations in the analysis of incompressible problems.
	
	\section{Uniqueness of Classical Solutions}\label{sec uniqueness}
	
	Assume that $\qty(q_1, \vv_1, \sigma_1)$ and $\qty(q_2, \vv_2, \sigma_2)$ are two solutions to the compressible Euler system \eqref{euler equiv2}, with $\Omt^1$ and $\Omt^2$ being the corresponding moving (bounded) domains. To demonstrate the uniqueness, it is natural to define a difference functional that quantifies the distance between the two solutions in some metric spaces. The first explicit challenge encountered here is that the two gas domains may not coincide. One approach to overcome this is to consider their difference within the common domain:
	\begin{equation*}
		\Omt \coloneqq \Omt^1 \cap \Omt^2.
	\end{equation*}
	It is clear that the boundary of $\Omt$ is not necessarily being $C^1$, but it is still Lipschitz whenever $q_1$ and $q_2$ are close in the Lipschitz topology.
	
	\subsection{Difference Functionals}
	
	Consider the difference functional akin to the linear energy \eqref{def Elin} defined as
	\begin{equation}\label{diff functional 1}
		\begin{split}
			\scD \coloneqq \int_{\Omt} &\qty(q_1 + q_2)^{\alpha-1}\abs{q_1 - q_2}^2 + \\
			 &\quad + \qty(q_1 + q_2)^{\alpha}\qty(\sigma_1 + \sigma_2)^{-1}\qty[\beta\abs{\vv_1 - \vv_2}^2 + \abs{\sigma_1 - \sigma_2}^2] \dd{x}.
		\end{split}
	\end{equation}
	Then, there holds the following difference estimate:
	\begin{theorem}\label{thm uniqueness}
		Let $(q_1, \vv_1, \sigma_1)$ and $ (q_2, \vv_2, \sigma_2)$ be two solutions to the compressible Euler system \eqref{euler equiv2} on $[0, T]$ in the regularity class $\wtC^{1, \frac{1}{2}}_x \times C^1_x \times C^1_x$. Assume that $q_1$ and $q_2$ are both non-degenerate on the corresponding boundaries, and that they are sufficiently close in the Lipschitz topology. Then, it holds that
		\begin{equation}\label{est cD}
			\sup_{0 \le t \le T} \scD(t) \lesssim \scD(0),
		\end{equation}
		where the implicit constant depends on the corresponding norms and the non-degeneracy constants for these two solutions.
	\end{theorem}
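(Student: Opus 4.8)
The plan is to differentiate the difference functional $\scD$ in time and show that $\abs{\dv{t}\scD} \lesssim \scD$, after which Grönwall's inequality immediately yields \eqref{est cD}. The main conceptual point is that $\scD$ is constructed to mimic the linearized energy $\Elin$ from \eqref{def Elin}, with $(s, \vw, \zeta)$ replaced by the differences $(q_1 - q_2, \vv_1 - \vv_2, \sigma_1 - \sigma_2)$ and the coefficients $q^{\alpha-1}, q^\alpha\sigma^{-1}$ replaced by symmetrized versions $(q_1+q_2)^{\alpha-1}$, $(q_1+q_2)^\alpha(\sigma_1+\sigma_2)^{-1}$. So the computation should parallel the derivation of \eqref{est energy lin2}: the differences satisfy a system that is a perturbation of the linearized system \eqref{eqn lin 2}, and the top-order terms should again assemble into a perfect divergence $\beta\int \div\big((q_1+q_2)^\alpha (q_1-q_2)(\vv_1-\vv_2)\big)$ modulo commutator-type errors that are bounded by $\Blin$-type quantities times $\scD$.

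The key steps, in order, are as follows. First, write down the equations satisfied by the differences $\delta q \coloneqq q_1 - q_2$, $\delta\vv \coloneqq \vv_1 - \vv_2$, $\delta\sigma \coloneqq \sigma_1 - \sigma_2$ by subtracting the two copies of \eqref{euler equiv2}; this produces a transport-type system with material derivative along, say, $\vv_1$ (or a symmetrized velocity), plus source terms that are products of one difference and one derivative of a coefficient. Second, apply the transport formula \eqref{trans formula} on the common domain $\Omt = \Omt^1 \cap \Omt^2$ to $\dv{t}\scD$; here one must be careful because $\pd\Omt$ is only Lipschitz and is not transported by a single velocity field, so the boundary contributions need separate attention. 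Third, identify the top-order (in derivatives of the coefficients) terms in $\dv{t}\scD$ and check that they combine into a divergence whose boundary integral vanishes because $q_1 q_2 = 0$ on $\pd\Omt$ — this is where the symmetrized weight $(q_1+q_2)$ is essential, since it vanishes on all of $\pd\Omt$. Fourth, estimate every remaining term by $\scD$ itself, using the non-degeneracy $\abs{\grad q_i}>0$, the Lipschitz bounds on $(q_i, \vv_i, \sigma_i)$, the entropy bounds \eqref{bounds sigma}, and the $\wtC^{1,\frac12}$ regularity of $q_i$ to handle the degenerate weights near the boundary; the Hardy-type inequalities behind Lemma \ref{lem inclusion} (or direct weighted estimates) control quotients like $(q_1-q_2)^2/(q_1+q_2)$ in terms of $\abs{\grad q_i}$.

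I expect the main obstacle to be the boundary terms arising from the mismatch of the two domains. Unlike the linearized setting, where $\Omt$ is a single domain transported by one velocity field and $q=0$ on $\pd\Omt$ kills all boundary integrals cleanly, here $\pd\Omt$ consists of pieces of $\Gmt^1$ (where $q_1 = 0$ but $q_2 > 0$) and pieces of $\Gmt^2$ (where $q_2 = 0$ but $q_1 > 0$). One must verify that the integrand of every would-be boundary term carries enough vanishing — a factor of $q_1 q_2$, or of $(q_1+q_2)^\alpha$ with $\alpha > 0$ together with the fact that $(q_1+q_2)^\alpha (q_1-q_2)$ vanishes on $\pd\Omt$ since one of $q_1, q_2$ is zero there. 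The degenerate weights also force careful bookkeeping: terms like $(q_1+q_2)^{\alpha-1}$ blow up at the boundary when $\alpha < 1$, so one needs the extra vanishing from the difference structure and the $\wtC^{1/2}$-control on $\grad q_i$ to absorb them. I would handle this by splitting via a partition of unity into an interior region (where all weights are bounded and the estimate is routine) and a collar neighborhood of $\pd\Omt$, and in the collar use coordinates adapted to the (Lipschitz, close-to-each-other) boundaries of $\Omt^1$ and $\Omt^2$ to make the cancellations and Hardy estimates explicit.
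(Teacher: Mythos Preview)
There is a genuine gap in your plan: the boundary term does \emph{not} vanish. You claim that the top-order divergence $\beta\int_{\Omt}\div\bigl((q_1+q_2)^\alpha(q_1-q_2)(\vv_1-\vv_2)\bigr)$ integrates to zero because ``$(q_1+q_2)^\alpha(q_1-q_2)$ vanishes on $\pd\Omt$ since one of $q_1,q_2$ is zero there.'' But this is false. On a piece of $\Gmt^1\setminus\Gmt^2$, say, we have $q_1=0$ while $q_2>0$, so $(q_1+q_2)^\alpha(q_1-q_2)=-q_2^{\alpha+1}\neq 0$. In the paper's notation $\mu\coloneqq q_1+q_2$, $\nu\coloneqq q_1-q_2$, the identity $\abs{\nu}=\mu$ holds on all of $\Gmt$, and neither quantity vanishes except on $\Gmt^1\cap\Gmt^2$. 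So the direct differentiation of $\scD$ produces an uncontrolled boundary contribution, and the Gr\"onwall argument breaks down exactly at the step you flagged as the main obstacle but then dismissed.

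The paper's device for circumventing this is to introduce a smooth, degree-zero homogeneous cutoff $\fka(\mu,\nu)$ supported in $\{\abs{\nu}\le\tfrac{9}{10}\mu\}$ and work instead with the reduced functional $\fD$ obtained by inserting $\fka$ into the integrand of $\scD$. This kills all boundary terms (since $\fka=0$ near $\Gmt$), and a separate lemma shows $\scD\simeq\fD$ using non-degeneracy and the Lipschitz closeness of $q_1,q_2$. The price is that $\Dt\fka$ generates new error terms; after cancellations these reduce to controlling an integral of the form $J=\int_{\Omt}\mu^{\alpha-3}\abs{\nu}^3\abs{\vw}\,\dd{x}$. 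This is where the $\wtC^{1,\frac12}$ hypothesis on $q_i$ is genuinely used: away from a carefully chosen boundary layer (thickness determined by a function $\lambda$ built from $\nu$ and $\grad\nu$ on $\Gmt$) one has $\abs{\nu}\lesssim B'\delta^{3/2}$, while inside the layer a case analysis (large-$\nu$ versus small-$\nu$ scenarios) combined with coercivity of $\scD$ on small cylinders closes the bound $J\lesssim B'\scD$. Your proposed Hardy-type estimates and interior/collar decomposition are in the right spirit for the equivalence $\scD\simeq\fD$, but they are not sufficient for the $J$-estimate, and none of this machinery is visible in your outline.
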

	
	One can observe that $\scD = 0$ yields $q_1 = q_2$ on $\Gmt$. Thus, the two moving domains $\Omt^1$ and $\Omt^2$ are identical, i.e.,
	\begin{equation*}
		\scD = 0 \quad \iff \quad (q_1, \vv_1, \sigma_1) = (q_2, \vv_2, \sigma_2). 
	\end{equation*}
	In particular, the above theorem concludes the uniqueness.
	
	To show \eqref{est cD}, it suffices to derive an energy estimate for \eqref{diff functional 1}. Note that although either $q_1$ or $q_2$ vanishes on $\Gmt \coloneqq \pd\Omt$, both their sum $(q_1 + q_2)$ and their difference $(q_1 - q_2)$ only vanish on $\Gmt^1 \cap \Gmt^2$. More precisely, one has
	\begin{equation*}
		\abs{q_1 - q_2}_{\restriction_{\Gmt}} = \qty(q_1 + q_2)_{\restriction_{\Gmt}}.
	\end{equation*}
	Thus, one could not expect to utilize \eqref{est energy lin2} to show the energy estimates, as there is no cancellations on the boundary $\Gmt$ in general.
	
	Since it is assumed that $\Gmt^1$ and $\Gmt^2$ are sufficiently close in the Lipschitz topology and both $q_1$ and $q_2$ are non-degenerate on the free boundary, the energy contributions from regions close to $\Gmt$ are less significant compared to those from regions farther away from the boundary. Therefore, one can select an ancillary function $\fka(\mu, \nu)$, which is a smooth homogeneous function of degree $0$, satisfying the following conditions:
	\begin{equation*}
		0 \le \fka \le 1, \quad \spt \fka \subset \qty{\abs{\nu} \le \frac{9}{10}\mu }, \qand \fka \equiv 1 \text{ in } \qty{\abs{\nu} \le \frac{4}{5}\mu}.
	\end{equation*}
	Consider the following reduced distance functional (see also \cite{Ifrim-Tataru2024}):
	\begin{equation}\label{def fD}
	\begin{split}
			\fD \coloneqq  \int_{\Omt} &(q_1 + q_2)^{\alpha -1} \fka(q_1 + q_2, q_1 - q_2)\, \times \\ &\quad\times \qty{\abs{q_1 - q_2}^2 + (q_1 + q_2)(\sigma_1 + \sigma_2)^{-1}\qty[\beta\abs{\vv_1 - \vv_2}^2 + \abs{\sigma_1 - \sigma_2}^2]} \dd{x}.
	\end{split}
	\end{equation}
	Thus, it is clear that $\fD \le \scD$. The following lemma indicates the converse:
	\begin{lemma}\label{lem equiv dist functional}
		Suppose that $q_j \in{\wtC}^{1, \frac{1}{2}}$, $\abs{\grad_{\vn}q_j}_{\restriction_{\Gmt^j}} \ge c_0 > 0$ $(j = 1, 2)$, and $\norm{q_1 - q_2}_{\textup{Lip}} \ll c_0$. Then, it holds that
		\begin{equation}\label{dist equiv}
			\scD \lesssim \fD,
		\end{equation}
		where the implicit constant depends on $c_0$ and $A' \coloneqq A'_1 + A'_2$. Here the control parameter $A'_j$ is defined by:
		\begin{equation}
			A'_j \coloneqq \norm{\grad{q_j}}_{L^\infty} + \norm{\vv_j}_{{C}^{\frac{1}{2}}} + \norm{\sigma_j}_{{C}^{\frac{1}{2}}} + \norm*{\sigma_j^{-1}}_{L^\infty} \quad (j = 1, 2).
		\end{equation}
	\end{lemma}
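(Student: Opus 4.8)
The plan is to show that the part of $\scD$ coming from the region near $\Gmt$ where $\fka$ is not identically $1$—namely the set where $|q_1-q_2| > \tfrac{4}{5}(q_1+q_2)$, equivalently where the two defining functions are comparable in size but one is much smaller than their difference—is already controlled by $\fD$, which captures the bulk region. Since $\fD \le \scD$ is trivial, the content is the reverse bound \eqref{dist equiv}, and the only loss can come from the transition zone $Z \coloneqq \{\tfrac{4}{5}(q_1+q_2) < |q_1-q_2| \le (q_1+q_2)\} \cup \{|q_1-q_2| > (q_1+q_2)\}$. On the latter set $q_1 q_2 < 0$ is impossible (both are $\ge 0$), so in fact $|q_1 - q_2| \le q_1 + q_2$ always and $Z$ is the single strip where $\max(q_1,q_2) \simeq q_1+q_2$ while $\min(q_1,q_2) \ll q_1+q_2$; geometrically this forces one to be within an $O(\|q_1-q_2\|_{\Lip})$-neighborhood of the corresponding boundary.

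First I would set up the geometry: by the non-degeneracy $|\grad_{\vn} q_j| \ge c_0$ on $\Gmt^j$ and $q_j \in \wtC^{1,\frac{1}{2}}$, there is a collar of width $\delta_0 \simeq_{c_0, A'} 1$ in which $q_j(x) \simeq \dist(x, \Gmt^j)$. When $\|q_1 - q_2\|_{\Lip} \ll c_0$, the two boundaries and the two defining functions are $C^1$-close, so on the bulk region $\{q_1 + q_2 \gtrsim \delta_0\}$ the cutoff $\fka$ equals $1$ and there is nothing to prove. The work is confined to the collar. There, writing $d_j(x) = \dist(x,\Gmt^j)$, I would use $q_1 + q_2 \simeq d_1 + d_2$ and the Lipschitz closeness $|d_1 - d_2| \lesssim \|q_1-q_2\|_{\Lip}/c_0 \cdot (\text{something})$ — more precisely $|q_1 - q_2| \lesssim \|q_1 - q_2\|_{\Lip}$ pointwise and, away from $\Gmt^1 \cap \Gmt^2$, the larger of $q_1, q_2$ is $\gtrsim c_0 \cdot (\text{distance to the far boundary})$. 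The key elementary inequality is that on $Z$ one has $q_1 + q_2 \lesssim_{c_0} |q_1 - q_2|^{?}$ — but since $q_1+q_2 \simeq |q_1 - q_2|$ on $Z$, the weight $(q_1+q_2)^{\alpha-1}$ and $(q_1+q_2)^\alpha$ differ from powers of $|q_1-q_2|$ only by bounded factors, so the $\scD$-integrand restricted to $Z$ is pointwise comparable to the $\fD$-integrand would be on an adjacent slab where $\fka \equiv 1$.

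The mechanism to transfer is a Hardy / bounded-overlap covering argument: cover $Z$ by boundary-fitted dyadic slabs $Z_k$ at height $q_1 + q_2 \simeq 2^{-k}$, and for each $Z_k$ exhibit a comparably-sized "good" slab $Z_k' \subset \{\fka \equiv 1\}$ at the same dyadic height but shifted into the bulk (possible precisely because $\|q_1-q_2\|_{\Lip}\ll c_0$ guarantees $\{\fka \equiv 1\}$ contains such a shifted slab, the two boundaries being $C^1$-close). On each pair, control the $Z_k$-integral of each of the three quantities $|q_1-q_2|^2$, $(q_1+q_2)(\sigma_1+\sigma_2)^{-1}\beta|\vv_1-\vv_2|^2$, $(q_1+q_2)(\sigma_1+\sigma_2)^{-1}|\sigma_1-\sigma_2|^2$ by the corresponding $Z_k'$-integral plus lower-order terms, using the $\wtC^{\frac12}$-modulus of $\grad q_j$ and the $C^{\frac12}$-moduli of $\vv_j, \sigma_j$ to compare values of the differences across the $O(2^{-k})$-gap between $Z_k$ and $Z_k'$ — this is where the $\wtC^{1,\frac12}$ and $C^1$ hypotheses and the control parameter $A'$ enter, since $|{\grad q_j}(x) - {\grad q_j}(y)| \lesssim A'(q_j^{1/2}(x)+q_j^{1/2}(y)+|x-y|^{1/2})$ and similarly for $\vv_j,\sigma_j$. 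Summing over $k$ with bounded overlap yields $\int_Z (\cdots) \lesssim_{c_0, A'} \fD + (\text{terms already inside }\fD)$, hence \eqref{dist equiv}.

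The main obstacle will be the last step — making the slab-by-slab comparison genuinely rigorous, because the two boundaries $\Gmt^1, \Gmt^2$ are only $\wtC^{1,\frac12}$ and $C^1$ (curvature possibly unbounded), so one cannot use a single smooth boundary-straightening map; instead the comparison must be done with a Whitney-type decomposition adapted to $\min(q_1,q_2)$ and the transversality $|\grad_\vn q_j|\ge c_0$ used to control how much the "good" region $\{\fka\equiv 1\}$ fails to reach down to a given dyadic height. One must also be careful that $(q_1+q_2)^{\alpha-1}$ is a genuine \emph{singular} weight when $\alpha < 1$, so the comparison of $Z_k$ and $Z_k'$ integrals has to be done with the weight in place, not factored out — but since the weight is comparable across each pair of slabs at a fixed dyadic height, this is a bookkeeping rather than a conceptual difficulty.
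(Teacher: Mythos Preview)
Your strategy is correct in outline and would go through, but the paper takes a more economical route that sidesteps the dyadic machinery. Rather than covering the bad region by global slabs $Z_k$ at heights $\mu \simeq 2^{-k}$ and pairing each with a shifted good slab $Z_k'$, the paper foliates a collar of $\Gmt$ by lines transversal to the boundary and reduces to one-dimensional integrals along each such line. The key geometric observation is sharper than your dyadic picture: along the transversal line through a fixed $\overline{x}\in\Gmt$, the set where $\fka<1$ is exactly the single interval $\{0<\delta\lesssim|\overline\nu|(\overline{x})\}$ (here $\overline\nu=\nu|_{\Gmt}$), and the immediately adjacent interval $\{c_1|\overline\nu|<\delta<c_2|\overline\nu|\}$ already lies in $\{\fka\equiv1\}$. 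So there is a \emph{single} relevant scale per boundary point, not a dyadic range, and the $\nu$-term becomes the elementary comparison $\int_0^{c|\overline\nu|}(\mu)^{\alpha-1}|\nu|^2\,d\delta\simeq|\overline\nu|^{\alpha+2}\simeq\int_{c_1|\overline\nu|}^{c_2|\overline\nu|}(\mu)^{\alpha-1}|\nu|^2\,d\delta$; integrating over $\Gmt$ finishes that piece. For the $\vw$- and $\zeta$-terms the paper does exactly what you propose: compare to the mean $\overline\vw\coloneqq\fint_{\{\delta\simeq|\overline\nu|\}}\vw\,d\delta$ over the good interval and invoke the $C^{1/2}$ modulus to get $|\vw|^2\lesssim|\overline\vw|^2+A'|\overline\nu|$ on the bad interval. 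Your anticipated ``main obstacle'' concerning Whitney decompositions and rough boundaries largely evaporates in this setup, since the foliation needs only a Lipschitz transversal direction, which the non-degeneracy $|\grad_{\vn}q_j|\ge c_0$ already supplies. What your dyadic scheme would buy is robustness if the bad region had genuine multiscale structure along each normal line; here it does not, so the foliation is both simpler and avoids the bounded-overlap bookkeeping you flag.
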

	
	For the simplicity of notations, we denote by:
	\begin{equation}\label{simple notaion}
		\mu \coloneqq q_1 + q_2 \qc \nu \coloneqq q_1 - q_2 \qc \vw \coloneqq \vv_1 - \vv_2\qc
		\kappa \coloneqq \sigma_1 + \sigma_2\qc \zeta \coloneqq \sigma_1 - \sigma_2,
	\end{equation}
	and
	\begin{equation*}
		\delta(x) \coloneqq \dist\qty(x, \Gmt) \qfor x \in \Omt.
	\end{equation*}
	
	\begin{proof}
		It is clear that $\mu = \abs{\nu}$ on $\Gmt$, and $\fka = 1$ when $\mu \ge \frac{5}{4}\abs{\nu}$.	Since $\overline{\Omega}_t$ is compact and $\mu > 0$ in $\Omt$, there exist two constants ${\varepsilon}_0 > 0$ and $\varepsilon_1 > 1$ such that
		\begin{equation*}
			\mu \ge \varepsilon_1 \abs{\nu} \qq{whenever} \delta(x) \ge \varepsilon_0.
		\end{equation*}
		One may assume that $\varepsilon_1 = \frac{5}{4}$ for the sake of simplicity (otherwise, one can modify the definition of the ancillary function $\fka(\mu, \nu)$). Thus, the missing part of the original distance functional $\scD$ concentrates on a small neighborhood of $\Gmt$.
		
		The modulus of continuity of $\mu$ and the non-degeneracy condition imply that
		\begin{equation}\label{rough est mu}
			\mu(x) \ge \mu(\overline{x}) + \frac{c_0}{2}\delta(x) \qq{whenever} \delta(x) \ll 1,
		\end{equation}
		here $\overline{x} \in \Gmt$ is the point such that $\delta(x) = \abs{x-\overline{x}}$. On the other hand,
		\begin{equation*}
			\abs{\nu}(x) \le \abs{\nu(\overline{x})} + \norm{\grad\nu}_{L^\infty}\delta(x).
		\end{equation*}
		In particular, as long as
		\begin{equation*}
			\mu(\overline{x}) + \frac{c_0}{2}\delta(x) \ge \frac{5}{4}\qty\big[\abs{\nu}(\overline{x}) + \norm{\grad\nu}_{L^\infty}\delta(x)],
		\end{equation*}
		it holds that $\fka(\mu, \nu)(x) = 1$.
		Since $\mu = \abs{\nu}$ on $\Gmt$, it suffices to have
		\begin{equation*}
			\delta(x) \ge \frac{\abs{\nu}(\overline{x})}{2c_0 - 4\norm{\grad\nu}_{L^\infty}}.
		\end{equation*}
		Thus, when $\norm{q_1 - q_2}_{\Lip} \ll c_0$, one has 
		\begin{equation}\label{region a=1}
			\fka(\mu, \nu)(x) = 1 \qq{as long as} \delta(x) \gtrsim \abs{\nu}(\overline{x}).
		\end{equation}
		Namely, the missing part of $\fD$ comparing to $\scD$ is the integral of the integrands of $\scD$ over the boundary layer region $\qty{\delta(x) \lesssim \abs{\nu}(\overline{x})}$.
				
		By taking the foliations of $\Omt$ using lines transversal to $\Gmt$, one can reduce \eqref{dist equiv} into the one-dimensional estimate. Indeed, denote by
		$\overline{\nu}$ the boundary value of $\nu$, it follows that
		\begin{equation*}
			\int_{\qty{\delta\lesssim\abs{\overline\nu}}} \mu^{\alpha-1}\abs{\nu}^2 \dd{x} \lesssim \int_{\Gmt}\qty(\int_{0}^{c\abs{\overline\nu}} \abs{\overline\nu}^{\alpha+1} \dd{\delta}) \dd{\cH^{d-1}} \lesssim \int_{\Gmt} \abs{\overline\nu}^{\alpha+2} \dd{\cH^{d-1}},
		\end{equation*}
		while \eqref{rough est mu}-\eqref{region a=1} yield that
		\begin{equation}\label{control fD T1}
			\fD \gtrsim \int_{\Gmt} \qty(\int_{c_1\abs{\overline\nu}}^{c_2\abs{\overline\nu}}\qty(\abs{\overline\nu}+\delta)^{\alpha-1}\abs{\overline\nu}^2 \dd{\delta})\dd{\cH^{d-1}} \gtrsim \int_{\Gmt} \abs{\overline\nu}^{\alpha+2} \dd{\cH^{d-1}}.
		\end{equation}
		In other words, the missing $\nu$-parts of $\fD$ are controllable.
		
		To deal with the $\vw$-terms, one can define (using the 1-D foliations):
		\begin{equation*}
			\overline\vw \coloneqq \fint_{\qty{\delta \simeq \abs{\overline\nu}}} \vw \dd{\delta}.
		\end{equation*}
		Then, it is routine to check that
		\begin{equation*}
			\begin{split}
				\abs{\overline\vw}^2 &\lesssim \fint_{\qty{\delta \simeq \abs{\overline\nu}}} \abs{\vw}^2 \dd{\delta} \\
				&\lesssim \norm{\kappa}_{L^\infty}\abs{\overline\nu}^{-\alpha}\fint_{\qty{\delta \simeq \abs{\overline\nu}}} \mu^\alpha \kappa^{-1}\abs{\vw}^2 \dd{\delta} \\
				&\lesssim \abs{\overline\nu}^{-(1+\alpha)}\int_{\qty{\delta \simeq \abs{\overline\nu}}} \mu^{\alpha} \kappa^{-1} \abs{\vw}^2 \dd{\delta}.
			\end{split}
		\end{equation*}
		In particular, it holds that
		\begin{equation}\label{control fD T2'}
			\int_{\Gmt} \abs{\overline\nu}^{1+\alpha}\abs{\overline\vw}^2 \dd{\cH^{d-1}} \lesssim \fD.
		\end{equation}
		On the other hand, one has
		\begin{equation*}
			\abs{\vw}^2 \lesssim \abs{\overline\vw}^2 + A'\abs{\overline\nu} \qin \qty{\delta \lesssim \abs{\nu}},
		\end{equation*}
		which, together with \eqref{control fD T1}-\eqref{control fD T2'}, yield
		\begin{equation}
			\begin{split}
			\int_{\qty{\delta \lesssim \abs{\overline\nu}}} \mu^\alpha\kappa^{-1}\abs{\vw}^2 \dd{x} &\lesssim \int_{\Gmt}\qty(\int_0^{c\abs{\overline\nu}}\mu^\alpha\kappa^{-1}\abs{\vw}^2 \dd{\delta} ) \dd{\cH^{d-1}} \\
			&\lesssim \int_{\Gmt} \abs{\overline\nu}^{1+\alpha} \qty(\abs{\overline\vw}^2 + A'\abs{\overline\nu}) \dd{\cH^{d-1}} \\
			&\lesssim \fD.
			\end{split}
		\end{equation}
		
		The estimates for the term involving $\zeta$ follow from the same arguments as those for $\vw$.
		
	\end{proof}

	\subsection{Energy Estimates}
	To compute the evolution of $\fD$, one need first consider the material derivatives of the integrands. Since $\Omt$ is the common domain, it is natural to consider the mean velocity as the evolution one. More precisely, define
	\begin{equation*}
		\vbu \coloneqq \frac{\vv_1 + \vv_2}{2} \qand \Dt \coloneqq \pd_t + \grad_\vbu.
	\end{equation*}
	Then, the evolution equations for the difference and sum quantities are:
	\begin{subequations}\label{evo eqn diff}
			\begin{equation}\label{Dt q1-q2}
			\begin{split}
				\Dt (q_1 - q_2) = &-\frac{1}{2}\grad_{\vv_1-\vv_2}\qty(q_1 + q_2) \\
				&- \frac{\beta}{2}\qty\big[(q_1 + q_2)\divergence(\vv_1 - \vv_2) + (q_1 - q_2)\divergence(\vv_1 + \vv_2)],
			\end{split}
		\end{equation}
		\begin{equation}\label{Dt q1+q2}
			\begin{split}
				\Dt(q_1+q_2) = &-\frac{1}{2}\grad_{\vv_1-\vv_2}(q_1 - q_2) \\
				&-\frac{\beta}{2}\qty\big[(q_1+q_2)\divergence(\vv_1+\vv_2)+(q_1 - q_2)\divergence(\vv_1 - \vv_2)],
			\end{split}
		\end{equation}
		\begin{equation}\label{Dt v1-v2}
			\begin{split}
				\Dt (\vv_1 - \vv_2) = &-\frac{1}{2}\grad_{\vv_1 - \vv_2}(\vv_1 + \vv_2) \\
				&-\frac{1}{2}\qty\big[(\sigma_1 - \sigma_2)\grad(q_1 + q_2) + (\sigma_1 + \sigma_2)\grad(q_1 - q_2) ],
			\end{split}
		\end{equation}
		\begin{equation}\label{Dt tau1-tau2}
			\Dt(\sigma_1 - \sigma_2) = -\frac{1}{2}\grad_{\vv_1 - \vv_2}(\sigma_1 + \sigma_2),
		\end{equation}
		and
		\begin{equation}\label{Dt tau1+tau2}
			\Dt(\sigma_1 + \sigma_2) = -\frac{1}{2}\grad_{\vv_1 - \vv_2}(\sigma_1 - \sigma_2).
		\end{equation}
	\end{subequations}
	
	Define control parameter $B'$ by:
	\begin{equation}\label{def B'}
		B' \coloneqq B_1 + B_2,
	\end{equation}
	where $B_j\ (j = 1, 2)$ is the control parameter given by \eqref{def B}.	Then, one has the following estimate:
	\begin{prop}
		It holds that
		\begin{equation}\label{energy est fD cD}
			\dv{t}\fD \lesssim_{A', c_0} B' \scD.
		\end{equation}
	\end{prop}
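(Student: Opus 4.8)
The plan is to compute $\dv{t}\fD$ directly using the transport formula \eqref{trans formula} with the mean velocity $\vbu$ as the flow field, and then to argue that all resulting terms are controlled by $B'\scD$ once we invoke the equivalence $\scD \lesssim \fD \lesssim \scD$ from Lemma \ref{lem equiv dist functional}. Since $\vbu$ is not the actual velocity of either solution, the material derivatives of the integrands are given by the evolution equations \eqref{Dt q1-q2}--\eqref{Dt tau1+tau2}, whose right-hand sides are all of the schematic form ``$\grad_{\vw}(\text{sum quantity})$'' plus ``$(\text{sum})\cdot\divergence\vw + \nu\cdot\divergence(\vv_1+\vv_2)$'' plus analogous entropy terms; crucially each such term carries at least one factor of $\vw$, $\nu$, or $\zeta$, i.e.\ a ``difference'' quantity, together with a first derivative of a ``sum'' or ``average'' quantity. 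The latter first derivatives are exactly what is bounded by $B'$ in \eqref{def B'}. So after expanding, $\dv{t}\fD$ is a sum of integrals each of which is (difference quantity)$\times$(difference quantity)$\times$(first derivative of sum/average), weighted by $\mu^{\alpha-1}\fka$ and its derivatives, and these manifestly look like $B'$ times the integrand of $\scD$.

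The key steps, in order, are: First I would differentiate $\fD$ term by term, separating (a) the contribution where $\Dt$ hits the weight $\mu^{\alpha-1}$, (b) where it hits the cutoff $\fka(\mu,\nu)$, (c) where it hits the core quadratic expression $\abs{\nu}^2 + \mu\kappa^{-1}[\beta\abs{\vw}^2+\abs{\zeta}^2]$, and (d) the $f(\divergence\vbu)$ term coming from the transport formula. For (a) I would use $\Dt\mu^{\alpha-1} = (\alpha-1)\mu^{\alpha-2}\Dt\mu$ and \eqref{Dt q1+q2}, noting $\Dt\mu$ itself is $O(B')\mu + O(\ldots)\nu$-type, so this stays of the right form after one observes that on $\spt\fka$ one has $\abs{\nu}\lesssim\mu$, which controls the potentially singular $\mu^{\alpha-2}\abs{\nu}$ factors. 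For (b), since $\fka$ is homogeneous of degree $0$ and smooth, $\Dt\fka = \fka_\mu\Dt\mu + \fka_\nu\Dt\nu$; the homogeneity gives $\mu\fka_\mu + \nu\fka_\nu = 0$ and $\fka_\mu,\fka_\nu$ are homogeneous of degree $-1$, so $\mu\fka_\mu$ and $\mu\fka_\nu$ are bounded, and combined with $\Dt\mu, \Dt\nu$ one again gets $O(B')$ times the $\scD$ integrand (the derivative of $\fka$ being supported in the transition annulus $\tfrac{4}{5}\mu\le\abs{\nu}\le\tfrac{9}{10}\mu$, where $\mu\simeq\abs{\nu}$, so no weight degeneracy). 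For (c) I would expand using the product rule and substitute \eqref{Dt q1-q2}--\eqref{Dt tau1+tau2}; the main algebraic point is that the ``principal'' cross terms — the ones with a bare $\divergence\vw$ not multiplied by a difference quantity — must cancel exactly, mirroring the cancellation $\dv{t}\Elin = -\beta\int\divergence(q^\alpha s\vw)$ in \eqref{est energy lin2}. Concretely, the $\mu^{\alpha-1}\fka\cdot\nu\cdot[-\beta\mu\divergence\vw]$ piece from differentiating $\abs{\nu}^2$ should pair against the $\mu^\alpha\kappa^{-1}\fka\cdot\beta\vw\cdot[-\kappa\grad\nu]$ piece from differentiating $\abs{\vw}^2$, yielding a perfect divergence $-\beta\divergence(\mu^\alpha\fka\,\nu\,\vw)$ up to terms where the derivative falls on $\mu^\alpha\fka$, and those residual terms are again of $O(B')\times(\scD\text{-integrand})$ type (here using boundedness of $\mu\fka_\mu, \mu\fka_\nu$ and $\abs{\grad\mu}\le B'$). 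The divergence integrates to a boundary term on $\Gmt = \pd\Omt$; on $\Gmt$ one has $\fka(\mu,\nu)$ supported where $\abs{\nu}\le\tfrac{9}{10}\mu$, but $\mu = \abs{\nu}$ on $\Gmt$, so $\fka \equiv 0$ there and the boundary term vanishes. For (d), $\divergence\vbu = \tfrac12\divergence(\vv_1+\vv_2)$ is bounded by $B'$, and it multiplies the full $\fD$-integrand, giving $O(B')\fD \le O(B')\scD$ directly. Finally, collecting (a)--(d) and applying Lemma \ref{lem equiv dist functional} gives \eqref{energy est fD cD}.

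The main obstacle I anticipate is step (c): getting the principal cross-term cancellation to go through cleanly when the weight $\mu^{\alpha-1}$ and the cutoff $\fka$ are present, rather than the clean constant weight of the linearized computation. One must be careful that the ``sum'' quantity appearing inside $\grad_{\vw}$ in \eqref{Dt q1-q2} is $\mu = q_1+q_2$ (so $\grad\mu$, bounded by $B'$), whereas in \eqref{Dt v1-v2} the velocity equation produces $(\sigma_1+\sigma_2)\grad(q_1-q_2) = \kappa\grad\nu$, i.e.\ a derivative of the \emph{difference} $\nu$ — this is precisely the term that enters the would-be divergence and does not by itself carry a $B'$ factor, so the cancellation is essential and cannot be replaced by crude estimation. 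A secondary subtlety is that $\scD$ and $\fD$ are integrals over the genuinely Lipschitz (not $C^1$) domain $\Omt = \Omt^1\cap\Omt^2$, so the divergence-theorem manipulations must be justified on a Lipschitz domain; but since $\fka\equiv 0$ near $\Gmt$ the integrands in the divergence terms are actually supported away from the rough part of $\pd\Omt$, which circumvents the issue. The remaining terms are routine once one systematically uses $\abs{\nu}\lesssim\mu$ on $\spt\fka$ to absorb negative powers of $\mu$ and the $C^{1/2}$-control from $A'$ where half-derivatives of difference quantities appear.
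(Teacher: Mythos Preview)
Your identification of the divergence cancellation (your step (c)) is correct and matches the paper, as does the observation that $\fka\equiv 0$ on $\Gmt$ kills the boundary term. However, your assessment that the remaining terms are ``routine'' is where the proposal breaks down.

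After the cancellation, what survives are cubic-in-difference integrals such as
\[
\int_{\Omt}\mu^{\alpha-1}\kappa^{-1}(\grad_{\vw}\nu)\,\bigl(\beta\abs{\vw}^2+\abs{\zeta}^2\bigr)\dd{x}
\qquad\text{and}\qquad
\int_{\Omt}\mu^{\alpha-2}\abs{\nu}^2(\grad_{\vw}\nu)\,\fka_1\dd{x},
\]
(the paper's $I_1$ and $I_2$). The key point you miss is that $\grad_{\vw}\nu = \vw\vdot\grad\nu$ and $\grad_{\vw}\mu$ carry only an $A'$ factor, \emph{not} $B'$: $\abs{\grad\nu},\abs{\grad\mu}\lesssim A'$ since each $q_j$ is merely Lipschitz. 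So these integrals have the form $A'\int\mu^{\alpha-1}\abs{\vw}^3$ etc., with no $B'$ in sight. Your claim that every surviving term pairs a difference quantity with ``a first derivative of a sum quantity bounded by $B'$'' is false for exactly these terms. Using $\abs{\nu}\lesssim\mu$ on $\spt\fka$ does not help: you are still left with $\int\mu^{\alpha-1}\abs{\vw}^3$, which is not $\lesssim B'\int\mu^{\alpha}\abs{\vw}^2$ by any pointwise bound.

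Extracting the $B'$ is the actual work. For $I_1$ the paper uses a Hardy-type inequality in the normal direction (integrating $\mu^{\alpha-1}$ to $\mu^\alpha$, the derivative landing on a $\vw$ or $\zeta$ factor and producing $\norm{\grad\vw}_{L^\infty}\lesssim B'$). For $I_2,I_3$ the paper reduces to $J=\int\mu^{\alpha-3}\abs{\nu}^3\abs{\vw}$ and then performs a genuinely delicate boundary-layer decomposition: one defines a thickness function $\lambda(\overline{x})$ on $\Gmt$ balancing $\abs{\nu}$, $(B')^{-1}\abs{\grad\nu}$ and $(B')^{-1}\abs{\nu}$, covers $\Omt$ by cylinders of radius $\simeq\lambda$, and in each cylinder exploits the $\wtC^{1,\frac12}$ regularity of $q_j$ together with coercivity of $\scD$ restricted to the cylinder to produce the $B'$ factor. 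This boundary-layer argument is the main technical content of the proof and is entirely absent from your outline.
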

	\begin{proof}
		For the simplicity of notations, we still use \eqref{simple notaion} to rewrite \eqref{def fD} as
		\begin{equation*}
			\fD = \int_{\Omt} \fka(\mu, \nu)\qty(\mu^{\alpha-1}\abs{\nu}^2 + \beta\mu^{\alpha}\kappa^{-1}\abs{\vw}^2 + \mu^{\alpha}\kappa^{-1}\abs{\zeta}^2) \dd{x}.
		\end{equation*}
		\paragraph*{Step 1: First simplifications}
		To compute the time derivative of $\fD$, one can first infer from \eqref{trans formula} and \eqref{def B'} that the contribution of $(\divergence\vbu)$ term is directly controlled by $B'\fD$. Similarly, for the material derivatives involving \eqref{evo eqn diff}, only the first two terms of \eqref{Dt q1-q2}, the first term of \eqref{Dt q1+q2}, and the last term of \eqref{Dt v1-v2} need  further estimates. More precisely, one has:
		\begin{equation}\label{dt fD 1}
			\begin{split}
				\dv{t}\fD = &\int_{\Omt} \fka(\mu, \nu)\qty(\mu^{\alpha-1}\abs{\nu}^2 + \beta\mu^{\alpha}\kappa^{-1}\abs{\vw}^2 + \mu^{\alpha}\kappa^{-1}\abs{\zeta}^2) (\divergence\vbu) \dd{x} \\
				&+ \int_{\Omt} \qty(\fka_{,\mu}\Dt\mu + \fka_{,\nu}\Dt\nu)	\qty(\mu^{\alpha-1}\abs{\nu}^2 + \beta\mu^{\alpha}\kappa^{-1}\abs{\vw}^2 + \mu^{\alpha}\kappa^{-1}\abs{\zeta}^2) \dd{x} \\
				&+ \int_{\Omt} \qty[\Dt\qty(\mu^{\alpha-1})\abs{\nu}^2 + \mu^{\alpha-1}\Dt\qty(\abs{\nu}^2)] \dd{x} \\
				&+ \int_{\Omt} \beta\fka(\mu, \nu)\qty[\Dt\qty(\mu^{\alpha})\kappa^{-1}\abs{\vw}^2 + \mu^{\alpha}\Dt\qty(\kappa^{-1})\abs{\vw}^2 + \mu^{\alpha}\kappa^{-1}\Dt\qty(\abs{\vw}^2)] \dd{x} \\
				&+ 	\int_{\Omt} \fka(\mu, \nu)\qty[\Dt\qty(\mu^{\alpha})\kappa^{-1}\abs{\zeta}^2 + \mu^{\alpha}\Dt\qty(\kappa^{-1})\abs{\zeta}^2 + \mu^{\alpha}\kappa^{-1}\Dt\qty(\abs{\zeta}^2)] \dd{x}.	
			\end{split}
		\end{equation}
		It is clear that the first integral on the right of \eqref{dt fD 1} is bounded by $B'\scD$. For the term involving $\Dt \fka$, one first observe that the homogeneity of $\fka$ yields
		\begin{equation*}
			\mu\fka_{,\mu} + \nu\fka_{,\nu} \equiv 0.
		\end{equation*}
		Thus, it follows that
		\begin{equation*}
			\Dt\fka = -\frac{1}{2}\qty\Big[\beta\qty(\fka_{,\mu}\nu+\fka_{,\nu}\mu)\qty(\divergence\vw) + \qty(\fka_{,\mu}\vw\vdot\grad\nu + \fka_{,\nu}\vw\vdot\grad\mu)],
		\end{equation*}
		which implies that the second integral on the right of \eqref{dt fD 1} can be reduced to
		\begin{equation*}
			-\frac{1}{2}\int_{\Omt} \mu^{\alpha-1} \qty(\fka_{,\mu}\grad_\vw\nu + \fka_{,\nu}\grad_\vw\mu) \qty[\abs{\nu}^2 + \mu\kappa^{-1} \qty(\beta\abs{\vw}^2 + \abs{\zeta}^2)] \dd{x} + \err,
		\end{equation*}
		where the error term can be controlled via
		\begin{equation}\label{est err}
			\abs{\err} = \Order{A'}{B'}\scD.
		\end{equation}
		Furthermore, the fact that
		\begin{equation*}
			\Dt(\kappa^{-1}) = -\kappa^{-2}\Dt\kappa = \frac{1}{2}\kappa^{-2}\grad_{\vw}\zeta
		\end{equation*}
		yields
		\begin{equation*}
			\norm{\kappa\abs*{\Dt(\kappa^{-1})}}_{L^\infty} \lesssim \norm*{\kappa^{-1}}_{L^\infty} \norm{\vw}_{L^\infty} \norm{\grad\zeta}_{L^\infty} \lesssim_{A'} B'.
		\end{equation*}
		Namely, those terms containing $\Dt(\kappa^{-1})$ is controllable under $\Order{A'}{B'}\scD$.
		In a similar manner, it is routine to compute that
		\begin{equation}\label{dt  fD 2}
			\begin{split}
				\dv{t}\fD = &-\frac{1}{2}\int_{\Omt} \mu^{\alpha-1} \qty(\fka_{,\mu}\grad_\vw\nu + \fka_{,\nu}\grad_\vw\mu) \qty[\abs{\nu}^2 + \mu\kappa^{-1} \qty(\beta\abs{\vw}^2 + \abs{\zeta}^2)] \dd{x} \\
				&- \int_{\Omt} \fka\qty[\frac{\alpha-1}{2}\mu^{\alpha-2}(\grad_\vw\nu)\abs{\nu}^2 + \mu^{\alpha-1}\nu\qty(\grad_\vw\mu + \beta\mu\divergence\vw)] \dd{x} \\
				&-\int_{\Omt}\beta\fka\qty[\alpha\mu^{\alpha-1}\qty(\frac{1}{2}\grad_\vw\nu)\kappa^{-1}\abs{\vw}^2 + \mu^{\alpha}\kappa^{-1}\vw\vdot(\kappa\grad\nu)] \dd{x} \\
				&-\int_{\Omt} \fka\qty[\alpha\mu^{\alpha-1}\qty(\frac{1}{2}\grad_\vw\nu)\kappa^{-1}\abs{\zeta}^2 + \mu^{\alpha}\kappa^{-1}\zeta\grad_\vw\kappa] \dd{x} \\
				&+\err\\
				=&-\frac{1}{2}\int_{\Omt} \mu^{\alpha-1} \qty(\fka_{,\mu}\grad_\vw\nu + \fka_{,\nu}\grad_\vw\mu) \qty[\abs{\nu}^2 + \mu\kappa^{-1} \qty(\beta\abs{\vw}^2 + \abs{\zeta}^2)] \dd{x} \\
				&-\frac{1}{2}\int_{\Omt}\fka\mu^{\alpha-2}\grad_\vw\nu\qty[(\alpha-1)\abs{\nu}^2 + \mu\kappa^{-1}\abs{\vw}^2 + \alpha\mu\kappa^{-1}\abs{\zeta}^2] \dd{x} \\
				&-\int_{\Omt} \fka\mu^{\alpha-1}\qty(\nu\grad_\vw\mu + \beta\mu\nu\divergence\vw + \beta\mu\grad_\vw\nu) \dd{x} \\
				&+\err.
			\end{split}
		\end{equation}
		Note that the last integral on the right of \eqref{dt  fD 2} yields a cancellation, i.e.
		\begin{equation*}
			\begin{split}
				&-\int_{\Omt} \fka\mu^{\alpha-1}\qty(\nu\grad_\vw\mu + \beta\mu\nu\divergence\vw + \beta\mu\grad_\vw\nu) \dd{x} \\
				&\quad=\beta\int_{\Omt}(\grad_\vw\fka)\mu^\alpha\nu \dd{x}\\
				&\quad=\beta\int_{\Omt}\mu^\alpha\nu\qty(\fka_{,\mu}\grad_\vw\mu + \fka_{,\nu}\grad_\vw\nu) \dd{x}.
			\end{split}
		\end{equation*}
	Collecting the above computations, one obtains that
	\begin{equation}\label{def I1 I2 I3}
		\begin{split}
			\dv{t}\fD = &- \int_{\Omt} \mu^{\alpha-1}\kappa^{-1}\qty(\mu\fka_{,\mu}\grad_\vw\nu + \mu\fka_{,\nu}\grad_\vw\mu + \alpha\fka\grad_\vw\nu)\qty(\beta\abs{\vw}^2 + \abs{\zeta}^2) \dd{x} \\
			&+\int_{\Omt} \mu^{\alpha-2}\nu^2\qty(\beta\mu^2\nu^{-1}\fka_{,\nu} - \frac{1}{2}\mu\fka_{,\mu} - \frac{\alpha-1}{2}\fka)\grad_\vw\nu \dd{x} \\
			&+ \int_{\Omt} \mu^{\alpha-1}\nu \qty(\beta\mu\fka_{,\mu}-\frac{1}{2}\nu\fka_{,\nu})\grad_\vw\mu \dd{x} + \err \\
			\eqqcolon&\, I_1 + I_2 + I_3 + \err.
		\end{split}
	\end{equation}
	
	\paragraph*{Step 2: Estimates of $I_1$, $I_2$, and $I_3$}
	The homogeneity of $\fka$ implies that
	\begin{equation}
		\abs{I_1} \lesssim_{A'} \int_{\Omt} \mu^{\alpha-1}\abs{\vw}\qty(\abs{\vw}^2 + \abs{\zeta}^2) \dd{x}.
	\end{equation}
	Note that the following Hardy-type inequality holds for $f_i \in C^1_\textup{c}[0, \infty)$ $(i = 1, 2, 3)$ and $\alpha>0$:
		\begin{equation}\label{L3 Hardy}
			\norm*{x^{\alpha-1}f_1 f_2 f_3}_{L^1} \lesssim_{\alpha} \sum_{\text{distinct } i, j, k } \norm{f'_i}_{L^\infty}\norm*{x^\frac{\alpha}{2}f_j}_{L^2}\norm*{x^\frac{\alpha}{2}f_k}_{L^2}.
		\end{equation}
	Indeed, it is direct to compute that
	\begin{equation*}
			\int_0^\infty x^{\alpha-1}f_1f_2f_3 \dd{x} = \int_0^\infty \frac{1}{\alpha}(x^\alpha)' f_1 f_2 f_3 \dd{x} = - \frac{1}{\alpha}\int_0^\infty x^\alpha (f_1 f_2 f_3)' \dd{x},
	\end{equation*}
	which yields \eqref{L3 Hardy}.
	Using the foliation as in the proof of Lemma \ref{lem equiv dist functional}, the estimate of $I_1$ is reduced to the one dimensional case. Since
	\begin{equation*}
		\mu(x) \approx \abs{\nu}(\overline{x}) + \delta(x) \qq{when} \delta(x) \ll 1,
	\end{equation*}
	it follows from \eqref{L3 Hardy} that
	\begin{equation}\label{est I1}
		\abs{I_1} \lesssim_{A'} B'\scD.
	\end{equation}
	
	For $I_3$, one first observes that
	\begin{equation*}
		\beta\mu\fka_{,\mu}-\frac{1}{2}\nu\fka_{,\nu} = \qty(\beta+\frac{1}{2})\mu\fka_{,\mu}.
	\end{equation*}
	Note that $\fka$ is homogeneous of degree $0$, one may assume that
	\begin{equation*}
		\fka(\mu, \nu) = \psi\qty(\frac{\nu}{\mu})
	\end{equation*}
	for some smooth function $\psi$ defined on $\R$. Thus, one has
	\begin{equation*}
		\fka_{,\mu} = - \frac{\nu}{\mu^2} \psi'\qty(\frac{\nu}{\mu}).
	\end{equation*}
	Since it is required that $\fka \equiv 1$ when $\abs{\nu} \ll \mu$, the function $\psi$ satisfies the property that $\psi \equiv 0$ in a small neighborhood of the origin. In particular, Taylor's expansion yields
	\begin{equation*}
		\psi'\qty(\frac{\nu}{\mu}) = \order{\frac{\abs{\nu}}{\mu}}.
	\end{equation*}
	Namely, one obtains
	\begin{equation*}
		\mu^\alpha\nu\fka_{,\mu} = \order{\mu^{\alpha-3}\abs{\nu}^3}.
	\end{equation*}
	Denote by
	\begin{equation}
		J \coloneqq \int_{\Omt} \mu^{\alpha-3}\abs{\nu}^3\abs{\vw} \dd{x}.
	\end{equation}
	Then, one gets the estimate
	\begin{equation}\label{est I3}
		\abs{I_3} \lesssim_{A'} J.
	\end{equation}
	
	To estimate $I_2$, one may first denote by
	\begin{equation*}
		\fka_1 \coloneqq \beta\mu^2\nu^{-1}\fka_{,\nu} - \frac{1}{2}\mu\fka_{,\mu} - \frac{\alpha-1}{2}\fka.
	\end{equation*}
	It is clear that $\fka_1$ is still a smooth homogeneous function of degree $0$ (note that $\mu \simeq \abs{\nu}$ in $\spt \qty{\pd\fka}$, so $\mu^2\nu^{-1}\fka_{,\nu}$ is not a troublesome term).
	Whence, $I_2$ can be estimated via the integration by parts:
	\begin{equation*}
		I_2	= - \frac{1}{3} \int_{\Omt}\nu^3 \divergence(\mu^{\alpha-2}\fka_1 \vw) \dd{x}.
	\end{equation*}
	In particular, it follows that
	\begin{equation}\label{est I2}
		\abs{I_2} \lesssim_{A'} J + B'\scD.
	\end{equation}
	It remains to show that $J$ is bounded by $\Order{A'}{B'}\scD$. 
	
	\paragraph*{Step 3: Estimate of $J$}
	First note that, assuming following relation
	\begin{equation}\label{goal relation mu nu}
		\norm*{\mu^{-\frac{5}{2}}\nu^2}_{L^\infty} \lesssim_{A'}B',
	\end{equation}
	the estimate of $J$ can be derived via H\"older's inequality. In particular, provided that $\Gmt^1 = \Gmt^2$ and $\abs{\grad \nu} = 0$ on $\Gmt$, one can directly obtain the following estimate:
	\begin{equation}\label{good nu relation}
		 \abs{\nu}\lesssim_{A'} B' \delta^\frac{3}{2},
	\end{equation}
	which yields \eqref{goal relation mu nu}.
	However, one could not always expect \eqref{goal relation mu nu} to hold near the boundary, since $\mu$ itself could be rather small (and even vanish) on $\Gmt$. Inspired by \cite{Ifrim-Tataru2024}, such obstacle can be overcome by introducing a well-chosen boundary layer. More precisely, define a function $\lambda$ on $\Gmt$ by:
	\begin{equation}\label{def lambda}
		\lambda (\overline{x}) \coloneqq C\abs{\nu}(\overline{x}) + \qty[(B')^{-1}\abs{\nu}(\overline{x})]^\frac{2}{3} + \qty[(B')^{-1}\abs{\grad\nu}(\overline{x})]^2 \qfor \overline{x} \in \Gmt,
	\end{equation}
	where $C \gg 1$ is a large constant. In other words, by defining such a boundary function, one has the trivial estimates:
	\begin{equation*}
		\abs{\nu} \le \frac{1}{C}\lambda \qc \abs{\nu} \le B'\lambda^{\frac{3}{2}}, \qand \abs{\grad\nu} \le B'\lambda^{\frac{1}{2}} \qq{on} \Gmt.
	\end{equation*}
	Therefore, after removing a boundary layer with thickness comparable to $\lambda$, one can expect that the ``good'' bound \eqref{good nu relation} holds outside the layer. Actually, let
	\begin{equation}
		\Ombd \coloneqq \Omt \cap \qty[\bigcup_{\overline{x}\in\Gmt}B\qty(\overline{x}, \varepsilon\lambda(\overline{x}))],
	\end{equation}
	where $\varepsilon>0$ is a sufficiently small constant. (Here one may not be able to choose the tubular neighborhoods, because $\Gmt$ is merely Lipschitz.) 
	Then, \eqref{good nu relation} holds for $x \notin \Ombd$. Indeed, one may assume that $\overline{x} \in \Gmt$ is one of the points such that $\delta(x) = \abs{x-\overline{x}}$, which implies $\delta(x) \ge \varepsilon\lambda(\overline{x})$. On the other hand, Taylor's expansion gives that
	\begin{equation}\label{rough est diff nu}
		\begin{split}
			\nu(x) = &\,\nu(\overline{x}) + \qty(x-\overline{x}) \cdot \int_0^1 \grad{\nu}(\overline{x} + s(x-\overline{x})) \dd{s} \\
			=&\, \nu(\overline{x}) + (x-\overline{x})\cdot\grad{\nu}(\overline{x}) + \Order{A'}{B'}\abs{x-\overline{x}}\qty(\abs{x-\overline{x}}^{\frac{1}{2}} + \mu^{\frac{1}{2}}(\overline{x})).
		\end{split}
	\end{equation}
	Since $\mu \equiv \abs{\nu}$ on $\Gmt$ and $\delta(x) \ge \varepsilon\lambda(\overline{x})$, it is routine to calculate that
	\begin{equation}\label{est J Om int}
		\begin{split}
			\abs{\nu}(x) &\le \abs{\nu}(\overline{x}) + B'\lambda(\overline{x})^\frac{1}{2}\delta(x) + \Order{A'}{B'}\qty(\delta^\frac{3}{2}(x) + \delta(x)\abs{\nu}^{\frac{1}{2}}(\overline{x})) \\
			&\lesssim_{A'} B' \qty(\lambda^\frac{1}{2}(\overline{x})+\delta^\frac{1}{2}(x))\qty[\lambda(\overline{x}) + \delta(x)] \\
			&\lesssim_{A'} B' \delta^\frac{3}{2}(x).
		\end{split}
	\end{equation}
	Therefore, the integral $J$ over the region $\Omt\setminus\Ombd$ can be controlled by $\Order{A'}{B'}\scD$. It remains to estimate $J_{\restriction_{\Ombd}}$.
	
	In order to localize the integral into a small region, one needs to estimate the variation of $\lambda$. Otherwise, if $\lambda$ is rapidly varying, the open cover of $\Ombd$ using balls with scale $\varepsilon\lambda(\overline{x})$ may not be locally finite. Suppose that $\overline{x}$ and $\overline{y}$ are two boundary points. Then it follows from the same calculations as in \eqref{rough est diff nu} that
	\begin{equation*}
		\abs{\nu(\overline{x})- \nu(\overline{y})} \le \abs{\grad\nu}(\overline{x})\abs{\overline{x}-\overline{y}} + B'\abs{\overline{x}-\overline{y}}\qty(\abs{\nu(\overline{x})}^\frac{1}{2} + (1+A')^\frac{1}{2}\abs{\overline{x}-\overline{y}}),
	\end{equation*}
	and
	\begin{equation*}
		(B')^{-1}\abs{\nu(\overline{x})-\nu(\overline{y})} \lesssim_{A'} \abs{\overline{x}-\overline{y}}\qty(\lambda^\frac{1}{2}(\overline{x})+\abs{\nu}^\frac{1}{2}(\overline{x})+\abs{\overline{x}-\overline{y}}^\frac{1}{2}).
	\end{equation*}
	Moreover, it holds that
	\begin{equation*}
		\begin{split}
			&\abs\Big{\abs{\grad\nu}^2(\overline{x})-\abs{\grad\nu}^2(\overline{y})} = \abs\Big{\qty\big[\grad\nu(\overline{x})-\grad\nu(\overline{y})]\vdot\qty\big[\grad\nu(\overline{x})+\grad{\nu}(\overline{y})]} \\
			&\quad\lesssim_{A'} \! B' \qty(\mu^\frac{1}{2}(\overline{x})+\mu^\frac{1}{2}(\overline{y})+\abs{\overline{x}-\overline{y}}^\frac{1}{2})\qty\Big(2\abs{\grad\nu(\overline{x})}+\abs{\grad{\nu}(\overline{x})-\grad{\nu}(\overline{y})}) \\
			&\quad\lesssim_{A'} \! (B')^2\qty[ \lambda^\frac{1}{2}(\overline{x})\abs{\nu}^\frac{1}{2}(\overline{x}) + \lambda^\frac{1}{2}(\overline{x})\abs{\overline{x}-\overline{y}}^\frac{1}{2} + \abs{\overline{x}-\overline{y}}].
		\end{split}
	\end{equation*}
	Thus, one has
	\begin{equation}
		\abs{\lambda(\overline{x})-\lambda(\overline{y})} \lesssim_{A'} \abs{\overline{x}-\overline{y}}^\frac{2}{3} \qty(\lambda^\frac{1}{3}(\overline{x}) + \abs{\overline{x}-\overline{y}}^\frac{1}{3}) + \lambda^\frac{1}{2}(\overline{x})\qty(\abs{\overline{x}-\overline{y}}^\frac{1}{2} + \abs{\nu}^\frac{1}{2}(\overline{x})).
	\end{equation}
	In particular, it follows that $\abs{\overline{x}-\overline{y}} \lesssim \lambda(\overline{x}) \implies \lambda(\overline{y}) \lesssim \lambda(\overline{x})$.
	
	Now, fix $\overline{x} \in \Gmt$ and take a cylinder $\calC_{\overline{x}}$ centered at $\overline{x}$ with radius $\varepsilon\lambda(\overline{x})$ and height $2\varepsilon\lambda(\overline{x})$, whose vertical axis is transversal (normal) to $\Gmt$. The compactness of $\Gmt$ and the slowly varying property of $\lambda$ imply that $\Ombd $ can be covered by finitely many such cylinders. The Besicovitch covering theorem ensures that each point belongs to at most $N_d$ such cylinders, where $N_d$ is a dimensional constant. Therefore, it suffices to show that
	\begin{equation}
		J_{\restriction_{\calC_{\overline{x}}}} \lesssim_{A'} B' \scD_{\restriction_{\calC_{\overline{x}}}}.
	\end{equation}
	It follows from the definition of $\lambda$ that
	\begin{equation*}
		\abs{\nu(x)} \lesssim B' \lambda^\frac{3}{2}(\overline{x}) \qand \abs{\grad\nu(x)} \lesssim B' \lambda^\frac{1}{2}(\overline{x}) \qfor x \in \calC_{\overline{x}}.
	\end{equation*}
	The fact that $\mu = \abs{\nu}$ on $\Gmt$ implies
	\begin{equation*}
		\mu(x) \simeq_{A'} \lambda(\overline{x}) \qin \calC_{\overline{x}}.
	\end{equation*}
	One also has the following improved bound in $\calC_{\overline{x}}$:
	\begin{equation*}
		\abs{\nu(x)} \le \abs{\nu(\overline{x})} + \norm{\grad\nu}_{L^\infty(\calC_{\overline{x}})}\abs{x-\overline{x}} \le \abs{\nu(\overline{x})} + \Order{A'}{B'}\lambda^\frac{1}{2}(\overline{x})\delta(x).
	\end{equation*}
	Moreover, one can derive the bound for $\vw$ by:
	\begin{equation*}
		\scD_{\restriction_{\calC_{\overline{x}}}} \gtrsim \int_{\qty{\delta \simeq \lambda(\overline{x})} \cap \calC_{\overline{x}}} \mu^{\alpha}\kappa^{-1}\abs{\vw}^2 \dd{x} \gtrsim \lambda^{d+\alpha}(\overline{x})\fint_{\qty{\delta \simeq \lambda(\overline{x})}\cap\calC_{\overline{x}}} \abs{\vw}^2 \dd{x},
	\end{equation*}
	which yields that
	\begin{equation}\label{point est w}
		\begin{split}
			\abs{\vw(x)} &\lesssim \qty(\fint_{\qty{\delta \simeq \lambda(\overline{x})}\cap\calC_{\overline{x}}} \abs{\vw}^2 \dd{x})^\frac{1}{2} + \norm{\grad\vw}_{L^\infty}\lambda(\overline{x}) \\
			&\lesssim (\scD_{\restriction_{\calC_{\overline{x}}}})^{\frac{1}{2}}\lambda^{-\frac{\alpha+d}{2}}(\overline{x}) + B'\lambda(\overline{x}),
		\end{split}
	\end{equation}
	for all $x \in \calC_{\overline{x}}$.
	Recall the definition of $\lambda$ in \eqref{def lambda}, one can roughly consider two main cases: either the main contributions to $\lambda$ come from the first term on the right, or from the last two items. We call them the \emph{large-$\nu$ scenario} and the \emph{small-$\nu$ scenario}, respectively. 
	
	\paragraph*{Scenario 1. $\nu$ is large}
		
		At present, one has $\lambda(\overline{x}) \simeq \abs{\nu(\overline{x})}$, and, in particular,
		\begin{equation}\label{case 1 nu est}
			\abs{\nu(\overline{x})} \gtrsim \qty((B')^{-1}\abs{\nu(\overline{x})})^\frac{2}{3} \implies B'\abs{\nu(\overline{x})}^\frac{1}{2} \gtrsim 1.
		\end{equation}
		Thus, it can be derived from the pointwise bound \eqref{point est w} that
		\begin{equation}\label{J case 1 first est}
			J_{\restriction_{\calC_{\overline{x}}}} \lesssim \int_{\calC_{\overline{x}}} \mu^\alpha \abs{\vw} \dd{x} \lesssim \lambda^{d+\alpha}(\overline{x})\qty((\scD_{\restriction_{\calC_{\overline{x}}}})^{\frac{1}{2}}\lambda^{-\frac{\alpha+d}{2}}(\overline{x}) + B'\lambda(\overline{x})).
		\end{equation}
		On the other hand, as $\lambda(\overline{x}) \simeq \abs{\nu(x)}$, one has the coercivity estimate
		\begin{equation*}
			\scD_{\restriction_{\calC_{\overline{x}}}} \ge \int_{\calC_{\overline{x}}} \mu^{\alpha-1}\nu^2 \dd{x} \gtrsim \lambda^{\alpha+d+1}(\overline{x}),
		\end{equation*}
		which, together with \eqref{J case 1 first est}, yield that
		\begin{equation*}
			J_{\restriction_{\calC_{\overline{x}}}} \lesssim_{A'} B' \scD_{\restriction_{\calC_{\overline{x}}}} + \lambda^{-\frac{1}{2}}(\overline{x})\scD_{\restriction_{\calC_{\overline{x}}}}.
		\end{equation*}
		Therefore, the bound \eqref{case 1 nu est} implies that
		\begin{equation}\label{J case 1 est}
			J_{\restriction_{\calC_{\overline{x}}}} \lesssim_{A'} B' \scD_{\restriction_{\calC_{\overline{x}}}}.
		\end{equation}
		
		\paragraph*{Scenario 2. $\nu$ is small} 
		
		Here one encounters two subcases, either $\lambda(\overline{x}) \approx \qty((B')^{-1}\abs{\nu}(\overline{x}))^\frac{2}{3}$ or $\lambda(\overline{x}) \approx \qty((B')^{-1}\abs{\grad\nu}(\overline{x}))^2$. Fortunately, for both situations, $\scD_{\restriction_{\calC_{\overline{x}}}}$ admits the coercivity estimate:
		\begin{equation}\label{coercivity cD case 2}
			\scD_{\restriction_{\calC_{\overline{x}}}} \ge \int_{\calC_{\overline{x}}} \mu^{\alpha-1}\nu^2 \dd{x} \gtrsim (B')^2 \lambda^{\alpha+d+2}(\overline{x}).
		\end{equation}
		Indeed, when $\lambda(\overline{x}) \approx \qty((B')^{-1}\abs{\grad{\nu}(\overline{x})})^2$, there exists a generic portion of $\calC_{\overline{x}}$, in which the following estimate holds:
		\begin{equation}\label{case2 point est nu}
			\abs{\nu}(x) \gtrsim B'\lambda^{\frac{3}{2}}(\overline{x}).
		\end{equation} 
		Actually, it is routine to drive that
		\begin{equation}\label{est nu point diff}
			\abs{\nu(x)-\nu(\overline{x})} \le \abs{(x-\overline{x})\vdot\grad{\nu}(\overline{x})} + \Order{A'}{B'}\abs{x-\overline{x}}\qty(\abs{\nu}^\frac{1}{2}(\overline{x}) + \abs{x-\overline{x}}^\frac{1}{2}).
		\end{equation}
		Since $\grad{\nu}(\overline{x})$ is a fixed non-vanishing vector, there exists a generic portion of the cylinder $\calC_{\overline{x}}$ (i.e. those points satisfying $(x-\overline{x}) \not\perp \grad{\nu}(\overline{x})$ and $\abs{x-\overline{x}} \simeq \lambda(\overline{x})$), in which there holds:
		\begin{equation*}
			\abs{\nu(x)-\nu(\overline{x})} \gtrsim B'\lambda^\frac{3}{2}(\overline{x}).
		\end{equation*}
		The smallness of $\abs{\nu(\overline{x})}$ implies \eqref{case2 point est nu}. On the other hand, given $\lambda(\overline{x}) \approx \qty((B')^{-1}\abs{\nu}(\overline{x}))^\frac{2}{3}$, the bound \eqref{coercivity cD case 2} can also be shown from \eqref{est nu point diff}.	With the coercivity bound \eqref{coercivity cD case 2}, one can refine \eqref{point est w} to
		\begin{equation*}
			\abs{\vw(x)} \lesssim \lambda^{-\frac{\alpha+d}{2}}(\overline{x})(\scD_{\restriction_{\calC_{\overline{x}}}})^\frac{1}{2} \qin \calC_{\overline{x}}.
		\end{equation*}
		Since \eqref{est nu point diff} also implies that $\abs{\nu(x)} \lesssim B' \lambda^\frac{3}{2}(\overline{x})$, one can estimate $J_{\restriction_{\calC_{\overline{x}}}}$ by:
		\begin{equation}\label{J case 2 est}
			\begin{split}
				J_{\restriction_{\calC_{\overline{x}}}} &= \int_{\calC_{\overline{x}}} \mu^{\alpha-3}\abs{\nu}^3\abs{\vw} \dd{x} \\
				&\lesssim \int_{\calC_{\overline{x}}} \mu^{\alpha-2} \qty(B'\lambda^\frac{3}{2}(\overline{x}))^2 \abs{\vw} \dd{x} \\
				&\lesssim_{A'} \lambda^{\alpha-2+3+d}(\overline{x})(B')^2\lambda^{-\frac{\alpha+d}{2}}(\overline{x})(\scD_{\restriction_{\calC_{\overline{x}}}})^\frac{1}{2} \\
				&\lesssim_{A'} B' \scD_{\restriction_{\calC_{\overline{x}}}},
			\end{split}
		\end{equation}
		whose last inequality follows from the coercivity bound \eqref{coercivity cD case 2}.
	
	In conclusion, the energy estimate \eqref{energy est fD cD} follows from the combination of \eqref{est err}, \eqref{def I1 I2 I3}, \eqref{est I1}-\eqref{goal relation mu nu}, \eqref{est J Om int}, \eqref{J case 1 est}, and \eqref{J case 2 est}.
	
	\end{proof}

	\section{A Priori Estimates}\label{sec energy est}
	
	Let $k \ge 1$ be an integer. The objective of this section is to establish the uniform control of $\fkH^{2k}$ norms of $(q, \vv, \sigma)$ using the initial data and control parameters. The main theorem of this section is presented in \S\ref{sec energies and control para}. Before that, we will discuss some heuristics and technical preparations.
	
	\subsection{Simplified Linear Systems}
	
	Consider the following simplified linearized system of \eqref{euler equiv2} with source terms:
	\begin{equation}\label{simple lin sys}
		\begin{cases*}
			\Dt s + \grad_{\vw} q + \beta q (\divergence\vw) = f, \\
			\Dt \vw + \sigma \grad{s} = \vb*{g},
		\end{cases*}
	\end{equation}
	where $s$ and $\vw$ are the linearized variables of $q$ and $\vv$, respectively. The above system can be regarded as the linearization around a background solution with constant velocity and entropy. Define the $L^2$-based energy functional for \eqref{simple lin sys} by
	\begin{equation}
		\cE (s, \vw) \coloneqq \frac{1}{2}\int_{\Omt} q^{\alpha-1} \abs{s}^2 + \beta q^{\alpha}\sigma^{-1}\abs{\vw}^2 \dd{x}.
	\end{equation}
	Then, it follows from the computations similar to \eqref{est energy lin2} that
	\begin{equation}\label{est simp lin}
		\dv{t}\cE = \int_{\Omt}\qty[ q^{\alpha-1}(sf) + \beta q^{\alpha} \sigma^{-1} (\vw\vdot \vb*{g})] \dd{x} + \order{\norm{\grad\vv}_{L^\infty}}\cE.
	\end{equation}
	The estimate \eqref{est simp lin} will be used to control the top order norms.
	
	Note that the direct usage of $(s, \vw) \coloneqq (q^k\pd^{2k}q, q^k\pd^{2k}\vv)$ may cause the source terms $(f, \vb*{g})$ in \eqref{simple lin sys} uncontrollable. Since the compressible Euler equations can lead to wave-type equations for $q$, one can try to use $\Dt^2 q$ to represent $q\pd^2 q$ at a leading order. Indeed, observe that
	\begin{equation}\label{eqn Dt2 q}
		\Dt^2 q = \beta q\sigma \laplace{q} + \beta q \grad{q}\vdot\grad{\sigma} + \beta q \qty(\beta(\divergence\vv)^2 + \tr\qty[(\grad\vv)^2]).
	\end{equation}
	Given the adapted elliptic estimates, it is hopeful to use $\Dt^{2k} q$ as a replacement of $q^k\pd^{2k}q$ in the energy estimates. It clear that the commutator $\comm{\Dt}{\Dt^{2k}} \equiv 0$. Thus, applying $\Dt^{2k} q$ can preserve much more symmetry of the linearized system. Similarly, one can calculate that
	\begin{equation}\label{eqn Dt2 v}
		\Dt^2 \vv = \beta q\sigma \grad(\divergence\vv) + \sigma\qty[\beta(\divergence\vv)+(\grad\vv)^*]\grad{q},
	\end{equation}
	where $(\grad\vv)^*$ represents  the transpose of the second order tensor $(\grad\vv)$. Therefore, it is possible to utilize $\Dt^{2k} \vv$ to control  $q^k\pd^{2k-1}(\divergence{\vv}) $. Thanks to the div-curl estimates, it suffices to control further $q^k \pd^{2k-1}(\Curl\vv)$ and $q^k\pd^{2k}\sigma$. Because both $(\Curl{\vv})$ and $\sigma$ satisfy transport equations, the estimates of them would not be the main obstacles. 
	
	In order to control the remainders, we first consider the interpolations adapted to the physical vacuum problems.
	
	\subsection{Interpolation Inequalities}
	Here we collect some results concerning the interpolations, whose derivations can be found in \cite[\S2.5]{Ifrim-Tataru2024}. Suppose that $\Omega$ is a $C^{1+}$ domain with a non-degenerate defining function $r \in C^{1+}(\overline{\Omega})$. Then, there hold:
	\begin{prop}\label{prop interp}
		Assume that $1 \le p_0, p_m \le \infty$, $\lambda_0 > - \frac{1}{p_0}$, $\lambda_m > - \frac{1}{p_m}$ and
		\begin{equation*}
			m - \lambda_m - \frac{d}{p_m} > -\lambda_0 - \frac{d}{p_0}.
		\end{equation*}
		Define
		\begin{equation*}
			\theta_j \coloneqq \frac{j}{m}\qc \frac{1}{p_j} \coloneqq \frac{\theta_j}{p_m} + \frac{1-\theta_j}{p_0} \qc \lambda_j \coloneqq \theta_j \lambda_m + (1-\theta_j)\lambda_0.
		\end{equation*}
		Then, for $0 < j < m$, it holds that
		\begin{equation*}
			\norm*{r^{\lambda_j}\pd^j f}_{L^{p_j}} \lesssim \norm*{r^{\lambda_0}f}_{L^{p_0}}^{1-\theta_j}\cdot\norm*{r^{\lambda_m}\pd^m f}_{L^{p_m}}^{\theta_j}.
		\end{equation*}
	\end{prop}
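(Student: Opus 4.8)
The plan is to reduce the global weighted estimate to a one–dimensional model inequality by localization, and then to prove the model inequality by a multi–point Taylor expansion followed by optimization in a free scale parameter.

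First I would use a partition of unity subordinate to a covering of $\overline{\Omega}$ by an interior region and finitely many boundary charts. On the interior region the defining function $r$ is bounded from above and from below, so the weights are irrelevant and the conclusion is the classical Gagliardo--Nirenberg inequality. In each boundary chart, a $C^{1+}$ change of variables flattens $\partial\Omega$ to a piece of $\{x_d=0\}$ and makes $r$ comparable to $x_d$; the Leibniz rule then lets one replace $r$ by $x_d$ at the cost of lower–order terms that are themselves of the same type and are absorbable by an induction on $m$. Thus everything reduces to the half–space model $\Omega=\{x_d>0\}$ with $r=x_d$. There, tangential derivatives are handled slice by slice in $x_d$ by the unweighted Gagliardo--Nirenberg inequality in $\R^{d-1}$ (the weight $x_d^{\lambda}$ being constant on each slice), and the genuinely new ingredient is the behaviour of the normal derivative, for which one fixes $x'$ and is left with a purely one–dimensional statement on $(0,\infty)$ with weight $x^{\lambda}$.

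For the one–dimensional inequality $\|x^{\lambda_j}f^{(j)}\|_{L^{p_j}(0,\infty)}\lesssim \|x^{\lambda_0}f\|_{L^{p_0}}^{1-\theta_j}\|x^{\lambda_m}f^{(m)}\|_{L^{p_m}}^{\theta_j}$ I would argue at a fixed $x>0$ directly for general $0<j<m$. Writing Taylor's formula with integral remainder on the intervals $[x,x+\rho_\ell\eta]$ for $m$ distinct $\rho_\ell\in(0,1]$ and any scale $\eta\in(0,cx]$ — small enough that the weight is comparable throughout the interval — and inverting the resulting Vandermonde system isolates $f^{(j)}(x)$; averaging in $\eta$ over $[\eta/2,\eta]$ to convert pointwise values of $f$ into $L^{p_0}$ averages, and applying Hölder to the remainder, yields
\begin{equation*}
 |f^{(j)}(x)|\ \lesssim\ \eta^{-j-\frac{1}{p_0}}\,\|f\|_{L^{p_0}(x,x+c\eta)}\ +\ \eta^{m-j-\frac{1}{p_m}}\,\|f^{(m)}\|_{L^{p_m}(x,x+c\eta)},\qquad 0<\eta\le cx .
\end{equation*}
Optimizing the free parameter $\eta$ balances the two terms and produces the interpolation (product) form rather than a mere sum of the two extreme norms; the exponents $\theta_j=j/m$, $\tfrac{1}{p_j}$ and $\lambda_j$ are precisely those for which the powers of $\eta$, and then of $x$ after multiplying by $x^{\lambda_j}$ and integrating in $x$, cancel. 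The final $x$–integration (equivalently, a dyadic summation of the localized pieces) closes because $\tfrac{1}{p_j}$ is the matching convex combination of $\tfrac{1}{p_0}$ and $\tfrac{1}{p_m}$, so the Hölder inequality used in the slicing variable is an equality of exponents.

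The main obstacle is the boundary layer $x\downarrow 0$: there the optimal scale $\eta_*$ can exceed the admissible range $(0,cx]$ — this happens exactly when $\|x^{\lambda_0}f\|$ is large compared with $\|x^{\lambda_m}f^{(m)}\|$ at scale $x$ — so one is forced to take $\eta=cx$ and only one of the two terms survives. Bounding $x^{\lambda_j}|f^{(j)}(x)|$ by $\|x^{\lambda_0}f\|_{L^{p_0}}$ alone in that regime with an integrable power of $x$ is possible precisely because of the strict gap hypothesis $m-\lambda_m-\tfrac{d}{p_m}>-\lambda_0-\tfrac{d}{p_0}$, which forces the power of the distance produced in the boundary layer to be summable; this is the step I expect to require the most care. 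The two remaining hypotheses $\lambda_0>-1/p_0$ and $\lambda_m>-1/p_m$ enter only softly, guaranteeing that the weighted integrals on the intervals $(x,x+c\eta)$ are finite and comparable across each interval. Once the one–dimensional estimate is established, reassembling the tangential and normal contributions in the half–space and undoing the localization gives Proposition \ref{prop interp}.
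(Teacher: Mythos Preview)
The paper does not actually prove this proposition: immediately before stating it, the authors write ``Here we collect some results concerning the interpolations, whose derivations can be found in \cite[\S2.5]{Ifrim-Tataru2024}'', and no argument is given. So there is no in-paper proof to compare against; the result is imported wholesale from the cited reference.

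That said, your strategy is essentially the standard one and is in line with how such inequalities are established in \cite{Ifrim-Tataru2024}. The reduction via boundary charts to the half-space with weight $x_d$, the separation of tangential directions (unweighted Gagliardo--Nirenberg on slices) from the normal direction, and the one-dimensional Taylor/Vandermonde argument with a free scale $\eta\le cx$ followed by optimization are exactly the right ingredients. You have also correctly isolated the one genuinely delicate point: when the optimizer $\eta_*$ exceeds $cx$ one is forced to the endpoint $\eta=cx$, and the resulting excess power of $x$ is integrable near the boundary precisely under the strict scaling gap $m-\lambda_m-\tfrac{d}{p_m}>-\lambda_0-\tfrac{d}{p_0}$. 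One small caution: in the boundary-layer regime you do not get a bound by $\|r^{\lambda_0}f\|_{L^{p_0}}$ alone with a summable power unless you also use the $\|r^{\lambda_m}\pd^m f\|_{L^{p_m}}$ piece via the very definition of the regime (i.e., the constraint that balances the two terms at $\eta=cx$); keeping both contributions and summing dyadically in $x$ is the clean way to close.
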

	\begin{prop}\label{interp L infty}
		Let $1 < p_m < \infty$, $-\frac{1}{p_m} < \lambda_m < m- \frac{d}{p_m}$, 
		\begin{equation*}
			\theta_j \coloneqq \frac{j}{m}\qc \frac{1}{p_j} \coloneqq \frac{\theta_j}{p_m}, \qand \lambda_j \coloneqq \theta_j \lambda_m.
		\end{equation*}
		Then, it holds for $0 < j < m$ that
		\begin{equation*}
			\norm*{r^{\lambda_j}\pd^j f}_{L^{p_j}} \lesssim \norm{f}_{L^\infty}^{1-\theta_j}\cdot \norm*{r^{\lambda_m}\pd^m f}_{L^{p_m}}^{\theta_j}.
		\end{equation*}
	\end{prop}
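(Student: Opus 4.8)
The plan is to obtain Proposition~\ref{interp L infty} as the $(p_0,\lambda_0)=(\infty,0)$ endpoint of Proposition~\ref{prop interp}. For any \emph{finite} $p_0\in(1,\infty)$ the choice $\lambda_0=0$ obeys $\lambda_0>-1/p_0$, and the structural assumption $m-\lambda_m-\tfrac{d}{p_m}>0$ gives $m-\lambda_m-\tfrac{d}{p_m}>-\lambda_0-\tfrac{d}{p_0}=-\tfrac{d}{p_0}$; hence Proposition~\ref{prop interp} applies. With $\lambda_0=0$ its interpolated parameters (which I denote by a superscript $(p_0)$ to distinguish them from those of Proposition~\ref{interp L infty}) are $\lambda_j^{(p_0)}=\theta_j\lambda_m$, independent of $p_0$ and equal to the $\lambda_j$ in the statement, and $p_j^{(p_0)}$ with $\tfrac{1}{p_j^{(p_0)}}=\tfrac{\theta_j}{p_m}+\tfrac{1-\theta_j}{p_0}$, which decreases to $\tfrac{\theta_j}{p_m}=\tfrac{1}{p_j}$ as $p_0\to\infty$. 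I would then let $p_0\to\infty$: since $\Omega$ is bounded, $\norm{f}_{L^{p_0}(\Omega)}\le\abs{\Omega}^{1/p_0}\norm{f}_{L^\infty(\Omega)}\to\norm{f}_{L^\infty}$, and on the finite-measure space $\Omega$ one has $\norm{r^{\lambda_j}\pd^j f}_{L^{p_j^{(p_0)}}(\Omega)}\to\norm{r^{\lambda_j}\pd^j f}_{L^{p_j}(\Omega)}$ by dominated/monotone convergence (splitting $\Omega$ into $\{\abs{r^{\lambda_j}\pd^j f}\le 1\}$ and its complement). The one nontrivial point is that the constant furnished by Proposition~\ref{prop interp} must stay bounded as $p_0\to\infty$ with $\lambda_0=0$ fixed; this holds because the configuration $(p_0,\lambda_0)=(\infty,0)$ is excluded there only by the proof method and not by a genuine degeneration of the constant --- we remain \emph{strictly above the critical scaling} ($m-\lambda_m-\tfrac{d}{p_m}>0$), so no BMO-type loss occurs, and one checks from the proof in \cite[\S2.5]{Ifrim-Tataru2024} that the constant depends continuously on the parameters there.

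To keep the argument self-contained, I would instead re-run the standard dyadic (Whitney) scaling proof directly with $p_0=\infty$, $\lambda_0=0$. Via a partition of unity, reduce to (a) interior pieces supported where $r\simeq 1$, on which the weights are harmless and the claim is the classical \emph{homogeneous} Gagliardo--Nirenberg inequality on $\R^d$ for compactly supported functions (no additive term, since we cut off), and (b) collar pieces near $\pd\Omega$, which after flattening reduce to the model region $\{0<x_d<1,\ \abs{x'}<1\}$ with $r\simeq x_d$. On a collar piece, decompose into the Whitney family $A_k\simeq\{2^{-k-1}<x_d<2^{-k}\}$, subdivide each $A_k$ into cubes $Q$ of side $\simeq 2^{-k}$ on which $x_d^{\lambda}\simeq 2^{-k\lambda}$ is essentially constant, and rescale each $Q$ to the unit cube (so $\pd\mapsto 2^{k}\pd$). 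Applying homogeneous Gagliardo--Nirenberg on the rescaled cube (through a cutoff on $2Q$) and scaling back, the powers of $2^{-k}$ from the weight and from the rescaling cancel exactly, because
\begin{equation*}
\lambda_j-j+\tfrac{d}{p_j}+\theta_j\Bigl(m-\lambda_m-\tfrac{d}{p_m}\Bigr)=0
\end{equation*}
by the relations $\lambda_j=\theta_j\lambda_m$, $j=\theta_j m$, $\tfrac{1}{p_j}=\tfrac{\theta_j}{p_m}$. Finally raise to the power $p_j$ and sum over $Q$ and $k$, using $\theta_j p_j=p_m$ (again from $\tfrac{1}{p_j}=\tfrac{\theta_j}{p_m}$) and the bounded overlap of the doubled cubes $2Q$ to bound the sum of the main terms by $\norm{f}_{L^\infty}^{(1-\theta_j)p_j}\norm{r^{\lambda_m}\pd^m f}_{L^{p_m}(\Omega)}^{p_m}$; the lower-order Leibniz errors produced by the cutoffs involve only $\pd^l f$ with $l<m$ and are absorbed by an induction on $m$ (the case $m=1$ being vacuous).

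The hard part is exactly the endpoint character. Since $(p_0,\lambda_0)=(\infty,0)$ sits on the boundary $\lambda_0=-1/p_0$ of the admissible range of Proposition~\ref{prop interp}, one cannot cite it verbatim; and in the direct proof one cannot tolerate the additive lower-order term that Gagliardo--Nirenberg normally carries on a bounded domain, because after rescaling a Whitney cube of scale $2^{-k}$ back such a term acquires the factor $2^{k\theta_j(m-\lambda_m-d/p_m)}$, which \emph{diverges} as $k\to\infty$ precisely because $m-\lambda_m-\tfrac{d}{p_m}>0$. Hence the scaling homogeneity must be exploited honestly --- via compactly supported cutoffs and the exact cancellation above --- rather than merely absorbed, and it is this, together with the careful bookkeeping of the Whitney summation (or, in the first approach, the uniform-in-$p_0$ control of the constant), where the real work lies.
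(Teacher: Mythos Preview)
The paper does not give its own proof of this proposition: it is stated as part of a list of interpolation results whose ``derivations can be found in \cite[\S2.5]{Ifrim-Tataru2024}''. So there is no in-paper argument to compare against.

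Your second approach --- the direct Whitney/dyadic decomposition with the homogeneous Gagliardo--Nirenberg inequality on each rescaled cube --- is the standard one and is essentially what the cited reference does. The scaling cancellation you identify is correct, as is the observation that $\theta_j p_j = p_m$ makes the $l^{p_j}$-sum over Whitney cubes collapse to the $L^{p_m}$ norm of $r^{\lambda_m}\pd^m f$. Your first approach (sending $p_0\to\infty$ in Proposition~\ref{prop interp}) is fine as a heuristic but, as you yourself note, rests on a uniformity-of-constants claim that ultimately reduces to redoing the Whitney proof anyway; so the second route is the honest one. One small point: in the collar argument you should be explicit that the cutoff on $2Q$ produces commutator terms involving $\pd^l f$ for $0\le l<m$, and that for $l=0$ the term is controlled directly by $\norm{f}_{L^\infty}$ (no induction needed there), while for $1\le l<m$ the induction hypothesis applies with the \emph{same} endpoint data $(L^\infty, L^{p_m})$, so the bootstrap closes.
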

	
	\begin{prop}\label{interp C1/2}
		Let $1 < p_m < \infty$, $-\frac{1}{p_m} < \lambda_m < m - \frac{1}{2}-\frac{d}{p_m}$,
		\begin{equation*}
			\theta_j \coloneqq \frac{j-\frac{1}{2}}{m-\frac{1}{2}}\qc \frac{1}{p_j} \coloneqq \frac{\theta_j}{p_m}, \qand \lambda_j \coloneqq \theta_j\lambda_m.
		\end{equation*}
		Then, it holds for $0 < j < m$ that
		\begin{equation*}
			\norm*{r^{\lambda_j}\pd^j f}_{L^{p_j}} \lesssim \norm{f}_{\dot{C}^{\frac{1}{2}}}^{1-\theta_j} \cdot \norm*{r^{\lambda_m}\pd^m f}_{L^{p_m}}^{\theta_j}.
		\end{equation*}
	\end{prop}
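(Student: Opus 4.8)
The plan is to read Proposition \ref{interp C1/2} as a weighted Gagliardo--Nirenberg inequality whose low-regularity endpoint is the homogeneous H\"older seminorm $\dot{C}^{\frac12}=\dot B^{1/2}_{\infty,\infty}$ in place of a weighted Lebesgue norm, and to prove it by reduction to the model half-space. First I would localize: by the $C^{1+}$ regularity of $\pd\Omega$, a finite cover of $\overline\Omega$, and a subordinate partition of unity, each patch either sits in the interior of $\Omega$, where $r$ is comparable to a positive constant, or is straightened by a $C^{1+}$ chart onto a neighborhood of $\{x_d=0\}$ in $\R^d_+$ with $r\simeq x_d$; since the chart distorts $r$ and the finitely many derivatives involved only by bounded factors, the three quantities $\norm{f}_{\dot{C}^{\frac12}}$, $\norm{r^{\lambda_m}\pd^m f}_{L^{p_m}}$ and $\norm{r^{\lambda_j}\pd^j f}_{L^{p_j}}$ transfer with equivalent constants, so it suffices to treat $\Omega=\R^d_+$, $r(x)=x_d$. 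The unweighted model estimate (which also covers the interior patches) I would obtain from the Littlewood--Paley square function: using $\norm{P_\ell g}_{L^\infty}\lesssim 2^{-\ell/2}\norm{g}_{\dot{C}^{\frac12}}$ and $\abs{\pd^j P_\ell g}\lesssim 2^{\ell(j-m)}M(\pd^m g)$, one bounds $\sum_\ell\abs{\pd^j P_\ell g}^2$ by splitting at the frequency where $2^{\ell(j-1/2)}\norm{g}_{\dot{C}^{\frac12}}$ meets $2^{\ell(j-m)}M(\pd^m g)$, which yields the pointwise bound $\big(\sum_\ell\abs{\pd^j P_\ell g}^2\big)^{1/2}\lesssim\norm{g}_{\dot{C}^{\frac12}}^{1-\theta_j}M(\pd^m g)^{\theta_j}$ with $\theta_j=\tfrac{j-1/2}{m-1/2}$; taking the $L^{p_j}$ norm, using H\"older with $\tfrac1{p_j}=\tfrac{\theta_j}{p_m}$ (the $\dot{C}^{\frac12}$ factor coming out in $L^\infty$), and invoking the Hardy--Littlewood maximal inequality ($p_m>1$) gives $\norm{\pd^j g}_{L^{p_j}}\lesssim\norm{g}_{\dot{C}^{\frac12}}^{1-\theta_j}\norm{\pd^m g}_{L^{p_m}}^{\theta_j}$. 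This computation is where the stated form of $\theta_j$ and $p_j$ is forced.

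On $\R^d_+$ I would decompose dyadically in the distance to $\{x_d=0\}$: write $\R^d_+=\bigcup_k\{2^{-k-1}<x_d<2^{-k}\}$, split each slab into cubes $Q$ of side $\simeq 2^{-k}$ with bounded overlap after a fixed dilation, and note $r\simeq 2^{-k}$ on $Q$. Rescaling $Q$ to the unit cube makes the weight inessential, one applies the unweighted estimate above on the slightly dilated cube, and then restores the weight $\simeq 2^{-k\lambda_j}$; matching the $2^{-k}$-powers one checks that the choices $\theta_j=\tfrac{j-1/2}{m-1/2}$, $\tfrac1{p_j}=\tfrac{\theta_j}{p_m}$, $\lambda_j=\theta_j\lambda_m$ are precisely those making the principal term scale-invariant shell by shell --- this is also where the hypothesis $\lambda_m<m-\tfrac12-\tfrac d{p_m}$ (equivalently, admissibility of the frequency split) enters. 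Since $\theta_j p_j=p_m$, summing the $p_j$-th powers over the cubes and using bounded overlap yields $\norm{r^{\lambda_j}\pd^j f}_{L^{p_j}(\R^d_+)}^{p_j}\lesssim\norm{f}_{\dot{C}^{\frac12}}^{(1-\theta_j)p_j}\norm{r^{\lambda_m}\pd^m f}_{L^{p_m}(\R^d_+)}^{p_m}$, which is the claim after taking $p_j$-th roots. (An equivalent route runs the square-function argument directly with the power weight $x_d^{p_j\lambda_j}$, reflecting across $\{x_d=0\}$ and invoking the weighted square function and Fefferman--Stein inequalities in the Muckenhoupt range, larger weights being reduced to it by first peeling off integer powers of $r$.)

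The step I expect to be the main obstacle is the interaction of the $\dot{C}^{\frac12}$ endpoint with the boundary: it is an unweighted seminorm of a function on $\overline\Omega$, not an $L^p$-based quantity adapted to the weight, so neither the per-shell reassembly nor the reflection is automatic --- a lower-order remainder appears controlled only by $\norm{f}_{\dot{C}^{\frac12}}$ with a weight power mismatched to the principal term, which near $\{x_d=0\}$ threatens to accumulate. I would resolve this by exploiting that $f\in\dot{C}^{\frac12}(\overline\Omega)$ has a continuous trace on $\pd\Omega$ with $\abs{f(x)-f(\overline x)}\lesssim\norm{f}_{\dot{C}^{\frac12}}\,r(x)^{\frac12}$ at the nearest boundary point $\overline x$: subtracting a controlled extension of that trace replaces $f$ by a remainder vanishing at rate $r^{1/2}$, which places it in the setting of the weighted interpolation of Proposition \ref{prop interp} with the otherwise forbidden low-endpoint weight $\lambda_0=-\tfrac12$ rendered admissible by the vanishing, the trace part being absorbed by the same dyadic scheme; equivalently, one can optimize the frequency split shell-dependently so that on the shells nearest the boundary the $\norm{r^{\lambda_m}\pd^m f}_{L^{p_m}}$-part dominates and no remainder survives the summation. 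Once this bookkeeping is settled, H\"older's inequality in the summation/frequency index together with bounded overlap close the estimate, and undoing the partition of unity and the flattening returns Proposition \ref{interp C1/2}.
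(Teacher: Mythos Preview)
The paper does not supply its own proof of this proposition; it is stated among the interpolation results at the top of the subsection with the derivations deferred wholesale to \cite[\S2.5]{Ifrim-Tataru2024}. So there is no argument in the paper against which to match yours step by step.

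Your strategy --- flatten to the half-space, decompose dyadically into layers $\{r\simeq 2^{-k}\}$, run a Littlewood--Paley square-function argument for the unweighted Gagliardo--Nirenberg inequality with the $\dot C^{1/2}=\dot B^{1/2}_{\infty,\infty}$ endpoint, then reassemble using the scale-invariance of the parameter choice $\theta_j,\lambda_j,p_j$ --- is consistent with the layer-wise methods used in that reference and is a correct overall plan. You have also correctly located the genuine difficulty: on a bounded region the unweighted interpolation is not purely homogeneous and carries a lower-order remainder. In fact the statement as written is already saturated by affine functions on $\Omega$ (take $f(x)=x_1$, $m\ge2$, $j=1$: the left side is $\|r^{\lambda_1}\|_{L^{p_1}}>0$ while the right side vanishes), so some lower-order correction or polynomial normalization is implicitly understood; in the paper's applications this is harmless because the high-order factor is always part of the full $\fkH^{2k}$ norm, which already contains the missing low-frequency information.

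Your proposed fix --- subtract an extension of the boundary trace so that the remainder vanishes like $r^{1/2}$ and falls under the weighted interpolation of Proposition~\ref{prop interp} with the borderline endpoint --- is a workable idea in spirit, but as written it is the least developed part of your proposal: you would need to produce such an extension with controlled higher derivatives (so that it does not pollute the $\|r^{\lambda_m}\pd^m f\|_{L^{p_m}}$ side), and check that the borderline weight $\lambda_0=-\tfrac12$ really becomes admissible after the subtraction. Absent that, the cleanest route is the one you mention parenthetically: keep the frequency split shell-dependent and show directly that on the boundary-adjacent shells the high-derivative factor dominates, so the troublesome remainder never enters the sum.
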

	
	\begin{prop}\label{interp Ctilde1/2}
		Let $1 < p_m < \infty$, $\frac{m}{2}-1 < \lambda_m < m-\frac{1}{2}-\frac{d}{p_m}$, 
		\begin{equation*}
			\theta_j \coloneqq \frac{j}{m}\qc \frac{1}{p_j} \coloneq \frac{\theta_j}{p_m}, \qand \lambda_j \coloneqq \theta_j\lambda_m + (1-\theta_j)\qty(-\frac{1}{2}).
		\end{equation*}
		Then, it holds for $0 < j < m$ that
		\begin{equation*}
			\norm*{r^{\lambda_j}\pd^j f}_{L^{p_j}} \lesssim \norm{f}_{\wtC^{\frac{1}{2}}}^{1-\theta_j} \cdot \norm*{r^{\lambda_m}\pd^m f}_{L^{p_m}}^{\theta_j}.
		\end{equation*}
	\end{prop}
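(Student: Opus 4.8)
The plan is to localize the estimate to the Whitney scales of $\Omega$, where the weights are essentially constant and the inequality collapses to the classical Gagliardo--Nirenberg inequality, and then to reassemble --- the point being that the three exponents $\theta_j$, $p_j$, $\lambda_j$ have been chosen so that every power of the local length scale cancels in the leading term. By scaling invariance and a standard approximation I would first assume $f$ smooth with finite right-hand side.

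Concretely, I would fix a Whitney decomposition $\Omega=\bigcup_Q Q$ into dyadic cubes with $\ell(Q)\simeq\dist(Q,\pd\Omega)\simeq r\restriction_Q$ and bounded overlap, and rescale each $Q$ to the unit cube by $x\mapsto\ell(Q)^{-1}x$. Since $r\simeq\ell(Q)$ on $Q$, the weights $r^{\lambda_j}$ and $r^{\lambda_m}$ may be replaced there by the constants $\ell(Q)^{\lambda_j}$ and $\ell(Q)^{\lambda_m}$. The $\wtC^{1/2}$ endpoint will enter only through the oscillation bound $\sup_{x,y\in Q}\abs{f(x)-f(y)}\lesssim\ell(Q)^{1/2}\norm{f}_{\wtC^{1/2}}$, valid because the $\wtC^{1/2}$ seminorm scales like $\dot C^{1/2}$ and reflects $L^\infty$ variations on interior scales; this is the scaling-correct substitute for the $L^\infty$ endpoint of Gagliardo--Nirenberg, and it is the $\ell(Q)^{1/2}$ gain here that will supply the discrepancy between $\lambda_j$ and $\theta_j\lambda_m$. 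Applying the classical Gagliardo--Nirenberg inequality on the unit cube to a rescaled-and-recentered copy of $f$ (with the $L^\infty$--$L^{p_m}$ endpoints), undoing the rescaling, and multiplying through by $\ell(Q)^{\lambda_j}$, one is led to a local estimate whose leading term carries the power $\ell(Q)^{\lambda_j+\frac{1-\theta_j}{2}}$. The identities $j=\theta_j m$ and $d/p_j=\theta_j d/p_m$ make the scaling power in front of $\norm{f}_{\wtC^{1/2}}^{1-\theta_j}\norm{\pd^m f}_{L^{p_m}(Q)}^{\theta_j}$ vanish, and then $\lambda_j+\tfrac{1-\theta_j}{2}=\theta_j\lambda_m$, so the leading term is exactly $\norm{f}_{\wtC^{1/2}}^{1-\theta_j}\norm{r^{\lambda_m}\pd^m f}_{L^{p_m}(Q)}^{\theta_j}$. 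Taking $p_j$th powers, summing over $Q$, using $\theta_j p_j=p_m$ (so $\sum_Q\norm{r^{\lambda_m}\pd^m f}_{L^{p_m}(Q)}^{\theta_j p_j}=\norm{r^{\lambda_m}\pd^m f}_{L^{p_m}(\Omega)}^{p_m}$ up to the overlap constant), and H\"older's inequality will then reassemble the leading contribution into the asserted bound.

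The hard part will be the lower-order term. Gagliardo--Nirenberg on a \emph{bounded} cube is genuinely not homogeneous, and after rescaling and weighting it leaves a remainder of size $\ell(Q)^{\lambda_j+d/p_j-j+1/2}\norm{f}_{\wtC^{1/2}}$ on each cube, whose exponent $d/p_j-j$ is negative, so the naive cube-by-cube bound diverges as $\ell(Q)\to0$. One natural way to resolve this is to avoid the crude oscillation bound near $\pd\Omega$ and reduce to functions vanishing on $\pd\Omega$: subtract from $f$ a controlled extension of its trace, exploiting that the trace of an $H^{m,\lambda_m}$ function is in fact more regular than $C^{1/2}$ when $\lambda_m$ lies below $m-\tfrac12-\tfrac{d}{p_m}$, so that this extension is harmless; then for the boundary-vanishing remainder the Hardy-type bound $\abs{f(x)}\lesssim r(x)^{1/2}\norm{f}_{\wtC^{1/2}}$ near $\pd\Omega$ upgrades the oscillation estimate and lets the weights $r^{\lambda_j}$ absorb the Gagliardo--Nirenberg remainder over the thin collars $\{r\simeq 2^{-k}\}$. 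This is exactly where the hypothesis $\lambda_m>\tfrac m2-1$ is used: for $p_m=2$ it is equivalent to $\lambda_j>-\tfrac12$ for every $1\le j<m$, i.e.\ precisely the threshold making each weighted space meaningful for non-vanishing functions and the boundary-layer sums convergent. Once the remainder is absorbed in this way, it combines with the leading estimate above to give the claim, after which the non-integer cases follow by interpolation as in \S\ref{sec func spaces}.
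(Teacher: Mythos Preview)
The paper does not prove this proposition; it is quoted from \cite[\S2.5]{Ifrim-Tataru2024}. Your Whitney--cube framework and the scaling computation for the leading term are correct and are the standard route to such weighted Gagliardo--Nirenberg inequalities; you also correctly isolate the non-homogeneous remainder from the bounded-domain GN as the only real obstruction.

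The gap is in your fix for that remainder. You propose to subtract a ``controlled extension of the trace'', asserting that the trace is ``more regular than $C^{1/2}$'' under the hypothesis $\lambda_m<m-\tfrac12-\tfrac{d}{p_m}$. But the proposition controls only the two \emph{seminorms} $\norm{f}_{\wtC^{1/2}}$ and $\norm{r^{\lambda_m}\pd^m f}_{L^{p_m}}$, not a full $H^{m,\lambda_m}$ norm. From the $\wtC^{1/2}$ bound alone the boundary trace is exactly $C^{1/2}$ and no better; a $C^{1/2}$ trace admits no extension $g$ with $\pd^j g$ classically defined for $j\ge1$, so the contributions $\norm{r^{\lambda_j}\pd^j g}_{L^{p_j}}$ cannot be bounded by the right-hand side. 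Extracting extra trace regularity from the top-order control $\norm{r^{\lambda_m}\pd^m f}_{L^{p_m}}$ would require intermediate-derivative bounds --- precisely the content of the proposition --- so the argument is circular as written. And even granting $f\restriction_{\pd\Omega}=0$, the oscillation of $f$ on a Whitney cube $Q$ is still only $\lesssim\ell(Q)^{1/2}\norm{f}_{\wtC^{1/2}}$, so the GN remainder $\ell(Q)^{\lambda_j-j+d/p_j+1/2}\norm{f}_{\wtC^{1/2}}$ is unchanged; the pointwise Hardy bound $\abs{f}\lesssim r^{1/2}\norm{f}_{\wtC^{1/2}}$ does not improve this exponent.

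A way to close this (and closer to what the cited reference does) is to avoid the bounded-domain remainder altogether: on each Whitney cube extend $f-\bar f_Q$ to $\R^d$ by a Stein operator, which is bounded simultaneously on $L^\infty$ and on the homogeneous $\dot W^{m,p_m}$ seminorm of the unit cube, apply the \emph{homogeneous} Gagliardo--Nirenberg on $\R^d$ (no remainder), and restrict. The lower-order contamination then sits inside the $\norm{\pd^m(\cdot)}_{L^{p_m}}$ factor as $\norm{f-\bar f_Q}_{L^{p_m}(Q)}\lesssim\ell(Q)^{1/2+d/p_m}\norm{f}_{\wtC^{1/2}}$, and after weighting and raising to the $p_j$th power the resulting layer sums converge exactly under the lower hypothesis $\lambda_m>\tfrac m2-1$ (equivalently $\lambda_j p_j>-1$ for the worst $j=1$), which is where that condition genuinely enters.
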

	
	\begin{remark}
		Recall that the seminorm $\norm{f}_{\wtC^\frac{1}{2}}$ is defined by
		\begin{equation*}
			\norm{f}_{\wtC^{\frac{1}{2}}(\Omega)} \coloneqq \sup_{x, y \in \Omega}\ \frac{\abs{f(x)-f(y)}}{r^\frac{1}{2}(x) + r^{\frac{1}{2}}(y) + \abs{x-y}^\frac{1}{2}}.
		\end{equation*}
		For interpolations, it behaves formally like $\norm*{r^{-\frac{1}{2}}f}_{L^\infty}$. Such a seminorm will be used to overcome the deficiency of $q$-factors.
	\end{remark}
	
	\subsection{Constructing the Energy Functional}\label{sec construct energy}
	
	First, recall the scaling analysis of orders:
	\begin{itemize}
		\item $q$, $\vv$, and $\sigma$ have orders $-1$, $-\frac{1}{2}$, and $0$, respectively;
		\item $\pd_x$ and $\Dt$ have orders $+1$ and $+\frac{1}{2}$, respectively.
	\end{itemize}
	Define the uniformly-scaling $2k$-norm of $(q, \vv, \sigma)$ by
	\begin{equation}
		\norm{(q, \vv, \sigma)}_{{\mathscr{H}}^{2k}_{\sharp}} \coloneqq \qty(\norm{q}_{H^{2k, k +\frac{\alpha-1}{2}}}^2 + \norm{\vv}_{H^{2k, k+\frac{\alpha}{2}}}^2 + \norm{\sigma}_{H^{2k, k+\frac{\alpha+1}{2}}}^2)^\frac{1}{2}.
	\end{equation}
	Note that the norms of  $\sigma$ here contain more weights than those used in the state spaces. Hence, there holds
	\begin{equation*}
		\norm{(q, \vv, \sigma)}_{{\mathscr{H}}^{2k}_{\sharp}} \lesssim_{\norm{q}_{L^\infty}} \norm{(q, \vv, \sigma)}_{\fkH^{2k}_{q}}.
	\end{equation*}
	
	\subsubsection{Admissible errors}
	
	In higher order energy estimates, it is crucial to identify which terms can be considered as small errors. Generally, one might aim to bound the lower order terms by the norms of the highest order derivatives using interpolations. However, the interpolation relations in the weighted Sobolev spaces we encounter here differ from the classical ones due to the decaying property of weights near the boundary. Thus, it makes sense to use classical interpolations away from the boundary and the previously mentioned interpolations near the free boundary. Unfortunately, when applying Proposition \ref{prop interp}, the coefficients of the highest order norms on the right-hand side cannot be made as small as needed. To address this, one can first consider the sharp control parameter:
	\begin{equation}\label{def A sharp}
		A^\sharp \coloneqq \norm{\grad{q}-\vb*{\vartheta}}_{L^\infty} + \norm{\vv}_{\dot{C}^\frac{1}{2}} + \norm{\sigma - \gamma}_{L^\infty},
	\end{equation}
	where $\vb*{\vartheta}$ and $\gamma$ are constants to be determined. Then, one can anticipate that $A^\sharp \ll 1$ when restricted to a rather small compact set (by assigning $\vb*{\vartheta} \coloneqq \grad{q}(x_0)$ and $\gamma \coloneqq \sigma(x_0)$ for some point $x_0$), as $A^\sharp$ actually characterizes the modulus of continuities of $(q, \vv, \sigma)$ there. In particular, during the interpolation procedures, the coefficients of the highest order norms can be sufficiently small, at least in a small region close to the boundary.
	
	Quantitatively speaking, consider a multilinear form $\calM = \calM(\scrJ q, \scrJ (\pd\vv), \scrJ(\pd\sigma))$, where $\scrJ f$ represents the jets of $f$:
	\begin{equation*}
		\scrJ f \coloneqq \qty{\pd^\tau f}_{\{\abs{\tau} \ge 0\}}.
	\end{equation*}
	In particular, the multilinear form $\calM$ does not contain undifferentiated $\vv$ or $\sigma$ factors. The following lemma gives an explicit version of the above heuristic arguments:
	\begin{lemma}\label{lem err est A}
		Let $\calM = \calM(\scrJ q, \scrJ (\pd\vv), \scrJ(\pd\sigma)) $ be a multilinear form of order $(k-1)$ and involve exactly $2k$ spacial derivatives. Namely, $\calM$ can be written as (here all indices are positive integers):
		\begin{equation}\label{expression M A}
			\calM = q^\mu (\pd q)^\lambda \prod_j^J \pd^{a_j} q \prod_l^L \pd^{b_l} \vv \prod_m^M \pd^{c_m} \sigma\qc a_j \ge 2, b_l, c_m \ge 1, 
		\end{equation}
		with
		\begin{equation}\label{rel a b c}
			\begin{cases*}
				-\mu-J-\frac{L}{2} + \sum a_j + \sum b_l + \sum c_m = k-1, \\
				\lambda + \sum a_j + \sum b_l + \sum c_m = 2k.
			\end{cases*}
		\end{equation}
		Then, it holds that
		\begin{equation}\label{est multilinear M}
			\norm{\calM}_{H^{0, \frac{\alpha-1}{2}}} \lesssim  (A^\sharp)^{(J+L+M-1)} \norm{\grad{q}}_{L^\infty}^\lambda \norm{(q, \vv, \sigma)}_{{\mathscr{H}}^{2k}_{\sharp}}.
		\end{equation}
		
		Similarly, if
		\begin{equation}\label{ abc 2}
			\begin{cases*}
				-\mu-J-\frac{L}{2} + \sum a_j + \sum b_l + \sum c_m = k-\frac{1}{2}, \\
				\lambda + \sum a_j + \sum b_l + \sum c_m = 2k,
			\end{cases*}
		\end{equation}
		then $\calM$ satisfies the estimate
		\begin{equation}\label{est M 2}
			\norm{\calM}_{H^{0, \frac{\alpha}{2}}} \lesssim  (A^\sharp)^{(J+L+M-1)} \norm{\grad{q}}_{L^\infty}^\lambda \norm{(q, \vv, \sigma)}_{{\mathscr{H}}^{2k}_{\sharp}}.
		\end{equation}
	\end{lemma}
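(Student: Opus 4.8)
The plan is to prove \eqref{est multilinear M} by a weighted H\"older--Gagliardo--Nirenberg argument in the spirit of \cite[\S2.5]{Ifrim-Tataru2024}; estimate \eqref{est M 2} then follows verbatim upon shifting every $q$-weight by $\tfrac12$, so I describe only the first. \emph{Reduction.} By definition of the $H^{0,\frac{\alpha-1}{2}}$-norm one must bound $\norm{q^{\,\mu+\frac{\alpha-1}{2}}(\pd q)^{\lambda}\prod_{j}\pd^{a_j}q\,\prod_{l}\pd^{b_l}\vv\,\prod_{m}\pd^{c_m}\sigma}_{L^2(\Omega)}$. I would keep the factor $(\pd q)^{\lambda}$ in $L^\infty$, producing the announced $\norm{\grad q}_{L^\infty}^{\lambda}$, and split the undifferentiated weight $q^{\,\mu+\frac{\alpha-1}{2}}$ as $\prod_j q^{\rho_j}\prod_l q^{\tau_l}\prod_m q^{\eta_m}$ over the $N:=J+L+M$ genuinely differentiated factors. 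H\"older's inequality on the bounded set $\Omega$ then reduces matters to bounding, factor by factor, the weighted Lebesgue norms $\norm{q^{\rho_j}\pd^{a_j}q}_{L^{p_j}}$, $\norm{q^{\tau_l}\pd^{b_l}\vv}_{L^{r_l}}$, $\norm{q^{\eta_m}\pd^{c_m}\sigma}_{L^{s_m}}$ with $\sum_j p_j^{-1}+\sum_l r_l^{-1}+\sum_m s_m^{-1}\le\tfrac12$.

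\emph{Interpolating each factor.} Each of these is interpolated between a low norm controlled by $A^\sharp$ and a top norm controlled by $\norm{(q,\vv,\sigma)}_{\mathscr{H}^{2k}_\sharp}$. For a $q$-factor I write $\pd^{a_j}q=\pd^{\,a_j-1}(\grad q-\vb*{\vartheta})$ and apply Proposition \ref{prop interp} (or \ref{interp L infty}) to $\grad q-\vb*{\vartheta}$, interpolating between $\norm{\grad q-\vb*{\vartheta}}_{L^\infty}\le A^\sharp$ and $\norm{q^{\,k+\frac{\alpha-1}{2}}\pd^{2k}q}_{L^2}$; for a $\vv$-factor I use Proposition \ref{interp C1/2}, interpolating $\pd^{b_l}\vv$ between $\norm{\vv}_{\dot C^{1/2}}\le A^\sharp$ and $\norm{q^{\,k+\frac{\alpha}{2}}\pd^{2k}\vv}_{L^2}$; for a $\sigma$-factor I write $\pd^{c_m}\sigma=\pd^{c_m}(\sigma-\gamma)$ and apply Proposition \ref{prop interp} (or \ref{interp L infty}), interpolating between $\norm{\sigma-\gamma}_{L^\infty}\le A^\sharp$ and $\norm{q^{\,k+\frac{\alpha+1}{2}}\pd^{2k}\sigma}_{L^2}$. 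Each step yields an exponent $\theta_i\in(0,1)$, and the freedom left in the low endpoint (the exponent $p_0$ and weight $\lambda_0\ge 0$), together with the Hardy-type embeddings of Lemma \ref{lem inclusion}, lets one arrange things so that the two relations \eqref{rel a b c} are exactly what is needed: the derivative-count relation forces $\sum_i\theta_i=1$, whence $\sum_i p_i^{-1}\le\tfrac12$ so that H\"older closes in $L^2$, and, each top norm being $\le\norm{(q,\vv,\sigma)}_{\mathscr{H}^{2k}_\sharp}$, the weighted product $\prod_i(\text{top}_i)^{\theta_i}$ collapses by weighted arithmetic--geometric mean to a single power of $\norm{(q,\vv,\sigma)}_{\mathscr{H}^{2k}_\sharp}$; the order relation makes the weights $\rho_j,\tau_l,\eta_m$ sum exactly to $\mu+\frac{\alpha-1}{2}$. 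Since $\sum_i(1-\theta_i)=N-\sum_i\theta_i=N-1$, the low norms assemble into $(A^\sharp)^{N-1}=(A^\sharp)^{J+L+M-1}$, which is \eqref{est multilinear M}.

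\emph{The main difficulty} is precisely this exponent and weight bookkeeping: one must verify that the interpolation parameters can be chosen so that H\"older is admissible, the $q$-weight budget balances, the top-order norm appears to the first power, and the structural hypotheses of Propositions \ref{prop interp}--\ref{interp C1/2} (notably $-\tfrac{1}{p_m}<\lambda_m<m-\tfrac{d}{p_m}$ and $\lambda_0>-\tfrac1{p_0}$) all hold --- this is where the standing assumption $k>\varkappa_0$ and the half-integer shift attached to the $\vv$-factors (hence the term $-\tfrac L2$ in \eqref{rel a b c}) come into play. A secondary point, which forces the use of the weighted interpolation inequalities rather than the classical Gagliardo--Nirenberg ones, is that $\grad q$ does not vanish on $\Gamma$, so $\pd^{a_j}q$ cannot be absorbed into an undifferentiated quantity; subtracting the constants $\vb*{\vartheta}$ and $\gamma$ is exactly what renders the low endpoints controllable by $A^\sharp$. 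Once the numerology is settled, the remaining computations are routine, and \eqref{est M 2} is obtained by the same scheme with $\tfrac{\alpha-1}{2}$ replaced by $\tfrac{\alpha}{2}$ throughout and \eqref{rel a b c} replaced by \eqref{ abc 2}.
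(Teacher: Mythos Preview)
Your proposal is correct and follows essentially the same route as the paper: distribute the weight $q^{\mu+\frac{\alpha-1}{2}}$ among the $N=J+L+M$ genuinely differentiated factors, interpolate each via Propositions~\ref{interp L infty}--\ref{interp C1/2} between an $A^\sharp$-controlled low endpoint and the $\mathscr{H}^{2k}_\sharp$ top norm (using Lemma~\ref{lem inclusion} to adjust), and close with H\"older once $\sum_i\theta_i=1$. The paper's proof differs only in that it carries out explicitly the ``numerology'' you flag as the main difficulty: it computes the exact interpolation parameters $\theta_j=\frac{a_j-1-\nu_j}{k-1}$, $\theta_l=\frac{b_l-\nu_l-\frac12}{k-1}$, $\theta_m=\frac{c_m-\nu_m}{k-1}$, and then checks by an elementary inequality that the required weight splitting $\mu=\sum\nu_j+\sum\nu_l+\sum\nu_m$ with each $\nu$ in its admissible range is actually feasible, which reduces to verifying $0\le\mu\le k$ (a consequence of \eqref{rel a b c}).
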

	\begin{proof}
		One can first consider the proof of \eqref{est multilinear M} under \eqref{rel a b c}, and the other case will follow from the same arguments.	In order to apply Propositions \ref{interp L infty}-\ref{interp C1/2}, one may rewrite $\calM$ as:
		\begin{equation}\label{decomp M}
			\calM = (\pd q)^\lambda \prod_j^{J} q^{\nu_j}\pd^{a_j}q \prod_l^{L} q^{\nu_{l}} \pd^{b_l}\vv \prod_m^{M} q^{\nu_m}\pd^{c_m} \sigma.
		\end{equation}
		One can formally consider the interpolations:
		\begin{equation*}
			\norm{q^{\frac{\alpha-1}{p_j}} q^{\nu_j} \pd^{a_j -1} \pd q}_{L^{p_j}} \lesssim \norm{\pd q -\vb*{\vartheta}}_{L^\infty}^{1-\theta_j} \norm{q^{\gamma_j}\pd^{\beta_j}\pd q}_{L^2}^{\theta_j},
		\end{equation*}
		with
		\begin{equation*}
			\theta_j = \frac{a_j -1}{\beta_j}\qc \frac{1}{p_j} = \frac{\theta_j}{2}, \qand \gamma_j = \frac{\nu_j}{\theta_j} + \frac{\alpha-1}{p_j \theta_j}.
		\end{equation*}
		To control the right hand side by $(A^\sharp)^{1-\theta_j}\norm{q}_{H^{2k, k+\frac{\alpha-1}{2}}}^{\theta_j}$, one can infer from Lemma \ref{lem inclusion} that it suffices to have
		\begin{equation*}
			1+\beta_j - \gamma_j = 2k - k - \qty(\frac{\alpha-1}{2}),
		\end{equation*}
		i.e.,
		\begin{equation*}
			\theta_j = \frac{a_j - 1 - \nu_j}{k-1} \qand \beta_j = \frac{(a_j -1)(k-1)}{a_j -1 - \nu_j}.
		\end{equation*}
		Therefore, it follows that
		\begin{equation*}
			\norm{q^{\frac{\alpha-1}{p_j}} q^{\nu_j} \pd^{a_j -1} \pd q}_{L^{\frac{2(k-1)}{a_j -1-\nu_j}}} \lesssim \norm{\grad{q}-\vb*{\vartheta}}_{L^\infty}^{1-\frac{a_j -1-\nu_j}{k-1}}\cdot\norm{q}_{H^{2k, k+\frac{\alpha-1}{2}}}^{\frac{a_j -1-\nu_j}{k-1}}.
		\end{equation*}
		Similarly, one can derive from Propositions \ref{interp L infty}-\ref{interp C1/2} and Lemma \ref{lem inclusion} that
		\begin{equation*}
			\norm{q^{\frac{\alpha-1}{r_l}}q^{\nu_l}\pd^{b_l}\vv}_{L^{r_l}} \lesssim \norm{\vv}_{\dot{C}^\frac{1}{2}}^{1-\theta_l}\norm{\vv}_{H^{2k, k+\frac{\alpha}{2}}}^{\theta_l},
		\end{equation*}
		for
		\begin{equation*}
			\theta_l = \frac{b_l - \nu_l - \frac{1}{2}}{k-1}\qc \frac{1}{r_l} = \frac{\theta_l}{2};
		\end{equation*}
		and
		\begin{equation*}
			\norm{q^{\frac{\alpha-1}{s_m}}q^{\nu_m}\pd^{c_m}\sigma}_{L^{s_m}} \lesssim \norm{\sigma - \gamma}_{L^\infty}^{1-\theta_m}\norm{\sigma}_{H^{2k, k+\frac{\alpha+1}{2}}}^{\theta_m},
		\end{equation*}
		for
		\begin{equation*}
			\theta_m = \frac{c_m - \nu_m}{k - 1}\qc \frac{1}{s_m} = \frac{\theta_m}{2}.
		\end{equation*}
		Note that \eqref{rel a b c} yields that
		\begin{equation*}
			\sum \theta_j + \sum \theta_l + \sum \theta_m = 1,
		\end{equation*}
		and hence \eqref{est multilinear M}, as long as the applications of interpolations and inclusions to the decomposition \eqref{decomp M} are legitimate. Indeed, it remains to check the existence of a decomposition 
		\begin{equation*}
			\mu = \sum \nu_j + \sum\nu_l + \sum\nu_m,
		\end{equation*} 
		for which
		\begin{equation*}
			\begin{cases*}
				\nu_j \ge 0, \\
				\frac{a_j -1}{\theta_j} \le 2k-1,
			\end{cases*}
			\quad
			\begin{cases*}
				\nu_l \ge 0, \\
				\frac{b_l-\frac{1}{2}}{\theta_l} \le 2k-\frac{1}{2},
			\end{cases*}
			\qand
			\begin{cases*}
				\nu_m \ge 0, \\
				\frac{c_m}{\theta_m} \le 2k.
			\end{cases*}
		\end{equation*}
		In other words, one ought to verify that
		\begin{equation*}
			0 \le \nu_j \le \frac{k}{2k-1}(a_j-1)\qc 0\le \nu_l \le \frac{k+\frac{1}{2}}{2k-\frac{1}{2}}\qty(b_l - \frac{1}{2}), \qand 0 \le \nu_m \le \frac{k+1}{2k}c_m.
		\end{equation*}
		It suffices to check that
		\begin{equation}\label{range mu}
			0 \le \mu \le \sum_j \frac{k}{2k-1}(a_j-1) + \sum_l \frac{k+\frac{1}{2}}{2k-\frac{1}{2}}\qty(b_l - \frac{1}{2}) + \sum_m \frac{k+1}{2k}c_m.
		\end{equation}
		Indeed, elementary calculations imply that
		\begin{equation*}
			\begin{split}
				&\sum \frac{k}{2k-1}(a_j-1) + \sum \frac{k+\frac{1}{2}}{2k-\frac{1}{2}}\qty(b_l - \frac{1}{2}) + \sum \frac{k+1}{2k}c_m \\
				&\ = \qty(\frac{1}{2} + \frac{1}{4k-2})\sum(a_j -1) + \qty(\frac{1}{2} + \frac{3}{4k-2})\sum\qty(b_l - \frac{1}{2}) + \qty(\frac{1}{2}+\frac{2-\frac{1}{k}}{4k-2})\sum c_m \\
				&\ \ge \qty(\frac{1}{2} + \frac{1}{4k-2})\qty(\sum a_j + \sum b_l + \sum c_m - J - \frac{L}{2}) \\
				&\ \ge \frac{k(k-1)}{2k-1} + \frac{k}{2k-1}\mu.
			\end{split}
		\end{equation*}
		In particular, \eqref{range mu} holds for $0 \le \mu \le k$ (and the equality holds only if $\mu = k, J = 1$, and $L = M = 0$), which is ensured by \eqref{rel a b c}.
		
	\end{proof}
	
	Define the control parameters:
	\begin{equation}\label{def A0}
		A_0 \coloneqq \norm{\grad{q}}_{L^\infty} + \norm{\vv}_{\dot{C}^\frac{1}{2}} + \norm{\sigma}_{L^\infty},
	\end{equation}
	and
	\begin{equation}\label{def B sharp}
		B^\sharp \coloneqq \norm{\grad q}_{\wtC^{\frac{1}{2}}}
		+ \norm{\grad \vv}_{L^\infty} + \norm{q^\frac{1}{2}\grad\sigma}_{L^\infty}.
	\end{equation}
	The following variation of the above lemma also holds:
	\begin{lemma}\label{lem err est B}
		Suppose that $\calM = \calM(\scrJ q, \scrJ(\pd\vv), \scrJ(\pd\sigma)) $ is a multilinear form of order $(k-\frac{1}{2})$, and it involves exactly $(2k+1)$ spacial derivatives. In other words, $\calM$ has the form \eqref{expression M A} with
		\begin{equation}\label{rel a b c 2}
			\begin{cases*}
				-\mu-J-\frac{L}{2} + \sum a_j + \sum b_l + \sum c_m = k-\frac{1}{2}, \\
				\lambda+\sum a_j + \sum b_l + \sum c_m = 2k + 1.
			\end{cases*}
		\end{equation}
		Then, if $(J + L + M) \ge 2$, it follows that
		\begin{equation}\label{est M B}
			\norm{\calM}_{H^{0, \frac{\alpha-1}{2}}} \lesssim (A_0)^{(\lambda+J+L+M-2)}  B^\sharp \norm{(q, \vv, \sigma)}_{{\mathscr{H}}^{2k}_{\sharp}}.
		\end{equation}
		
		Similarly, if \eqref{rel a b c 2} is replaced by
		\begin{equation}
			\begin{cases*}
				-\mu-J-\frac{L}{2} + \sum a_j + \sum b_l + \sum c_m = k, \\
				\lambda+\sum a_j + \sum b_l + \sum c_m = 2k + 1,
			\end{cases*}
		\end{equation}
		then, it holds for $(J + L + M) \ge 2$ that
		\begin{equation}\label{est M B 2}
			\norm{\calM}_{H^{0, \frac{\alpha}{2}}} \lesssim (A_0)^{(\lambda+J+L+M-2)}  B^\sharp \norm{(q, \vv, \sigma)}_{{\mathscr{H}}^{2k}_{\sharp}}.
		\end{equation}
	\end{lemma}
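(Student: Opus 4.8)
\emph{Plan.} The estimate is the exact analogue of Lemma~\ref{lem err est A}: $\calM$ now carries $2k+1$ rather than $2k$ spatial derivatives and has order $k-\tfrac12$ rather than $k-1$, and the hypothesis $J+L+M\ge 2$ guarantees that after one differentiated factor is routed through $B^\sharp$ there is still at least one factor left to absorb the top‑order energy norm. I would begin exactly as in the proof of Lemma~\ref{lem err est A}: distribute the weight $q^{\mu}$ over the differentiated factors, writing
\[
  \calM = (\pd q)^{\lambda}\prod_{j}^{J}q^{\nu_j}\pd^{a_j}q\,\prod_{l}^{L}q^{\nu_l}\pd^{b_l}\vv\,\prod_{m}^{M}q^{\nu_m}\pd^{c_m}\sigma,\qquad \mu=\sum\nu_j+\sum\nu_l+\sum\nu_m,\quad \nu_\bullet\ge 0,
\]
with the $\nu_\bullet$ to be fixed later. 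The $\lambda$ undifferentiated‑gradient factors $\pd q$ are bounded in $L^\infty$ by $\norm{\grad q}_{L^\infty}\le A_0$, producing the prefactor $(A_0)^{\lambda}$.

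\emph{Two routings.} Using $J+L+M\ge 2$, single out one differentiated factor and estimate it by interpolation against $B^\sharp$: a $q$‑factor $\pd^{a_{j_0}}q$ via Proposition~\ref{interp Ctilde1/2} applied to $\grad q$ (with $\norm{\grad q}_{\wtC^{\frac{1}{2}}}$ as the endpoint), or, if $J=0$, a $\vv$‑factor $\pd^{b_{l_0}}\vv$ via Proposition~\ref{interp L infty} applied to $\grad\vv$ (with $\norm{\grad\vv}_{L^\infty}$), or, if $J=L=0$, a $\sigma$‑factor $\pd^{c_{m_0}}\sigma$ via Proposition~\ref{prop interp} applied to $\grad\sigma$ with $p_0=\infty$, $\lambda_0=\tfrac12$ (with $\norm{q^{\frac{1}{2}}\grad\sigma}_{L^\infty}$); in each case Lemma~\ref{lem inclusion} is then used to bring the high‑order endpoint into $\norm{(q,\vv,\sigma)}_{{\mathscr{H}}^{2k}_{\sharp}}$. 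Every \emph{other} differentiated factor is interpolated down to $A_0$ exactly as in Lemma~\ref{lem err est A} — Proposition~\ref{interp L infty} on $\grad q$ against $\norm{\grad q}_{L^\infty}$, Proposition~\ref{interp C1/2} on $\vv$ against $\norm{\vv}_{\dot{C}^\frac{1}{2}}$, Proposition~\ref{interp L infty} on $\sigma$ against $\norm{\sigma}_{L^\infty}$ — again followed by Lemma~\ref{lem inclusion}. One then assembles the $L^{p_\bullet}$‑bounds by Hölder's inequality with $\sum 1/p_\bullet=\tfrac12$, so that the product lands in $H^{0,\frac{\alpha-1}{2}}$. The crucial arithmetic is that the piece of $B^\sharp$ used in the selected factor carries exactly one half of a derivative more than the corresponding piece of $A_0$, so its interpolation exponent is smaller by $\tfrac{1}{2(k-1)}$ than it would be in the $A_0$‑routing; together with the $+\tfrac12$ shift in the first line of \eqref{rel a b c 2} relative to \eqref{rel a b c}, this makes the sum of all interpolation exponents equal to $1$, so that $\norm{(q,\vv,\sigma)}_{{\mathscr{H}}^{2k}_{\sharp}}$ occurs to the first power while $B^\sharp$ is produced once.

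\emph{Main obstacle.} The delicate part is precisely this exponent‑and‑weight bookkeeping at the assembly step: one has to check the feasibility of a splitting $\mu=\sum\nu_\bullet$ for which all the interpolation/inclusion steps are legitimate (the analogue of \eqref{range mu}, which holds with room to spare because $\mu\le k$), and then verify that after collecting powers the residual factor is $(A_0)^{\lambda+J+L+M-2}$ and $B^\sharp$ enters linearly — this is exactly where the hypothesis $J+L+M\ge 2$ is needed, so that one factor can be dedicated to $B^\sharp$ while a second still carries the bulk of the top‑order norm, the leftover $A_0$‑powers being tidied up by a weighted arithmetic–geometric–mean inequality. Once this is done, the second assertion follows by the same argument with $\tfrac{\alpha-1}{2}$ replaced by $\tfrac{\alpha}{2}$ in the output weight, the $+\tfrac12$ shift in the order budget being identical.
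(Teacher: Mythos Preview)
Your proposal is correct and follows essentially the same strategy as the paper: route one factor through a $B^\sharp$-type endpoint via the appropriate weighted interpolation (in particular Proposition~\ref{interp Ctilde1/2} for $\grad q$, which is the key new ingredient beyond Lemma~\ref{lem err est A}), route the remaining factors through $A_0$ as before, and verify the weight/exponent bookkeeping by scaling. Two minor remarks: the first line of \eqref{rel a b c 2} forces $L$ to be a positive odd integer, so your case $J=L=0$ never occurs; and the paper organizes the argument slightly differently by first disposing of the easy situation where some factor is literally $\pd\vv$ or $\pd\sigma$ (bounded in $L^\infty$ by $B^\sharp$, reducing directly to Lemma~\ref{lem err est A}), then working out the borderline case $q^k\,\pd^2 q\,\pd^{2k-1}\vv$ explicitly to exhibit why the $\wtC^{\frac12}$ seminorm (rather than $\dot C^{\frac12}$) is needed, and finally remarking that general terms may require a two-step interpolation --- but this is the same mechanism you describe.
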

	\begin{proof}
		It follows from \eqref{rel a b c 2} that
		\begin{equation*}
			\lambda + \mu + J + \frac{L}{2} = k +\frac{3}{2}.
		\end{equation*}
		Thus, $L$ must be a positive odd integer. If there exists a factor being exactly $\pd\vv$ or $\pd\sigma$, whose $L^\infty$ norms can be bounded by $B^\sharp$, \eqref{est M B} follows directly from Lemma \ref{lem err est A}. Now, suppose that $(J+L+M)= 2$, and one is about to estimate
		\begin{equation*}
			\norm{q^{k} \cdot \pd^2 q \cdot \pd^{2k-1}\vv}_{H^{0, \frac{\alpha-1}{2}}}.
		\end{equation*}
		It can be deduced from Propositions \ref{interp L infty}, \ref{interp Ctilde1/2}, and H\"older's inequality that
		\begin{equation*}
			\begin{split}
				&\norm{q^{\frac{\alpha-1}{2}+k} \cdot \pd^2 q \cdot \pd^{2k-1} \vv }_{L^2} \\
				&\ \le \norm{q^{\frac{2k-2}{2k-1}\qty(\frac{\alpha}{2}+k)} \cdot \pd^{2k-2}\pd\vv}_{L^{\frac{4k-2}{2k-2}}} \cdot \norm{ q^{\qty[\frac{1}{2k-1}\qty(\frac{\alpha-1}{2}+k)-\frac{1}{2}\qty(1-\frac{1}{2k-1})]}\cdot \pd \pd q}_{L^{4k-2}} \\
				&\ \lesssim \norm{\pd\vv}_{L^\infty}^{\frac{1}{2k-1}}\norm{q^{\frac{\alpha}{2}}q^k\pd^{2k}\vv}_{L^2}^{\frac{2k-2}{2k-1}} \cdot \norm{\pd q}_{\wtC^{\frac{1}{2}}}^{1-\frac{1}{2k-1}}\norm{q^{\frac{\alpha-1}{2}}q^k\pd^{2k} q}_{L^2}^{\frac{1}{2k-1}} \\
				&\ \lesssim B^\sharp \norm{(q, \vv, \sigma)}_{{\mathscr{H}}^{2k}_{\sharp}}.
			\end{split}
		\end{equation*}
		It can be seen that $\norm{\grad q}_{\wtC^\frac{1}{2}}$ is necessary for such interpolation, and $\norm{\grad{q}}_{\dot{C}^\frac{1}{2}}$	is not an admissible candidate here. Other terms with $(J+L+M)=2$ can be treated in a similar manner.  For more general items, one can decompose $\calM$ as in \eqref{decomp M} and use Propositions \ref{prop interp}-\ref{interp Ctilde1/2} to derive the estimate as in the proof of Lemma \ref{lem err est A}. Note that two-step interpolations might be needed here to balance the indices, one step uses $B^\sharp$, the other step uses $A_0$. Scaling analysis yields that exactly one $B^\sharp$ factor appears on the right hand side of \eqref{est M B} (see \cite{Ifrim-Tataru2024}). The estimate \eqref{est M B 2} can be established with the same arguments.
		
	\end{proof}
	
	\begin{remark}
		Lemmas \ref{lem err est A}-\ref{lem err est B} hint that a multilinear form $\calM$ written as \eqref{expression M A} can be controlled by interpolations with the desired control parameters, whenever $(J+L+M) \ge 2$. Such terms are called ``balanced terms'' in \cite{Ifrim-Tataru2024}. Note that $(J+L+M)$ is exactly the amount of those factors having strictly positive order. 
	\end{remark}
	
	\begin{defi}[Admissible errors]\label{def adm err}
		Let  $\calM = \calM(\scrJ q, \scrJ(\pd\vv), \scrJ(\pd\sigma)) $ be a multilinear form written as \eqref{expression M A}:
		\begin{equation*}
			\calM = q^\mu (\pd q)^\lambda \prod_{j}^J \pd^{a_j} q \prod_{l}^L \pd^{b_l} \vv \prod_{m}^M \pd^{c_m} \sigma\qc a_j \ge 2, b_l, c_m \ge 1. 
		\end{equation*}
		Then, $\calM$ is called an \emph{admissible error} if $(J+L+M) \ge 2$, i.e., $\calM$ contains at least two factors having strictly positive orders.
	\end{defi}
	
	\subsubsection{Good unknowns}
		Due to Lemmas \ref{lem err est A}-\ref{lem err est B}, it is known that admissible errors could be controlled. Now, one can consider the evolution equations for $(\Dt^{2k}q, \Dt^{2k}\vv)$ in the manner of \eqref{simple lin sys}. Observe that
		\begin{equation*}
			\Dt \qty(\Dt^{2k}q) = -\beta\Dt^{2k}\qty\big[q(\divergence\vv)].
		\end{equation*}
		The Leibniz rule yields that all multilinear forms involved in
		\begin{equation*}
			\Dt^{2k}\qty\big[q(\divergence{\vv})] - q \Dt^{2k}(\divergence{\vv})
		\end{equation*}
		are admissible errors in the sense of Definition \ref{def adm err}. The commutator formula
		\begin{equation*}
			\comm{\Dt}{\divergence} \vw = - \tr(\grad\vv \vdot \grad\vw)
		\end{equation*}
		implies that the multilinear forms in
		\begin{equation*}
			q\Dt^{2k}(\divergence{\vv}) - q \divergence{(\Dt^{2k}\vv)}
		\end{equation*}
		are all admissible errors. Unfortunately, the term
		\begin{equation*}
			\grad q \vdot \Dt^{2k} \vv
		\end{equation*}
		is not admissible, when all spacial derivatives fall onto $\vv$. Thus, as a substitution, one can consider the unknown (see also \cite{Ifrim-Tataru2024}):
		\begin{equation*}
			s_{2k} \coloneqq \Dt^{2k}q - \grad q \vdot \Dt^{2k-1}\vv.
		\end{equation*}
		It is straightforward to check that the commutator
		\begin{equation*}
			\comm{\Dt}{\grad{q}} \vdot \Dt^{2k-1}\vv
		\end{equation*}
		is admissible whenever $k \ge 2$. If $k = 1$, one can modify the definition of $s_2$ to 
		\begin{equation*}
			s_2 \coloneqq \Dt^2 q - \frac{1}{2}\grad{q} \vdot \Dt\vv.
		\end{equation*}
		For the evolution of $\Dt^{2k}\vv$, note that
		\begin{equation*}
			\begin{split}
				\Dt \Dt^{2k}\vv + \sigma \grad{s_{2k}} = &- \sigma \Dt^{2k}\grad{q} + \sigma\grad(\Dt^{2k}q - \grad{q}\vdot \Dt^{2k-1}\vv) \\ 
				= &-\sigma\comm{\Dt^{2k-1}}{\grad}\Dt q - \sigma\Dt^{2k-1}\comm{\Dt}{\grad}q \\
				&- \sigma\grad{q} \vdot \grad{\Dt^{2k-1}\vv} - \sigma\grad^2 q \vdot \Dt^{2k-1}\vv
			\end{split}
		\end{equation*}
		It can be derived from the commutator formula
		\begin{equation}\label{comm Dt D}
			\comm{\Dt}{\grad} \varphi = - (\grad\vv)^* \grad{\varphi}
		\end{equation}
		that the term
		\begin{equation*}
			\sigma\Dt^{2k-1}\comm{\Dt}{\grad}q + \sigma \qty(\grad{\Dt^{2k-1}\vv})^*\grad{q}
		\end{equation*}
		is admissible. Therefore, $(\Dt \Dt^{2k}\vv + \sigma \grad{s_{2k}})$ itself is also an admissible error.
		
		In summary, one can consider the following ``good unknowns''
		\begin{equation}\label{def good unknown}
			\begin{cases*}
				s_2 \coloneqq \Dt^2 q - \frac{1}{2}\grad{q} \vdot \Dt\vv, \\
				s_{2k} \coloneqq \Dt^{2k}q - \grad q \vdot \Dt^{2k-1}\vv \quad (k \ge 2),
			\end{cases*}
			\qand
			\vw_{2k} \coloneqq \Dt^{2k}\vv  \quad (k \ge 1).
		\end{equation}
		Then, it follows from the above arguments that $(s_{2k}, \vw_{2k})$ satisfies the linear system \eqref{simple lin sys} with admissible source terms. Namely, one has
		\begin{equation}
			\begin{cases*}
				\Dt s_{2k} + \vw_{2k}\vdot\grad{q} + \beta q (\divergence\vw_{2k}) = f_{2k}, \\
				\Dt \vw_{2k} + \sigma \grad{s_{2k}} = \vb*{g}_{2k},
			\end{cases*}
		\end{equation}
		where $f_{2k}$ and $\vb*{g}_{2k}$ are multilinear forms of $\scrJ(q, \pd\vv, \pd\sigma)$ having at least two factors of strictly positive order.
		
	\subsection{Expressions of the Energy Functional}\label{sec energies and control para}
	Denote by $\vom$ the vorticity of the gas:
	\begin{equation*}
		\vom \coloneqq \Curl \vv.
	\end{equation*}
	Namely, $\vom$ is a second order anti-symmetric tensor given by
	\begin{equation*}
		\omega_{ij} \coloneqq \pd_i v_j - \pd_j v_i.
	\end{equation*}
	Based on the derivations in \S\ref{sec construct energy}, one can consider the energy functional defined via
	\begin{equation}\label{evo eqn s2k w2k}
		\begin{split}
			\cE_{2k} \coloneqq &\int_{\Omt} q^{\alpha-1} \abs{s_{2k}}^2 + \beta q^{\alpha}\sigma^{-1}\abs{\vw_{2k}}^2 \dd{x} \\
			 &\quad +\int_{\Omt} q^{\alpha+1} + \frac{\beta+1}{2}q^\alpha\sigma^{-1}\abs{\vv}^2 + q^\alpha\sigma^2 \dd{x} \\
			 &\quad+ \norm{\vom}_{H^{2k-1, k+\frac{\alpha}{2}}}^2 + \norm{\sigma}_{H^{2k, k+\frac{\alpha}{2}}}^2,
		\end{split}
	\end{equation}
	where $s_{2k}$ and $\vw_{2k}$ are given by \eqref{def good unknown}.
	
	Recall the control parameters $A_*$ and $B$ defined by \eqref{def A*} and \eqref{def B} respectively:
	\begin{equation*}
		A_* \coloneqq \norm{q}_{C^{1+\varepsilon_*}} + \norm{\vv}_{{C}^{\frac{1}{2}+\varepsilon_*}} + \norm{\sigma}_{C^{\frac{1}{2}+\varepsilon_*}} + \norm*{\sigma^{-1}}_{L^\infty}
	\end{equation*}
	and
	\begin{equation*}
		B \coloneqq \norm{\grad{q}}_{\wtC^{0, \frac{1}{2}}} + \norm{\grad\vv}_{L^\infty} + \norm{\grad{\sigma}}_{L^\infty}.
	\end{equation*}
	For the control parameters defined in  \eqref{def A sharp} and \eqref{def A0}-\eqref{def B sharp}, it is obvious that
	\begin{equation}\label{rel ctrl paras}
		A^\sharp, A_0 \le A_* \qand
		B^\sharp \lesssim_{A_*} B.
	\end{equation}
	
	The main objective of this whole section is to show:
	\begin{theorem}\label{thm energy est}
		The energy functional $\cE_{2k}$ satisfies the following two properties:
		\begin{enumerate}[label=(\roman*)]
			\item \textbf{Coercivity:} 
			\begin{equation}\label{energy coercivity est}
				\cE_{2k} \simeq_{A_*, c_0} \norm{(q, \vv, \sigma)}_{\fkH^{2k}_{q}}^2.
			\end{equation}
			\item \textbf{Propagation estimate:}
			\begin{equation}\label{energy est}
				\abs{\dv{t} \cE_{2k}} \lesssim_{A_*} B \norm{(q, \vv, \sigma)}_{\fkH^{2k}_q}^2.
			\end{equation}
		\end{enumerate}
	\end{theorem}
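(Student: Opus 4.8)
The plan is to establish the coercivity \eqref{energy coercivity est} and the propagation bound \eqref{energy est} separately, reducing each — via a partition of unity subordinate to a cover of $\overline{\Omt}$ by one interior piece and finitely many small boundary charts — to a family of localized estimates. On the interior piece $q$ is bounded from below, so all weighted norms reduce to ordinary Sobolev norms and both properties follow from the classical theory for the quasilinear symmetric hyperbolic system together with Leibniz expansions; the substance lies in the boundary charts. There, fixing a boundary point $x_0$ and subtracting the constants $\vb*{\vartheta}\coloneqq\grad q(x_0)$ and $\gamma\coloneqq\sigma(x_0)$, one arranges by shrinking the chart that the sharp control parameter $A^\sharp$ of \eqref{def A sharp} is as small as desired, which is precisely what makes the multilinear error terms absorbable. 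Coercivity is proved first, since the propagation argument uses its easy half.

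For the propagation estimate I would split $\cE_{2k}$ into its four constituents. The middle integral $\int_{\Omt} q^{1+\alpha}+\tfrac{\beta+1}{2}q^\alpha\sigma^{-1}|\vv|^2\dd x$ together with $\int_{\Omt} q^\alpha\sigma^2\dd x$ is exactly conserved along \eqref{euler equiv2} (the computations following \eqref{def Ephy}, using $q=0$ on $\Gmt$), so it contributes nothing. For the good-unknown energy $\mathcal E(s_{2k},\vw_{2k})$ one uses that $(s_{2k},\vw_{2k})$ solves the simplified linear system \eqref{simple lin sys} with source terms $f_{2k},\vb*{g}_{2k}$ that are admissible errors in the sense of Definition \ref{def adm err} (of scaling orders $k-\tfrac12$ and $k$, with $2k+1$ spatial derivatives); then \eqref{est simp lin}, Cauchy--Schwarz, Lemma \ref{lem err est B} and \eqref{rel ctrl paras} give $|\dv t\mathcal E(s_{2k},\vw_{2k})|\lesssim_{A_*}B\,\mathcal E(s_{2k},\vw_{2k})^{1/2}\|(q,\vv,\sigma)\|_{\fkH^{2k}_q}+B\,\mathcal E(s_{2k},\vw_{2k})$, which closes once $\mathcal E(s_{2k},\vw_{2k})\lesssim_{A_*}\|(q,\vv,\sigma)\|^2_{\fkH^{2k}_q}$ (the easy half of coercivity) is in hand. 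For $\|\vom\|^2_{H^{2k-1,k+\alpha/2}}$ I would differentiate the vorticity equation $\Dt\vom=\grad\sigma\wedge\grad q+(\text{terms bilinear in }\grad\vv\text{ and }\vom)$ — obtained by taking $\Curl$ of $\Dt\vv+\sigma\grad q=\vb*{0}$ — a total of $2k-1$ times, commute the weight $q^{k+\alpha/2}$ through $\Dt$ (noting $\Dt(q^{k+\alpha/2})=-(k+\tfrac\alpha2)\beta q^{k+\alpha/2}\divergence\vv$ is of size $B\,q^{k+\alpha/2}$), and run the transport $L^2$-estimate: every resulting source term is an admissible error except the top pieces $\grad q\wedge\pd^{2k}\sigma$ and $\grad\sigma\wedge\pd^{2k}q$ of $\pd^{2k-1}(\grad\sigma\wedge\grad q)$, which are bounded directly by $B\|\sigma\|_{H^{2k,k+\alpha/2}_q}$ and $B\|q\|_{H^{2k,k+(\alpha-1)/2}_q}$. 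The entropy term is handled identically starting from $\Dt\sigma=0$. Summing the four contributions gives \eqref{energy est}.

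The core of the theorem is the lower bound $\|(q,\vv,\sigma)\|^2_{\fkH^{2k}_q}\lesssim_{A_*,c_0}\cE_{2k}$. The $\sigma$-norm is literally a summand of $\cE_{2k}$. For $q$, iterating \eqref{eqn Dt2 q} one writes $\Dt^{2k}q=(\beta q\sigma)^k\laplace^k q+(\text{admissible errors})+(\text{terms of the form }q^k\grad q\cdot\pd^{2k-1}\grad\sigma\text{ and div/curl-of-}\vv\text{ terms})$, and the correction $\grad q\cdot\Dt^{2k-1}\vv$ from \eqref{def good unknown} is expanded similarly; hence $q^{k+(\alpha-1)/2}\laplace^k q$ is controlled in $L^2$ by $\mathcal E(s_{2k},\vw_{2k})^{1/2}$ plus errors, and a degenerate elliptic estimate adapted to $H^{2k,k+(\alpha-1)/2}$ upgrades this to control of $\|q^{k+(\alpha-1)/2}\pd^{2k}q\|_{L^2}$, the lower-order pieces of $\|q\|_{H^{2k,k+(\alpha-1)/2}_q}$ being recovered by interpolation (Proposition \ref{prop interp}) against the conserved $\int q^{1+\alpha}$. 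For $\vv$, \eqref{eqn Dt2 v} gives $\Dt^{2k}\vv=(\beta q\sigma)^k\laplace^{k-1}\grad\divergence\vv+\cdots$, so $\vw_{2k}$ controls — after the analogous degenerate elliptic estimate applied to the scalar $\divergence\vv$ — the weighted $\pd^{2k-1}\divergence\vv$; the vorticity summand controls the weighted $\pd^{2k-1}\Curl\vv$; and a weighted div--curl estimate recombines these into $\|\vv\|_{H^{2k,k+\alpha/2}_q}$. Throughout, all admissible errors are estimated by Lemma \ref{lem err est A}; the role of the localization is that on a boundary chart they carry the factor $(A^\sharp)^{J+L+M-1}$ with $J+L+M\ge2$, hence a small constant, and can be absorbed into the left-hand side, while on the interior chart they are simply lower order. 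The $k=1$ case is identical with $s_2=\Dt^2 q-\tfrac12\grad q\cdot\Dt\vv$ in place of $s_{2k}$.

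The step I expect to be the main obstacle is the construction and proof of these degenerate elliptic estimates — showing that control of $q^{m}\laplace^k\phi$ in a weighted $L^2$ space, together with low-order control, yields control of $q^m\pd^{2k}\phi$, uniformly up to the merely $C^{1+}$ free boundary — and their delicate interaction with the boundary layer where the operator degenerates. This is exactly where the non-degeneracy constant $c_0$ and the smallness of $A^\sharp$ after localization are indispensable, and where the weighted-space machinery must be adapted to the non-isentropic setting, in particular to accommodate the $q\grad q\cdot\grad\sigma$ and $\grad\sigma$-type contributions that are \emph{not} admissible errors in the sense of Definition \ref{def adm err} and must therefore be tracked explicitly and closed against the entropy energy.
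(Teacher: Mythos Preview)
Your propagation argument is sound and essentially coincides with the paper's: the conserved integrals, the simplified linear estimate \eqref{est simp lin} for $(s_{2k},\vw_{2k})$ closed via Lemma~\ref{lem err est B}, and the transport estimates for $\vom$ and $\sigma$. Your explicit flagging of the top-order pieces $\grad q\wedge\pd^{2k}\sigma$ and $\grad\sigma\wedge\pd^{2k}q$ (which have $J+L+M=1$ and so fall outside Definition~\ref{def adm err}) is in fact more careful than the paper's phrasing; these are indeed bounded directly by $B\|(q,\vv,\sigma)\|_{\fkH^{2k}_q}$ exactly as you say.

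The gap is in the coercivity lower bound. Your expansion $\Dt^{2k}q=(\beta q\sigma)^k\laplace^k q+\text{(admissible)}+\text{($\sigma$- and $\vv$-terms)}$ is not correct: iterating $\Dt^2\sim\beta q\sigma\laplace$ on itself produces cross terms of the schematic form $q^{k-1}\sigma^k(\grad q)\cdot\pd^{2k-1}q$ (from $\laplace$ landing on the $q$-coefficient), and these have $J+L+M=1$, carry no $A^\sharp$ smallness, and are \emph{not} of the $\grad\sigma$- or $\vv$-type you list. They cannot be absorbed by localization. The paper resolves this by recognizing that the correct leading operator is $\cL_1 u=\beta q\laplace u+\grad q\cdot\grad u$ (and $\cL_2\vw=\beta\grad(q\divergence\vw)+(\grad\vw)^*\grad q$ for the velocity), so that the recurrence reads $s_{2j}=\sigma\cL_1 s_{2(j-1)}+f_{2j}$ with $f_{2j}$ genuinely admissible (Lemma~\ref{lem recur}); the first-order piece $\grad q\cdot\grad$ absorbs exactly the cross terms you are missing.

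This is not merely a bookkeeping choice: the degenerate elliptic estimate you need also fails for $q\laplace$ alone and requires $\cL_1$. In the half-space model $q=x_n$, the harmonic profile $\hat\phi=e^{-|\xi'|x_n}$ satisfies $q\laplace\phi=0$ while $\|q\pd^2\phi\|_{L^2(q^{\alpha-1})}\sim|\xi'|\,\|\phi\|_{L^2(q^{\alpha-1})}$, so no estimate $\|q\pd^2\phi\|\lesssim\|q\laplace\phi\|+C\|\phi\|$ with uniform $C$ can hold. By contrast $\cL_1\phi=\pd_n\phi=-|\xi'|\phi$ has exactly the right size. The self-adjointness of $\cL_1$ in $L^2(q^{\alpha-1}\dd x)$, encoded in \eqref{L1 int formula}, is what makes the integration-by-parts proof of Lemma~\ref{lem ellipic est} close; the paper then iterates via the shifted operators $\cL_{1,\lambda}$ of Lemma~\ref{lem ellip est L1 L2 L3} to handle normal-derivative commutators and reach Lemma~\ref{lem L1k est}. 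Your plan is structurally right---partition of unity, smallness of $A^\sharp$ on boundary charts, degenerate elliptic estimate, div--curl recombination for $\vv$---but it will not close until you replace $(\beta q\sigma)^k\laplace^k$ by $(\sigma\cL_1)^k$ and $(\sigma\cL_2)^k$ and prove the elliptic estimates for those operators.
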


	\subsection{Propagation Estimates}
	
	It follows from \eqref{evo eqn s2k w2k}, the linear estimate \eqref{est simp lin}, the admissibility of $(f_{2k}, \vb*{g}_{2k})$, the relation between the control parameters \eqref{rel ctrl paras}, and interpolation Lemmas \ref{lem err est A}-\ref{lem err est B} that
	\begin{equation*}
		\abs{\dv{t}\int_{\Omt} q^{\alpha-1} \abs{s_{2k}}^2 + \beta q^{\alpha}\sigma^{-1}\abs{\vw_{2k}}^2 \dd{x}} \lesssim_{A_*}{B}\norm{(q, \vv, \sigma)}_{\fkH^{2k}_{q}}^2.
	\end{equation*}
	Furthermore, it can be deduced directly from the compressible Euler system \eqref{euler equiv2} that
	\begin{equation*}
		\dv{t}\int_{\Omt} q^{\alpha+1} + \frac{\beta+1}{2}q^\alpha\sigma^{-1}\abs{\vv}^2 + q^\alpha\sigma^2 \dd{x} = 0.
	\end{equation*}
	To show the evolution estimate of $\vom$, one first observes that
	\begin{equation*}
		\Dt\vom + (\grad\vv)\vdot\vom + \vom\vdot(\grad\vv)^* + \grad{\sigma} \wedge \grad{q} = 0,
	\end{equation*}
	where $\grad{\sigma}\wedge\grad{q}$ is an anti-symmetric tensor given by
	\begin{equation*}
		(\grad{\sigma}\wedge\grad{q})_{ij} = \pd_i \sigma \cdot \pd_j q - \pd_j \sigma \cdot \pd_i q.
	\end{equation*}
	Thus, it follows that
	\begin{equation*}
		\Dt (q^k\pd^{2k-1}\vom) = \comm{\Dt}{q^k \pd^{2k-1}}\vom - q^k \pd^{2k-1} \qty\big[(\grad\vv)\vdot\vom + \vom\vdot(\grad\vv)^* + \grad{\sigma} \wedge \grad{q}].
	\end{equation*}
	Since all the terms on the right hand side are admissible, one can derive that
	\begin{equation*}
		\abs{\dv{t} \norm{q^{\frac{\alpha}{2}}q^k\pd^{2k-1}\vom}_{L^2}^2} \lesssim_{A_*} B\norm{(q, \vv, \sigma)}_{\fkH^{2k}_{q}}^2.
	\end{equation*}
	The other intermediate terms in $\norm{\vom}_{H^{2k-1, k+\frac{\alpha}{2}}}$ can be handled by interpolations.
	
	The estimates for $\sigma$ are similar and even simpler. Indeed, it holds that
	\begin{equation*}
		\Dt\qty(q^k \pd^{2k}\sigma) = \comm{\Dt}{q^k \pd^{2k}}\sigma.
	\end{equation*}
	The commutator formula \eqref{comm Dt D} and Proposition \ref{interp L infty} yield the estimate:
	\begin{equation*}
		\abs{\dv{t}\norm{q^{\frac{\alpha}{2}}q^k\pd^{2k}\sigma}_{L^2}^2} \lesssim_{A_*} B\norm{(q, \vv, \sigma)}_{\fkH^{2k}_{q}}^2.
	\end{equation*}
	The intermediate terms can also be controlled via interpolations.
	
	Thus, the proof of \eqref{energy est} can be concluded from the previous arguments. \\
	\qed
	
	\subsection{Coercivity Estimates}\label{sec coercivity est}
	In this subsection, one never needs to make reference to the dynamical problems. Indeed, the energy coercivity is purely an elliptic issue.
	
	\subsubsection{Differential operators}
	
	As can be seen from \eqref{eqn Dt2 q}, the leading order term of $\Dt^2 q$ is 
	$\beta\sigma (q \laplace q)$.	Since $\sigma$ is assumed to be uniformly bounded, one can merely consider the operator $(q \laplace)$ acting on a scalar. However, such a differential operator is not self-adjoint in the Hilbert space $L^2(q^{\alpha-1}\dd{x}) =: L^2_*$, which may cause some unnecessary difficulties during the construction of solutions. Instead, one can consider the following linear differential operator
	\begin{equation}\label{def L1}
		\cL_1 u \coloneqq \beta q\laplace u + \grad{q} \vdot \grad{u}.
	\end{equation}
	Then, $\cL_1$ is self-adjoint in $L^2_*$. More concretely, it is routine to derive that
	\begin{equation}\label{L1 int formula}
		\ev{\cL_1 u, \psi}_{L^2_*} = -\beta \int_{\Omt} q^{\alpha} \grad{u}\vdot\grad{\psi}\dd{x}.
	\end{equation}
	In view of \eqref{eqn Dt2 v}, one can similarly consider a differential operator $\cL_2$ given by
	\begin{equation}\label{def L2}
		\cL_2 \vw \coloneqq \beta \grad(q\divergence\vw) + (\grad\vw)^*\grad{q}.
	\end{equation}
	It is clear that the highest order terms only involves $(\divergence\vw)$, so the  estimate for the curl is missing when using merely $\cL_2\vw$. As a compensation, one can consider a differential operator $\cL_3$:
	\begin{equation}\label{def L3}
		\cL_3 \vw \coloneqq -\beta q^{-\alpha} \Div\qty(q^{1+\alpha}\Curl\vw).
	\end{equation}
	Then it is standard to check that
	\begin{equation}\label{def L2+L3}
		\qty\big[(\cL_2 + \cL_3)\vw]_i = \beta\sum_j  q^{-\alpha}\pd_j \qty(q^{1+\alpha}\pd_j w_i) + \qty(\pd_i q \cdot \pd_j w_j - \pd_j q \cdot \pd_i w_j).
	\end{equation}
	Therefore, in the Hilbert space $L^2_{**} \coloneqq L^2(q^\alpha \dd{x})$, one has
	\begin{equation}\label{int formula L2+L3}
		\begin{split}
			&\ev{(\cL_2 + \cL_3)\vw, \vb*{\phi}}_{L^2_{**}} \\
			 &\ = - \frac{\beta^2}{1+\beta} \int_{\Omt} q^{1+\alpha}\qty\big[(1+\alpha){(\grad\vw)}\textbf{:}{(\grad{\vb*{\phi}})} + (\divergence\vw)(\divergence\vb*{\phi}) - \tr(\grad\vw\vdot\grad\vb*{\phi})] \dd{x}, 
		\end{split}
	\end{equation}
	where ${(\grad\vw)}\textbf{:}{(\grad{\vb*{\phi}})}$ is the standard pointwise inner product of tensors. In particular, $(\cL_2 + \cL_3)$ is self-adjoint in $L^2_{**}$. Moreover, one can check that $\cL_3$ itself is self-adjoint in $L^2_{**}$.
	
	\subsubsection{Bounds for the energy functionals}\label{sec bounds of good unknown}
	
	To prove the coercivity of the energy functionals, one may first note that:
	\begin{lemma}\label{lem recur}
		The following recurrence relations hold for $j \ge 2$:
		\begin{equation}\label{eqn reur}
			s_{2j} = \sigma\cL_1 s_{2(j-1)} + f_{2j} \qand \vw_{2j}=\sigma\cL_2 \vw_{2(j-1)} + \vb*{g}_{2j},
		\end{equation}
		where $f_{2j}$ and $\vb*{g}_{2j}$ are both admissible errors, i.e. they are multilinear forms of $\scrJ(q, \pd\vv, \pd\sigma)$ containing at least two factors of strictly positive order.
	\end{lemma}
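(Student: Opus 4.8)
The plan is to read off both identities by differentiating twice along $\Dt$ and feeding in the evolution system for the good unknowns established in \S\ref{sec construct energy}, namely that $(s_{2(j-1)},\vw_{2(j-1)})$ solves the simplified linear system \eqref{simple lin sys} with \emph{admissible} source $(f_{2(j-1)},\vb*{g}_{2(j-1)})$. The bookkeeping rests on one elementary principle: since $q$ has order $-1$, $\pd q$ order $0$, and a weight $q^{\mu}$ with $\mu\ge 0$ only lowers the order, every term of a multilinear form in $\scrJ(q,\pd\vv,\pd\sigma)$ of \emph{strictly positive} order must carry at least one differentiated factor of strictly positive order; hence a pointwise product of two forms, each of positive order, automatically has $J+L+M\ge 2$, i.e., is an admissible error in the sense of Definition \ref{def adm err}. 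Moreover, applying $\Dt$ (rewriting $\pd_t$ through \eqref{euler equiv2}) or $\grad$, or multiplying by $q$, $\grad q$ or a bounded function of $\sigma$, never lowers $J+L+M$, so these operations preserve admissibility. Throughout I use $\Dt q=-\beta q\divergence\vv$, $\Dt\vv=-\sigma\grad q$, $\Dt\sigma=0$, the commutators $\comm{\Dt}{\grad}\varphi=-(\grad\vv)^{*}\grad\varphi$ and $\comm{\Dt}{\divergence}\vw=-\tr(\grad\vv\vdot\grad\vw)$, and the order count that $\Dt^{m}\grad q$ has order $m/2$ while $\Dt^{n}\vv$ has order $(n-1)/2$.

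For the velocity recurrence one has $\vw_{2j}=\Dt^{2}\vw_{2(j-1)}$ exactly. Differentiating the second equation of \eqref{simple lin sys} and using $\Dt\sigma=0$ and the commutator for $\comm{\Dt}{\grad}$ yields
\begin{equation*}
\Dt^{2}\vw_{2(j-1)}=-\sigma\,\grad(\Dt s_{2(j-1)})+\sigma(\grad\vv)^{*}\grad s_{2(j-1)}+\Dt\vb*{g}_{2(j-1)}.
\end{equation*}
Substituting the first equation of \eqref{simple lin sys} for $\Dt s_{2(j-1)}$ isolates $\beta\sigma\,\grad(q\divergence\vw_{2(j-1)})$, which is the elliptic part of $\sigma\cL_{2}\vw_{2(j-1)}$; by the product rule the remaining transport contribution $\sigma\,\grad(\vw_{2(j-1)}\vdot\grad q)-\sigma(\grad\vw_{2(j-1)})^{*}\grad q$ collapses to $\sigma\,(\grad^{2}q)\,\vw_{2(j-1)}$, which has a $\pd^{2}q$ factor and the positive-order factor $\vw_{2(j-1)}$, hence is admissible. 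The leftover terms are admissible as well: $\sigma(\grad\vv)^{*}\grad s_{2(j-1)}$ is a product of two positive-order forms (recall $\grad s_{2(j-1)}$ has order $j-1\ge 1$), while $\sigma\,\grad f_{2(j-1)}$ and $\Dt\vb*{g}_{2(j-1)}$ are a derivative of, respectively a $\Dt$-image of, the admissible $f_{2(j-1)},\vb*{g}_{2(j-1)}$. This gives $\vw_{2j}=\sigma\cL_{2}\vw_{2(j-1)}+\vb*{g}_{2j}$ with $\vb*{g}_{2j}$ admissible.

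For the $q$-recurrence I first show $\Dt^{2}s_{2(j-1)}=\sigma\cL_{1}s_{2(j-1)}+(\text{admissible})$: differentiating the first equation of \eqref{simple lin sys}, substituting the second for $\Dt\vw_{2(j-1)}$, and using $\Dt q=-\beta q\divergence\vv$, $\Dt\sigma=0$ and $\comm{\Dt}{\divergence}$, one is left with $\sigma(\beta q\laplace+\grad q\vdot\grad)s_{2(j-1)}=\sigma\cL_{1}s_{2(j-1)}$ plus the error terms $\beta q\,\grad\sigma\vdot\grad s_{2(j-1)}$, $\beta^{2}q\,(\divergence\vv)(\divergence\vw_{2(j-1)})$, $\beta q\,\tr(\grad\vv\vdot\grad\vw_{2(j-1)})$, $\vw_{2(j-1)}\vdot\Dt\grad q$, together with terms built from $\Dt f_{2(j-1)}$, $\vb*{g}_{2(j-1)}\vdot\grad q$ and $q\divergence\vb*{g}_{2(j-1)}$ — each a product of two positive-order forms, or a derivative or multiple of an admissible one, hence admissible. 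It remains to compare $s_{2j}$ with $\Dt^{2}s_{2(j-1)}$. For $j\ge 3$ the Leibniz expansion of $\Dt^{2}(\grad q\vdot\Dt^{2j-3}\vv)$ gives
\begin{equation*}
s_{2j}-\Dt^{2}s_{2(j-1)}=(\Dt^{2}\grad q)\vdot\Dt^{2j-3}\vv+2(\Dt\grad q)\vdot\Dt^{2j-2}\vv,
\end{equation*}
and both summands are products of two positive-order forms, so the recurrence follows. The delicate case is $j=2$, where $s_{2}=\Dt^{2}q-\frac{1}{2}\grad q\vdot\Dt\vv$: here the same computation gives $s_{4}-\Dt^{2}s_{2}=-\frac{1}{2}\grad q\vdot\Dt^{3}\vv+\frac{1}{2}(\Dt^{2}\grad q)\vdot\Dt\vv+(\Dt\grad q)\vdot\Dt^{2}\vv$, and the first two terms are each merely of the shape $\sigma\,\grad q\vdot\Dt^{2}\grad q$, with one positive-order factor only, hence \emph{not} admissible individually. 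The point — and the reason for the coefficient $\frac{1}{2}$ in the definition of $s_{2}$ — is that, after rewriting $\Dt^{3}\vv=-\sigma\,\Dt^{2}\grad q$ and $\Dt\vv=-\sigma\grad q$, these two terms cancel identically, leaving only the admissible $(\Dt\grad q)\vdot\Dt^{2}\vv$.

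The main obstacle is exactly this base case $j=2$: one must keep precise track of the numerical coefficients through the Leibniz expansion in order to witness the cancellation that rescues admissibility, whereas for $j\ge 3$ the positive-order counting is automatic. The only other technical care needed is to state the order-counting principle cleanly and to verify that the iterated forms $\Dt^{m}\grad q$ and $\Dt^{n}\vv$ appearing above really carry positive order for the relevant $m$ and $n$ (so that each contains a differentiated factor), which is immediate from the scaling of \eqref{euler equiv2}.
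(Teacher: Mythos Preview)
Your proof is correct and follows the same ``direct calculation'' route the paper alludes to (the paper itself gives no details beyond referring to \cite[Lemma~5.1]{Ifrim-Tataru2024}). Your explicit treatment of the base case $j=2$---in particular the exact cancellation between $-\tfrac12\grad q\vdot\Dt^{3}\vv=\tfrac12\sigma\,\grad q\vdot\Dt^{2}\grad q$ and $\tfrac12(\Dt^{2}\grad q)\vdot\Dt\vv=-\tfrac12\sigma\,\grad q\vdot\Dt^{2}\grad q$---is precisely the point that justifies the coefficient $\tfrac12$ in the definition of $s_{2}$ and is the only place where mere order-counting is insufficient.
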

	The above lemma can be proved via direct calculations, and one can refer to \cite[Lemma 5.1]{Ifrim-Tataru2024} for details. Although the equations are different, most of the calculations still work here, since $\sigma$ has order exactly $0$ and it satisfies the equation $\Dt\sigma=0$.
	
	\begin{proof}[Proof of the ``$\,\lesssim\,$'' part of \eqref{energy coercivity est}]{\hfill}	
		
	It follows from the induction formula \eqref{lem recur} and the interpolation Lemma \ref{lem err est A}.
		
	\end{proof}
	
	\subsubsection{Elliptic estimates}\label{sec elliptic est}
	
	In order to show the energy coercivity, it necessitates to establish the appropriate elliptic estimates for $\cL_1 $ and $(\cL_2 + \cL_3)$. More precisely, there holds the following  lemma:
	\begin{lemma}\label{lem ellipic est}
		Suppose that $\varepsilon_* > 0$ is a (small) constant, and $r \in C^{1+\varepsilon_*}(\overline{\Omega})$ is a non-degenerate defining function of $\Omega$, i.e. $r>0$ in $\Omega$, $r=0$ and $\abs{\grad{r}} \ge c_0 > 0$ on $\pd\Omega$. Then, it holds for each constant $\ell \ge 0$ that
		\begin{equation}\label{est L1}
			\norm*{r^{1+\ell}\pd^2u}_{L^2_*}  \lesssim_{c_0} \norm*{r^\ell\cL_1 u}_{L^2_*} + C(\norm{r}_{C^{1+\varepsilon_*}}, c_0)\norm*{r^{\ell-1}u}_{L^2_*(\{r\gtrsim\overline{\epsilon}\})},
		\end{equation}
		and
		\begin{equation}\label{est L2+L3}
			\norm*{r^{1+\ell}\pd^2\vw}_{L^2_{**}}  \lesssim_{c_0} \norm*{r^\ell(\cL_2 + \cL_3)\vw}_{L^2_{**}} + C(\norm{r}_{C^{1+\varepsilon_*}}, c_0)\norm*{r^{\ell-1}\vw}_{L^2_*(\{r\gtrsim\overline{\epsilon}\})},
		\end{equation}
		where $\overline{\epsilon} > 0$ is a small constant depending on $\norm{\grad r}_{\dot{C}^{\varepsilon_*}}$ and $c_0$. The operators $\cL_1$ and $(\cL_2 + \cL_3)$ are given by \eqref{def L1} and \eqref{def L2+L3} respectively with $q$ replaced by $r$. The weighted $L^2$ spaces are defined by $L^2_* \coloneqq L^2(\Omega; r^{\alpha-1}\dd{x})$ and $L^2_{**} \coloneqq L^2(\Omega; r^\alpha \dd{x})$.
	\end{lemma}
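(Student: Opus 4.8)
The plan is to prove \eqref{est L1} first, by a localization argument, and then to deduce \eqref{est L2+L3} from the structure of $\cL_2+\cL_3$ recorded in \eqref{def L2+L3}--\eqref{int formula L2+L3}. Fix a finite partition of unity $1=\sum_\iota\chi_\iota$ on $\overline\Omega$ subordinate to a cover by balls, each of which is either an \emph{interior ball}, on which $r\gtrsim\overline{\epsilon}$, or a \emph{boundary ball} centered at a point of $\pd\Omega$. Since $\comm{\cL_1}{\chi_\iota}$ is a first-order operator with coefficients bounded by $C(\norm{r}_{C^{1+\varepsilon_*}})$, the terms it produces carry one fewer derivative on $u$ and, after an interpolation of the type of Proposition \ref{prop interp}, are either absorbed into the left side of \eqref{est L1} or incorporated into the error term $\norm{r^{\ell-1}u}_{L^2_*(\{r\gtrsim\overline{\epsilon}\})}$; hence it suffices to estimate $\norm{r^{1+\ell}\pd^2(\chi_\iota u)}_{L^2_*}$ ball by ball.

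On an interior ball all weights are comparable to $1$ and $\cL_1$ is uniformly elliptic with coefficients of class $C^{\varepsilon_*}$ (because $r\in C^{1+\varepsilon_*}$ and $r\simeq\overline{\epsilon}$ there), so the classical interior $L^2$-regularity estimate for second-order elliptic operators with continuous coefficients gives $\norm{\pd^2(\chi_\iota u)}_{L^2}\lesssim\norm{\cL_1 u}_{L^2(\spt\chi_\iota)}+\norm{u}_{L^2(\spt\chi_\iota)}$, which is precisely the interior contribution to \eqref{est L1} and produces the error term on the right. For a boundary ball one uses the non-degeneracy $\abs{\grad r}\ge c_0>0$ to flatten $\pd\Omega$ to $\{x_d=0\}$ by a $C^{1+\varepsilon_*}$ change of coordinates (equivalently, one takes $r$ itself as the $d$-th coordinate). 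In the new coordinates $\cL_1$ becomes a model operator $\beta x_d\,a^{ij}(x)\pd_i\pd_j u+b^i(x)\pd_i u$ on a half-ball, with $(a^{ij})$ uniformly elliptic of class $C^{\varepsilon_*}$ and $b^d(x)\to\abs{\grad r}^2\ge c_0^2$ as $x_d\to0$, while $r^{\alpha-1}$ is replaced by a weight comparable to $x_d^{\alpha-1}$.

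For this model operator the key algebraic identity is $a^{ij}\pd_i\pd_j u=(\beta x_d)^{-1}(\cL_1 u-b^i\pd_i u)$. Integrating $\int x_d^{\alpha+1+2\ell}\abs{\pd^2 u}^2$ by parts twice — legitimate here since $x_d$ is a smooth coordinate, the H\"older coefficients $a^{ij}$ being treated perturbatively by freezing coefficients — and inserting this identity, one obtains
\begin{equation*}
	\norm{x_d^{1+\ell}\pd^2 u}_{L^2(x_d^{\alpha-1}\dd{x})}^2\lesssim\norm{x_d^{\ell}\cL_1 u}_{L^2(x_d^{\alpha-1}\dd{x})}^2+\norm{x_d^{\ell}\grad u}_{L^2(x_d^{\alpha-1}\dd{x})}^2+(\text{lower order}).
\end{equation*}
The surviving gradient term is controlled \emph{not} by a naive integration by parts (which would worsen the weight) but by pairing the equation with $x_d^{2\ell}u$ and using the self-adjoint energy identity analogous to \eqref{L1 int formula}: this bounds $\int x_d^{\alpha+2\ell}\abs{\grad u}^2$ by $\norm{x_d^\ell\cL_1 u}_{L^2_*}\norm{x_d^\ell u}_{L^2_*}$ up to $\ell$-dependent remainders, and the weight mismatch between $x_d^{\alpha-1+2\ell}$ and $x_d^{\alpha+2\ell}$, together with the passage from $\norm{x_d^\ell u}$ to the interior error term, is absorbed by one-dimensional Hardy inequalities in $x_d$ (the model case of Lemma \ref{lem inclusion}), which are available at the exponents that occur since $\alpha>0$; for non-integer $\ell$ one invokes the interpolation definition of the weighted spaces. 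Bootstrapping in $\ell$ and reassembling the partition of unity, with the collar interfaces $\{x_d\simeq\overline{\epsilon}\}$ feeding the term $\norm{r^{\ell-1}u}_{L^2_*(\{r\gtrsim\overline{\epsilon}\})}$, completes \eqref{est L1}.

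Finally, \eqref{est L2+L3} follows componentwise. By \eqref{def L2+L3} the principal part of $(\cL_2+\cL_3)\vw$ is, in each component $w_i$, the operator $\beta q\laplace w_i+(\beta+1)\grad q\vdot\grad w_i$, which is of $\cL_1$-type (the coefficient $(1+\alpha)\beta=\beta+1$ only alters a harmless constant and is precisely the one making it self-adjoint in $L^2_{**}$), while the remaining terms $\pd_i q\,\pd_j w_j-\pd_j q\,\pd_i w_j$ couple the components at first order only and are lower order, hence absorbable exactly as the commutator terms above; moreover \eqref{int formula L2+L3} shows $(\cL_2+\cL_3)$ is self-adjoint and coercive in $L^2_{**}$, since $(\divergence\vw)(\divergence\vb*{\phi})-\tr(\grad\vw\vdot\grad\vb*{\phi})$ integrates (a Korn/Gaffney-type identity, after one further integration by parts) into lower-order and boundary contributions, so the form dominates $\int q^{1+\alpha}\abs{\grad\vw}^2$ modulo lower order, the vectorial analogue of the energy identity used above. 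Running the scalar argument of the previous paragraph for each component, with $L^2_{**}$ in place of $L^2_*$, then yields \eqref{est L2+L3}. The main obstacle throughout is the interplay of the boundary degeneracy of $\cL_1$ and $\cL_2+\cL_3$ with the merely $C^{1+\varepsilon_*}$ regularity of $r$: one cannot straighten the boundary smoothly nor integrate by parts freely in the original coordinates (there is no classical $\pd^2 r$), so every manipulation must keep $r$ differentiated at most once and treat the H\"older coefficients perturbatively; in addition the target weight $r^{1+\ell}$ on $\pd^2 u$ is sharp, which forces the Hardy inequalities to be applied at borderline admissible exponents and requires care to confine the loss to the region $\{r\gtrsim\overline{\epsilon}\}$.
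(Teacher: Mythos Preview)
Your overall architecture --- partition of unity, classical interior estimates, model analysis near the boundary, and the componentwise reduction of \eqref{est L2+L3} to the scalar case --- matches the paper's. The gap is in the boundary gradient estimate. After your double integration by parts you reach $\norm{x_d^{1+\ell}\pd^2 u}_{L^2_*}^2 \lesssim \norm{x_d^\ell\cL_1 u}_{L^2_*}^2 + \norm{x_d^\ell\grad u}_{L^2_*}^2$ and must then control $\norm{x_d^\ell\grad u}_{L^2_*}^2 = \int x_d^{\alpha-1+2\ell}\abs{\grad u}^2$. Pairing with $x_d^{2\ell}u$ as you propose yields only $\int x_d^{\alpha+2\ell}\abs{\grad u}^2$, one power of $x_d$ too many. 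The one-dimensional Hardy inequality you invoke to repair this reads $\int x_d^{\alpha-1+2\ell}\abs{g}^2 \lesssim \int x_d^{\alpha+1+2\ell}\abs{\pd_d g}^2$ (valid since $\alpha+2\ell>0$); applied with $g=\grad u$ it puts $\norm{x_d^{1+\ell}\pd^2 u}_{L^2_*}^2$ back on the right with a constant that forbids absorption (already on the half-line with $r=x_d$, $\ell=0$ one has $\norm{x_d u''}_{L^2_*}\le\alpha\norm{\cL_1 u}_{L^2_*}+\alpha\norm{u'}_{L^2_*}$ while Hardy gives $\norm{u'}_{L^2_*}\le\tfrac{2}{\alpha}\norm{x_d u''}_{L^2_*}$). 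Lemma~\ref{lem inclusion} is circular for the same reason, as it trades one derivative for one power of the weight. The paper closes this by testing \eqref{L1 int formula} not with $u$ but with $\phi=\xi^2\grad_{\vb*{\vartheta}}u$, the constant vector $\vb*{\vartheta}$ chosen so that $\grad_{\vb*{\vartheta}}r\gtrsim c_0$ on the small ball: since $\grad u\cdot\grad(\grad_{\vb*{\vartheta}}u)=\tfrac12\grad_{\vb*{\vartheta}}\abs{\grad u}^2$, one integration by parts produces directly $\int r^{\alpha-1}(\grad_{\vb*{\vartheta}}r)\,\xi^2\abs{\grad u}^2\gtrsim c_0\norm{\xi\grad u}_{L^2_*}^2$ with the correct weight and a favorable sign.

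A secondary issue: your $C^{1+\varepsilon_*}$ boundary-flattening transforms $\beta r\laplace u$ into $\beta x_d\,a^{ij}\pd_i\pd_j u$ plus first-order terms whose coefficients pick up the Christoffel symbols of the pulled-back metric, i.e.\ second derivatives of the coordinate change, and are therefore not bounded functions; the pointwise identity $a^{ij}\pd_i\pd_j u=(\beta x_d)^{-1}(\cL_1 u-b^i\pd_i u)$ is then not available as written. The paper sidesteps this entirely by never changing coordinates: both multipliers $\psi=\eta^2 r\laplace u$ (for the $\pd^2 u$ bound) and $\phi=\xi^2\grad_{\vb*{\vartheta}}u$ (for the $\grad u$ bound) are inserted directly into the intrinsic energy identity \eqref{L1 int formula}, which only differentiates $r$ once.
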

	\begin{remark}
		In \S\ref{sec elliptic est}, with a slight abuse of notations, we still denote by
		\begin{equation*}
			A^\sharp \coloneqq \norm{\grad{r}-\vb*{\vartheta}}_{L^\infty} \qand A_* \coloneqq \norm{r}_{C^{1+\varepsilon_*}},
		\end{equation*}
		where $\vb*{\vartheta}$ is a constant vector to be determined.
	\end{remark}
	\begin{proof}
		One may assume that $\ell = 0$, since other cases can be handled in a similar manner.	Take a cut-off function $\eta$ and let $\psi \coloneqq \alpha\eta^2 r\pd_j \pd_j u$ be the test function. One can derive from \eqref{L1 int formula} that
		\begin{equation*}
			\begin{split}
				\ev{\cL_1 u, \psi}_{L^2_*} = &\,\int r^{1+\alpha} \eta^2 \abs{\grad\pd_j u}^2 \dd{x} \\
				&- \int r^\frac{\alpha+1}{2}\eta\pd_j\pd_j u \cdot r^{\frac{\alpha-1}{2}} \eta \grad{r}\vdot\grad{u} \dd{x} \\
				&-2 \int r^{\frac{\alpha+1}{2}}\eta\pd_j\pd_j u \cdot r^{\frac{\alpha-1}{2}}\grad{\eta}\vdot\grad{u} \dd{x} \\
				&+(1+\alpha)\int r^\frac{1+\alpha}{2} \eta\grad\pd_j u \cdot r^{\frac{\alpha-1}{2}} \eta \pd_j{r} \grad{u} \dd{x} \\
				&+2\int r^{\frac{1+\alpha}{2}}\eta\grad\pd_j u \vdot r^{\frac{1+\alpha}{2}}\pd_j\eta\grad{u} \dd{x},
			\end{split}
		\end{equation*}
		which leads to
		\begin{equation*}
			\norm{\eta r \pd^2 u}_{L^2_*} \lesssim \norm{\eta\cL_1 u}_{L^2_*} + \norm{\eta\pd r \pd u}_{L^2_*} + \norm{r\pd\eta\pd u}_{L^2_*}.
		\end{equation*}
		To estimate $\norm{\pd u}_{L^2_*}$, one can take another cut-off function $\xi$ and a constant vector $\vb*{\vartheta}$. Plugging $\phi \coloneqq \alpha \xi^2\grad_{\vb*{\vartheta}}u$ into \eqref{L1 int formula} yields
		\begin{equation*}
		\begin{split}
				\ev{\cL_1 u, \phi}_{L^2_*} = &\,\frac{1}{2}\int r^{\alpha-1} \grad_{\vb*{\vartheta}}r \cdot\xi^2 \abs{\grad u}^2 \dd{x} \\
				&\,+\order{\abs{\vb*{\vartheta}}}\int r^\alpha\xi \abs{\grad\xi}\abs{\grad{u}}^2 \dd{x}.
		\end{split}
		\end{equation*}
		Recall that $r$ is assumed to be non-degenerate and $\grad{r} \in C^{\varepsilon_*}$. If $\spt (\xi)$ is of rather small size and close to the boundary, one can take a constant vector $\vb*{\vartheta}$ so that $\abs{\vb*{\vartheta}}>\frac{c_0}{2}$ and $A^\sharp \ll 1$ in $\spt(\xi)$. Thus, one arrives at
		\begin{equation*}
			\norm{\xi\pd u}_{L^2_*} \lesssim_{c_0} \norm{\xi\cL_1 u}_{L^2_*} + \norm{r\pd\xi\pd u}_{L^2_*}.
		\end{equation*}
		The modulus of continuity of $\pd r$ ensures the existence of a universal constant $\overline{\epsilon} > 0$ (depending on $A_*$) so that $A^\sharp$ is small enough as needed inside each ball with radius at most $4\overline{\epsilon}$. Therefore, one can consider a fixed point $\overline{x} \in \pd\Omega$ and a cutoff function $\xi$ supported in $B(\overline{x}, 4\overline{\epsilon})$, such that $\xi \equiv 1$ in $B(\overline{x}, 2\overline{\epsilon})$. Then, one may assume that $\abs{\pd\xi} \le \overline{\epsilon}^{-1}$. It is routine to check that
		\begin{equation*}
			\norm{r\pd\xi\pd u}_{L^2_*(\{r \ll \overline{\epsilon}\})} \ll \norm{\pd u}_{L^2_*(\{r \ll \overline{\epsilon}\}\cap\spt(\pd\xi))}.
		\end{equation*}
		Consider a finite open cover of $\{\dist(x, \pd\Omega) \le \overline{\epsilon}\}$ using balls $B(\overline{x}, 2\overline{\epsilon}) $ with $\overline{x} \in \pd\Omega$. The Besicovitch covering theorem ensures that there exists a cover so that each point belongs to at most $N_d$ doubled balls (i.e., $B(\overline{x}, 4\overline{\epsilon}) $) in the open cover, here $N_d$ is a dimensional constant. Therefore, the above arguments yield the estimate within a thin boundary layer:
		\begin{equation*}
			\norm{\pd u}_{L^2_*(\{r\ll\overline{\epsilon}\})} \lesssim_{c_0} \norm{\cL_1 u}_{L^2_*(\{r\lesssim\overline{\epsilon}\})}.
		\end{equation*}
		On the other hand, for the regions away from the boundary, one can invoke the classical elliptic estimates and interpolation inequalities to obtain that
		\begin{equation*}
			\norm{\pd u}_{L^2_*(\{r\gtrsim\overline{\epsilon}\})} \lesssim_{c_0} \norm{\cL_1 u}_{L^2_*(\Omega)} + C(A_*, c_0)\norm{r^{-1}u}_{L^2_*(\{r\gtrsim\overline{\epsilon}\})}.
		\end{equation*}
		The constant $C(A_*, c_0)$ here might be quite large, but it depends only on the property of the defining function anyway. It is unnecessary to worry about those regions away from the boundary while $0 < r \ll \overline{\epsilon}$. Indeed, one can obtain the interior local elliptic estimate by taking cutoff functions $\eta$ with $\pd\eta$ supported in the region $\{r\gtrsim\overline{\epsilon}\}$.  In summary, \eqref{est L1} is concluded from the previous arguments. The verification of \eqref{est L2+L3} follows from taking $\eta^2 r \laplace\vw$ and $\xi^2 \grad_{\vb*{\vartheta}}\vw$ as test functions in \eqref{int formula L2+L3}.
		
	\end{proof} 
	With essentially the same arguments, one can derive the following variant:
	\begin{lemma}\label{lem ellip est L1 L2 L3}
		Suppose that the assumptions in Lemma \ref{lem ellipic est} hold. Given a constant $\lambda \ge 0$, let
		\begin{equation}\label{est ellip L1 lambda}
			\cL_{1,\lambda} \coloneqq \cL_1 + \lambda\beta{\grad r}\vdot\grad(\cdot) \qand \cL_{2,\lambda} \coloneqq \cL_2 + \lambda\beta\qty\big[\div(\cdot)]\grad{r}.
		\end{equation}
		Then, $\cL_{1,\lambda}$ and $(\cL_{2,\lambda}+\cL_3)$ also satisfy the estimates \eqref{est L1} and \eqref{est L2+L3}, respectively.
	\end{lemma}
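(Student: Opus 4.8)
The plan is to repeat the proof of Lemma~\ref{lem ellipic est} almost verbatim, since $\cL_{1,\lambda}$ and $\cL_{2,\lambda}$ differ from $\cL_1$ and $\cL_2$ only by the first-order terms $\lambda\beta\,\grad r\vdot\grad(\cdot)$ and $\lambda\beta\,[\div(\cdot)]\grad r$, which are of lower order in the elliptic scaling. The only ingredients used in that proof were the integration-by-parts identities \eqref{L1 int formula} and \eqref{int formula L2+L3}, together with the two families of test functions $\psi=\alpha\eta^2 r\laplace u$ and $\phi=\alpha\xi^2\grad_{\vb*{\vartheta}}u$ (and their vectorial analogues). For the perturbed operators these identities acquire exactly one extra term: for any $\psi$,
\begin{equation*}
	\ev{\cL_{1,\lambda}u,\psi}_{L^2_*} = -\beta\int_\Omega r^\alpha\grad u\vdot\grad\psi\dd{x} + \lambda\beta\int_\Omega r^{\alpha-1}(\grad r\vdot\grad u)\,\psi\dd{x},
\end{equation*}
and $\ev{(\cL_{2,\lambda}+\cL_3)\vw,\vb*{\phi}}_{L^2_{**}}$ equals the right-hand side of \eqref{int formula L2+L3} plus $\lambda\beta\int_\Omega r^\alpha(\div\vw)(\grad r\vdot\vb*{\phi})\dd{x}$. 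The whole task is to verify that these extra terms do not spoil the estimates.

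For the scalar operator this is clean. Against $\psi=\alpha\eta^2 r\laplace u$ the new contribution $\lambda\beta\alpha\int r^\alpha\eta^2(\grad r\vdot\grad u)\laplace u\dd{x}$ is bounded by $\lesssim_{A_*}\lambda\norm{\eta\pd u}_{L^2_*}\norm{\eta r\pd^2 u}_{L^2_*}$; after Cauchy--Schwarz one half is absorbed on the left and the other is a multiple of $\norm{\eta\pd u}_{L^2_*}^2$, which is precisely the quantity already controlled in the original argument through $\norm{\eta\pd r\,\pd u}_{L^2_*}$. Against $\phi=\alpha\xi^2\grad_{\vb*{\vartheta}}u$, with $\xi$ supported in a small ball about a boundary point $\overline x\in\pd\Omega$ and $\vb*{\vartheta}=\grad r(\overline x)$, the leading part of $\beta r\laplace$ still supplies the coercive term $\simeq\int r^{\alpha-1}(\vb*{\vartheta}\vdot\grad r)\,\xi^2\abs{\grad u}^2\dd{x}\gtrsim_{c_0}\norm{\xi\pd u}_{L^2_*}^2$, while the new term is $\lambda\int r^{\alpha-1}\xi^2(\grad r\vdot\grad u)(\vb*{\vartheta}\vdot\grad u)\dd{x}$. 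Writing $(\grad r\vdot\grad u)(\vb*{\vartheta}\vdot\grad u)=(\vb*{\vartheta}\vdot\grad u)^2+\bigl((\grad r-\vb*{\vartheta})\vdot\grad u\bigr)(\vb*{\vartheta}\vdot\grad u)$ and using $\lambda\ge0$ to discard the nonnegative square, the rest is $\ge -C(A_*)\lambda A^\sharp\norm{\xi\pd u}_{L^2_*}^2$, which is absorbed once $A^\sharp$ is small on $\spt\xi$ (arranged by shrinking $\overline{\epsilon}$). From this point the proof of Lemma~\ref{lem ellipic est} goes through unchanged: the Besicovitch cover of the thin layer $\{r\lesssim\overline{\epsilon}\}$, the classical interior elliptic estimate and interpolation on $\{r\gtrsim\overline{\epsilon}\}$, and the cutoff gluing are all insensitive to which first-order terms $\cL_1$ carries.

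The vectorial operator $\cL_{2,\lambda}+\cL_3$ is handled along the same lines, with the scalar test functions replaced by $\eta^2 r\laplace\vw$ and $\xi^2\grad_{\vb*{\vartheta}}\vw$. Against $\eta^2 r\laplace\vw$ the new term is again bounded by $\lesssim_{A_*}\lambda\norm{\eta\pd\vw}_{L^2_{**}}\norm{\eta r\pd^2\vw}_{L^2_{**}}$ and absorbed exactly as above. The delicate point, and the main obstacle, is the new term against $\xi^2\grad_{\vb*{\vartheta}}\vw$, namely $\lambda\beta\int r^\alpha\xi^2(\div\vw)(\grad r\vdot\grad_{\vb*{\vartheta}}\vw)\dd{x}$: in contrast with the scalar case its integrand is not a perfect square, so it cannot be controlled pointwise in isolation. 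Instead one must absorb it into the full coercive quadratic form in $\grad\vw$ that \eqref{int formula L2+L3} produces with this test function, and check that the resulting form stays nonnegative near $\pd\Omega$; this is where the hypothesis $\lambda\ge0$ is genuinely used, and it is the only step that departs from the proof of Lemma~\ref{lem ellipic est}. The contribution of $(\grad r-\vb*{\vartheta})\vdot\grad_{\vb*{\vartheta}}\vw$ is, as in the scalar case, an error controlled by $A^\sharp$ and absorbed by shrinking $\overline{\epsilon}$. Granting this, the remaining ingredients — covering, interior estimates, gluing — are identical, and \eqref{est L1} and \eqref{est L2+L3} follow for $\cL_{1,\lambda}$ and $\cL_{2,\lambda}+\cL_3$, with implicit constants now additionally depending on $\lambda$.
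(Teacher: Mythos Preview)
Your approach is exactly the paper's (which says only ``with essentially the same arguments''), and for the scalar operator $\cL_{1,\lambda}$ your argument is complete: against $\phi=\alpha\xi^2\grad_{\vb*{\vartheta}}u$ the new contribution is, up to $A^\sharp$-errors, the nonnegative term $\lambda\int r^{\alpha-1}\xi^2(\vb*{\vartheta}\vdot\grad u)^2\dd{x}$, so coercivity only improves and the rest of the proof of Lemma~\ref{lem ellipic est} transfers verbatim.

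For the vectorial operator, however, the point you flag with ``granting this'' is a real obstruction, and the absorption you propose does \emph{not} work for large $\lambda$. After the integration by parts with $\vb*{\phi}=\xi^2\grad_{\vb*{\vartheta}}\vw$ and $\grad r\approx\vb*{\vartheta}=e_n$, the full pointwise quadratic form in $\grad\vw$ that must control $|\grad\vw|^2$ is
\[
\tfrac{1+\beta}{2}\,|\grad\vw|^2+(1+\lambda\beta)(\div\vw)\,\pd_n w_n-\sum_i\pd_i w_n\,\pd_n w_i,
\]
and this is indefinite once $\lambda$ is large. Concretely, in $d=2$ with $\beta=1$ and $\grad\vw=\operatorname{diag}(1,-\tfrac12)$ the form equals $\tfrac34-\tfrac14\lambda$, negative for $\lambda>3$. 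So the hypothesis $\lambda\ge0$ does not by itself force nonnegativity, and your assertion that ``the resulting form stays nonnegative near $\pd\Omega$'' is false in general. The paper's one-line proof does not confront this either; it is worth noting that in \S\ref{sec other energies} the paper introduces $\cL_{3,\lambda}\coloneqq\cL_3+\lambda\beta(\grad\vw)\vdot\grad r-\lambda\beta(\div\vw)\grad r$ and asserts the estimate for $\cL_{2,\lambda}+\cL_{3,\lambda}$: for that combination the extra first-order piece becomes $\lambda\beta(\grad\vw)\vdot\grad r$, and against $\xi^2\grad_{\vb*{\vartheta}}\vw$ this produces the perfect square $\lambda\beta\int r^\alpha\xi^2|\grad_{\vb*{\vartheta}}\vw|^2\dd{x}\ge0$, so your argument goes through cleanly there. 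As stated for $\cL_{2,\lambda}+\cL_3$, the first-derivative estimate via $\xi^2\grad_{\vb*{\vartheta}}\vw$ genuinely breaks down and would need a different mechanism.
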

	
	For the higher order estimates, one can first assume that the function $u$ is supported in a tiny compact region close to the boundary. Thus, the lower order term on the right hand side of \eqref{est L1} is unnecessary. In other words, one can now apply the simpler form of \eqref{est L1}:
	\begin{equation}\label{est L1 simple}
		\norm*{r^{1+\ell}\pd^2 u}_{L^2_*} + \norm*{r^\ell \pd u}_{L^2_*} \lesssim_{c_0} \norm*{r^\ell \cL_{1, \lambda} u}_{L^2_*}.
	\end{equation}
	By taking adapted coordinates, one may assume that $A^\sharp \ll 1$ and
	\begin{equation}\label{assumption p.o.u.}
		\grad{r} = \vb{e}_{(n)} + \order*{A^\sharp} \qin \spt(u).
	\end{equation}
	Notice further that
	\begin{equation*}
		\comm{\cL_1}{\pd_j}u = -\beta\pd_j r\laplace u - \grad(\pd_j r) \vdot\grad{u}.
	\end{equation*}
	Then, it is clear that
	\begin{equation}\label{comm L1 pd bar}
		\comm{\cL_1}{\overline{\pd}}u = \order*{A^\sharp}\pd^2 u - \overline{\pd}(\grad{r})\vdot\grad{u},
	\end{equation}
	and
	\begin{equation*}
		\begin{split}
			\pd_n\cL_1 u &= \cL_1\pd_n u +\beta\pd_n{r}\cdot\pd_n\pd_n{u} + \beta\pd_n{r}\cdot\overline{\pd}^2 u + \pd_n(\grad{r})\vdot\grad{u}\\
			&= \cL_{1, 1} (\pd_n{u}) + \beta\pd_n r\cdot\overline{\pd}^2 u + \pd_n(\grad{r})\vdot\grad{u}.
		\end{split}
	\end{equation*}
	where $\overline{\pd}$ means $\pd_j$ with $j \neq n$.
	Similar arguments lead to
	\begin{equation*}
		\cL_{1, 2}(\pd_n\pd_n u) = \pd_n \pd_n \cL_1 u -2\beta\pd_n r\cdot\pd_n\overline{\pd}^2 u - \pd_n\pd_n(\grad{r})\vdot\grad{u}  + \order{\abs*{\pd^2r\cdot\pd^2 u}}.
	\end{equation*}
	Inductively, the following relation holds for $\lambda \in \mathbb{N}$:
	\begin{equation}\label{comm L1 pd n pd n}
		\cL_{1, \lambda}(\pd_n^\lambda u) = \pd_n^\lambda\cL_1 u -\lambda\beta\pd_n r \cdot\pd_n^{\lambda-1}\overline{\pd}^2u - \pd_n^\lambda(\grad{r})\vdot\grad{u} + \calM\qty\big(\scrJ(r), \scrJ(\pd^2 r), \scrJ(\pd^2 u)),
	\end{equation}
	here $\calM$ is a multilinear form, which is linear in $\scrJ(\pd^2 u)$. 
	\begin{remark}
		For the sake of convenience, one may note that for the particular case $u = r$, there hold
		\begin{equation}\label{L1 pdbar r}
			\comm{\cL_1}{\overline{\pd}}r = - \pd_n r\cdot\pd_n\overline{\pd}r + \order*{A^\sharp}\pd^2 r,
		\end{equation}
		and
		\begin{equation}\label{L1 pdn r}
			\cL_{1, (\lambda+\alpha)} \pd_n^\lambda r = \pd_n^\lambda \cL_1 r - \lambda\beta\pd_n r \cdot \pd_n^{\lambda-1}\overline{\pd}^2 r + \calM',
		\end{equation}
		where $\calM'$ is a multilinear form at least bilinear in $\scrJ(\pd^2 r)$.
	\end{remark}
	
	The combination of \eqref{comm L1 pd n pd n} and \eqref{est L1 simple} implies that
	\begin{equation*}
		\begin{split}
			\norm*{r^2\pd_n^4 u}_{L^2_*} &\lesssim_{c_0} \norm*{r \cL_{1, 2} \pd_n^2 u}_{L^2_*} \\
			&\lesssim_{c_0} \norm*{r\pd_n^2 \cL_1 u}_{L^2_*} + \order{\norm{\pd r}_{L^\infty}}\norm*{r\overline{\pd}^2\pd_n u}_{L^2_*} + R \\
			&\lesssim_{A_*, c_0} \norm*{(\cL_1)^2 u}_{L^2_{*}} + \norm*{\cL_{1, 1}\pd_n u}_{L^2_*} + R \\
			&\lesssim_{A_*, c_0} \norm*{(\cL_1)^2 u}_{L^2_*} + \norm*{\overline{\pd}^2 u}_{L^2_*} + R,
		\end{split}
	\end{equation*}
	where the remainder term $R$ is given by
	\begin{equation*}
		R \coloneqq \norm*{r\pd^2 r\cdot \pd^2 u}_{L^2_*} + \norm*{r \pd^3 r\cdot \pd u}_{L^2_*}.
	\end{equation*}
	Moreover, Lemma \ref{lem inclusion} and Proposition \ref{prop interp} lead to
	\begin{equation*}
		\norm*{\overline{\pd}^2 u}_{L^2_*} \lesssim \norm*{r^2\pd^2\overline{\pd}^2 u}_{L^2_*} + \norm*{r^2\overline{\pd}^2u}_{L^2_*}.
	\end{equation*}
	It follows from \eqref{est L1 simple}-\eqref{comm L1 pd bar} that
	\begin{equation*}
		\norm*{r^2 \pd^2 \overline{\pd}^2 u}_{L^2_*} \lesssim \norm*{\cL_1^2 u}_{L^2_*} + \order*{A^\sharp}\norm*{\overline{\pd}^2 u}_{L^2_*} + R.
	\end{equation*}
	Combining the above estimates and Proposition \ref{prop interp} together, one can derive from the smallness assumption $A^\sharp \ll 1$ that
	\begin{equation}\label{est L1 square u}
		\norm*{r^2\pd^4 u}_{L^2_*} \lesssim_{A_*, c_0} \norm*{\cL_1^2 u}_{L^2_*} + \norm*{u}_{L^2_*} + \order{\norm*{r\pd^2r\cdot\pd^2u}_{L^2_*} + \norm*{r\pd^3r\cdot\pd u}_{L^2_*}}.
	\end{equation}
	Note that there is a partition of unity dividing the domain into the union of the boundary layer and the interior, so that the boundary layer consists of multiple small pieces with uniform sizes, whose double enlargements satisfy the generic finite overlapping property. Thus, the estimate \eqref{est L1 square u} actually holds for all $u$, not only for those supported in a small set near the boundary. The cost for invoking the partition of unities would be the largeness of the coefficient of $\norm{u}_{L^2_*}$, but fortunately, it depends only on $\norm{\grad{r}}_{\dot{C}^{\varepsilon_*}}$ and $ c_0$. One can also repeat the previous process to obtain the higher-order versions.
	
	More precisely, there holds the following lemma:
	\begin{lemma}\label{lem L1k est}
		Suppose that $r \in C^{1+\varepsilon_*}(\overline{\Omega})$ is a non-degenerate defining function of $\Omega$, i.e. $r>0$ in $\Omega$, $r=0$ and $\abs{\grad{r}} \ge c_0 > 0$ on $\pd\Omega$. Assume further that $\ell > 0$ is an integer and
		\begin{equation*}
			r \in H^{2\ell, \ell+\frac{\alpha-1}{2}}_r (\Omega).
		\end{equation*} 
		Then, for a large constant $C$ determined by $c_0$ and $\norm{\grad r}_{\dot{C}^{\varepsilon_*}}$, it holds that
		\begin{equation}\label{est L1 high}
			\begin{split}
				\norm{u}_{H^{2\ell, \ell+\frac{\alpha-1}{2}}_r (\Omega)} \lesssim_{A_*, c_0}  &\, \norm*{(\cL_1[r])^\ell u}_{H^{0, \frac{\alpha-1}{2}}} + C\norm{u}_{H^{0, \frac{\alpha-1}{2}}} + \\ 
				&\quad + \norm*{r^\ell \pd^{2\ell-1}r\cdot \pd u}_{H^{0, \frac{\alpha-1}{2}}} + \norm{\calM}_{H^{0, \frac{\alpha-1}{2}}}, 
			\end{split}
		\end{equation}
		where the term $\calM$ consists of multilinear forms of $(\scrJ(r), \scrJ(\pd^2 u))$, which are linear in $\scrJ(\pd^2 u)$ and have at least one $\pd^{\ge 2}r$ factors.
	\end{lemma}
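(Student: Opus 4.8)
The plan is to prove \eqref{est L1 high} by induction on $\ell$, iterating the argument that produced \eqref{est L1 square u}. For $\ell=1$ the estimate is Lemma \ref{lem ellipic est} combined with a partition of unity---used to replace the interior term $\norm{r^{-1}u}_{L^2_*(\{r\gtrsim\overline{\epsilon}\})}$ in \eqref{est L1} by $C\norm{u}_{H^{0,\frac{\alpha-1}{2}}}$ with $C$ depending only on $c_0$ and $\norm{\grad r}_{\dot C^{\varepsilon_*}}$---together with Proposition \ref{prop interp}, which recovers the intermediate-order derivatives from the top-order norm and $\norm{u}_{H^{0,\frac{\alpha-1}{2}}}$. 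For the inductive step I would first split $u$, via a partition of unity subordinate to a cover of $\overline\Omega$ by one interior region $\{r\gtrsim\overline{\epsilon}\}$ and finitely many balls $B(\overline x,4\overline{\epsilon})$ meeting $\pd\Omega$, choosing $\overline{\epsilon}$ small enough (depending on $\norm{\grad r}_{\dot C^{\varepsilon_*}}$ and $c_0$) that $A^\sharp\ll1$ on each ball and \eqref{assumption p.o.u.} holds in adapted coordinates; by the Besicovitch covering theorem the doubled balls have the finite-overlap property. On the interior region $(\cL_1)^\ell$ is uniformly elliptic, so classical elliptic regularity and interpolation give a bound by $\norm{(\cL_1)^\ell u}_{L^2}+C(A_*,c_0)\norm{u}_{L^2}$, folded into the $C\norm{u}_{H^{0,\frac{\alpha-1}{2}}}$ term; it remains to treat each boundary piece.

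On a boundary piece, in adapted coordinates, I would estimate $\norm{r^\ell\pd^{2\ell}u}_{L^2_*}$ by reducing $\pd^{2\ell}=\pd_n^a\overline{\pd}^b$ to a single application of $\cL_1$ at the cost of two derivatives. If $b\ge2$, one uses the commutator \eqref{comm L1 pd bar}---which makes $\comm{\cL_1}{\overline{\pd}}$ of size $\order{A^\sharp}$ up to the term $\overline{\pd}(\grad r)\vdot\grad u$---together with the clean elliptic bound \eqref{est L1 simple}; if $b\le1$, one iterates the normal commutator \eqref{comm L1 pd n pd n} together with the shifted elliptic estimates of Lemma \ref{lem ellip est L1 L2 L3}, peeling off $\pd_n^2$ and one $\cL_1$ at each step. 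Either way one is left with $\norm{r^{\ell-1}\pd^{2\ell-2}(\cL_1 u)}_{L^2_*}\le\norm{\cL_1 u}_{H^{2(\ell-1),(\ell-1)+\frac{\alpha-1}{2}}_r}$, to which the inductive hypothesis applies, plus error terms of the form: (i) $\order{A^\sharp}$ times top-order $r^\ell\pd^{2\ell}u$-norms; (ii) strictly-lower-order quantities---in particular the term $-\lambda\beta\pd_n r\vdot\pd_n^{\lambda-1}\overline{\pd}^2 u$ of \eqref{comm L1 pd n pd n}, which despite its $\order{1}$ coefficient carries two tangential and fewer than $2\ell$ derivatives in all, so it is re-routed into the $b\ge2$ case or reduced again by \eqref{est L1 simple}; (iii) the resonant term $r^\ell\pd^{2\ell-1}r\vdot\pd u$ (arising from $\pd_n^{\lambda}(\grad r)\vdot\grad u$ with all derivatives falling on $r$) and multilinear remainders of type $\calM$. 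The leftover terms produced by the inductive hypothesis applied to $\cL_1 u$---namely $C\norm{\cL_1 u}_{H^{0,\frac{\alpha-1}{2}}}$, $\norm{r^{\ell-1}\pd^{2\ell-3}r\vdot\pd(\cL_1 u)}_{H^{0,\frac{\alpha-1}{2}}}$, and its $\calM'$---are, after expanding $\cL_1 u=\beta r\laplace u+\grad r\vdot\grad u$ and applying the Leibniz rule, either of one of the two allowed types or strictly-lower-order; the lower-order quantities are absorbed using Proposition \ref{prop interp}, Lemma \ref{lem inclusion}, and Young's inequality (the derivative gap converting $\order{1}$ coefficients into $\epsilon$ against the top order), with any cutoff-junction contributions living in $\{r\gtrsim\overline{\epsilon}\}$ and hence again folded into $C\norm{u}_{H^{0,\frac{\alpha-1}{2}}}$. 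Summing over the finitely many overlapping pieces and the interior, absorbing the $\order{A^\sharp}$- and $\epsilon$-contributions, and invoking Proposition \ref{prop interp} once more for the intermediate derivatives, yields \eqref{est L1 high}.

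The main obstacle is this remainder bookkeeping: one must verify that every term generated by the iterated commutators and by the inductive hypothesis's remainders falls into one of exactly four categories---$\order{A^\sharp}$-small top-order (absorbed by shrinking the localization scale uniformly), strictly-lower-order (absorbed via Proposition \ref{prop interp} plus Young, which tolerates $\order{1}$ coefficients precisely because of the derivative gap), the resonant $r^\ell\pd^{2\ell-1}r\vdot\pd u$, or the multilinear $\calM$---and in particular that no top-order term survives with an $\order{1}$ coefficient, which would break the absorption and render the argument circular. The delicate point is the uniqueness of the resonant obstruction: the term $r^\ell\pd^{2\ell-1}r\vdot\pd u$ cannot be absorbed since it carries only $\pd u$, so there is no derivative surplus against the top order; confirming that it is the only such term amounts to tracking the precise structure $\pd_n^{\lambda}(\grad r)\vdot\grad u$ isolated in \eqref{comm L1 pd n pd n}, which is exactly why the operator $\cL_1$ (and, in the dynamical setting, the good unknowns $s_{2k}$) were built with the first-order $\grad q$-term subtracted off.
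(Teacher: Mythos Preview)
Your proposal is correct and follows essentially the same approach as the paper: the paper derives \eqref{est L1 square u} for $\ell=2$ by the localization--commutator--absorption scheme you describe and then simply asserts that one can ``repeat the previous process to obtain the higher-order versions,'' which is precisely your induction on $\ell$ with the inductive hypothesis applied to $\cL_1 u$. Your remainder bookkeeping into the four categories---$\order{A^\sharp}$ top-order, strictly lower-order, the resonant $\pd^{2\ell-1}r\cdot\pd u$ term, and the $\calM$-type multilinear forms---matches the structure of the remainder $R$ in \eqref{est L1 square u} and the commutator \eqref{comm L1 pd n pd n}; if anything, your writeup is more explicit about this classification than the paper's own argument.
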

	
	Concerning the estimates for vector fields, one can first compute as the above to obtain that
	\begin{equation}
		\cL_{2,\lambda}(\pd_n^\lambda \vw) = \pd_n^\lambda\cL_2 \vw -\lambda\beta\pd_n r \cdot\pd_n^{\lambda-1}\overline{\pd}(\div\vw) + \calM\qty\big(\scrJ(r), \scrJ(\pd^2 r), \scrJ(\pd \vw)).
	\end{equation}
	Thus, similar arguments applied to $(\cL_2 + \cL_3)$ imply the following lemma, which is akin to the previous one:
	\begin{lemma}\label{lem l2k est}
		With the same assumptions and the notations as in Lemma \ref{lem L1k est}, it holds that
		\begin{equation}\label{est L2 high}
			\norm{\vw}_{H^{2k, k+\frac{\alpha}{2}}} \lesssim_{A_*, c_0} \norm*{(\cL_2[r])^k \vw}_{H^{0, \frac{\alpha}{2}}} + \norm{\Curl \vw}_{H^{2k-1, k+\frac{\alpha}{2}}} + C\norm{\vw}_{H^{0, \frac{\alpha}{2}}} + \norm{\calM}_{H^{0, \frac{\alpha}{2}}},
		\end{equation}
		where $\calM$ consists of multilinear forms of $(\scrJ(r), \scrJ(\pd\vw))$, which are linear in $\scrJ(\pd\vw)$ and have at least one $\pd^{\ge 2}r$ factors.
	\end{lemma}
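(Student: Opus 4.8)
The plan is to follow, essentially verbatim, the strategy already used for the scalar operator in Lemma~\ref{lem L1k est}, with $\cL_1$ replaced by the \emph{elliptic} operator $(\cL_2+\cL_3)$, and then to trade $(\cL_2+\cL_3)^k$ for $(\cL_2[r])^k$ at the cost of lower-order curl norms. First I would record the base step: by the variant in Lemma~\ref{lem ellip est L1 L2 L3} applied with $q$ replaced by $r$, whenever $\vw$ is supported in a small coordinate patch near $\pd\Omega$ in which $\grad r = \vb{e}_{(n)}+\order{A^\sharp}$ and $A^\sharp\ll 1$, testing the bilinear identity \eqref{int formula L2+L3} against $\eta^2 r\,\laplace\vw$ and $\xi^2\grad_{\vb*{\vartheta}}\vw$ yields, exactly as in \eqref{est L1 simple}, the clean estimate
\begin{equation*}
    \norm*{r^{1+\ell}\pd^2\vw}_{L^2_{**}} + \norm*{r^{\ell}\pd\vw}_{L^2_{**}} \lesssim_{c_0} \norm*{r^{\ell}(\cL_{2,\lambda}+\cL_3)\vw}_{L^2_{**}}
\end{equation*}
for all integers $\ell,\lambda\ge 0$.

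Next I would iterate this using the two commutator identities: the normal one stated just above the lemma,
\begin{equation*}
    \cL_{2,\lambda}(\pd_n^\lambda\vw) = \pd_n^\lambda\cL_2\vw - \lambda\beta\,\pd_n r\cdot\pd_n^{\lambda-1}\overline{\pd}(\divergence\vw) + \calM\qty(\scrJ(r),\scrJ(\pd^2 r),\scrJ(\pd\vw)),
\end{equation*}
together with the tangential commutator $\comm{\cL_2+\cL_3}{\overline{\pd}}\vw = \order{A^\sharp}\pd^2\vw + \calM(\scrJ(r),\scrJ(\pd^2 r),\scrJ(\pd\vw))$, the vector analogue of \eqref{comm L1 pd bar}. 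As in the derivation of \eqref{est L1 square u}, one first bounds the purely normal derivative $\norm*{r^{k}\pd_n^{2k}\vw}_{L^2_{**}}$ by $\norm*{(\cL_2+\cL_3)^k\vw}_{L^2_{**}}$ plus remainders, then peels off tangential derivatives $\overline{\pd}$, using Lemma~\ref{lem inclusion} and Proposition~\ref{prop interp} to trade an unweighted $\overline{\pd}^2\vw$ for $r^2\pd^2\overline{\pd}^2\vw$ and absorbing the $\order{A^\sharp}\pd^2\vw$ contributions thanks to $A^\sharp\ll 1$; intermediate weighted norms are recovered by Proposition~\ref{prop interp}. The remainders produced are precisely (i) a lower-order $\norm{\vw}_{H^{0,\alpha/2}}$, (ii) the term $r^{\ell}\pd^{2\ell-1}r\cdot\pd\vw$ coming from the $\pd_n^\lambda(\grad r)$ factors, and (iii) multilinear forms $\calM(\scrJ(r),\scrJ(\pd\vw))$ that are linear in $\scrJ(\pd\vw)$ and carry at least one $\pd^{\ge 2}r$ factor — exactly the classes in the statement.

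To globalize, I would invoke the same partition of unity as for \eqref{est L1 square u}: split $\Omega$ into a boundary layer made of finitely many small patches with bounded overlap (Besicovitch) plus an interior region where classical interior elliptic estimates and interpolation apply; this is what produces the large constant $C=C(c_0,\norm{\grad r}_{\dot{C}^{\varepsilon_*}})$ multiplying $\norm{\vw}_{H^{0,\alpha/2}}$. Finally, to replace $(\cL_2+\cL_3)^k$ by $(\cL_2[r])^k$, expand the difference into non-commutative words, each containing at least one $\cL_3$ factor; since $\cL_3\vw$ depends on $\vw$ only through $\Curl\vw$, while $\Curl(\cL_2\vw)$ is of strictly lower differential order (the $\grad(r\,\divergence\vw)$ part is curl-free and the $(\grad\vw)^*\grad r$ part is bilinear in first and second derivatives of $\vw$ and $r$), each such word is dominated by $\norm{\Curl\vw}_{H^{2k-1,k+\alpha/2}}$ plus admissible $\calM$-type remainders; collecting everything gives \eqref{est L2 high}.

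The main obstacle is the combinatorics of the iterated commutators: one must verify that at each peeling step the newly generated terms genuinely land in the three allowed remainder classes, and in particular that the $\pd_n^{\lambda-1}\overline{\pd}(\divergence\vw)$ term created by $\cL_{2,\lambda}$ is re-absorbed rather than leaving behind an uncontrolled divergence — this is exactly where the presence of $\cL_3$ in the base estimate is essential, since $(\cL_2+\cL_3)$, unlike $\cL_2$ alone, is coercive. The localization must also be arranged so that $A^\sharp\ll 1$ holds on each patch while the overlap multiplicity stays dimensional; the curl conversion is conceptually routine but requires checking that composing with the lower-order curl expressions never demands more than $2k$ derivatives on $\vw$, which holds because each $\cL_3$ spends its two derivatives inside a $\Curl$.
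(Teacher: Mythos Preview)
Your proposal is correct and follows the same route the paper sketches: run the iteration of Lemma~\ref{lem L1k est} with the coercive operator $(\cL_2+\cL_3)$ in place of $\cL_1$, using the base estimate from Lemma~\ref{lem ellip est L1 L2 L3} together with the normal commutator for $\cL_{2,\lambda}$ and the tangential commutator analogue of \eqref{comm L1 pd bar}, and globalize by the same Besicovitch partition of unity. The only organizational difference is that you first iterate all the way to $\norm{(\cL_2+\cL_3)^k\vw}_{H^{0,\alpha/2}}$ and then expand the non-commutative difference $(\cL_2+\cL_3)^k-(\cL_2)^k$ into words containing an $\cL_3$ factor, whereas the paper separates off the $\cL_3$ contribution at each step of the iteration (since $\cL_3\pd_n^\lambda\vw$ depends only on $\pd_n^\lambda\Curl\vw$ it is absorbed directly into $\norm{\Curl\vw}_{H^{2k-1,k+\alpha/2}}$, and when the base estimate is re-applied to $\cL_2\vw$ the resulting $\cL_3\cL_2\vw$ is already an $\calM$-type remainder because $\Curl(\cL_2\vw)=\pd^2 r\cdot\pd\vw$). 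Both bookkeepings are equivalent and produce exactly the three remainder classes in the statement.
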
 
	
	\subsubsection{Coercivity of the energy functional}\label{subsubsec coercivity}
	
	Next, one can use the recurrence relations given in Lemma \ref{lem recur} and the elliptic estimates established above to show the energy coercivity.
	Now, let us focus on the estimates for $q$. Since \eqref{eqn reur} only holds for $j \ge 2$, to close the induction process, one still needs to control $q\equiv s_0$ using $s_2$. Indeed, it follows from \eqref{eqn Dt2 q} and \eqref{def good unknown} that
	\begin{equation*}
		s_2 = \sigma\beta q\laplace q + \frac{1}{2}\sigma\grad{q}\vdot\grad{q} + \beta q\grad{q}\vdot\grad{\sigma} + \order{q\cdot\pd\vv\cdot\pd\vv}.
	\end{equation*}
	Thus, it is straightforward to see that
	\begin{equation*}
		\pd_j s_2 \approx \sigma \beta \pd_j q \laplace q + \sigma\beta q\laplace \pd_j q + \sigma\grad{q}\vdot\grad(\pd_j q) + \order{\pd q\cdot\pd q\cdot\pd\sigma} + \order{q\pd q\cdot \pd^2\sigma},
	\end{equation*}
	here $F\approx G$ means that $(F-G)$ is an admissible error. After applying an appropriate partition of unity, one may assume now that $\pd^2 q$, $\vv$, and $\pd\sigma$ all vanish outside a small tiny region near the boundary. Suppose further that in the prescribed compact set, it holds that
	\begin{equation*}
		\abs{\grad{q}-\vb{e}_{(n)}} \le A^\sharp \ll 1.
	\end{equation*}
	Then, it is routine to compute that
	\begin{equation*}
		\overline{\pd}s_2 \approx \sigma \cL_1 \overline{\pd}s_0 + \order*{A^\sharp}\sigma\pd^2s_0 + \order{\pd q\cdot\pd q\cdot\pd\sigma} + \order{q\pd q\cdot\pd^2\sigma},
	\end{equation*}
	and
	\begin{equation*}
		\pd_n s_2 \approx \sigma\cL_{1, 1}(\pd_n s_0) + \sigma\beta \pd_n q \overline{\pd}^2 s_0 + \order{\pd q\cdot\pd q\cdot\pd\sigma} + \order{q\pd q\cdot\pd^2\sigma}.
	\end{equation*}
	Inductively, one can derive that
	\begin{equation}\label{rel s_2 s_0}
		\pd_n^\lambda s_2 \approx \sigma\cL_{1, \lambda}(\pd_n^\lambda s_0) + \lambda\beta\sigma\pd_n q\cdot\pd_n^{\lambda-1}\overline{\pd}^2 s_0 + \order{\pd q\cdot\pd q\cdot\pd^{\lambda}\sigma} + \order{q\pd q\cdot\pd^{1+\lambda}\sigma}.
	\end{equation}
	On the other hand, it follows from Lemmas \ref{lem err est A}, \ref{lem recur}, and \ref{lem L1k est}  that
	\begin{equation}
		\begin{split}
			\norm{s_2}_{H^{2(k-1), (k-1)+\frac{\alpha-1}{2}}} \lesssim_{A_*, c_0}&\, \norm{s_{2k}}_{H^{0, \frac{\alpha-1}{2}}} + C\norm{s_2}_{H^{0, \frac{\alpha-1}{2}}} + \norm{\sigma}_{H^{2k, k+\frac{\alpha}{2}}} + \\
			&\quad + \order*{A^\sharp}\norm{(q, \vv, \sigma)}_{\fkH^{2k}_{q}}. 
		\end{split}
	\end{equation}
	Thus, it can be derived from \eqref{rel s_2 s_0}, \eqref{L1 pdbar r}-\eqref{L1 pdn r}, \eqref{est L1 simple}-\eqref{comm L1 pd bar}, and the interpolations that
	\begin{equation}\label{est q 2k norm}
			\norm{q}_{H^{2k, k+\frac{\alpha-1}{2}}} \lesssim_{A_*, c_0} \norm{s_{2k}}_{H^{0, \frac{\alpha-1}{2}}} + C\norm{q}_{H^{0, \frac{\alpha-1}{2}}} + \norm{\sigma}_{H^{2k, k+\frac{\alpha}{2}}} + \order*{A^\sharp}\norm{(q, \vv, \sigma)}_{\fkH^{2k}_{q}}.
	\end{equation}
	
	For the estimates of $\vv$, one can first note that
	\begin{equation*}
		\vw_2 = \sigma\cL_2\vv.
	\end{equation*}
	Thus, Lemmas \ref{lem err est A}, \ref{lem recur}, and \ref{lem l2k est} lead to
	\begin{equation}\label{est v 2k norm}
		\begin{split}
			\norm{\vv}_{H^{2k, k+\frac{\alpha}{2}}} \lesssim_{A_*, c_0} &\, \norm{\vw_{2k}}_{H^{0, \frac{\alpha}{2}}} + C\norm{\vv}_{H^{0, \frac{\alpha}{2}}} + \norm{\vom}_{H^{2k-1, k+\frac{\alpha}{2}}} + \norm{\sigma}_{H^{2k, k+\frac{\alpha}{2}}} + \\
			 &\quad + \order*{A^\sharp}\norm{(q, \vv, \sigma)}_{\fkH^{2k}_{q}}.
		\end{split}
	\end{equation}
	\begin{proof}[Proof of the ``$\,\gtrsim$'' part of \eqref{energy coercivity est}]\hfill
		
		By taking an adapted partition of unity, one can assume that $A^\sharp \ll 1$. Then \eqref{energy coercivity est} follow from \eqref{est q 2k norm}, \eqref{est v 2k norm}, and the smallness of $A^\sharp$. For the estimates in a region away from the boundary, one can apply the standard elliptic estimates and interpolation inequalities. Note that, by invoking an adapted partition of unity, the coefficients of $\norm{q}_{H^{0, \frac{\alpha-1}{2}}}$ and $\norm{\vv}_{H^{0, \frac{\alpha}{2}}}$ may become quite large, but they are still generic constants depending only on the space dimension, $\alpha, k$, $c_0 $ and $A_*$.
		
	\end{proof}
	
	\subsection{Other Equivalent Energy Functionals}\label{sec other energies}
	
	The energy functionals defined in \S\ref{sec energies and control para} is designed for the simplicity of a priori estimates. However, those energies may cause unnecessary technical difficulties during the construction of solutions. Thus, we introduce some equivalent higher-order energies here, which will be used in the next section. 
	
	
	In order to make the expressions of the higher order norms of $\vv$ and $\sigma$ more explicit, one may consider the operators (here $r$ is a non-degenerate defining function of $\Omega$):
	\begin{equation}\label{def L0 L4}
		(\cL_0[r])\vw \coloneqq \grad^2 r\vdot\vw \qc \cL_4 \coloneqq \cL_2 + \cL_0,
	\end{equation}
	and
	\begin{equation}
		(\cL_5[r]) u \coloneqq r\laplace u + (1+\alpha)\grad{r}\vdot\grad{u}.
	\end{equation}
	Then, it is clear that $\cL_0[r]$, $\cL_4[r]$, and $\cL_5[r]$ are all self-adjoint operators in $L^2(r^\alpha \dd{x})$. 
	
	It follows from the same arguments that $\cL_5[r]$ also satisfies the elliptic estimate akin to \eqref{est L1 high}:
	\begin{equation}\label{est L5}
		\norm{u}_{H^{2k, k+\frac{\alpha}{2}}_r (\Omega)} \lesssim_{A_*, c_0}   \norm*{(\cL_5[r])^k u}_{H^{0, \frac{\alpha}{2}}} + C\norm{u}_{H^{0, \frac{\alpha}{2}}} + \norm{\calM}_{H^{0, \frac{\alpha}{2}}},
	\end{equation}
	where $\calM$ is a multilinear form of $\scrJ(r, \pd u)$ linear in $\scrJ(\pd u)$, and containing at least one factor of $\pd^{\ge 2} r$. For $k = 1$ case, the $\calM$-term will not appear.
	
	It is clear that the operator $\cL_4$ satisfies the commuting relation:
	\begin{equation*}
		\cL_4 \cL_3 = 0 = \cL_3 \cL_4,
	\end{equation*}
	which implies that
	\begin{equation}\label{comm L2 L3}
		\cL_2 \cL_3 = - \cL_0 \cL_3 \qand \cL_3 \cL_2 = - \cL_3 \cL_0.
	\end{equation}
	Moreover, it follows from \eqref{def L3} and \eqref{def L0 L4} that
	\begin{equation}\label{L3 L0}
		-(\cL_3 \cL_0 \vw)_i = \sum_{j, k} \beta r^{-\alpha}\pd_j\qty\big(r^{1+\alpha}\pd_j\pd_k r\cdot \pd_i w_k - r^{1+\alpha}\pd_i\pd_k r\cdot\pd_j w_k).
	\end{equation}
	In particular, both $(\cL_2[q]\cL_3[q]) \vv$ and $(\cL_3[q]\cL_2[q]) \vv$ are admissible errors in the sense of Definition \ref{def adm err}. Similar to \eqref{est ellip L1 lambda}, one can define
	\begin{equation}
		\cL_{3, \lambda}[r] \vw \coloneqq \cL_3[r] \vw + \lambda\beta(\grad\vw)\vdot\grad{r} - \lambda\beta(\div\vw)\grad{r},
	\end{equation}
	here $\lambda$ is a positive parameter. Then, following the assumption \eqref{assumption p.o.u.}, it can be deduced from the similar arguments that
	\begin{equation}
		\cL_{3, \lambda}(\pd_n^\lambda \vw) = \pd_n^\lambda \cL_3 \vw + \lambda\beta\qty\big[(\grad\vw)-(\div\vw)]\grad{r} + \calM\qty\big[\scrJ(r), \scrJ(\pd^2 r), \scrJ(\pd\vw)],
	\end{equation} 
	where $\calM$ is a multilinear form. Moreover, it follows from the same proof that \eqref{est L2+L3} still holds when $(\cL_2 + \cL_3)$ is replaced by $(\cL_{2,\lambda} + \cL_{3, \lambda})$. Thus, under the same assumptions and notations, \eqref{est L2 high} can be replaced by
	\begin{equation}\label{est L2 and L3}
		\norm{\vw}_{H^{2k, k+\frac{\alpha}{2}}} \lesssim_{A_*, c_0} \norm*{(\cL_2[r])^k \vw}_{H^{0, \frac{\alpha}{2}}} + \norm*{(\cL_3 [r])^k \vw}_{H^{0, \frac{\alpha}{2}}} + C\norm{\vw}_{H^{0, \frac{\alpha}{2}}} + \norm{\calM}_{H^{0, \frac{\alpha}{2}}},
	\end{equation}
	where $\calM$ consists of multilinear forms of $(\scrJ(r), \scrJ(\pd\vw))$, which are linear in $\scrJ(\pd\vw)$ and have at least one $\pd^{\ge 2}r$ factors.
	
	In summary, one can	define the modified good unknowns
	\begin{equation}\label{def mod good unknown}
		\fks_{2k} \coloneqq \sigma^{-k} s_{2k}\qc \fkw_{2k} \coloneqq \sigma^{-k}\vw_{2k} \qc \vom_{2k} \coloneqq (\cL_3[q])^k \vv, \qand \sigma_{2k} \coloneqq (\cL_5[q])^k \sigma,
	\end{equation}
	and the variant energy functional
	\begin{equation}\label{def fkE 2k}
		\begin{split}
			\fkE_{2k} &\coloneqq \int_{\Omt} q^{\alpha-1} \qty[\abs{\fks_{2k}}^2 + \beta q \sigma^{-1}\abs{\fkw_{2k}}^2 + \beta q \sigma^{-1}\abs{\vom_{2k}}^2 + q\abs{\sigma_{2k}}^2] \dd{x} \\
		 	&\qquad + \int_{\Omt} q^{1+\alpha} + \frac{1+\beta}{2}q^\alpha\sigma^{-1}\abs{\vv}^2 + q^\alpha \abs{\sigma}^2 \dd{x} \\
		 	&\eqqcolon \fkE_{2k, \text{high}} + \fkE_{2k, \text{low}}.
		\end{split}
	\end{equation}
	Then, the modified good unknowns $\fks_{2j}$ and $\fkw_{2j}$ satisfy the following recurrence relations:
	\begin{equation}
		\fks_{2j} = \cL_1 \fks_{2(j-1)} + \sigma^{-k}f_{2j}, \ (j\ge 2) \qand \fkw_{2j}=\cL_2 \fkw_{2(j-1)} + \sigma^{-k}\vb*{g}_{2j}, \ (j \ge 1),
	\end{equation}
	here $f_{2j}$ and $\vb*{g}_{2j}$ are still admissible errors in the sense of Definition \ref{def adm err}. 
	Thus, it follows from the combination of arguments in \S\ref{sec coercivity est} and \eqref{est L5}-\eqref{est L2 and L3} that $\fkE_{2k} $ also satisfies \eqref{energy coercivity est}.
	
	We further remark that, the trivial identity
	\begin{equation*}
		\int_{\Omt} q^{\alpha}\sigma^{-1} \qty(\abs{\fkw_{2k}}^2 + \abs{\vom_{2k}}^2) \dd{x} = \int_{\Omt} q^\alpha \qty(\abs{\sigma^{-\frac{1}{2}}\fkw_{2k}}^2 + \abs{\sigma^{-\frac{1}{2}}\vom_{2k}}^2) \dd{x}
	\end{equation*}
	hints one to consider the $H^{0, \alb}$-norms of $(\cL_2)^k(\sigma^{-\frac{1}{2}}\vv)$ and $(\cL_3)^k(\sigma^{-\frac{1}{2}}\vv) $. Indeed, when omitting the admissible errors, one has the relations
	\begin{equation}
		\sigma^{-\frac{1}{2}}\fkw_{2k} \approx (\cL_2[q])^k\qty(\sigma^{-\frac{1}{2}}\vv) \qand \sigma^{-\frac{1}{2}}\vom_{2k} \approx (\cL_3[q])^k\qty(\sigma^{-\frac{1}{2}}\vv).
	\end{equation}
	In other words, the new energy functional also characterizes the following norm:
	\begin{equation}\label{equiv fkE 2k new norm}
		\fkE_{2k} \simeq_{A_*, c_0} \norm{\qty(q, \sigma^{-\frac{1}{2}}\vv, \sigma)}_{\fkH^{2k}_{q}}^2.
	\end{equation}
	In particular, there holds the equivalence:
	\begin{equation}\label{equiv two norms}
		\norm{\qty(q, \sigma^{-\frac{1}{2}}\vv, \sigma)}_{\fkH^{2k}_{q}} \simeq_{A_*, c_0} \norm{\qty(q, \vv, \sigma)}_{\fkH^{2k}_{q}}.
	\end{equation}
	
	On the other hand, note that
	\begin{equation}\label{eqn evo modified good unknowns}
		\begin{cases*}
			\Dt \fks_{2k} + \fkw_{2k} \vdot \grad{q} + \beta q (\div\fkw_{2k}) = \sigma^{-k}\fkf_{2k}, \\
			\Dt \fkw_{2k} + \sigma\grad{\fks_{2k}} = \sigma^{-k} \fkg_{2k},
		\end{cases*}
	\end{equation}
	where $\fkf_{2k}$ and $\fkg_{2k}$ are both admissible errors. Furthermore, one can calculate that
	\begin{equation*}
		\begin{split}
			\Dt (\cL_3[q]\vv) = &-(1+\beta) (\Dt\vom)\vdot\grad{q} - (1+\beta)\vom\vdot(\Dt\grad{q}) \\
			 &+ \beta(\Dt q)\qty\big[\laplace\vv-\grad(\div\vv)] + \beta q\Dt\qty\big[\laplace\vv-\grad(\div\vv)] \\
			 \approx &\,(1+\beta)(\grad{\sigma}\wedge\grad{q})\vdot\grad{q} \\
			 \approx &\,(1+\beta)\qty[\abs{\grad{q}}^2\grad{\sigma}-(\grad{\sigma}\vdot\grad{q})\grad{q}],
		\end{split}
	\end{equation*}
	here we still use the notation $F \approx G$ iff $(F-G)$ is an admissible error. Inductions and the commutator formula \eqref{comm Dt D} yield that
	\begin{equation}\label{eqn Dt vom 2k}
		\Dt \vom_{2k} \approx \order{q^{k-1}(\pd q)^2 \pd^{2k-1}\sigma} + \order{(\pd q)^{k+1} \pd^k \sigma}.
	\end{equation}
	Moreover, it is direct to compute that
	\begin{equation*}
		\Dt (\cL_5[q]\sigma) \approx 0,
	\end{equation*}
	which implies that
	\begin{equation}\label{eqn Dt sigma 2k}
		\Dt \sigma_{2k} \approx 0.
	\end{equation}
	Thus, the new energy functional $\fkE_{2k}$ also satisfies the propagation estimate \eqref{energy est}.
	
	\section{Existence of High-regularity Solutions}\label{sec existence}
	
	In this section, we will construct solutions to the compressible Euler system for sufficiently regular initial data, specifically in the state space $\bbH^{2k}$ with $k \gg 1$. Low-regularity solutions will be constructed in the next section as limits of these highly regular ones.
	
	There are two classical methods to solve an ordinary differential equation: Picard's iterations and Euler's polygonal integration methods. Picard's method yields a unique solution but requires more regularity in the source terms. Euler's method can handle more general source terms, but the uniqueness of solutions needs further investigation. Since one is dealing with a quasilinear free boundary problem, it is often necessary to convert the problem into one defined on a fixed domain before applying Picard's iterations. This can be achieved by using Lagrangian coordinates, which transform the free boundary problem into a fixed-boundary but highly nonlinear one. The advantage is that it eliminates the issue of moving domains, but the downside is the sharply increased difficulty in handling the nonlinear terms. As in \cite{Ifrim-Tataru2024}, one can utilize Euler's iterations and the Arzel\`{a}-Ascoli theorem to obtain solutions to the compressible Euler system with highly regular initial data.
	
	One major challenge is controlling the energy increments at each iteration step. For example, if the time step size is $\varepsilon$, only $\order{\varepsilon}$-bounded energy increments are acceptable. Otherwise, the scheme would blow up after several iterations as $\varepsilon \to 0$. Another challenge is overcoming the derivative loss during the Euler iterations. Regarding the quasilinear nature, a direct application would not preserve the regularity of solutions. Following the ideas in \cite{Ifrim-Tataru2024}, one can overcome these obstacles by applying a two-step iteration scheme:
	\begin{itemize}
		\item Regularization;
		\item Euler's polygonal methods in an appropriate sense.
	\end{itemize}
	
	\subsection{Euler's Polygonal Methods: One-step Iterates}
	Given a large integer $k \gg 1$, an initial state $\qty{q_0, \vv_0, \sigma_0} \in \bbH^{2k}$, and a small time step-size $0 < \varepsilon \ll 1$, one can construct a sequence of states $\qty{q(\ell\varepsilon), \vv(\ell\varepsilon), \sigma(\ell\varepsilon) }_{\ell \ge 0}$ satisfying
	\begin{itemize}
		\item \textbf{Initial values:}
			\begin{equation*}
				\qty\big{q(0), \vv(0), \sigma(0)} = \qty\big{q_0, \vv_0, \sigma_0}.
			\end{equation*}
		\item \textbf{Energy increments:}
		
		For the energy functional given by \eqref{def fkE 2k}, it holds that
		\begin{equation*}
			\fkE_{2k}\qty\big{q[(\ell+1)\varepsilon], \vv[(\ell+1)\varepsilon], \sigma[(\ell+1)\varepsilon]} \le (1+C\varepsilon) \fkE_{2k}\qty\big{q(\ell\varepsilon), \vv(\ell\varepsilon), \sigma(\ell\varepsilon)},
		\end{equation*}
		here $C$ is a generic constant depending only on the initial size $\norm{(q_0, \vv_0, \sigma_0)}_{\fkH^{2k}_{q_0}}$.
		\item \textbf{Approximate solutions:}
		
		For domains given by
		\begin{equation*}
			\Omega_{(\ell+1)\varepsilon} \coloneqq \qty\big{q[(\ell+1)\varepsilon] > 0} \qand \Omega_{\ell\varepsilon} \coloneqq \qty\big{q[\ell\varepsilon] > 0},
		\end{equation*}
		there holds
		\begin{equation*}
			\Omega_{(\ell+1)\varepsilon} = \qty\Big(\textup{Id} + \varepsilon\vv[\ell\varepsilon])\Omega_{\ell\varepsilon}.
		\end{equation*}
		Furthermore, the following relations hold in $\Omega_{(\ell+1)\varepsilon} \cap \Omega_{\ell\varepsilon}$:
		\begin{equation*}
			\begin{cases*}
				q[(\ell+1)\varepsilon]-q[\ell\varepsilon] + \varepsilon \qty\big{(\vv[\ell\varepsilon]\vdot\grad){q[\ell\varepsilon]}+ \beta q[\ell\varepsilon]\div{\vv[\ell\varepsilon]}} = \Order{C^1}{\varepsilon^{1+}}, \\
				\vv[(\ell+1)\varepsilon] - \vv[\ell\varepsilon] + \varepsilon\qty\big{(\vv[\ell\varepsilon]\vdot\grad)\vv[\ell\varepsilon] + \sigma[\ell\varepsilon]\grad{q[\ell\varepsilon]}} = \Order{C^1}{\varepsilon^{1+}}, \\
				\sigma[(\ell+1)\varepsilon] - \sigma[\ell\varepsilon] + \varepsilon (\vv[\ell\varepsilon]\vdot\grad)\sigma[\ell\varepsilon] = \Order{C^1}{\varepsilon^{1+}}.
			\end{cases*}
		\end{equation*}
	\end{itemize}
	
	To obtain a sequence satisfies the above three conditions, it suffices to establish the one-step iteration:
	
	\begin{theorem}\label{thm one step iterate}
		Assume that $k \gg 1$ is a sufficiently large integer and $ M > 1$ is a constant. Let $0 < \varepsilon \ll 1$ be a time step-length and $\qty{q_0, \vv_0, \sigma_0} \in \bbH^{2k}$ an initial state satisfying the energy bound
		\begin{equation}\label{energy bound 0 state}
			\fkE_{2k}(q_0, \vv_0, \sigma_0) \le M
		\end{equation}
		and the non-degeneracy condition
		\begin{equation}
			\abs{\grad{q_0}} \ge c_0 > 0 \qq{on} \Gamma_0.
		\end{equation}
		Then, there exists a one-step iterate $(q_1, \vv_1, \sigma_1)$ satisfying the following properties:
		\begin{enumerate}[label=(\roman*)]
			\item \textbf{Energy increment:}
			\begin{equation}\label{energy increments one step}
				\fkE_{2k}(q_1, \vv_1, \sigma_1) \le \qty\big[1+C(M)\varepsilon] \fkE_{2k}(q_0, \vv_0, \sigma_0).
			\end{equation}
			\item \textbf{Approximate solution:}
			
			For $\Omega_0$ and $\Omega_1$ defined respectively by
			\begin{equation*}
				\Omega_0 \coloneqq \qty{q_0 > 0} \qand \Omega_1 \coloneqq \qty{q_1 > 0},
			\end{equation*}
			there hold
			\begin{equation}\label{approx sol one step}
				\begin{cases*}
					q_1 - q_0 + \varepsilon\qty\big[(\vv_0\vdot\grad)q_0 + \beta q_0\div\vv_0] = \Order{C^1}{\varepsilon^2} &in\, $\Omega_1 \cap \Omega_0$, \\
					\vv_1 - \vv_0 + \varepsilon\qty\big[(\vv_0\vdot\grad)\vv_0 + \sigma_0 \grad q_0] = \Order{C^1}{\varepsilon^2} &in\, $\Omega_1 \cap \Omega_0$, \\
					\sigma_1 - \sigma_0 + \epsilon\qty\big[(\vv_0\vdot\grad)\sigma_0] = \Order{C^1}{\varepsilon^2} &in\, $\Omega_1 \cap \Omega_0$, \\
					\Omega_1 = \qty\big(\textup{Id}+\varepsilon\vv_0)\Omega_0 + \Order{C^1}{\varepsilon^2}.
				\end{cases*}
			\end{equation}
		\end{enumerate}
	\end{theorem}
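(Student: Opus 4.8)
The plan is to construct the one-step iterate in two stages: first regularize the data $(q_0,\vv_0,\sigma_0)$ at an appropriate scale (so that higher derivatives are bounded but the energy is only increased by $\order{\varepsilon}$), then apply an explicit Euler step in Eulerian coordinates. The delicate point is that a na\"ive Euler step $q_0 \mapsto q_0 - \varepsilon[(\vv_0\vdot\grad)q_0 + \beta q_0\div\vv_0]$ etc.\ loses derivatives (the right-hand side has one fewer derivative than the left, in the weighted scale), so the regularization is essential to close the scheme. Following \cite{Ifrim-Tataru2024}, I would use the bi-scale regularization prepared in \S\ref{sec reg data}: pick a mollification scale $h = h(\varepsilon)$ with $\varepsilon \ll h \ll 1$, set $(q_0^h, \vv_0^h, \sigma_0^h)$ to be the (domain-corrected) mollified data, which satisfies $\fkE_{2k}(q_0^h,\vv_0^h,\sigma_0^h) \le (1+C(M)\varepsilon)\,\fkE_{2k}(q_0,\vv_0,\sigma_0)$ together with the higher-order bounds $\norm{(q_0^h,\vv_0^h,\sigma_0^h)}_{\fkH^{2k+2}_{q_0^h}} \lesssim h^{-2}\norm{(q_0,\vv_0,\sigma_0)}_{\fkH^{2k}_{q_0}}$, and is $\order{h}$-close to the original data in a lower norm.

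Next, on the regularized data I would perform the actual Euler step. Define the new domain by the flow of the regularized velocity, $\Omega_1 \coloneqq (\textup{Id} + \varepsilon\vv_0^h)\Omega_0^h$ (a genuine $C^1$ diffeomorphism for $\varepsilon$ small since $\vv_0^h \in C^{1+}$), and set
\begin{align*}
q_1 &\coloneqq \qty\big(q_0^h - \varepsilon[(\vv_0^h\vdot\grad)q_0^h + \beta q_0^h \div\vv_0^h]) \circ (\textup{Id}+\varepsilon\vv_0^h)^{-1}, \\
\vv_1 &\coloneqq \qty\big(\vv_0^h - \varepsilon[(\vv_0^h\vdot\grad)\vv_0^h + \sigma_0^h\grad q_0^h]) \circ (\textup{Id}+\varepsilon\vv_0^h)^{-1}, \\
\sigma_1 &\coloneqq \sigma_0^h \circ (\textup{Id}+\varepsilon\vv_0^h)^{-1},
\end{align*}
possibly with an additional $\order{\varepsilon^2}$ correction to restore the non-degeneracy and the precise support condition. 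A Taylor expansion of the composition then gives \eqref{approx sol one step}, with the $\Order{C^1}{\varepsilon^2}$ remainders controlled by $\varepsilon^2$ times $C^1$-norms of quadratic expressions in $(q_0^h, \vv_0^h, \sigma_0^h)$, which by the Sobolev embeddings of \S\ref{sec func spaces} are bounded by a power of $h^{-1}$ times $M$; choosing $h$ so that $\varepsilon^2 h^{-N} \le \varepsilon$ (i.e.\ $h \ge \varepsilon^{1/N}$ for a suitable $N$) keeps these genuinely $\order{\varepsilon^{1+}}$. Transport of $\sigma$ by the flow preserves the two-sided bounds on $\sigma$, and the non-degeneracy $\abs{\grad q_1}\ge c_0/2$ on $\Gamma_1$ persists for $\varepsilon$ small because the Euler step changes $\grad q$ by $\Order{C^0}{\varepsilon}$.

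The main obstacle — and the heart of the argument — is the energy increment \eqref{energy increments one step}. Here I would use the equivalent energy functional $\fkE_{2k}$ built in \S\ref{sec other energies}, whose modified good unknowns $\fks_{2k}, \fkw_{2k}, \vom_{2k}, \sigma_{2k}$ satisfy the evolution equations \eqref{eqn evo modified good unknowns}, \eqref{eqn Dt vom 2k}, \eqref{eqn Dt sigma 2k} with \emph{admissible} source terms. The point of the modified functional is precisely that it is maximally symmetric, so that a discrete Euler step along $\vv_0^h$ mimics the continuous flow: writing $\fkE_{2k}(q_1,\vv_1,\sigma_1) - \fkE_{2k}(q_0^h,\vv_0^h,\sigma_0^h)$ as $\varepsilon \dv{t}\fkE_{2k}\big|_{t=0} + \Order{}{\varepsilon^2}$, the linear-in-$\varepsilon$ term is estimated by the propagation estimate \eqref{energy est} (specialized to the simplified linear system \eqref{est simp lin} and the admissible errors, via Lemmas \ref{lem err est A}--\ref{lem err est B}) as $\lesssim_{A_*} B\,\fkE_{2k} \le C(M)\,\fkE_{2k}$, using that $A_*, B \lesssim_{M}$ on the regularized data. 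The $\Order{}{\varepsilon^2}$ term again costs a power of $h^{-1}$ and is absorbed into $C(M)\varepsilon$ by the choice of $h$. Combining with the $(1+C(M)\varepsilon)$ factor lost in the regularization step yields \eqref{energy increments one step}. The technical subtlety to watch is that the discrete increment is not literally $\varepsilon\,\dv{t}\fkE_{2k}$ — one must verify that the second-order-in-$\varepsilon$ error from expanding the composition and from the discrepancy between $\div\vv_0^h$ in the continuity-equation step versus the geometric Jacobian of $(\textup{Id}+\varepsilon\vv_0^h)$ is of the claimed order; this is where the orthogonality/symmetry of $\fkE_{2k}$ is exploited to avoid spurious first-order increments, exactly as foreshadowed in the Outline.
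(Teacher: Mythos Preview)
Your proposal is essentially correct and follows the same two-stage strategy as the paper: regularize via the bi-scale construction of \S\ref{sec reg data} (Proposition~\ref{prop reg state}), then perform an explicit Euler step along the regularized velocity, using the symmetric functional $\fkE_{2k}$ of \S\ref{sec other energies} so that the linear-in-$\varepsilon$ cross term cancels exactly as in the continuous estimate \eqref{est simp lin}. A minor point: your scale bookkeeping is looser than the paper's --- there the regularization is tuned so that $\norm{(\hat q,\hat{\vv},\hat\sigma)}_{\fkH^{2k+2}} \lesssim_M \varepsilon^{-1}$ precisely (not a generic $h^{-2}$ with $h\ge\varepsilon^{1/N}$), which is exactly what makes the quadratic term $\varepsilon^2\big(\norm{\hat{\fks}_{2k}}_{H^{1,\frac{\alpha}{2}}}^2 + \norm{\hat{\fkw}_{2k}}_{H^{1,\frac{\alpha+1}{2}}}^2\big) \lesssim_M \varepsilon^2\cdot\varepsilon^{-1} = \varepsilon$ close; and the paper writes the Euler step as $q_1(y+\varepsilon\hat{\vv}(y)) = \hat q(y) - \varepsilon\beta\hat q(y)\div\hat{\vv}(y)$ (without the convective term, which is absorbed by the composition), but this is equivalent to your formulation up to $\order{\varepsilon^2}$.
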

	
	\begin{remark}
		For the simplicity of notations, in \S\ref{sec existence}, we will denote by $C(M)$ a constant depending on both $M$ and $c_0$. Indeed, this will not hurt the existence results, because the property that $\abs{\grad{q}} \gtrsim c_0$ on $\Gmt$ can be propagated, at least for a short time.
	\end{remark}
		
	\subsection{Convergence of the Iteration Scheme}
	In stead of proving Theorem \ref{thm one step iterate}, we first observe how to obtain a regular solution from the iterations. For a given small time scale $\varepsilon>0$, Theorem \ref{thm one step iterate} yields a discrete approximate solution $\qty{q^{(\varepsilon)}, \vv^{(\varepsilon)}, \sigma^{(\varepsilon)}}$, which is defined at discrete times $t = \ell\varepsilon, \ell \in \mathbb{N}$. The energy increment bound \eqref{energy increments one step} implies that
	\begin{equation*}
		\fkE_{2k}\qty[q^{(\varepsilon)}(\ell\varepsilon), \vv^{(\varepsilon)}(\ell\varepsilon), \sigma^{(\varepsilon)}(\ell\varepsilon)] \le \qty\big[1+C(M)\varepsilon]^{\ell} \fkE_{2k}(q_0, \vv_0, \sigma_0) \le M \exp{C(M)\varepsilon\ell}.
	\end{equation*}
	Thus, these approximate solutions are defined up to a time $T = T(M)$, uniformly in $\varepsilon \ll 1$. Moreover, there hold the uniform bounds
	\begin{equation}\label{energy bound approx sol}
		\sup_{0 \le t \le T} \norm{\qty(q^{(\varepsilon)}, \vv^{(\varepsilon)}, \sigma^{(\varepsilon)})}_{\fkH^{2k}_{q^{(\varepsilon)}}} \lesssim_M 1 \qc \forall\, 0 < \varepsilon \ll 1.
	\end{equation}
	On the other hand, as it is required that $k \gg 1$, it follows from the Sobolev embeddings that
	\begin{equation*}
		\sup_{0 \le t \le T} \norm\big{q^{(\varepsilon)}}_{C^3} + \norm\big{\vv^{(\varepsilon)}}_{C^3} + \norm\big{\sigma^{(\varepsilon)}}_{C^3} \lesssim_{M} 1 \qc \forall 0< \varepsilon\ll 1.
	\end{equation*}
	Therefore, iterating the relation \eqref{approx sol one step} yields the difference bound
	\begin{equation*}
		\abs{q^{(\varepsilon)}(t, x) - q^{(\varepsilon)}(s, y)} + \abs{\grad{q^{(\varepsilon)}}(t, x) - \grad{q^{(\varepsilon)}}(s, y)} \lesssim_M \abs{t-s} + \abs{x-y},
	\end{equation*}
	for all $t,s \in \varepsilon\mathbb{N}\cap[0, T] $.
	The same bounds also hold for $\vv^{(\varepsilon)}$ and $\sigma^{(\varepsilon)}$. Plugging these Lipschitz bounds into \eqref{approx sol one step}, one arrives at
	\begin{equation*}
		\begin{cases*}
			q^{(\varepsilon)}(t) = q^{(\varepsilon)}(s) - (t-s)\qty[(\vv^{(\varepsilon)}(s)\vdot\grad)q^{(\varepsilon)}(s)+\beta q^{(\varepsilon)}(s) \div\vv^{(\varepsilon)}(s)] + \order{\abs{t-s}^2}, \\
			\vv^{(\varepsilon)}(t) = \vv^{(\varepsilon)}(s) - (t-s)\qty[(\vv^{(\varepsilon)}(s)\vdot\grad)\vv^{(\varepsilon)}(s)+\sigma^{(\varepsilon)}(s)\grad q^{(\varepsilon)}(s)] + \order{\abs{t-s}^2}, \\
			\sigma^{(\varepsilon)}(t) = \sigma^{(\varepsilon)}(s) - (t-s)\qty[(\vv^{(\varepsilon)}(s)\vdot\grad)\sigma^{(\varepsilon)}(s)]+\order{\abs{t-s}^2}, \\
			\Omega^{(\varepsilon)}(t) = \qty(\textup{Id}+(t-s)\vv^{(\varepsilon)}(s))\Omega^{(\varepsilon)}(s) + \order{\abs{t-s}^2},
		\end{cases*}
	\end{equation*}
	for all $t, s \in \varepsilon\mathbb{N}\cap[0, T]$.
	Thus, a careful application of the Arzel{\`a}-Ascoli theorem yields the existence of a subsequence converging uniformly to $(q, \vv, \sigma)$. By taking  weak limits, the energy bound \eqref{energy bound approx sol} also holds for $(q, \vv, \sigma)$.
	
	In conclusion, one has obtained
	\begin{theorem}\label{thm existence}
		Let $k$ be a sufficiently large integer and $M \ge 1$ a constant. For an initial state $(q_0, \vv_0, \sigma_0) \in \bbH^{2k}$ with bound $\norm{(q_0, \vv_0, \sigma_0)}_{\fkH^{2k}_{q_0}} \le M$, there is a constant $T = T(M) > 0$, so that the compressible Euler system \eqref{euler equiv2} admit a solution $(q, \vv, \sigma)$ in the space
		\begin{equation*}
			(q, \vv, \sigma) \in L^\infty\qty([0, T]; \bbH^{2k})\cap C\qty([0, T]; \bbH^{2k-2}).
		\end{equation*}
		Furthermore, the solution satisfies the bound
		\begin{equation*}
			\sup_{0 \le t \le T} \norm{(q, \vv, \sigma)}_{\fkH^{2k}_{q}} \lesssim_M 1.
		\end{equation*}
	\end{theorem}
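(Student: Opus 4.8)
The plan is to combine the one-step iterate of Theorem \ref{thm one step iterate} with the compactness argument already sketched in the subsection, making the limiting procedure precise. First I would fix $k \gg 1$ and an initial state $(q_0,\vv_0,\sigma_0)\in\bbH^{2k}$ with $\fkE_{2k}(q_0,\vv_0,\sigma_0)\le M$ (which is legitimate by the coercivity equivalence \eqref{energy coercivity est}, since $\norm{(q_0,\vv_0,\sigma_0)}_{\fkH^{2k}_{q_0}}\le M$ controls $\fkE_{2k}$ up to constants depending on $A_*$ and $c_0$, both of which are themselves controlled by $M$ via the Sobolev embeddings of \S\ref{sec func spaces}). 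For a small step size $\varepsilon>0$, iterate Theorem \ref{thm one step iterate} to produce $\{(q^{(\varepsilon)}(\ell\varepsilon),\vv^{(\varepsilon)}(\ell\varepsilon),\sigma^{(\varepsilon)}(\ell\varepsilon))\}_{\ell\ge 0}$. The energy increment bound \eqref{energy increments one step} gives, after $\ell$ steps, $\fkE_{2k}\le (1+C(M)\varepsilon)^\ell M\le M\exp(C(M)\varepsilon\ell)$, so as long as $\ell\varepsilon\le T$ for a suitably chosen $T=T(M)$ the energy stays bounded by, say, $2M e^{C(M)T}$; this fixes $T$ uniformly in $\varepsilon$ and yields the uniform bound \eqref{energy bound approx sol}. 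One subtlety to address here is that Theorem \ref{thm one step iterate} requires the non-degeneracy $\abs{\grad q}\ge c_0>0$ on the free boundary at each step: I would note that since $B^\sharp\lesssim_{A_*}B\lesssim \norm{(q,\vv,\sigma)}_{\fkH^{2k}_q}\lesssim_M 1$, the quantity $\grad q$ has a uniform Lipschitz-in-time modulus along the iterates (from the approximate momentum relation in \eqref{approx sol one step}), so the non-degeneracy constant degrades at most linearly in time and can be kept above $c_0/2$ by further shrinking $T$.

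Next I would promote the discrete-time approximate solutions to Hölder-in-time estimates. From the uniform $\fkH^{2k}_{q^{(\varepsilon)}}$ bound and Sobolev embedding ($k\gg 1$) one gets $\norm{q^{(\varepsilon)}}_{C^3}+\norm{\vv^{(\varepsilon)}}_{C^3}+\norm{\sigma^{(\varepsilon)}}_{C^3}\lesssim_M 1$ uniformly. Telescoping the one-step relations \eqref{approx sol one step} across $\ell$ steps then yields uniform Lipschitz bounds in $(t,x)$ for $q^{(\varepsilon)}$, $\grad q^{(\varepsilon)}$, $\vv^{(\varepsilon)}$, $\grad\vv^{(\varepsilon)}$, $\sigma^{(\varepsilon)}$, $\grad\sigma^{(\varepsilon)}$ on $\varepsilon\mathbb{N}\cap[0,T]$, with constants depending only on $M$; feeding these back into \eqref{approx sol one step} converts the first-order discrete relations into the $O(|t-s|^2)$ Taylor-type relations displayed just before the theorem. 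At this point I would interpolate the discrete approximations to continuous functions of $t$ (piecewise linear in $t$, say) and invoke the Arzel\`a--Ascoli theorem on $C([0,T];C^2_{\mathrm{loc}})$: along a subsequence $\varepsilon_n\to 0$, $(q^{(\varepsilon_n)},\vv^{(\varepsilon_n)},\sigma^{(\varepsilon_n)})\to (q,\vv,\sigma)$ uniformly together with two spatial derivatives, and the limiting domains $\Omega_t$ converge in the Lipschitz topology. Passing to the limit $s\to t$ (equivalently $\varepsilon\to 0$ with $\ell\varepsilon\to t$) in the $O(|t-s|^2)$ relations shows that $(q,\vv,\sigma)$ is differentiable in $t$ and solves the compressible Euler system \eqref{euler equiv2} classically on $\Omt$, with $\Omt=(\mathrm{Id}+(t-s)\vv(s))\Omega_s + o(|t-s|)$ capturing the kinematic boundary condition. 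Finally, lower semicontinuity of the $\fkH^{2k}_q$-norm under this convergence (using the Norm-convergence characterization in the definition of $\bbH^{2k}$-topology, applied to fixed smooth approximants) transfers the uniform bound \eqref{energy bound approx sol} to the limit, giving $(q,\vv,\sigma)\in L^\infty([0,T];\bbH^{2k})$ with the stated bound; the continuity-in-time statement $(q,\vv,\sigma)\in C([0,T];\bbH^{2k-2})$ follows from the uniform $\bbH^{2k}$ bound plus the Lipschitz-in-time control (weak continuity in $\bbH^{2k}$ upgraded by interpolation to strong continuity in $\bbH^{2k-2}$).

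The main obstacle in this argument is not any of the soft compactness steps but the proof of the one-step iterate Theorem \ref{thm one step iterate} itself—in particular, constructing $(q_1,\vv_1,\sigma_1)$ so that the energy increment is genuinely $O(\varepsilon)$ rather than $O(1)$. A na\"ive Euler step $(q_1,\vv_1,\sigma_1)=(q_0,\vv_0,\sigma_0)-\varepsilon(\text{Euler flux})$ both loses derivatives (the flux involves $\grad q_0$, so $q_1$ would only be as regular as $\vv_0$) and disrupts the delicate orthogonality structure built into $\fkE_{2k}$ via the good unknowns $s_{2k}$, $\vw_{2k}$, $\vom_{2k}$, $\sigma_{2k}$, producing $O(\varepsilon)$-size but $O(1)$-constant energy jumps whose constant cannot be absorbed. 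The resolution, as indicated in \S\ref{sec reg data}, is a bi-scale mollification: regularize at a low scale to gain the needed smoothness, apply the Euler step, then add a high-scale correction engineered to restore the symmetries that keep $\dv{t}\fkE_{2k}$ controllable. This is exactly why the highly symmetric variant functional $\fkE_{2k}$ of \eqref{def fkE 2k} was introduced in \S\ref{sec other energies}—it is stable under the regularization in a way the original $\cE_{2k}$ is not—and why its propagation and coercivity properties \eqref{energy est}, \eqref{energy coercivity est} (re-derived for $\fkE_{2k}$ at the end of \S\ref{sec other energies}) are precisely the inputs needed. The remaining steps—the telescoping Lipschitz bounds, the Arzel\`a--Ascoli extraction, and the limit passage—are then routine, modulo careful bookkeeping of the moving-domain issue via the Lipschitz convergence of $q^{(\varepsilon)}$.
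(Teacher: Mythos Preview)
Your proposal is correct and follows essentially the same route as the paper's proof in \S5.2: iterate the one-step construction of Theorem~\ref{thm one step iterate}, propagate the energy bound $\fkE_{2k}\le M e^{C(M)\varepsilon\ell}$ to a uniform time $T=T(M)$, use Sobolev embeddings for $C^3$ control, telescope \eqref{approx sol one step} into Lipschitz and then $O(|t-s|^2)$ bounds, extract a limit via Arzel\`a--Ascoli, and pass the energy bound through by weak lower semicontinuity. You add a few clarifications the paper leaves implicit (the non-degeneracy propagation at each step, the piecewise-linear extension to continuous time, and the interpolation argument for $C([0,T];\bbH^{2k-2})$), and you correctly locate the genuine difficulty in Theorem~\ref{thm one step iterate} and the bi-scale regularization of \S\ref{sec reg data}, which the paper indeed treats as the main technical burden rather than the compactness passage itself.
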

	
	\subsection{Constructions of the One-step Iterates}
	In this subsection, we prove Theorem \ref{thm one step iterate} under the assumption of the following ancillary regularization proposition, the proof of which will be deferred for now.
	\begin{prop}\label{prop reg state}
		Given a state $(q_0, \vv_0, \sigma_0) \in \bbH^{2k}$ with energy bound \eqref{energy bound 0 state}, there exists a regular state $(\hat{q}, \hat{\vv}, \hat{\sigma})$ satisfying the following properties:
		\begin{enumerate}[label=(\roman*)]
			\item \textbf{Pointwise approximation:}
			\begin{equation}
				\norm{\hat{q} - q_0}_{L^\infty} \lesssim \varepsilon^2
			\end{equation}
			and
			\begin{equation}\label{reg pointwise bound}
				\left\{
				\begin{split}
					\hat{q}-q_0 &= \Order{C^2}{\varepsilon^2}\\
					\hat{\vv}-\vv_0 &= \Order{C^2}{\varepsilon^2}\\ \hat{\sigma}-\sigma_0 &= \Order{C^2}{\varepsilon^2}
				\end{split}\right. \qin \widehat{\Omega} \cap \Omega_0,
			\end{equation}
			where $\widehat{\Omega} \coloneqq \{\widehat{q}>0\}$.
			\item \textbf{Energy increment:}
			\begin{equation}\label{energy incre reg}
				\fkE_{2k}(\hat{q}, \hat{\vv}, \hat{\sigma}) \le \qty\big[1+C(M)\varepsilon]\fkE_{2k}(q_0, \vv_0, \sigma_0).
			\end{equation}
			\item \textbf{Higher order bound:}
			\begin{equation}\label{higher order bounds hat}
				\norm{(\hat{q}, \hat{\vv}, \hat{\sigma})}_{\fkH^{2k+2}_{\hat{q}}(\widehat{\Omega})} \lesssim_M \varepsilon^{-1}.
			\end{equation}
		\end{enumerate}
	\end{prop}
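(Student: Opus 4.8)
The plan is to produce $(\hat q,\hat\vv,\hat\sigma)$ by a bi-scale mollification, with essentially all the work going into the energy bound \eqref{energy incre reg}. First extend $(q_0,\vv_0,\sigma_0)$ across $\Gamma_0$ to compactly supported functions on $\R^d$ of comparable regularity (a routine Whitney/Sobolev extension), choosing the extension of $q_0$ to be $\le-\tfrac{c_0}{2}\dist(\cdot,\Omega_0)$ outside $\overline{\Omega_0}$ so that its zero set is stable under small perturbations. Because $k\gg1$, the Morrey corollary to Lemma \ref{lem inclusion} places $q_0,\vv_0,\sigma_0$ in $C^m$ with $m\gg1$, so that defining $\hat q$ as the scale-$h\coloneqq\varepsilon^{1/2}$ mollification of the extended $q_0$ against a kernel with enough vanishing moments ($h^2=\varepsilon$ matching the Euler time step), and $\hat\vv,\hat\sigma$ likewise, yields at once $\norm{\hat q-q_0}_{L^\infty}\ll\varepsilon^2$ and, on $\widehat\Omega\cap\Omega_0$ with $\widehat\Omega\coloneqq\{\hat q>0\}$, $\hat q-q_0=\Order{C^2}{\varepsilon^2}$ together with the analogous bounds for $\hat\vv,\hat\sigma$ --- this is \eqref{reg pointwise bound}; moreover the $C^1$-smallness propagates the non-degeneracy as $\abs{\grad\hat q}\ge c_0/2$ on $\partial\widehat\Omega$ and makes $\widehat\Omega$ a $C^\infty$ domain. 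For \eqref{higher order bounds hat}, two additional derivatives at scale $h$ cost a factor $h^{-2}$; reconciling the $\hat q$-weights on $\widehat\Omega$ with the $q_0$-weights on $\Omega_0$ through Lemma \ref{lem inclusion}, and turning $\fkE_{2k}(q_0,\vv_0,\sigma_0)\le M$ into $\norm{(q_0,\vv_0,\sigma_0)}_{\fkH^{2k}_{q_0}}\lesssim_M1$ via the coercivity \eqref{energy coercivity est}, one obtains $\norm{(\hat q,\hat\vv,\hat\sigma)}_{\fkH^{2k+2}_{\hat q}(\widehat\Omega)}\lesssim_M h^{-2}=\varepsilon^{-1}$.

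The crux is the energy increment \eqref{energy incre reg}, for which both a two-scale estimate and the precise choice of functional are indispensable. Expanding $\fkE_{2k}(\hat q,\hat\vv,\hat\sigma)$ through the modified good unknowns \eqref{def mod good unknown} forces one to follow the perturbation of the domain $\widehat\Omega$, of the degenerate weight $\hat q^{\alpha-1}$, and of the $\hat q$-dependent operators $\cL_1[\hat q],\cL_2[\hat q],\cL_3[\hat q],\cL_5[\hat q]$ carried by $\fks_{2k},\fkw_{2k},\vom_{2k},\sigma_{2k}$; since a high-order mollifier is only bounded, not contractive, on the relevant weighted Sobolev spaces, a bare estimate would inflate $\fkE_{2k}$ by a fixed factor, and even a careful ``on-scale'' estimate naively loses an increment of size $h=\varepsilon^{1/2}$, far above the admissible $\varepsilon$. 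The way out is to estimate $\hat q$ by splitting it as $\hat q=q^\delta+(\hat q-q^\delta)$ against an auxiliary scale $\delta$ with $\delta^2\lesssim\varepsilon$. For the gentle passage $q^\delta\mapsto q_0$ one invokes the recurrence of Lemma \ref{lem recur}, the interpolation Lemma \ref{lem err est A}, and the elliptic Lemmas \ref{lem L1k est}--\ref{lem l2k est}, and --- the decisive point --- the \emph{symmetric} functional $\fkE_{2k}$ of \S\ref{sec other energies}, whose good unknowns are the images of $q_0,\sigma_0^{-1/2}\vv_0,\sigma_0$ under the self-adjoint operators $\cL_1,\cL_2,\cL_3,\cL_5$, is stationary to first order under such a regularization (its first variation in the direction of the mollification defect vanishes to leading order by the self-adjoint structure --- the orthogonality that the unsymmetrized $\cE_{2k}$ would destroy), so this contribution is quadratic, of order $\delta^2$. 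The remaining piece $\hat q-q^\delta$ is a high-frequency tail of a $C^m$ function ($m\gg1$), hence small in $\fkH^{2k}_q$; it enters $\fkE_{2k}$ only quadratically, with its cross terms against the coarse part killed to leading order by frequency separation, so that its contribution is again $\Order{}{\varepsilon}$. Collecting, $\fkE_{2k}(\hat q,\hat\vv,\hat\sigma)\le(1+C(M)\varepsilon)\fkE_{2k}(q_0,\vv_0,\sigma_0)$, which is \eqref{energy incre reg}.

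I expect the main obstacle to be precisely this energy-increment step: one must linearize $\fkE_{2k}$ simultaneously in the moving domain, the degenerating weight, and the coefficients of the four elliptic operators, and verify that every first-order contribution cancels --- a cancellation engineered into the highly symmetric functional of \S\ref{sec other energies}, which is exactly why that functional, rather than $\cE_{2k}$, is used for the construction --- while the boundary layer, where $\hat q^{\alpha-1}$ degenerates, is where the two-scale split genuinely earns its keep, converting the dangerous $\varepsilon^{1/2}$-terms into quadratic ones. The remaining ingredients are comparatively routine: the Whitney extension, the mollifier estimates (which exploit $k\gg1$ to trade the modulus of continuity of the many derivatives of $q_0$ for powers of $h$), the propagation of the non-degeneracy $\abs{\grad\hat q}\gtrsim c_0$, and the smoothness of $\widehat\Omega$. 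With Proposition \ref{prop reg state} in hand, Theorem \ref{thm one step iterate} follows by running a single Euler polygonal step from the regularized, $\varepsilon^{-1}$-bounded state $(\hat q,\hat\vv,\hat\sigma)$: the time-$\varepsilon$ advection produces the stated $\Order{C^1}{\varepsilon^2}$ approximate solution, while the a priori estimate of Theorem \ref{thm energy est}, applied over that single step, raises $\fkE_{2k}$ by at most a further factor $1+C(M)\varepsilon$.
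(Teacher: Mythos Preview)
Your outline correctly identifies where the difficulty lies --- the energy increment \eqref{energy incre reg} --- and that the symmetric functional $\fkE_{2k}$ of \S\ref{sec other energies} is chosen precisely for this step. But the mechanism you propose for obtaining the $O(\varepsilon)$ increment is not the one that actually works, and in fact the paper's introductory discussion (end of \S1.5) warns exactly against it: ``direct on-scale regularization might disrupt the orthogonality, causing untoward energy increments.'' Concretely, your claim that the first variation of $\fkE_{2k}$ in the direction of the mollification defect vanishes by self-adjointness is not correct. Self-adjointness of $\cL_1,\cL_2+\cL_3,\cL_5$ lets you move operators across the inner product, but it does \emph{not} make $\langle(\cL_1)^k q_0,\,(\cL_1)^k(q^\delta-q_0)\rangle_{H^{0,\frac{\alpha-1}{2}}}$ small: the mollification defect has no orthogonality to the data in this sense, so the linear term in your expansion is generically $O(1)$, not $O(\delta^2)$. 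Your post-hoc split $\hat q=q^\delta+(\hat q-q^\delta)$ cannot repair this, because the energy increment is a property of the \emph{constructed} state, not of how one chooses to decompose it afterward.

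The paper's construction is genuinely different. It uses the adapted regularization operators $\vps_{\le h}$ of \S\ref{sec reg op} (with spatially varying kernel width $\sim r^{1/2}2^{-h}$, not a uniform mollifier after Whitney extension) at \emph{two} frequency scales $h_-<h<h_+$, and then defines the intermediate state by \emph{functional calculus}:
\[
\wt q \;=\; q_- + \chi_\varepsilon(\cL_1^-)\,(q_+-q_-),
\]
with $\cL_1^-=\cL_1[q_-]$ self-adjoint on $L^2(q_-^{\alpha-1}\dd x)$ and $\chi_\varepsilon$ a carefully chosen scalar function (see \eqref{def chi epsilon}). The three required properties of $\chi_\varepsilon$ simultaneously deliver the $\varepsilon^{-1}$ higher-order bound (from $\chi_\varepsilon(t)\lesssim(\varepsilon t)^{-1}$), the $\varepsilon^2$ approximation (from $1-\chi_\varepsilon(t)\lesssim\varepsilon^2 t$), and --- the decisive point --- the \emph{sign} condition \eqref{req chi ep 1}, which makes the dangerous cross term
\[
\big\langle(\cL_1^-)^k(q_+-\wt q),\,(\cL_1^-)^k(\wt q-q_-)\big\rangle
\]
nonnegative and large enough to absorb the remaining error terms (see \eqref{L1 diff trans}--\eqref{req chi ep 2}). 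This is not a vanishing first variation; it is a designed positivity, and it is the reason self-adjointness matters here. Your proposal is missing this functional-calculus step entirely, and without it the energy increment cannot be brought down to $O(\varepsilon)$.
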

	
	With the help of Proposition \ref{prop reg state}, one can now construct the one-step iterates in Theorem \ref{thm one step iterate}. Note that, since one is dealing with a free boundary problem, the ODE structure becomes more transparent when computing the time derivative along the particle path. Specifically, in Lagrangian coordinates, the free boundary problem can be viewed as an ODE in certain state spaces, where the spatial variables are confined to a fixed domain. Based on this observation, one can first define the discrete flow transport by:
	\begin{equation}\label{transport x y}
		\vb*{x} \coloneqq \vb*{y} + \varepsilon\hat{\vv}(\vb*{y}).
	\end{equation}
	Hence, $(\vb*{x}-\vb*{y})$ could be viewed as the average fluid displacements. Inserting the source terms into the discrete evolutions, one can further define for $x \equiv y + \varepsilon\hat{\vv}(y)$
	\begin{equation}\label{def q1 v1 sigma1}
		\begin{cases*}
			q_1 (x) \coloneqq \hat{q}(y) - \varepsilon\beta\hat{q}(y)(\div\hat{\vv})(y), \\
			\vv_1 (x) \coloneqq \hat{\vv}(y) - \varepsilon\hat{\sigma}(y)(\grad{\hat{q}})(y), \\
			\sigma_1 (x) \coloneqq \hat{\sigma} (y).
		\end{cases*}
	\end{equation}
	It follows directly from \eqref{transport x y} that
	\begin{equation}
		\pdv{x}{y} = \textup{Id} + \varepsilon \grad{\hat{\vv}}(y),
	\end{equation}
	which implies that
	\begin{equation}
		\vb*{x} = \vb*{y} + \Order{M}{\varepsilon}
	 \qand
		\det(\pdv{x}{y}) = 1 + \Order{M}{\varepsilon}.
	\end{equation}
	Thus, \eqref{approx sol one step} can be derived from \eqref{reg pointwise bound} and the above constructions. It remains to show the energy bound \eqref{energy increments one step}.
	
	Note that \eqref{def q1 v1 sigma1} is exactly the discrete version for the compressible Euler system, which yields that the corresponding good unknowns satisfy the discrete ``evolution'' equations:
	\begin{equation}\label{discrete evo eqn good unknown}
		\begin{cases*}
			\fks_{2k}^1 (x) = \hat{\fks}_{2k} (y) - \varepsilon\qty[\hat{\fkw}_{2k} \vdot \grad{\hat{q}} + \beta\hat{q}(\div\hat{\fkw}_{2k}) - \hat{\sigma}^{-k}\hat{\fkf}_{2k}](y) + \varepsilon^2  \hat{\sigma}^{-k}\mathfrak{r}_{2k}(y), \\
			\fkw_{2k}^1 (x) = \hat{\fkw}_{2k}(y) - \varepsilon\qty[\hat{\sigma}\grad{\hat{\fks}_{2k}}-\hat{\sigma}^{-k}\hat{\fkg}_{2k}](y) + \varepsilon^2 \hat{\sigma}^{-k} \vb*{\mathfrak{R}}_{2k} (y), \\
			x \equiv y + \varepsilon\hat{\vv}(y),
		\end{cases*}
	\end{equation}
	where $(\hat{\fkf}_{2k}, \hat{\fkg}_{2k})$ are the multilinear forms given in \eqref{eqn evo modified good unknowns}, and $(\fkr_{2k} $, $\fkR_{2k})$ are multilinear forms of $\scrJ(\hat{q}, \pd\hat{\vv}, \hat{\sigma}, \varepsilon\grad{\hat{\vv}})$. Indeed, to derive the coefficients of the $\varepsilon$-terms, it can be seen from the constructions \eqref{transport x y}-\eqref{def q1 v1 sigma1} that they are merely the first order linearization about the derivative $\Dt$ in the dynamical problems. 
	
	To obtain the properties of the remainders $(\fkr_{2k}, \fkR_{2k})$, one may first assign $\varepsilon$ having order $-\frac{1}{2}$, since it plays the role of discrete time step-length. Thus, the error term $\varepsilon\grad\hat{\vv}$ in the Jacobian has exactly order zero, which is consistent to the scaling analysis in the continuous case. Direct calculations yield that $(\fkr_{2k}, \fkR_{2k})$ satisfy the following properties:
	\begin{itemize}
		\item They have orders $k$ and $k+\frac{1}{2}$, respectively.
		\item Besides the powers of the Jacobian, they contain exactly $(2k+2)$ derivatives applied to the factors $\scrJ(\hat{q}, \pd\hat{\vv}, \hat{\sigma}, \varepsilon\grad{\hat{\vv}})$.
		\item They are both composed of admissible errors, i.e., containing at least two factors having strictly positive order.
	\end{itemize}
	It follows from Lemma \ref{lem err est B} that
	\begin{equation}
		\norm*{\hat{\sigma}^{-k}\hat{\fkf}_{2k}}_{H^{0, \frac{\alpha-1}{2}}_{\hat{q}}(\widehat{\Omega})} + \norm*{\hat{\sigma}^{-k}\hat{\fkg}_{2k}}_{H^{0, \frac{\alpha}{2}}_{\hat{q}}(\widehat{\Omega})} \lesssim_{\hat{A}_*} \hat{B} \norm{(\hat{q}, \hat{\vv}, \hat{\sigma})}_{\fkH^{2k}_{\hat{q}}} \lesssim_M 1.
	\end{equation}
	Similarly, one can derive from \eqref{higher order bounds hat} and the interpolations that
	\begin{equation}
		\norm{\fkr_{2k}}_{H^{0, \frac{\alpha-1}{2}}_{\hat{q}}(\widehat{\Omega})} + \norm{\fkR_{2k}}_{H^{0, \frac{\alpha}{2}}_{\hat{q}}(\widehat{\Omega})} \lesssim_M \norm{(\hat{q}, \hat{\vv}, \hat{\sigma})}_{\fkH^{2k+1}_{\hat{q}}} \lesssim_M \varepsilon^{-1}.
	\end{equation}
	
	Before comparing the energies between $(q_1, \vv_1, \sigma_1)$ and $(\hat{q}, \hat{\vv}, \hat{\sigma})$, one may first note that these two set of quantities are defined in different domains. Thanks to the constructions \eqref{transport x y}-\eqref{def q1 v1 sigma1}, one can obtain that
	\begin{equation*}
		\dd{x} = \qty\big(1+\Order{M}{\varepsilon})\dd{y},
	\end{equation*} 
	and
	\begin{equation*}
		q_1 (y + \varepsilon\hat{\vv}(y)) = \hat{q}(y) + \Order{M}{\epsilon}.
	\end{equation*}
	Therefore, it is standard to calculate that
	\begin{equation}\label{energy hat cal 1}
		\begin{split}
			&\int_{\Omega_1} \abs{q_1(x)}^{\alpha-1}\abs{\fks_{2k}^1 (x)}^2 + \beta\abs{q_1(x)}^\alpha\sigma_1^{-1}(x)\abs{\fkw_{2k}^1(x)}^2 \dd{x} \\
			&\quad = \int_{\widehat{\Omega}} \abs{\hat{q}(y)}^{\alpha-1}\abs{\fks_{2k}^1\qty\big[y+\varepsilon\hat{\vv}(y)]}^2 + \beta \abs{\hat{q}(y)}^\alpha \hat{\sigma}^{-1}(y) \abs{\fkw_{2k}^1\qty\big[y+\varepsilon\hat{\vv}(y)]}^2 \dd{y} + \Order{M}{\varepsilon}.
		\end{split}
	\end{equation}
	Combining \eqref{discrete evo eqn good unknown}-\eqref{energy hat cal 1} yields that
	\begin{equation}
		\begin{split}
			&\hspace*{-2em}\int_{\Omega_1} \abs{q_1(x)}^{\alpha-1}\abs{\fks_{2k}^1 (x)}^2 + \beta\abs{q_1(x)}^\alpha\sigma_1^{-1}(x)\abs{\fkw_{2k}^1(x)}^2 \dd{x} \\
			 \le &\,\int_{\widehat{\Omega}} \hat{q}^{\alpha-1}(y) \qty[\abs{\hat{\fks}_{2k}}^2 + \beta \hat{q}\hat{\sigma}^{-1}\abs{\hat{\fkw}_{2k}}^2](y) \dd{y} \\
			&-2\varepsilon\int_{\widehat{\Omega}} \hat{q}^{\alpha-1}(y) \qty\Big[\hat{\fks}_{2k}\qty(\hat{\fkw}_{2k} \vdot \grad{\hat{q}} + \beta\hat{q}\div\hat{\fkw}_{2k})+\beta\hat{q}\hat{\fkw}_{2k}\vdot\grad{\hat{\fks}_{2k}}](y) \dd{y} \\
			&+ \varepsilon^2 \int_{\widehat{\Omega}} \hat{q}^{\alpha-1}(y)\qty[\abs{\hat{\fkw}_{2k} \vdot \grad{\hat{q}} + \beta\hat{q}(\div\hat{\fkw}_{2k})}^2 + \beta\hat{q}\hat{\sigma}^{-1}\abs{\hat{\sigma}\grad{\hat{\fks}_{2k}}}^2](y) \dd{y} \\
			&+ \Order{M}{\varepsilon} \\
			\le &\, \int_{\widehat{\Omega}} \hat{q}^{\alpha-1}(y) \qty[\abs{\hat{\fks}_{2k}}^2 + \beta \hat{q}\hat{\sigma}^{-1}\abs{\hat{\fkw}_{2k}}^2](y) \dd{y} \\
			&+  \Order{M}{\varepsilon^2} \qty(\norm{\hat{\fks}_{2k}}_{H^{1, \frac{\alpha}{2}}_{\hat{q}}}^2 + \norm{\hat{\fkw}_{2k}}_{H^{1, \frac{\alpha+1}{2}}_{\hat{q}}}^2) +\Order{M}{\varepsilon}, \\
			\le &\, \int_{\widehat{\Omega}} \hat{q}^{\alpha-1}(y) \qty[\abs{\hat{\fks}_{2k}}^2 + \beta \hat{q}\hat{\sigma}^{-1}\abs{\hat{\fkw}_{2k}}^2](y) \dd{y} + \Order{M}{\varepsilon},
		\end{split}
	\end{equation}
	where the second inequality follows from the cancellation, and the third one follows from the interpolation Proposition \ref{prop interp} together with the energy bound \eqref{higher order bounds hat}.	Similar arguments applied to the discrete version of \eqref{eqn Dt vom 2k}-\eqref{eqn Dt sigma 2k} yield that
	\begin{equation*}
		\fkE_{2k, \text{high}}^1 \le \widehat{\fkE}_{2k, \text{high}} + \Order{M}{\varepsilon}.
	\end{equation*}
	The lower order energy bounds can be derived from the pointwise estimates \eqref{reg pointwise bound} routinely.
	
	Thus, one concludes the proof of Theorem \ref{thm one step iterate}, provided that Proposition \ref{prop reg state} holds. \\ \qed

	\subsection{Regularization Operators}\label{sec reg op}
	The remaining parts of this section are devoted to the proof of Proposition \ref{prop reg state}. Before proceeding, we first consider the regularization operators adapted to the physical vacuum problems. For more motivations and detailed discussions, one can refer to \cite[\S2.4]{Ifrim-Tataru2024}.

	The initial goal of the regularization is to generate a regular datum corresponding to each dyadic scale $2^h$ for $h \ge 0$. However, given a $C^{1+}$ domain $\Omega$ and its non-degenerate defining function $r$, directly truncating the frequency at scale $2^h$ for a function $f$ defined on $\Omega$ is not suitable for the weighted Sobolev spaces used in this manuscript. Due to the ellipticity of $\cL_1$, whose leading term is $r\laplace$, it is natural to choose a frequency $\xi$ such that $r\xi^2 \lesssim 2^{2h}$, necessitating the regularization kernels at the scale
	\begin{equation*}
		\var{x} \approx r^\frac{1}{2}2^{-h}.
	\end{equation*}
	On the other hand, when $r \ll 2^{-2h}$, one encounters the scenario
	\begin{equation*}
		\var{x} \gg r,
	\end{equation*}
	which would require the information of $f$ outside $\Omega$. One direct scheme to overcome this obstacle is to neglect the information of $f$ in the region $\qty{r \ll 2^{-2h}}$. More precisely, denote by
	\begin{equation}
		\Omega^{[h]} \coloneqq \Set*{x \in \Omega \given r(x) \simeq 2^{-2h}} \qfor h\ge 1,
	\end{equation}
	the dyadic boundary layer associated to the frequency $2^{h}$. One can also define the full boundary strip via
	\begin{equation}
		\Omega^{[>h]} \coloneqq \Set*{x \in \Omega \given r(x) \lesssim 2^{-2h}} = \bigcup_{j>h}\Omega^{[j]},
	\end{equation}
	and the interior counterpart by
	\begin{equation}
		\Omega^{[<h]} \coloneqq \Set*{x \in \Omega \given r(x) \gtrsim 2^{-2h}} = \bigcup_{0 \le j < h} \Omega^{[j]},
	\end{equation}
	here
	\begin{equation*}
		\Omega^{[0]} \coloneqq \Set*{x \in \Omega \given r(x) \gtrsim 1}.
	\end{equation*}
	The enlargement of $\Omega$ at the frequency scale $2^{h}$ is given by
	\begin{equation}
		\check{\Omega}^{[h]} \coloneqq \Set*{x \in \R^d \given \dist(x, \Omega) \le c2^{-2h}},
	\end{equation}
	here $c > 0$ is a generic small constant. Consider a regularization kernel $K^h (x, y)$ defined by
	\begin{equation}\label{def Kh}
		K^h(x, y) = \theta_{>h}(x)K^h_0(x-y) + \sum_{0 \le \ell \le h}\theta_{\ell}(x)K^\ell_0(x-y),
	\end{equation}
	where $\theta_{\ell}$ is a cutoff function associated to $\Omega^{[\ell]}$, and $\theta_{>h}$ is identical to $1$ outside  $\Omega$. The primitive kernel $K^\ell_0 (z)$ is defined as
	\begin{equation*}
		K^\ell_0(z) \coloneqq 2^{2\ell d} \varphi\qty(2^{2\ell}z),
	\end{equation*}
	where $\varphi$ is a smooth function given by
	\begin{equation*}
		\varphi(x) = \sum_{1 \le m \le n} \kappa_{m}(z)\phi_m(z).
	\end{equation*}
	Here $\{\kappa_m\}$ is a partition of unities of a neighborhood of $\Omega$, so that there are unit vectors $\vb{e}_{(m)}$ uniformly outward transversal to $\pd\Omega$ in $\spt(\kappa_m)$. The smooth functions $\phi_m $ satisfy
	\begin{enumerate}[label=(\roman*)]
		\item support conditions: $\spt(\phi_m) \subset B(\vb{e}_{(m)}, \delta_*)$, $\delta_* \ll 1$;
		\item average one: $\int_{\R^d} \phi_m = 1$;
		\item moment conditions: $\int z^\gamma \phi_m (z) \dd{z} = 0$, $1 \le \abs{\gamma} \le N$, for some large $N$.
	\end{enumerate}
	Thus, the kernel $K^h$ admits the following properties:
	\begin{enumerate}
		\item $K^h \colon \check{\Omega}^{[h]}\times\Omega^{[<h]} \to \R$;
		\item $\spt(K^h) \subset \Set*{(x, y) \in \check{\Omega}^{[h]}\times\Omega^{[<h]} \given \abs{x-y} \lesssim 2^{-2h} + 2^{-h}r^\frac{1}{2}(y)}$;
		\item $\abs{\pd_x^\mu \pd_y^\nu K^h(x, y)} \lesssim \abs*{2^{-2h}+2^{-h}r^\frac{1}{2}(y)}^{-(d+\abs{\mu}+\abs{\nu})}$ for multi-indices $\mu, \nu$;
		\item $\int K^h(x, y) \dd{y} = 1$;
		\item $\int (x-y)^\gamma K^h(x, y) \dd{y} = 0$, $1 \le \abs{\gamma} \le N$.
	\end{enumerate}
	
	For a function $u$ defined in $\Omega$, define the regularization of $u$ at the $h$-dyadic-scale by:
	\begin{equation}
		\vps_{\le h}(u) \coloneqq \int K^h(x, y) u(y) \dd{y}.
	\end{equation}
	Thus, the regularized data is defined in an enlarged domain of $\Omega$. Furthermore, the following proposition holds (cf. \cite[Proposition 2.10]{Ifrim-Tataru2024}):
	\begin{prop}\label{prop reg op}
		Suppose that  $r$ is a non-degenerate defining function of $\Omega$, and $r_1$ is a non-degenerate defining function of $\Omega_1$ with $\norm{r-r_1}_{L^\infty} \ll 2^{-2h}$. Then, for each parameter $\lambda > 0$, the regularization operator $\vps_{\le h}$ admits the following properties:
		\begin{enumerate}
			\item \textbf{Regularization bounds:}
			\begin{equation*}
				\norm{\vps_{\le h} (u)}_{H^{2k+2j, k+j+\frac{\lambda-1}{2}}_{r_1}(\Omega_1)} \lesssim 2^{2jh}\norm{u}_{H^{2k, k+\frac{\lambda-1}{2}}_r(\Omega)} \qc j \ge 0.
			\end{equation*}
			\item \textbf{Difference bounds:}
			\begin{equation*}
				\norm{(\vps_{\le h+1}-\vps_{\le h})u}_{H^{2k-2j, k-j+\frac{\lambda-1}{2}}_{r_1}(\Omega_1)} \lesssim 2^{-2jh}\norm{u}_{H^{2k, k+\frac{\lambda-1}{2}}_r(\Omega)} \qc 0 \le j \le k.
			\end{equation*}
			\item \textbf{Error bounds:}
			\begin{equation*}
				\norm{(\textup{Id}-\vps_{\le h})u}_{H^{2k-2j, k-j+\frac{\lambda-1}{2}}_{r}(\Omega)} \lesssim 2^{-2jh} \norm{u}_{H^{2k, k+\frac{\lambda-1}{2}}_r (\Omega)} \qc 0 \le j \le k.
			\end{equation*}
		\end{enumerate}
	\end{prop}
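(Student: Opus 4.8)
The plan is to reduce all three estimates to a single two-sided bound for the dyadic increments $\vps_{\le h+1}-\vps_{\le h}$ and then sum geometrically. Concretely, I would first establish that for every integer $j$ with $2k+2j\ge 0$,
\begin{equation*}
	\norm{(\vps_{\le h+1}-\vps_{\le h})u}_{H^{2k+2j,\, k+j+\frac{\lambda-1}{2}}_{r_1}(\Omega_1)} \lesssim 2^{2jh}\norm{u}_{H^{2k,\, k+\frac{\lambda-1}{2}}_r(\Omega)};
\end{equation*}
here $j\ge 0$ is a \emph{loss} of regularity and $j<0$ is a \emph{gain}. Granting this, the \textbf{difference bounds} are exactly the instances $j\le 0$; the \textbf{error bounds} follow from the telescoping identity $(\mathrm{Id}-\vps_{\le h})u=\sum_{h'\ge h}(\vps_{\le h'+1}-\vps_{\le h'})u$ — valid because $\vps_{\le h'}u\to u$ as $h'\to\infty$, the kernels $K^{h'}$ being approximate identities on $\Omega$ — together with $\sum_{h'\ge h}2^{-2jh'}\simeq 2^{-2jh}$ for $j<0$; and the \textbf{regularization bounds} follow from $\vps_{\le h}u=\vps_{\le 0}u+\sum_{0\le h'<h}(\vps_{\le h'+1}-\vps_{\le h'})u$ by estimating each increment in the \emph{higher} norm with the loss $2^{2jh'}$ and using $\sum_{0\le h'<h}2^{2jh'}\simeq 2^{2jh}$, the fixed-scale term $\vps_{\le 0}u$ being a harmless mollification at unit scale.

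For the key increment estimate I would use that the kernel $L^h\coloneqq K^{h+1}-K^h$ acts at the anisotropic scale $\ell(y)\coloneqq 2^{-2h}+2^{-h}r^{1/2}(y)$: it is supported in $\{|x-y|\lesssim\ell(y)\}$, satisfies $|\pd_x^\mu\pd_y^\nu L^h(x,y)|\lesssim\ell(y)^{-(d+|\mu|+|\nu|)}$, and — crucially, because $\vps_{\le h}$ and $\vps_{\le h+1}$ both reproduce polynomials of degree $\le N$ by properties (4)--(5) of $K^h$ — satisfies $\int L^h(x,y)(x-y)^\gamma\,dy=0$ for \emph{all} $|\gamma|\le N$, the case $\gamma=0$ included. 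I would then decompose $\Omega$ into the dyadic boundary layers $\Omega^{[\ell]}$, $0\le\ell\le h$, together with the strip $\Omega^{[>h]}$. On $\Omega^{[\ell]}$ one has $r\simeq 2^{-2\ell}$, so the weight is essentially constant and may be pulled out of the integral, $\ell(y)\simeq 2^{-h-\ell}$, and after a parabolic dilation normalizing the normal thickness $2^{-2\ell}$ to unit size the operator becomes an honest convolution-type operator with a Schwartz-like kernel at dyadic frequency $\simeq 2^{h-\ell}$ possessing $N$ vanishing moments. On this model the estimate is classical Littlewood--Paley theory: to obtain the gain one subtracts the degree-$(2k-1)$ Taylor polynomial of $u$ at $x$ (annihilated by the moment conditions once $N\ge 2k-1$) and absorbs the factor $|x-y|^{2k}\lesssim\ell^{2k}$ against $\pd^{2k}u$, while to obtain the loss one lets the extra $2j$ derivatives fall on the kernel, each costing $\ell(y)^{-1}\simeq 2^{h+\ell}$; re-inserting the weight $r\simeq 2^{-2\ell}$ turns both counts into the stated powers of $2^{h}$. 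The strip $\Omega^{[>h]}$ is treated identically with $2^{-2\ell}$ replaced by $2^{-2h}$.

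It remains to reassemble the dyadic pieces and to handle the cutoffs $\theta_\ell,\theta_{>h}$ in \eqref{def Kh}. The layers $\Omega^{[\ell]}$ essentially partition $\Omega$, so the squared $H^{2k,\,k+\frac{\lambda-1}{2}}_r$ norm is comparable to the sum of the squared norms over the layers (and likewise on the image side), and the per-layer estimates add up. The derivatives of $\theta_\ell$ live in the transition region between consecutive layers, where they are of size $\simeq 2^{2\ell}\simeq r^{-1}\simeq\ell(y)^{-1}$, i.e.\ exactly at the scale dictated by the kernel; hence the commutator terms generated when $\pd_x$ meets a cutoff are of the same type and are absorbed by the same argument at the cost of a slightly larger $N$. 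The piece $\theta_{>h}$, being $\equiv 1$ off $\Omega$, contributes only in $\Omega^{[>h]}$ and is already covered. Finally, the hypothesis $\norm{r-r_1}_{L^\infty}\ll 2^{-2h}$ ensures that on $\spt K^h$ the weights $r$ and $r_1$ are comparable and that $\Omega_1\subset\check\Omega^{[h]}$, so passing from $(\Omega,r)$ to $(\Omega_1,r_1)$ only changes constants.

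The main obstacle I anticipate is the uniform bookkeeping of the degenerate weight against the anisotropic kernel width $\simeq 2^{-h}r^{1/2}$ all the way to the boundary: the parabolic change of variables normalizing each layer is only \emph{formal}, since $\pd\Omega$ is merely $C^{1+\varepsilon_*}$ and carries no tubular neighborhood, so — as elsewhere in this paper — one must replace it by a Besicovitch-type cover by balls of radius $\simeq 2^{-2\ell}$, work layer by layer, and verify that the overlap multiplicity is a dimensional constant and that the modulus of continuity of $\grad r$ (rather than its curvature) controls every geometric constant. The secondary delicate point is the moment bookkeeping: checking the $\gamma=0$ cancellation for $L^h$ and fixing $N$ large enough, say $N\ge 2k$, so that the Taylor-subtraction argument closes in each norm that occurs.
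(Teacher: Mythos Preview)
The paper does not prove this proposition: the sentence immediately preceding it reads ``the following proposition holds (cf.\ \cite[Proposition 2.10]{Ifrim-Tataru2024})'', and no argument is given here --- the result is imported wholesale from Ifrim--Tataru. There is therefore no in-paper proof to compare your proposal against.

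On its own merits your sketch is sound and follows the natural route (and, as far as one can tell from the surrounding discussion in \S\ref{sec reg op}, the route of the cited reference): reduce to the dyadic increments $\vps_{\le h+1}-\vps_{\le h}$, exploit that $L^h=K^{h+1}-K^h$ annihilates polynomials of degree $\le N$, localize to the layers $\Omega^{[\ell]}$ where $r\simeq 2^{-2\ell}$ and the kernel width is $\simeq 2^{-h-\ell}$, and run a weighted Littlewood--Paley count. Two small repairs are needed. First, the telescoping sums with ratio $1$ do not deliver the endpoint $j=0$ of either the regularization or the error bounds; those are just the $H^{2k,\,k+\frac{\lambda-1}{2}}$-boundedness of $\vps_{\le h}$ and of $\mathrm{Id}-\vps_{\le h}$, which you should prove directly from the kernel size bounds via Schur's test on each layer. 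Second, your Taylor-subtraction step as written controls the increment by $\sup|\pd^{2k}u|$; to land in the weighted $L^2$ scale you must instead pair the pointwise kernel bound $|L^h|\lesssim \ell(y)^{-d}$ with Schur/Young on each $\Omega^{[\ell]}$, using that the kernel has $L^1$-mass $O(1)$ in both variables. The geometric caveats you flag --- the $C^{1+\varepsilon_*}$ boundary forcing a Besicovitch cover in place of a straightening, and the cutoff derivatives $\pd\theta_\ell\simeq 2^{2\ell}$ living at the correct scale --- are exactly the right ones.
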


	\subsection{Constructions of the Regularized Data}\label{sec reg data}
	
	Now, we turn to the proof of Proposition \ref{prop reg state}. Assume that $\varepsilon = 2^{-2h}$. A straightforward application of the regularization operator $\vps_{\le h}$ to $(q_0, \vv_0, \sigma_0)$ may not preserve the energy bound \eqref{energy incre reg}, as it may lose significant information about high-frequency cancellations. To correct this, it might be necessary to add some high-frequency components to the regularized data. Conversely, regularizations at a higher scale, say $\vps_{\le 2h}$, will not maintain the higher order bound \eqref{higher order bounds hat}. Therefore, an adapted regularization is possibly not achievable in a single step. In view of the energy expression \eqref{def fkE 2k}, the $H^{0, \ala}$ norm of $(\cL_1)^k q$ represents the highest order information. To obtain the higher order bounds at a frequency scale $2^{h}$ while controlling the energy loss, one can consider a two-scale regularization. More precisely, take the appropriate lower- and higher-frequency scales:
	\begin{equation*}
		1 \ll h_- < h < h_+,
	\end{equation*}
	whose explicit choices will be determined later. Define the associated regularization by:
	\begin{equation*}
		(q_-, \vv_-, \sigma_-) \coloneqq \vps_{\le h_-} (q_0, \vv_0, \sigma_0) \qand (q_+, \vv_+, \sigma_+) \coloneqq \vps_{\le h_+}(q_0, \vv_0, \sigma_0).
	\end{equation*}
	Consider a correction function $\chi_\varepsilon$ defined on $\R_+$, so that
	\begin{equation}
		\begin{cases*}
			\chi_\varepsilon (t) \le 1, \\
			1-\chi_\varepsilon(t) \lesssim \varepsilon^2 t, \\
			\chi_\varepsilon (t) \lesssim (\varepsilon t)^{-1}, \\
			(\varepsilon t) \chi_\varepsilon \cdot (1-\chi_\varepsilon) \gtrsim (1-\chi_\varepsilon)^2 + (\epsilon^2 t)^2 \chi_\varepsilon^2.
		\end{cases*}
	\end{equation}
	These requirements may seem strange at a first glance, but they are quite natural in view of \eqref{L1 diff trans}-\eqref{req chi ep 2}. One simple candidate satisfying the above requirements is
	\begin{equation}\label{def chi epsilon}
		\chi_\varepsilon(t) \coloneqq \begin{cases*}
			1-\varepsilon^2 t, &if $0 \le t \le \varepsilon^{-1}(\varepsilon^{-1}-1)$; \\
			(1+\varepsilon t)^{-1}, &if $t > \varepsilon^{-1}(\varepsilon^{-1}-1)$.
		\end{cases*}
	\end{equation}
	Then, define the intermediate regularization states by:
	\begin{equation}\label{def tilde quantities}
		\left\{
		\begin{split}
			\wt{q} &\coloneqq q_- + \chi_\varepsilon(\cL_1^-)(q_+ - q_-)_{\restriction_{\Omega_-}}, \\
			\wt{\vv} &\coloneqq \vv_- + (\sigma_-)^{\frac{1}{2}} \cdot \chi_\varepsilon(\cL_2^- + \cL_3^-)\qty[(\sigma_-)^{-\frac{1}{2}}(\vv_+ - \vv_-)]_{\restriction_{\Omega_-}}, \\
			\wt{\sigma} &\coloneqq \sigma_- + \chi_\varepsilon(\cL_5^-)(\sigma_+ - \sigma_-)_{\restriction_{\Omega_-}},
		\end{split}
		\right.
	\end{equation}
	where $\Omega_- \coloneqq \qty{q_- > 0}$, $\cL_j^- \coloneqq \cL_j[q_-]$  $(0 \le j \le 5)$, and the operators $\chi_\varepsilon(\cL_1^-), \chi_\varepsilon(\cL_2^- + \cL_3^-)$, and $\chi_\varepsilon(\cL_5^-) $ are interpreted in the sense of functional calculus for the self-adjoint operators on Hilbert spaces. One may note that the Sobolev embeddings and Proposition \ref{prop reg op} ensure that
	\begin{equation}\label{q pm - q0 L infty}
		\abs{q_\pm - q_0}_{L^\infty} \lesssim 2^{-2h_\pm(k+1-\varkappa_0)}.
	\end{equation}
	Thus, so long as
	\begin{equation}\label{rel h- h+}
		h_-(k+1-\varkappa_0) > h_+ + c
	\end{equation}
	for some generic constant $c$, there holds
	\begin{equation*}
		\Omega_- \subset \check{\Omega}^{[h_+]},
	\end{equation*}
	which implies that the quantities $(q_+, \vv_+, \sigma_+)$ are well-defined in $\Omega_-$. Namely, the definitions in \eqref{def tilde quantities} are legitimate. The quantities $(\wt{q}, \wt{\vv}, \wt{\sigma})$ can be regarded as the regularization of $(q_0, \vv_0, \sigma_0)$ at frequency scale $2^{h_-}$ with proper high-frequency corrections. In view of the fact that $\wt{q}$ may not vanish on $\pd\Omega_-$, one should further refine the regularization. Note that $k \gg 1$, so \eqref{q pm - q0 L infty} yields that
	\begin{equation*}
		\abs{\wt{q}} \ll 2^{-2Ch} \qq{on} \pd{\Omega_-}
	\end{equation*}
	for some large $C$. Hence, one could define
	\begin{equation}\label{def hat data}
		(\hat{q}, \hat{\vv}, \hat{\sigma}) \coloneqq \qty(\wt{q}-2^{-2Ch}, \wt{\vv}, \wt{\sigma}) \qand \widehat{\Omega}\coloneqq\qty{\hat{q}>0}.
	\end{equation}
	This would be the final regularizations. One can also refer to \cite[pp. 474-477]{Ifrim-Tataru2024} for the original heuristics.
	
	The $C^2$-approximation property \eqref{reg pointwise bound} follows from the constructions, Proposition \ref{prop reg op}, and the Sobolev embeddings. Thus, it only remains to show the energy increment estimate \eqref{energy incre reg} and the higher order bound \eqref{higher order bounds hat}.
	
	For the simplicity of notations, we will denote by:
	\begin{subequations}
		\begin{equation}
			\norm{u}_{H^{0, \lambda}_{q_-}} \coloneqq \norm{u}_{H^{0, \lambda}_{q_-} (\Omega_-)} \qfor \lambda>-\frac{1}{2},
		\end{equation}
		and
		\begin{equation}
			\norm{(\wt{q}, \wt{\vv}, \wt{\sigma})}_{\fkH^{2k}_{q_-}}^2 \coloneqq \norm{\wt{q}}_{H^{2k, k+\ala}_{q_-}}^2 + \norm{\wt{\vv}}_{H^{2k, k+\alb}_{q_-}}^2 + \norm{\wt{\sigma}}_{H^{2k, k+\alb}_{q_-}}^2.
		\end{equation}
	\end{subequations}

	\subsubsection{Bounds for the regularized data}
	It follows from \eqref{def chi epsilon}-\eqref{def tilde quantities} that
	\begin{equation*}
		\norm{(\cL_1^-)^{k+1}(\wt{q}-q_-)}_{H^{0, \frac{\alpha-1}{2}}_{q_-}(\Omega_-)} \lesssim \varepsilon^{-1}\norm{(\cL_1^-)^k (q_+ - q_-)}_{H^{0, \ala}_{q_-}(\Omega_-)},
	\end{equation*}
	which, together with Proposition \ref{prop reg op}, yield that
	\begin{equation*}
		\norm{(\cL_1^-)^{k+1} \wt{q}}_{H^{0, \ala}_{q_-}(\Omega_-)} \lesssim_M \varepsilon^{-1}.
	\end{equation*}
	Similarly, there holds
	\begin{equation*}
		\norm{(\cL_2^- + \cL_3^-)^{k+1}\qty[(\sigma_-)^{-\frac{1}{2}}\wt{\vv}]}_{H^{0, \alb}_{q_-}} + \norm{(\cL_5^-)^{k+1}\wt{\sigma}}_{H^{0, \alb}_{q_-}} \lesssim_M \varepsilon^{-1}.
	\end{equation*}
	Moreover, Proposition \ref{prop reg op} implies that
	\begin{equation*}
		\norm{(\wt{q}-q_0, \wt{\vv}-\vv_0, \wt{\sigma}-\sigma_0)}_{\fkH^0_{q_0}(\Omega_0)} \lesssim_M 2^{-2kh_-} \lesssim_M \varepsilon^2,
	\end{equation*}
	and
	\begin{equation*}
		\norm{(\wt{q}, \wt{\vv}, \wt{\sigma})}_{\fkH^{0}_{q_-}(\Omega_- \setminus\Omega_0)} \lesssim_M \varepsilon^2.
	\end{equation*}
	Therefore, it can be derived from the ellipticity of operators $\cL_1^-, (\cL_2^- +\cL_3^-)$, and $\cL_5^-$ that
	\begin{equation*}
		\norm{\qty(\wt{q}, (\sigma_-)^{-\frac{1}{2}}\wt{\vv}, \wt{\sigma})}_{\fkH^{2k+2}_{q_-}(\Omega_-)} \lesssim_M \varepsilon^{-1}.
	\end{equation*}
	which, together with \eqref{equiv fkE 2k new norm}-\eqref{equiv two norms}, yield 
	\begin{equation}\label{bound wt}
		\norm{\qty(\wt{q}, \wt{\vv}, \wt{\sigma})}_{\fkH^{2k+2}_{q_-}(\Omega_-)} \lesssim_M \varepsilon^{-1}.
	\end{equation}
	Furthermore, since $\chi_\varepsilon \le 1$ on $\R_+$, it follows from Proposition \ref{prop reg op} that
	\begin{equation}
		\norm{(\wt{q}, \wt{\vv}, \wt{\sigma})}_{\fkH^{2k}_{q_-}(\Omega_-)} \lesssim_M 1.
	\end{equation}
	
	On the other hand, it holds that
	\begin{equation*}
		q_+ - \wt{q} =\qty\big[( 1 - \chi_\varepsilon)(\cL_1^-)](q_+ - q_-)_{\restriction_{\Omega_-}},
	\end{equation*}
	and
	\begin{equation*}
		\norm{(\cL_1^-)^{k-1}(\wt{q}-q_+)}_{\Hala_{q_-}} \lesssim \varepsilon^2 \norm{(\cL_1^-)^k(q_+ - q_-)}_{\Hala_{q_-}} \lesssim_M \varepsilon^2.
	\end{equation*}
	In a similar manner, one can establish the corresponding bounds for $(\wt{\vv}-\vv_+, \wt{\sigma}-\sigma_+)$ and finally arrives at the estimate:
	\begin{equation}\label{est q+ - q wt}
		\norm{(\wt{q}-q_+, \wt{\vv}-\vv_+, \wt{\sigma}-\sigma_+)}_{\fkH^{2k-2}_{q_-}(\Omega_-)} \lesssim_M \varepsilon^2.
	\end{equation}
	Indeed, \eqref{bound wt}-\eqref{est q+ - q wt} formally implies the regularized data $(\wt{q}, \wt{\vv}, \wt{\sigma})$ have the benefits from both the high-scale and on-scale regularization.
	
	\subsubsection{Energy increments}
	Now, we turn to the proof of \eqref{energy incre reg}, which is the main bulk of the work. As a necessary intermediate step, one may first compare the energy between $\fkE_{2k, \text{high}}^- (\wt{q}, \wt{\vv}, \wt{\sigma})$ and $\fkE_{2k, \text{high}}(q_0, \vv_0, \sigma_0)$, where $\fkE_{2k, \text{high}}^- (\wt{q}, \wt{\vv}, \wt{\sigma})$ is a shorthand notation of
	\begin{equation*}
		\int_{\Omega_-} q_-^{\alpha-1} \qty[\abs{\wt{\fks}_{2k}}^2 + \beta q_- \sigma_-^{-1}\abs{\wt{\fkw}_{2k}}^2 + \beta q_- \sigma_-^{-1}\abs{\wt{\vom}_{2k}}^2 + q_-\abs{\wt{\sigma}_{2k}}^2] \dd{x}.
	\end{equation*}
	To compare these two energies, there are two main challenges. One is that, as usual, they are defined in different domains. The other is that the good unknowns are multilinear forms involving derivatives, the estimates of their difference would be rather subtle. Let us now handle the first obstacle.
	
	\paragraph*{Step 1: Removing a boundary layer}
	Recall that \eqref{q pm - q0 L infty} implies that
	\begin{equation}
		\Omin \coloneqq	\Omega_0^{[<h_-(k+1-\varkappa_0) -h]} \subset \Omega_-,
	\end{equation}
	and that
	\begin{equation}\label{dif q om in}
			\abs{q_0 - q_-}(x) \lesssim_M 2^{-2h_-(k+1-\varkappa_0)} \lesssim_M 2^{-2h} 2^{-2[h_-(\kk)- h]} \lesssim_M \varepsilon q_0 (x) \qfor x \in \Omin.
	\end{equation}

	In order to compute the energies in $(\Omega_- \setminus \Omin)$, one can first observe that, in the $h_1$-boundary-layer of $\Omega_-$, there holds
	\begin{equation*}
		\begin{split}
			\norm{\wt{q}}_{H^{2k, k+\ala}_{q_-}(\Omega_-^{[h_1]})} &\lesssim \norm{\wt{q}}_{H^{2k+2, k+2+\ala}_{q_-}(\Omega_-^{[h_1]})} \\
			&\lesssim 2^{-2h_1} \norm{\wt{q}}_{H^{2k+2, k+1+\ala}_{q_-}(\Omega_-^{[h_1]})} \\
			&\lesssim_M 2^{-2(h_1 - h)},
		\end{split}
	\end{equation*}
	whose last inequality follows from the bound \eqref{bound wt}. Similar arguments lead to
	\begin{equation*}
		\norm{(\wt{\vv}, \wt{\sigma})}_{H^{2k, k+\ala}_{q_-}(\Omega_-^{[h_1]})} \lesssim_M 2^{-2(h_1 - h)}.
	\end{equation*}
	Thus, it follows from the arguments in \S\ref{sec bounds of good unknown} that
	\begin{equation}\label{est Om1 bdry layer}
		\begin{split}
			&\int_{\Omega_-^{[h_1]}} q_-^{\alpha-1} \qty[\abs{\wt{\fks}_{2k}}^2 + \beta q_- \sigma_-^{-1}\abs{\wt{\fkw}_{2k}}^2 + \beta q_- \sigma_-^{-1}\abs{\wt{\vom}_{2k}}^2 + q_-\abs{\wt{\sigma}_{2k}}^2] \dd{x} \\
			&\quad\lesssim_M \norm{(\wt{q}, \wt{\vv}, \wt{\sigma})}^2_{\fkH^{2k}_{q_-}(\Omega_-^{[h_1]})} \lesssim_M 2^{-4(h_1-h)}.
		\end{split}
	\end{equation}
	On the other hand, in order that $(\Omega_0^{[>h_-(\kk)- h]} \cap \Omega_-) \subset \Omega_-^{[>2h]}$, it suffices to have
	\begin{equation*}
		2^{-2h_-(\kk)+2 h} + 2^{-2h_-(\kk)} \lesssim 2^{-4h},
	\end{equation*}
	which automatically holds whenever
	\begin{equation}\label{h- kk 3h}
		h_-(\kk) > 3h.
	\end{equation}
	Hence, \eqref{est Om1 bdry layer}-\eqref{h- kk 3h} imply that
	\begin{equation}
		\int_{\Omega_- \setminus \Omin} q_-^{\alpha-1} \qty[\abs{\wt{\fks}_{2k}}^2 + \beta q_- \sigma_-^{-1}\abs{\wt{\fkw}_{2k}}^2 + \beta q_- \sigma_-^{-1}\abs{\wt{\vom}_{2k}}^2 + q_-\abs{\wt{\sigma}_{2k}}^2] \dd{x} \lesssim_M \varepsilon^2.
	\end{equation}
	
	In other words, it suffices to compare the restrictions of $\fkE_{2k, \text{high}}^- (\wt{q}, \wt{\vv}, \wt{\sigma})$ and $\fkE_{2k}(q_0, \vv_0, \sigma_0)$ to $\Omin$. Here one may notice that \eqref{h- kk 3h} and \eqref{q pm - q0 L infty} also imply
	\begin{equation}\label{dif q0 - q- (2)}
		\abs{q_0 - q_-} \lesssim_M \varepsilon q_- \qin \Omin.
	\end{equation}
	Moreover, it follows from Proposition \ref{prop reg op} and \eqref{h- kk 3h} that
	\begin{equation}\label{dif sigma om in}
		\abs{\sigma_- - \sigma_0} \lesssim_M \varepsilon^{2} \sigma_- \qand \abs{\sigma_- - \sigma_0} \lesssim_M \varepsilon^{2} \sigma_0 \qin \Omin.
	\end{equation}
	Thus, when comparing the two energies in the interior region $\Omin$, one will not be bothered by the different weights, as interchanging $q_0 \leftrightarrow q_-$ and $\sigma_0 \leftrightarrow \sigma_-$ would merely produce $\Order{M}{\varepsilon}$-errors. Indeed, in $\Omin$, it holds that
	\begin{equation*}
		q_- \approx (1 \pm \varepsilon) q_0 \implies q_-^{\alpha-1} \approx [1 \pm(\alpha-1)\varepsilon]q_0^{\alpha-1} \approx \qty[1+\order{\varepsilon}]q_0^{\alpha-1},
	\end{equation*}
	and vice versa.
	
	\paragraph*{Step 2: Differences of good unknowns}
	First note that the original good unknown $s_{2k}$ defined in \eqref{def good unknown}, which does not include negative powers of $\sigma$, is a multilinear form of $(\scrJ(q), \scrJ(\pd\vv), \scrJ(\sigma))$. Thus, algebraic calculations yield that the difference $(\wt{s}_{2k} - s_{2k}^0)$ is a multilinear form of $(\wt{q}, \wt{\vv}, \wt{\sigma})$, $(\wt{q}-q_0, \wt{\vv}-\vv_0, \wt{\sigma}-\sigma_0)$, and their derivatives. More explicitly, one has the following relation:
	\begin{equation*}
		s_{2k}^0 - \wt{s}_{2k} =(\var{s_{2k}})_{\restriction_{(\wt{q}, \wt{\vv}, \wt{\sigma})}}(q_0 - \wt{q}, \vv_0 - \wt{\vv}, \sigma_0 - \wt{\sigma}) + F_{2k},
	\end{equation*}
	here $(\var{s_{2k}})$ is the variational derivative of a functional, and $F_{2k}$ is composed of the above stated multilinear forms and at least bilinear in $\scrJ(\wt{q}-q_0, \wt{\vv}-\vv_0, \wt{\sigma}-\sigma_0)$. For the first term on the right hand side, one can further compute that
	\begin{equation*}
		\begin{split}
			&(\var{s_{2k}})_{\restriction_{(\wt{q}, \wt{\vv}, \wt{\sigma})}}(q_0 - \wt{q}, \vv_0 - \wt{\vv}, \sigma_0 - \wt{\sigma}) \\
			&\quad = \wt{\sigma}^k (\cL_1[\wt{q}])^k (q_0 - \wt{q}) + \underbrace{\Lambda_{2k}[\wt{q}, \wt{\vv}, \wt{\sigma}](q_0 - \wt{q}, \vv_0 - \wt{\vv}, \sigma_0 - \wt{\sigma})}_{\eqqcolon F_{2k}'},
		\end{split}
	\end{equation*}
	here the coefficient $ \Lambda_{2k}[\wt{q}, \wt{\vv}, \wt{\sigma}]$ consists in multilinear forms containing at least one factor having strictly positive order. Indeed, for $k \ge 2$, the above relation can be derived from Lemma \ref{lem recur} with routine computations. For $k = 1$, one may recall from \eqref{eqn Dt2 q} and \eqref{def good unknown} that
	\begin{equation*}
		s_2 = \beta q\sigma \laplace{q} + \frac{\sigma}{2}\abs{\grad{q}}^2 + \beta q \grad{q}\vdot\grad{\sigma} + \beta q \qty(\beta(\divergence\vv)^2 + \tr\qty[(\grad\vv)^2]),
	\end{equation*}
	which implies that
	\begin{equation*}
		(\var{s_2})_{\restriction_{(q, \vv, \sigma)}} u = \sigma \cL_1[q] u + \beta\sigma\laplace q \cdot u + \order{\pd\vv\pd\vv} u.
	\end{equation*}	
	Thus, it holds that $\Lambda_{2}[q, \vv, \sigma]$ also contains at least one factor having strictly positive order.
	
	Note that the regularized data $(\wt{q}, \wt{\vv}, \wt{\sigma})$ can be formally regarded as the low-frequency component of $(q_0, \vv_0, \sigma_0)$, and thus $(q_0 - \wt{q}, \vv_0 - \wt{\vv}, \sigma_0 - \wt{\sigma})$ can be considered the high-frequency part. Specifically, the remainder $F_{2k}$ can be viewed as ``high-high'' interactions, which are usually not problematic. The remainder term $F_{2k}'$ can be seen as ``low-high balanced'' terms, which are more challenging than the ``high-high'' terms but still manageable. Heuristically, the main difficulty arises from the highest order term $\wt{\sigma}^k(\cL_1[\wt{q}])^k (q_0 - \wt{q})$. Note that the $\wt{\sigma}^k$ coefficient disrupts the symmetry of the differential operator, so we aim to avoid this by introducing the modified good unknown $\wt{\fks}_{2k} \coloneqq \wt{\sigma}^{-k}\wt{s}_{2k}$.
	
	Since the original good unknown $s_{2k}^0 $ certainly contains at least one factor of positive order, the term $(\sigma_0^{-k} - \wt{\sigma}^{-k})s_{2k}^0$ can be treated as ``low-high balanced'' or ``high-high'' terms. Namely, for the modified good unknowns $\fks_{2k}^0$ and $\wt{\fks}_{2k}$ defined by \eqref{def mod good unknown}, it holds that
	\begin{subequations}\label{diff formula gu 2k}
		\begin{equation}\label{diff formula s2k}
			\fks_{2k}^0 - \wt{\fks}_{2k} = \qty\big(\wt{\cL}_1)^k (q_0 - \wt{q}) + \sigma_0^{-k} \wt{\sigma}^{-k} \qty(\fkF_{2k}' + \fkF_{2k}),
		\end{equation}
		where $\wt{\cL}_1 \coloneqq \cL_1[\wt{q}]$, $\fkF_{2k}'$ and $\fkF_{2k}$ are multilinear forms of $(\wt{q}, \wt{\vv}, \wt{\sigma})$, $(\wt{q}-q_0, \wt{\vv}-\vv_0, \wt{\sigma}-\sigma_0)$, and their derivatives. Here $\fkF_{2k}$ is at least bilinear in $\scrJ(\wt{q}-q_0, \wt{\vv}-\vv_0, \wt{\sigma}-\sigma_0)$, and $\fkF_{2k}'$ is linear in $\scrJ(\wt{q}-q_0, \wt{\vv}-\vv_0, \wt{\sigma}-\sigma_0)$ and contains at least one factor of $\scrJ(\pd^2\wt{q}, \pd\wt{\vv}, \pd\wt{\sigma})$.
		
		Similarly, one can calculate that
		\begin{equation}
			\fkw_{2k}^0 - \wt{\fkw}_{2k} = \qty\big(\wt{\cL}_2)^k (\vv_0 - \wt{\vv}) + \sigma_0^{-k} \wt{\sigma}^{-k}\qty(\fkG_{2k}' + \fkG_{2k}),
		\end{equation}
		\begin{equation}
			\vom_{2k}^0 - \wt{\vom}_{2k} = \qty\big(\wt{\cL}_3)^k (\vv_0 - \wt{\vv}) +  \sigma_0^{-k} \wt{\sigma}^{-k}\qty(\fkW_{2k}' + \fkW_{2k}),
		\end{equation}
		and
		\begin{equation}\label{diff formula sigma 2k}
			\sigma_{2k}^0 - \wt{\sigma}_{2k} = \qty\big(\wt{\cL}_5)^k (\sigma_0 - \wt{\sigma})  + \sigma_0^{-k} \wt{\sigma}^{-k}\qty(\fkS_{2k}' + \fkS_{2k}),
		\end{equation}
	\end{subequations}
	where $(\fkG_{2k}, \fkW_{2k}, \fkS_{2k})$ and $(\fkG_{2k}', \fkW_{2k}', \fkS_{2k}')$ are respectively the ``high-high'' and ``low-high balanced'' terms discussed previously.
	
	\paragraph*{Step 3: Controlling the remainders}
	As in \eqref{def A*}-\eqref{def B}, define the control parameters for the difference data $(q_0 - \wt{q}, \vv_0 - \wt{\vv}, \sigma_0 - \wt{\sigma})$ restricted in $\Omin$ by
	\begin{equation}
		A_*^\Delta \coloneqq \norm{q_0 - \wt{q}}_{C^{1+\varepsilon_*}(\Omin)} + \norm{(\vv_0 - \wt{\vv}, \sigma_0 - \wt{\sigma})}_{C^{\frac{1}{2}+\varepsilon_*}(\Omin)} + \norm*{\sigma_0^{-1} - \wt{\sigma}^{-1}}_{L^\infty(\Omin)},
	\end{equation}
	and
	\begin{equation}
		B^\Delta \coloneqq \norm{\grad(q_0 - \wt{q})}_{\wtC^{0, \frac{1}{2}}(\Omin)} + \norm{\grad(\vv_0 - \wt{\vv})}_{L^\infty(\Omin)} + \norm{\grad(\sigma_0 - \wt{\sigma})}_{L^\infty(\Omin)}.
	\end{equation}
	Then, Proposition \ref{prop reg op} and the Sobolev embeddings yield that
	\begin{equation}\label{est A B dif}
		A_*^\Delta + B^\Delta \lesssim_M \varepsilon^2.
	\end{equation}
	Since the ``high-high'' terms are at least bilinear in the difference $\scrJ(q_0 - \wt{q}, \vv_0 - \wt{\vv}, \sigma_0 - \wt{\sigma})$ and do not involve the top order terms, one can utilize Lemmas \ref{lem err est A}-\ref{lem err est B} and \eqref{est A B dif} to conclude that
	\begin{equation*}
		\norm{\fkF_{2k}}_{H^{0, \ala}_{q_-}(\Omin)} + \norm{(\fkG_{2k}, \fkW_{2k}, \fkS_{2k})}_{H^{0, \alb}_{q_-}(\Omin)} \lesssim_M \varepsilon^2.
	\end{equation*} 

	The estimates for the ``low-high balanced'' terms can be obtained from similar interpolation and scaling arguments. More precisely, since they are linear in $\scrJ(\wt{q}-q_0, \wt{\vv}-\vv_0, \wt{\sigma}-\sigma_0)$ and contains at least one factor of $\scrJ(\pd^2\wt{q}, \pd\wt{\vv}, \pd\wt{\sigma})$, one has
	\begin{equation*}
		\begin{split}
			&\norm{\fkF_{2k}'}_{H^{0, \ala}_{q_-}(\Omin)} + \norm{(\fkG_{2k}', \fkW_{2k}', \fkS_{2k}')}_{H^{0, \alb}_{q_-}(\Omin)} \\
			&\quad \lesssim_{\wt{A}_*, A_*^\Delta} (B^\Delta + A_*^\Delta \wt{B}) \norm{(\wt{q}, \wt{\vv}, \wt{\sigma})}_{\fkH^{2k-1}_{q_-}(\Omin)} + \wt{B} \norm{(q_0 - \wt{q}, \vv_0 - \wt{\vv}, \sigma_0 - \wt{\sigma})}_{\fkH^{2k-1}_{q_-}(\Omin)},
		\end{split}
	\end{equation*}
	where $\wt{A}_*$ and $\wt{B}$ are the control parameters for $(\wt{q}, \wt{\vv}, \wt{\sigma})$ defined by \eqref{def A*} and \eqref{def B} respectively. Note that $\wt{B}$ is merely bounded, so one can conclude that
	\begin{equation*}
		\begin{split}
			&\norm{\fkF_{2k}'}_{H^{0, \ala}_{q_-}(\Omin)} + \norm{(\fkG_{2k}', \fkW_{2k}', \fkS_{2k}')}_{H^{0, \alb}_{q_-}(\Omin)} \\
			&\quad \lesssim_M \varepsilon^2 + \norm{(q_0 - \wt{q}, \vv_0 - \wt{\vv}, \sigma_0 - \wt{\sigma})}_{\fkH^{2k-1}_{q_-}(\Omin)}.
		\end{split}
	\end{equation*}
	On the other hand, Propositions \ref{prop interp} and \ref{prop reg op} imply that
	\begin{equation*}
		\norm{(q_0 - q_+, \vv_0 - q_+, \sigma_0 - \sigma_+)}_{\fkH^{2k-1}_{q_-}(\Omin)} \lesssim_M 2^{-h_+}.
	\end{equation*}
	Hence, as long as
	\begin{equation}\label{h+ > 4h}
		h_+ > 4h,
	\end{equation}
	it can be derived that
	\begin{equation}
		\begin{split}
			&\norm{\fkF_{2k}'}_{H^{0, \ala}_{q_-}(\Omin)} + \norm{(\fkG_{2k}', \fkW_{2k}', \fkS_{2k}')}_{H^{0, \alb}_{q_-}(\Omin)} \\
		&\quad \lesssim_M \varepsilon^2 + \norm{(q_+ - \wt{q}, \vv_+ - \wt{\vv}, \sigma_+ - \wt{\sigma})}_{\fkH^{2k-1}_{q_-}(\Omin)}.
		\end{split}
	\end{equation}
		
	\paragraph*{Step 4: Comparison of the energies}
	With all these preparations, one can compare the restrictions of $\fkE_{2k, \text{high}}^- (\wt{q}, \wt{\vv}, \wt{\sigma})$ and $\fkE_{2k, \text{high}}(q_0, \vv_0, \sigma_0)$ in $\Omin$. First, the elementary inequality $a^2 \le b^2 - 2a(b-a) $, together with the $L^\infty$-estimates \eqref{dif q om in}-\eqref{dif sigma om in}, imply that
	\begin{equation}\label{comparasion 1}
		\begin{split}
			&\hspace*{-2em} \fkE_{2k, \text{high}}^- (\wt{q}, \wt{\vv}, \wt{\sigma})_{\restriction_{\Omin}} \\
			\le &\, \fkE_{2k, \text{high}}^-(q_0, \vv_0, \sigma_0)_{\restriction_{\Omin}}  \\
			&-2 \qty(\bmqty{\wt{\fks}_{2k} \\ \wt{\fkw}_{2k} \\ \wt{\vom}_{2k} \\ \wt{\sigma}_{2k}},\bmqty{(\wt{\cL}_1)^k (q_0 - \wt{q}) + \sigma_0^{-k} \wt{\sigma}^{-k} \qty(\fkF_{2k}' + \fkF_{2k}) \\ (\wt{\cL}_2)^k (\vv_0 - \wt{\vv}) + \sigma_0^{-k} \wt{\sigma}^{-k}\qty(\fkG_{2k}' + \fkG_{2k}) \\ (\wt{\cL}_3)^k (\vv_0 - \wt{\vv}) +  \sigma_0^{-k} \wt{\sigma}^{-k}\qty(\fkW_{2k}' + \fkW_{2k}) \\ (\wt{\cL}_5)^k (\sigma_0 - \wt{\sigma})  + \sigma_0^{-k} \wt{\sigma}^{-k}\qty(\fkS_{2k}' + \fkS_{2k}) })_{\fkH_{-}(\Omin)}\\
			\le&\, \fkE_{2k, \text{high}}(q_0, \vv_0, \sigma_0) + \Order{M}{1} \norm{(q_+ - \wt{q}, \vv_+ - \wt{\vv}, \sigma_+ - \wt{\sigma})}_{\fkH^{2k-1}_{q_-}(\Omin)} + \Order{M}{\varepsilon}\\
			&-2 \qty({\bmqty{\wt{\fks}_{2k} \\ \wt{\fkw}_{2k} \\ \wt{\vom}_{2k} \\ \wt{\sigma}_{2k}}},{\bmqty{(\wt{\cL}_1)^k (q_0 - \wt{q})  \\ (\wt{\cL}_2)^k (\vv_0 - \wt{\vv})  \\ (\wt{\cL}_3)^k (\vv_0 - \wt{\vv})  \\ (\wt{\cL}_5)^k (\sigma_0 - \wt{\sigma}) }})_{\fkH_{-}(\Omin)},
		\end{split}
	\end{equation}
	here the inner product is defined as
	\begin{equation*}
		\qty(\bmqty{f_1 \\ f_2 \\ f_3 \\ f_4}, \bmqty{g_1 \\ g_3 \\ g_3 \\ g_4})_{\fkH_{-}(\Omin)} \hspace*{-3em}\coloneqq \int_{\Omin} q_-^{\alpha-1} \qty[\abs{f_1 g_1}^2 + \beta q_- \sigma_-^{-1}\abs{f_2 g_2}^2 + \beta q_- \sigma_-^{-1}\abs{f_3 g_3}^2 + q_-\abs{f_4 g_4}^2].
	\end{equation*}
	Indeed, one is able to simplify the inner products. First note that the difference $(\wt{\cL}_1)^k - (\cL_1^-)^k$ induces at least one $(\wt{q}-q_-)$ factor. Furthermore, the Sobolev embeddings, Proposition \ref{prop reg op}, and \eqref{h- kk 3h} yield that
	\begin{equation*}
		\norm{\wt{q}-q_-}_{C^1(\Omin)} \lesssim_M \varepsilon^2.
	\end{equation*}
	Thus, H{\"o}lder's inequality and interpolations imply the following estimate
	\begin{equation*}
		\norm{\qty[(\wt{\cL}_1)^k - (\cL_1^-)^k](q_0 - \wt{q})}_{H^{0, \ala}_{q_-}(\Omin)} \lesssim_M \varepsilon.
	\end{equation*}
	Notice that $q_0 - \wt{q} = (q_0 - q_+) + (q_+ - \wt{q})$. In order to estimate the inner product $\ev{(\cL_1^-)^k(q_0 - q_+), \wt{s}_{2k}}_{H^{0, \ala}_{q_-}(\Omin)}$, one natural way is to use the integration by parts in $\Omega_-$. Indeed, one can first insert a cutoff function $\eta$ with $\spt(\eta) \subset \Omega_0 \cap \Omega_-$ and $\eta \equiv 1 $ in $\Omin$. Observe that, whenever $\spt(\eta) \subset \Omega_-^{[h_2]}$, the arguments in Step 1 yield
	\begin{equation*}
		\ev{(\cL_1^-)^k(q_0 - q_+), \wt{s}_{2k}}_{H^{0, \ala}_{q_-}(\Omin)} = \ev{(\cL_1^-)^k[\eta(q_0 - q_+)],\wt{s}_{2k}}_{H^{0, \ala}_{q_-}(\Omega_-)} + \Order{M}{\varepsilon},
	\end{equation*}
	provided that $h_2 \le h_+ + c$. Indeed, when $h_2 \gg h_+$, the derivatives of $\eta$ will become the main contributions, which is to be avoided. Recall that $\Omega_- \setminus \Omin \supset \Omega_-^{[>h_-(k+1-\varkappa_0)-h-c]}$, whenever
	\begin{equation}\label{h_- h_+ rel}
		h_- (k+1-\varkappa_0) \le h_+ + h + c,
	\end{equation}
	one can derive that	
	\begin{equation*}
		\begin{split}
			&\hspace*{-1em}\ev{(\cL_1^-)^k(q_0 - q_+), \wt{s}_{2k}}_{H^{0, \ala}_{q_-}(\Omin)} \\
			&= \ev{(\cL_1^-)^k[\eta(q_0 - q_+)], \wt{s}_{2k}}_{H^{0, \ala}_{q_-}(\Omega_-)} + \Order{M}{\varepsilon} \\
			&\lesssim \abs{\ev{(\cL_1^-)^{(k-1)}[\eta(q_0 - q_+)], \cL_1^- \wt{s}_{2k}}_{H^{0, \ala}_{q_-}(\Omega_-)}} + \Order{M}{\varepsilon} \\
			&\lesssim \norm{q_0 - q_+}_{H^{2k-2, k-1+\ala}_{q_-}} \cdot \norm{(\wt{q}, \wt{\vv}, \wt{\sigma})}_{\fkH^{2k+2}_{q_-}} + \Order{M}{\varepsilon} \\
			&\lesssim_M \varepsilon + 2^{2h-2h_+}.
		\end{split}
	\end{equation*}
	Thus, \eqref{h+ > 4h} yields that
	\begin{equation*}
		\abs{\ev{(\cL_1^-)^k(q_0 - q_+), \wt{s}_{2k}}_{H^{0, \ala}_{q_-}(\Omin)}} \lesssim_M \varepsilon.
	\end{equation*}
	Similar arguments applied to the other three inner products imply that \eqref{comparasion 1} can be further simplified to 
	\begin{equation*}
		\begin{split}
			&\hspace*{-2em} \fkE_{2k, \text{high}}^- (\wt{q}, \wt{\vv}, \wt{\sigma})_{\restriction_{\Omin}} \\
			\le&\, \fkE_{2k, \text{high}}(q_0, \vv_0, \sigma_0) + \order{M} \norm{(q_+ - \wt{q}, \vv_+ - \wt{\vv}, \sigma_+ - \wt{\sigma})}_{\fkH^{2k-1}_{q_-}(\Omin)} + \Order{M}{\varepsilon}\\
			&-2 \qty({\bmqty{\wt{\fks}_{2k} \\ \wt{\fkw}_{2k} \\ \wt{\vom}_{2k} \\ \wt{\sigma}_{2k}}},{\bmqty{(\cL_1^-)^k (q_+ - \wt{q})  \\ (\cL_2^-)^k (\vv_+ - \wt{\vv})  \\ (\cL_3^-)^k (\vv_+ - \wt{\vv})  \\ (\cL_5^-)^k (\sigma_+ - \wt{\sigma}) }})_{\fkH_{-}(\Omin)}.
		\end{split}
	\end{equation*}
	Next, observe the following relations akin to \eqref{diff formula gu 2k}:
	\begin{subequations}
		\begin{align}
			\wt{\fks}_{2k} &= {\fks}_{2k}^- + \qty(\cL_1^-)^k (\wt{q}-q_-) + (\sigma_-)^{-k}\wt{\sigma}^{-k}\qty(\fkF_{2k}'' + \fkF_{2k}'''), \\
			\wt{\fkw}_{2k} &= \fkw_{2k}^- + \qty(\cL_2^-)^k (\wt{\vv}-\vv_-) + (\sigma_-)^{-k}\wt{\sigma}^{-k}\qty(\fkG_{2k}'' + \fkG_{2k}'''), \\
			\wt{\vom}_{2k} &= {\vom}_{2k}^- + \qty(\cL_3^-)^k (\wt{\vv}-\vv_-) + (\sigma_-)^{-k}\wt{\sigma}^{-k} \qty(\fkW_{2k}'' + \fkW_{2k}'''), \\
			\wt{\sigma}_{2k} &= \sigma_{2k}^- + \qty(\cL_5^-)(\wt{\sigma} - \sigma_-) + (\sigma_-)^{-k}\wt{\sigma}^{-k}\qty(\fkS_{2k}'' + \fkS_{2k}'''),
		\end{align}
	\end{subequations}
	where $(\fkF_{2k}'', \fkG_{2k}'', \fkW_{2k}'', \fkS_{2k}'')$ (the ``high-high'' terms) and $(\fkF_{2k}''', \fkG_{2k}''', \fkW_{2k}''', \fkS_{2k}''')$ (the ``low-high balanced'' terms) are all multilinear forms of $\scrJ(q_-, \vv_-, \sigma_-)$ and $\scrJ(\wt{q}-q_-, \wt{\vv}-\vv_-, \wt{\sigma}-\sigma_-)$. Thus, following the arguments in Step 3, one can bound the ``high-high'' terms by
	\begin{equation*}
		\norm{\fkF_{2k}''}_{H^{0, \ala}_{q_-}(\Omin)} + \norm{(\fkG_{2k}'', \fkW_{2k}'', \fkS_{2k}'')}_{H^{0, \alb}_{q_-}(\Omin)} \lesssim_M \varepsilon^2.
	\end{equation*}
	For the ``low-high balanced'' terms, one can calculate that
	\begin{equation*}
		\begin{split}
			&\abs{\qty({\bmqty{\fkF_{2k}''' \\ \fkG_{2k}''' \\ \fkW_{2k}''' \\ \fkS_{2k}'''}},{\bmqty{(\cL_1^-)^k (q_+ - \wt{q})  \\ (\cL_2^-)^k (\vv_+ - \wt{\vv})  \\ (\cL_3^-)^k (\vv_+ - \wt{\vv})  \\ (\cL_5^-)^k (\sigma_+ - \wt{\sigma}) }})_{\fkH_{-}(\Omin)}} \\
			&\quad \lesssim \abs{\qty({\bmqty{\cL_1^-\fkF_{2k}''' \\ (\cL_2^-)^*\fkG_{2k}''' \\ (\cL_3^-)^*\fkW_{2k}''' \\ \cL_5^-\fkS_{2k}'''}},{\bmqty{(\cL_1^-)^{(k-1)} (q_+ - \wt{q})  \\ (\cL_2^-)^{(k-1)} (\vv_+ - \wt{\vv})  \\ (\cL_3^-)^{(k-1)} (\vv_+ - \wt{\vv})  \\ (\cL_5^-)^{(k-1)} (\sigma_+ - \wt{\sigma}) }})_{\fkH_{-}(\Omega_-)}} + \Order{M}{\varepsilon} \\
			&\quad \lesssim_M \varepsilon + \norm{(q_+ - \wt{q}, \vv_+ - \wt{\vv}, \sigma_+ - \wt{\sigma})}_{\fkH^{2k-2}_{q-}(\Omega_-)} \cdot \norm{(\wt{q}-q_-, \wt{\vv}-\vv_-, \wt{\sigma}-\sigma_-)}_{\fkH^{2k+1}_{q_-}(\Omega_-)} \\
			&\quad \lesssim_M \varepsilon + \varepsilon^2\norm{(\wt{q}-q_-, \wt{\vv}-\vv_-, \wt{\sigma}-\sigma_-)}_{\fkH^{2k+1}_{q_-}(\Omega_-)},
		\end{split}
	\end{equation*}
	where the last inequality follows from \eqref{est q+ - q wt}. Similar arguments yield that
	\begin{equation*}
		\begin{split}
			&\abs{\qty({\bmqty{\fks_{2k}^- \\ \fkw_{2k}^- \\ {\vom}_{2k}^- \\ \sigma_{2k}^-}},{\bmqty{(\cL_1^-)^k (q_+ - \wt{q})  \\ (\cL_2^-)^k (\vv_+ - \wt{\vv})  \\ (\cL_3^-)^k (\vv_+ - \wt{\vv})  \\ (\cL_5^-)^k (\sigma_+ - \wt{\sigma}) }})_{\fkH_{-}(\Omin)}} \\
			&\quad \lesssim \abs{\qty({\bmqty{\cL_1^-\fks_{2k}^- \\ (\cL_2^-)^*\fkw_{2k}^- \\ (\cL_3^-)^*{\vom}_{2k}^- \\ \cL_5^-\sigma_{2k}^-}},{\bmqty{(\cL_1^-)^{(k-1)} (q_+ - \wt{q})  \\ (\cL_2^-)^{(k-1)} (\vv_+ - \wt{\vv})  \\ (\cL_3^-)^{(k-1)} (\vv_+ - \wt{\vv})  \\ (\cL_5^-)^{(k-1)} (\sigma_+ - \wt{\sigma}) }})_{\fkH_{-}(\Omega_-)}} + \Order{M}{\varepsilon} \\
			&\quad \lesssim_M \varepsilon + \norm{(q_+ - \wt{q}, \vv_+ - \wt{\vv}, \sigma_+ - \wt{\sigma})}_{\fkH^{2k-2}_{q-}(\Omega_-)} \cdot \norm{(q_-, \vv_-, \sigma_-)}_{\fkH^{2k+2}_{q_-}(\Omega_-)} \\
			&\quad \lesssim_M \varepsilon.
		\end{split}
	\end{equation*}
	In summary, by noting \eqref{comm L2 L3}-\eqref{L3 L0}, one can conclude from the above arguments and interpolations that
	\begin{equation*}
		\begin{split}
			&\hspace*{-2em} \fkE_{2k, \text{high}}^- (\wt{q}, \wt{\vv}, \wt{\sigma})_{\restriction_{\Omin}} \\
			\le&\, \fkE_{2k, \text{high}}(q_0, \vv_0, \sigma_0) + \Order{M}{1} \norm{(q_+ - \wt{q}, \vv_+ - \wt{\vv}, \sigma_+ - \wt{\sigma})}_{\fkH^{2k-1}_{q_-}(\Omin)} \\
			&+ \Order{M}{\varepsilon^2} \norm{(\wt{q}-q_-, \wt{\vv}-\vv_-, \wt{\sigma}-\sigma_-)}_{\fkH^{2k+1}_{q_-}(\Omin)} + \Order{M}{\varepsilon}\\
			&- 2\qty(\bmqty{(\cL_1^-)^k(q_+ - \wt{q}) \\ (\sigma_-)^{-\frac{1}{2}}(\cL_2^- + \cL_3^-)^k (\vv_+ - \wt{\vv}) \\ (\cL_5^-)^k (\sigma_+ - \wt{\sigma})}, \bmqty{(\cL_1^-)^k (\wt{q}-q_-) \\ (\sigma_-)^{-\frac{1}{2}}(\cL_2^- + \cL_3^-)^k (\wt{\vv}-\vv_-) \\ (\cL_5^-)^k (\wt{\sigma}-\sigma_-)})_{\fkH_{q_-}^0(\Omega_-)},
		\end{split}
	\end{equation*}
	where the inner product is defined as
	\begin{equation*}
		\qty(\bmqty{f_1 \\ f_2 \\ f_3}, \bmqty{g_1 \\ g_2 \\ g_3})_{\fkH^0_{q_-}(\Omega_-)} \hspace*{-3em}\coloneqq \int_{\Omega_-} q_-^{\alpha-1}\qty[f_1 g_1 + \beta q_- (f_2 g_2) + q_- (f_3 g_3)].
	\end{equation*}
	Notice that both commutators
	\begin{equation*}
		\comm{(\sigma_-)^{-\frac{1}{2}}}{(\cL_2^- + \cL_3^-)^k}(\vv_+ - \wt{\vv}) \qand \comm{(\sigma_-)^{-\frac{1}{2}}}{(\cL_2^- + \cL_3^-)^k}(\wt{\vv} - \vv_-)
	\end{equation*}
	are composed of the ``low-high balanced'' terms (possibly multiplying some negative powers of $\sigma_-$, which are harmless in the estimates anyway), so one can further refine that
	\begin{equation}
		\begin{split}
			&\hspace*{-1em} \fkE_{2k, \text{high}}^- (\wt{q}, \wt{\vv}, \wt{\sigma})_{\restriction_{\Omin}} \\
			\le&\, \fkE_{2k, \text{high}}(q_0, \vv_0, \sigma_0) + \Order{M}{1} \norm{(q_+ - \wt{q}, \vv_+ - \wt{\vv}, \sigma_+ - \wt{\sigma})}_{\fkH^{2k-1}_{q_-}(\Omin)} \\
			&+ \Order{M}{\varepsilon^2} \norm{(\wt{q}-q_-, \wt{\vv}-\vv_-, \wt{\sigma}-\sigma_-)}_{\fkH^{2k+1}_{q_-}(\Omin)} + \Order{M}{\varepsilon}\\
			&- 2\qty(\bmqty{(\cL_1^-)^k(q_+ - \wt{q}) \\ (\cL_2^- + \cL_3^-)^k \qty\big[(\sigma_-)^{-\frac{1}{2}}(\vv_+ - \wt{\vv})] \\ (\cL_5^-)^k (\sigma_+ - \wt{\sigma})}, \bmqty{(\cL_1^-)^k (\wt{q}-q_-) \\ (\cL_2^- + \cL_3^-)^k \qty\big[(\sigma_-)^{-\frac{1}{2}}(\wt{\vv}-\vv_-)] \\ (\cL_5^-)^k (\wt{\sigma}-\sigma_-)})_{\fkH^0_{q_-}(\Omega_-)}.
		\end{split}
	\end{equation}
	It follows from the Cauchy-Schwartz inequality that
	\begin{equation*}
		\begin{split}
			&C(M)\norm{(q_+ - \wt{q}, \vv_+ - \wt{\vv}, \sigma_+ - \wt{\sigma})}_{\fkH^{2k-1}_{q_-}(\Omin)} \\
			&\quad \le \frac{\abs{C(M)}^2}{4\epsilon'} 2^{-2h} + \epsilon' 2^{2h} \norm{(q_+ - \wt{q}, \vv_+ - \wt{\vv}, \sigma_+ - \wt{\sigma})}_{\fkH^{2k-1}_{q_-}(\Omin)}^2 \\
			&\quad \lesssim_M \frac{\varepsilon}{\epsilon'} + \epsilon' \varepsilon^{-1} \norm{\qty(q_+ - \wt{q}, (\sigma_-)^{-\frac{1}{2}}(\vv_+ - \wt{\vv}), \sigma_+ - \wt{\sigma})}_{\fkH^{2k-1}_{q_-}(\Omin)}^2
		\end{split}
	\end{equation*}
	here $\epsilon'$ is a small parameter. Similarly, it holds that 
	\begin{equation*}
		\begin{split}
			&C(M)\varepsilon^2 \norm{(\wt{q}-q_-, \wt{\vv}-\vv_-, \wt{\sigma}-\sigma_-)}_{\fkH^{2k+1}_{q_-}(\Omin)} \\
			&\quad \le \frac{\abs{C(M)}^2\varepsilon}{4\epsilon''} + \epsilon'' \varepsilon^3 \norm{(\wt{q}-q_-, \wt{\vv}-\vv_-, \wt{\sigma}-\sigma_-)}_{\fkH^{2k+1}_{q_-}(\Omin)}^2 \\
			&\quad \lesssim_M \frac{\varepsilon}{\epsilon''} + \epsilon'' \varepsilon^3 \norm{\qty(\wt{q}-q_-, (\sigma_-)^{-\frac{1}{2}}(\wt{\vv}-\vv_-), \wt{\sigma}-\sigma_-)}_{\fkH^{2k+1}_{q_-}(\Omin)}^2,
		\end{split}
	\end{equation*}
	where $\epsilon''$ is another small parameter. In view of the above relations, one can control the remainders by utilizing the inner product. More precisely, it follows from \eqref{def chi epsilon}-\eqref{def tilde quantities} that
	\begin{equation}\label{L1 diff trans}
	\begin{split}
			&\ev{(\cL_1^-)^k (q_+ - \wt{q}),\, (\cL_1^-)^k (\wt{q}- q_-)}_{H^{0, \ala}_{q_-}}  \\
			 &\quad=\ev{(\cL_1^-)^k(\textup{Id}-\chi_\varepsilon(\cL_1^-))(q_+ - q_-),\, (\cL_1^-)^k \chi_\varepsilon(\cL_1^-)(q_+ - q_-)}_{H^{0, \ala}_{q_-}} \\
			 &\quad = \ev{\chi_\varepsilon(\cL_1^-)(\textup{Id}-\chi_\varepsilon(\cL_1^-))(\cL_1^-)^k(q_+ - q_-),\, (\cL_1^-)^k (q_+ - q_-)}_{H^{0, \ala}_{q_-}}.
	\end{split}
	\end{equation}
	Note that the choice of $\chi_\varepsilon$ ensures that
	\begin{equation}\label{req chi ep 1}
		t\chi_\varepsilon(t)\cdot[1-\chi_\varepsilon(t)] \gtrsim \varepsilon^{-1}[1-\chi_\varepsilon(t)]^2 + \varepsilon^3 t^2 \abs{\chi_\varepsilon(t)}^2,
	\end{equation}
	which yields
	\begin{equation}\label{req chi ep 2}
		\begin{split}
			&\hspace*{-2em}\ev{(\cL_1^-)^k (q_+ - \wt{q}),\, (\cL_1^-)^k (\wt{q}- q_-)}_{H^{0, \ala}_{q_-}}\\
			\gtrsim &\,\varepsilon^{-1}\norm{q_+ - \wt{q}}_{H^{2k-1, k-\frac{1}{2}+\ala}_{q_-}(\Omega_-)}^2 + \varepsilon^3 \norm{\wt{q}-q_-}_{H^{2k+1, k+\frac{1}{2}+\ala}_{q_-}(\Omega_-)}^2 \\
			 &- C(M)\norm{q_+ - q_-}_{H^{0, \ala}_{q_-}}^2.
		\end{split}
	\end{equation}
	Similar arguments applied to the second and the third part of the inner product lead to the estimate
	\begin{equation}
		\begin{split}
			\fkE_{2k, \text{high}}^- (\wt{q}, \wt{\vv}, \wt{\sigma})_{\restriction_{\Omin}} \le &\,\fkE_{2k, \text{high}}(q_0, \vv_0, \sigma_0) \\
			 &+ C\norm{(q_+ - q_-, \vv_+ - \vv_-, \sigma_+ - \sigma_-)}_{H^{0, \ala}_{q_-}}^2 + \Order{M}{\varepsilon}.
		\end{split}
	\end{equation}
	Furthermore, Proposition \ref{prop reg op} implies that the weighted $L^2$-norm of the difference data is $\Order{M}{\varepsilon}$-small. Namely, counting into the contributions in the removed boundary layer, it holds that
	\begin{equation*}
		\fkE_{2k, \text{high}}^- (\wt{q}, \wt{\vv}, \wt{\sigma}) \le \qty[1+\Order{M}{\varepsilon}]\fkE_{2k}(q_0, \vv_0, \sigma_0).
	\end{equation*}
	The control of $\fkE_{2k, \text{low}}^- (\wt{q}, \wt{\vv}, \wt{\sigma})$ follows routinely from Proposition \ref{prop reg op}. In other words, one obtains
	\begin{equation}\label{fkE2k wt est}
		\fkE_{2k}^- (\wt{q}, \wt{\vv}, \wt{\sigma}) \le \qty[1+\Order{M}{\varepsilon}]\fkE_{2k}(q_0, \vv_0, \sigma_0).
	\end{equation}
	
	\paragraph*{Step 5: Final reductions}
	Here we collect the range of $h_\pm$ from \eqref{rel h- h+}, \eqref{h- kk 3h}, \eqref{h+ > 4h}, and \eqref{h_- h_+ rel} for the sake of convenience:
	\begin{gather*}
		1 \ll h_- < h < h_+, \\
		h_{-}(k+1-\varkappa_0) > h_+ + c_1, \\
		h_{-}(k+1-\varkappa_0) > 3h, \\
		h_+ > 4h,		\\
		h_- (k+1-\varkappa_0) \le h_+ + h + c_2,
	\end{gather*}
	here $c_1$ and $c_2$ are generic constants determined by the primitive regularization operator $\vps$ and the Sobolev embeddings, in particular, they are independent of $h$. In other words, whenever $k$ is large enough, one can simply take
	\begin{equation}
		h_- \approx \frac{1}{2} h \qand h_+ \approx \frac{1}{2}(k-\varkappa_0) h.
	\end{equation}
	Thus, one can rewrite \eqref{def hat data} in a more explicit way:
	\begin{equation}
		(\hat{q}, \hat{\vv}, \hat{\sigma}) = (\wt{q}-2^{-(k-1-\varkappa_0)h}, \wt{\vv}, \wt{\sigma}){\restriction_{\qty{\wt{q} > 2^{-(k-1-\varkappa_0)h}}}}.
	\end{equation}
	On the other hand, the Sobolev embeddings and Proposition \ref{prop reg op} yield that
	\begin{equation*}
		\abs{\wt{q}-q_-}_{L^\infty} \lesssim_M 2^{-2h_-(k+1-\varkappa_0)} \lesssim_M \varepsilon 2^{-h(k-1-\varkappa_0)},
	\end{equation*}
	which implies
	\begin{equation}
		\widehat{\Omega} \coloneqq \qty{\hat{q} > 0} \subset \Omega_-.
	\end{equation}
	Therefore, it follows from the $L^\infty$-estimates for the differences, \eqref{bound wt}, and \eqref{fkE2k wt est} that Proposition \ref{prop reg state} holds, which concludes the proof of Theorem \ref{thm existence}. \\ \qed

	\section{Rough Solutions and Continuation Criteria}\label{sec rough sol}
	This section concerns the existence and continuation criteria in the fractional state spaces, which are achievable by interpolations in the notion of frequency envelopes introduced by Tao \cite{Tao2001} (cf. \cite[\S7]{Ifrim-Tataru2024}). One can also refer to \cite[\S 5]{Ifrim-Tataru2023} for a more general exposition.
		
	\subsection{Frequency Envelopes}\label{sec freq env}
	Let $\Omega$ be a bounded domain with a $C^{1+}$ boundary and $r$ its non-degenerate defining function. Recall that for a parameter $\lambda > 0$ and an integer $j \in \mathbb{N}$, the weighted Sobolev space $H^{j, \frac{\lambda-1}{2}}_r(\Omega)$ is defined by
	\begin{equation*}
		H^{j, \frac{\lambda-1}{2}}_r(\Omega) \coloneqq \Set*{f \in \mathcal{D}'(\Omega) \given \sum_{0 \le \abs{\nu} \le j} \norm\big{r^{\frac{\lambda-1}{2}}\pd^{\nu}f}_{L^2(\Omega)}^2 < \infty },
	\end{equation*}
	and that one can define such weighted Sobolev spaces for non-integral indices by using complex interpolations. Here, we focus on the spaces $H^{2\varkappa, \varkappa+\frac{\lambda-1}{2}}(\Omega)$ for $\varkappa>0$. Thanks to Proposition \ref{prop interp}, it holds that
	\begin{equation*}
		\qty[H^{0, \frac{\lambda-1}{2}}(\Omega), H^{4, 2+\frac{\lambda-1}{2}}(\Omega)]_{\frac{1}{2}} \hookrightarrow H^{2, 1+\frac{\lambda-1}{2}}(\Omega),
	\end{equation*}
	whose left hand side is the complex interpolation space. On the other hand, layer-wise interpolations  and limiting arguments yield that
	\begin{equation*}
		H^{2, 1+\frac{\lambda-1}{2}}(\Omega) \hookrightarrow \qty(H^{0, \frac{\lambda-1}{2}}(\Omega), H^{4, 2+\frac{\lambda-1}{2}}(\Omega))_{\frac{1}{2}, 2; K},
	\end{equation*}
	whose right hand side is the real interpolation space obtained by using the K-method. Since $H^{0, \frac{\lambda-1}{2}}$ and $H^{4, 2+\frac{\lambda-1}{2}}$ are both Hilbert spaces, with $H^{4, 2+\frac{\lambda-1}{2}} \xhookrightarrow[]{\text{dense}} H^{0, \frac{\lambda-1}{2}} $, all interpolation methods will induce the same space (cf. \cite[Corollary 4.37]{Lunardi2018}). Thus, one obtains
	\begin{equation*}
		H^{2, 1+\frac{\lambda-1}{2}}(\Omega) = \qty(H^{0, \frac{\lambda-1}{2}}(\Omega), H^{4, 2+\frac{\lambda-1}{2}}(\Omega))_{\frac{1}{2}, 2; J},
	\end{equation*}
	whose right hand side is the interpolation space using the J-method. In a similar manner, it follows that the spaces $H^{2\varkappa, \varkappa+\frac{\lambda-1}{2}}$ for integer-valued $\varkappa$ given by J-method of interpolations coincide with those defined explicitly.  In particular, there holds the following result (\cite[Lemma 2.5]{Ifrim-Tataru2024}), which is akin to the Littlewood-Paley decomposition.
	\begin{lemma}\label{lem J interp decomp}
		Let $N \ge 1$ be an integer and $0 < \varkappa < N$. Then the space $H^{2\varkappa, \varkappa + \frac{\lambda-1}{2}}(\Omega)$ can be defined as the collection of distributions in $\Omega$ admitting representations
		\begin{equation*}
			u = \sum_{\ell \ge 0} u_\ell,
		\end{equation*}
		for which the sequence $\qty{u_\ell}$ satisfies
		\begin{equation}\label{seq norm}
			\abs{\qty{u_{\ell}}}^2 \coloneqq \sum_{\ell \ge 0} 2^{4\varkappa\ell}\norm{u_\ell}_{H^{0, \frac{\lambda-1}{2}}(\Omega)}^2 + 2^{-4(N-\varkappa)\ell}\norm{u_\ell}_{H^{2N, N+\frac{\lambda-1}{2}}(\Omega)}^2 < \infty.
		\end{equation}
		Moreover, the norm of $H^{2\varkappa, \varkappa+\frac{\lambda-1}{2}}$ can be given equivalently as
		\begin{equation*}
			\norm{u}_{H^{2\varkappa, \varkappa+\frac{\lambda-1}{2}}} \coloneqq \inf \abs{\qty{u_{\ell}}},
		\end{equation*}
		where the infimum is taken from all possible sequential representatives.
	\end{lemma}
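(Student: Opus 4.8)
The plan is to recognize the sequence norm \eqref{seq norm} as a reparametrized form of the standard discrete $J$-method norm of real interpolation, and then to close the argument by invoking the identifications already assembled above. First I would fix the compatible couple $X_0 \coloneqq H^{0, \frac{\lambda-1}{2}}(\Omega)$ and $X_1 \coloneqq H^{2N, N+\frac{\lambda-1}{2}}(\Omega)$, note that these are Hilbert spaces with $X_1 \hookrightarrow X_0$ densely (by Lemma \ref{lem inclusion} and density of smooth functions), and set $\theta \coloneqq \varkappa/N \in (0,1)$. Given a representation $u = \sum_{\ell \ge 0} u_\ell$ with $u_\ell \in X_1$, I would abbreviate $a_\ell \coloneqq 2^{2\varkappa\ell}\norm{u_\ell}_{X_0}$ and $b_\ell \coloneqq 2^{-2(N-\varkappa)\ell}\norm{u_\ell}_{X_1}$, so that $\abs{\{u_\ell\}}^2 = \sum_\ell (a_\ell^2 + b_\ell^2) \simeq \sum_\ell \max(a_\ell, b_\ell)^2$, and then observe the elementary identity $\max(a_\ell, b_\ell) = 2^{2\varkappa\ell}\, J\!\left(2^{-2N\ell}, u_\ell\right)$, where $J(t,u) \coloneqq \max(\norm{u}_{X_0}, t\norm{u}_{X_1})$ is the usual $J$-functional of the couple. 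After relabeling the index via $\mu = 2N\ell$ this summand takes the classical form $2^{\mu\theta} J(2^{-\mu}, u_\mu)$, i.e. precisely a term of the textbook discrete $J$-method, except that the geometric ratio is $2^{2N}$ rather than $2$ and the index runs only over $\ell \ge 0$ rather than over all of $\mathbb{Z}$.

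Next I would argue that neither modification changes the space or the equivalence class of the norm. A standard regrouping shows that replacing the ratio $2$ by any fixed power of $2$ produces an equivalent discrete $J$-norm, and, since $X_1 \hookrightarrow X_0$, one can always absorb all negative-index contributions into $u_0$ at the cost of a universal constant, so the one-sided version of the norm agrees with the two-sided one. I would also record that finiteness of \eqref{seq norm} forces $\sum_\ell \norm{u_\ell}_{X_0} \le \big(\sum_\ell 2^{-4\varkappa\ell}\big)^{1/2}\abs{\{u_\ell\}} < \infty$ by Cauchy--Schwarz, so the series $\sum_\ell u_\ell$ converges in $X_0 + X_1 = X_0$ and the construction is meaningful. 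Together these steps identify the space of the lemma, equipped with the infimum norm, with the $J$-method space $(X_0, X_1)_{\theta, 2; J}$.

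Finally I would close the loop using the material already established in this subsection: because $X_0$ and $X_1$ are Hilbert spaces with dense embedding, all the usual interpolation functors (complex interpolation, and the $K$- and $J$-methods at exponent $2$) produce the same space with equivalent norms (cf.\ \cite[Corollary 4.37]{Lunardi2018}), so $(X_0, X_1)_{\theta, 2; J} = [X_0, X_1]_\theta$; and $[X_0, X_1]_\theta$ is, by definition of the weighted Sobolev scale for non-integer indices together with Lemma \ref{lem inclusion} and Proposition \ref{prop interp}, exactly $H^{2\varkappa, \varkappa + \frac{\lambda-1}{2}}(\Omega)$, the reiteration theorem guaranteeing that this is independent of the auxiliary integer $N > \varkappa$. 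I expect the only genuinely delicate point to be the passage between \eqref{seq norm} and the textbook discrete $J$-norm --- that is, verifying that the nonstandard geometric ratio $2^{2N}$ and the truncation to nonnegative indices are harmless; the remaining steps are routine bookkeeping with the abstract interpolation machinery and with the $J$-method description recalled just above (cf.\ \cite[\S7]{Ifrim-Tataru2024}).
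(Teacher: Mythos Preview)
Your proposal is correct and follows the same route the paper takes: the paper does not supply an independent proof of this lemma but rather cites \cite[Lemma 2.5]{Ifrim-Tataru2024} after sketching, in the paragraph preceding the statement, exactly the framework you describe --- namely that the weighted spaces $H^{2\varkappa,\varkappa+\frac{\lambda-1}{2}}$ arise as $J$-method interpolants of the endpoint Hilbert couple, with all interpolation functors coinciding by \cite[Corollary 4.37]{Lunardi2018}. Your write-up simply makes explicit the bookkeeping (the reparametrization of the $J$-functional, the harmlessness of the ratio $2^{2N}$ and of the one-sided index range) that the paper leaves to the cited reference.
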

	Given the above lemma, norms of the intermediate spaces $H^{2\varkappa, \varkappa+\frac{\lambda-1}{2}}$ can be defined in an explicit way.
	Next, we consider a refinement of Proposition \ref{prop reg op}. For a rough state $(q, \vv, \sigma) \in \bbH^{2\varkappa}$ (here $\varkappa$ is not necessarily being an integer), define the regularizations 
	\begin{equation*}
		(q^h, \vv^h, \sigma^h) \coloneqq \vps_{\le h}(q, \vv, \sigma),
	\end{equation*}
	where $\vps_{\le h}$ is the regularization operator introduced in \S\ref{sec reg op}. In further discussions, the regularized states $(q^h, \vv^h, \sigma^h)$ are restricted to the domain $\Omega_h \coloneqq \qty{q^h > 0}$. In particular, the following proposition holds (see \cite[Proposition 2.11]{Ifrim-Tataru2024}):
	\begin{prop}\label{prop freq env}
		Suppose that $\varkappa > \varkappa_0$, and that $(q, \vv, \sigma) \in \bbH^{2\varkappa}$ is a given state. Then, there exist a family of regularized data $(q^h, \vv^h, \sigma^h) \in \bbH^{2\varkappa}$ and a corresponding slowly varying frequency envelope $\qty{a_h}_{h \ge 0} \in l^2$ satisfying (here $0<\delta\ll 1$ is a generic constant)
		\begin{equation*}
			\frac{a_j}{a_m} \lesssim 2^{\delta\abs{j-m}} \qfor j, m \in \mathbb{N}
		\qand
		 \norm{a_h}_{l^2} \simeq \norm{(q, \vv, \sigma)}_{\bbH^{2\varkappa}}.
		\end{equation*}
		Moreover, there hold the following properties:
		\begin{enumerate}
			\item \textbf{Good Approximations:}
			\begin{equation*}
				(q^h, \vv^h, \sigma^h) \xrightarrow{h\to\infty} (q, \vv, \sigma) \qin C^1_x \times C^{\frac{1}{2}}_x \times C^{\frac{1}{2}}_x,
			\end{equation*}
			and
			\begin{equation}\label{diff est qh q L infty}
				\norm{q^h - q}_{L^\infty(\Omega)} \lesssim 2^{-2(\varkappa+1-\varkappa_0)h}.
			\end{equation}
			\item \textbf{Uniform bounds:}
			\begin{equation}
				\norm{(q^h, \vv^h, \sigma^h)}_{\fkH^{2\varkappa}_{q^h}(\Omega_h)} \lesssim \norm{(q, \vv, \sigma)}_{\fkH^{2\varkappa}_q(\Omega)}.
			\end{equation}
			\item \textbf{Higher order regularities:}
			\begin{equation}
				\norm{(q^h, \vv^h, \sigma^h)}_{\fkH^{2\varkappa + 2\gamma}_{q^h}(\Omega_h)} \lesssim 2^{2\gamma h} a_h \qfor \gamma > 0, \gamma \in \R.
			\end{equation}
			\item \textbf{Low-frequency difference bounds:}
			\begin{equation}\label{diff est qh}
				\norm{(q^{h+1}, \vv^{h+1}, \sigma^{h+1})-(q^h, \vv^h, \sigma^h)}_{\fkH_{\wt{q}}^{0}(\wt{\Omega})} \lesssim 2^{-2\varkappa h}a_h,
			\end{equation}
			for any (compactly supported) defining function $\wt{q}$ satisfying $\abs{\wt{q} - q}_{L^\infty} \ll 2^{-2h}$.
		\end{enumerate}
	\end{prop}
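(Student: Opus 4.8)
The plan is to take $(q^h,\vv^h,\sigma^h) \coloneqq \vps_{\le h}(q,\vv,\sigma)$, restricted to $\Omega_h \coloneqq \{q^h>0\}$, and to manufacture the frequency envelope from a near-optimal J-decomposition of the initial data. Fix a large integer $N \gg \varkappa$ and a small $\delta \ll \min\{1,2\varkappa\}$. Applying Lemma \ref{lem J interp decomp} to each of $q,\vv,\sigma$ with the relevant weight parameters (so $\lambda=\alpha$ for $q$ and $\lambda=\alpha+1$ for $\vv,\sigma$), I would obtain decompositions $q=\sum_{\ell\ge 0}q_\ell$, $\vv=\sum_\ell\vv_\ell$, $\sigma=\sum_\ell\sigma_\ell$ together with a scalar sequence $\{c_\ell\}_{\ell\ge 0}$ — the $\ell$-th terms of the combined sequence norm \eqref{seq norm} — so that
\begin{equation*}
	\sum_\ell c_\ell^2 \lesssim \norm{(q,\vv,\sigma)}_{\bbH^{2\varkappa}}^2, \qquad \norm{q_\ell}_{H^{2s,s+\ala}_q(\Omega)}+\norm{\vv_\ell}_{H^{2s,s+\alb}_q(\Omega)}+\norm{\sigma_\ell}_{H^{2s,s+\alb}_q(\Omega)} \lesssim 2^{2(s-\varkappa)\ell}c_\ell
\end{equation*}
for all $0\le s\le N$, the second bound being the geometric interpolant of the two endpoint bounds in \eqref{seq norm}. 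I would then set $a_h \coloneqq \big(\sum_{\ell\ge 0} 2^{-2\delta\abs{h-\ell}}c_\ell^2\big)^{1/2}$. Schur's test gives $\norm{a_h}_{l^2}\simeq \norm{(q,\vv,\sigma)}_{\bbH^{2\varkappa}}$, the triangle inequality $\abs{h-\ell}\le\abs{h-m}+\abs{m-\ell}$ yields $a_j/a_m\lesssim 2^{\delta\abs{j-m}}$, and $c_\ell \le 2^{\delta\abs{h-\ell}}a_h$ for every $h,\ell$.

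For properties (1) and (2): the $L^\infty$ bound \eqref{diff est qh q L infty} follows by combining the error bound of Proposition \ref{prop reg op} with the weighted Morrey embedding — for $j$ just below $\varkappa+1-\varkappa_0$ the space $H^{2(\varkappa-j),(\varkappa-j)+\ala}_q(\Omega)$ embeds into $C^0$, and $\norm{(\textup{Id}-\vps_{\le h})q}_{H^{2(\varkappa-j),(\varkappa-j)+\ala}}\lesssim 2^{-2jh}\norm{q}_{H^{2\varkappa,\varkappa+\ala}}$ — while the same argument with $j$ below $\varkappa-\varkappa_0>0$ gives convergence in $C^1_x\times C^{\frac12}_x\times C^{\frac12}_x$. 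Since $\varkappa+1-\varkappa_0>1$, the rate in \eqref{diff est qh q L infty} is $\ll 2^{-2h}$, so for $h$ large $q^h$ is a genuine non-degenerate defining function of $\Omega_h$ with $\norm{q^h-q}_{L^\infty}\ll 2^{-2h}$; the domain-transfer form of Proposition \ref{prop reg op} (with $r=q$, $r_1=q^h$) then applies, and taking $j=0$ there yields the uniform bound (2), first for integer $\varkappa$ and then for real $\varkappa$ by interpolation through Lemma \ref{lem J interp decomp}.

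The heart of the matter is (3) and (4), obtained by summing the pieces $\vps_{\le h}q_\ell$, $\vps_{\le h}\vv_\ell$, $\vps_{\le h}\sigma_\ell$ against the envelope. For (3) I would write $q^h=\sum_\ell\vps_{\le h}q_\ell$ and bound $\norm{\vps_{\le h}q_\ell}_{H^{2(\varkappa+\gamma),(\varkappa+\gamma)+\ala}_{q^h}(\Omega_h)}$ in two regimes: for $\ell\le h$ keep the full regularity of $q_\ell$, $\lesssim\norm{q_\ell}_{H^{2(\varkappa+\gamma),\dots}}\lesssim 2^{2\gamma\ell}c_\ell$; for $\ell>h$ use the regularization bound $\lesssim 2^{2(\varkappa+\gamma)h}\norm{q_\ell}_{H^{0,\ala}}\lesssim 2^{2\gamma h}2^{2\varkappa(h-\ell)}c_\ell$. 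Inserting $c_\ell\le 2^{\delta\abs{h-\ell}}a_h$, the $\ell\le h$ sum is $\lesssim 2^{\delta h}a_h\sum_{\ell\le h}2^{(2\gamma-\delta)\ell}\lesssim 2^{2\gamma h}a_h$ and the $\ell>h$ sum is $\lesssim 2^{2\gamma h}a_h\sum_{j\ge 1}2^{(\delta-2\varkappa)j}\lesssim 2^{2\gamma h}a_h$, both convergent since $\delta<2\gamma$ and $\delta<2\varkappa$; the same for $\vv^h,\sigma^h$. For (4) I would split $(\vps_{\le h+1}-\vps_{\le h})q=\sum_\ell(\vps_{\le h+1}-\vps_{\le h})q_\ell$: for $\ell\ge h$ the difference operator is bounded on $H^{0,\ala}$, giving $\lesssim\norm{q_\ell}_{H^{0,\ala}}\lesssim 2^{-2\varkappa h}2^{-2\varkappa(\ell-h)}c_\ell$; for $\ell<h$ the error bound used with $N$ derivatives gives $\norm{(\textup{Id}-\vps_{\le h'})q_\ell}_{H^{0,\ala}}\lesssim 2^{-2Nh'}\norm{q_\ell}_{H^{2N,\dots}}\lesssim 2^{-2N(h-\ell)}2^{-2\varkappa\ell}c_\ell$ for $h'\in\{h,h+1\}$, which is super-polynomially small off the diagonal. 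Inserting $c_\ell\le 2^{\delta\abs{h-\ell}}a_h$ and summing (using $N\gg\varkappa,\delta$) yields \eqref{diff est qh} in $\fkH^0_{\wt q}(\wt\Omega)$ for any $\wt q$ with $\abs{\wt q-q}_{L^\infty}\ll 2^{-2h}$, the weights $q^{\alpha-1}\leftrightarrow\wt q^{\alpha-1}$ being interchangeable at this cost.

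The genuine difficulty here is not any single estimate but the bookkeeping forced by the fact that the regularized triples live on varying domains $\Omega_h$ (and in (4) on the auxiliary domain $\wt\Omega$), each with its own defining function: every invocation of Proposition \ref{prop reg op} must be run in its domain-transfer form, and every weighted-norm comparison $q^{\alpha-1}\leftrightarrow(q^h)^{\alpha-1}$ must be justified by $L^\infty$-closeness relative to the scale $2^{-2h}$, which is precisely what $\varkappa>\varkappa_0$ buys through \eqref{diff est qh q L infty}. A second, milder point is that Proposition \ref{prop reg op} and the endpoint $H^{0,\ala}$-type norms are stated/defined at integer levels, so the passage to real $\varkappa$ has to be threaded through the J-method characterization of Lemma \ref{lem J interp decomp} at each step; the rest — verifying the envelope axioms and the two-regime geometric summations above — is routine once those pieces are in place.
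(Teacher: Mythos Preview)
Your proposal is correct and follows essentially the same approach as the paper: the paper does not give a detailed proof of this proposition but cites \cite[Proposition 2.11]{Ifrim-Tataru2024} and, in the remark immediately following, indicates exactly the construction you carry out --- take $(q^h,\vv^h,\sigma^h)=\vps_{\le h}(q,\vv,\sigma)$ and build the envelope from a near-optimal J-decomposition of Lemma~\ref{lem J interp decomp}. The only cosmetic difference is that the paper's remark writes the envelope as a $\max$-mollification with an additive tail term $2^{-\delta j}\norm{(q,\vv,\sigma)}_{\fkH^{2\varkappa}_q}$, whereas you use an $l^2$-mollification; these are standard equivalent devices, and your two-regime summations for (3) and (4) are the expected way to cash out the construction.
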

	\begin{remark}
		Denote by $\qty{(s_\ell, \vw_\ell, \zeta_\ell)}_{\ell \ge 0}$ an appropriate decomposition of $(q, \vv, \sigma)$ in the sense of Lemma \ref{lem J interp decomp} (i.e., the $\bbH^{2\varkappa}$ norm of $(q, \vv, \sigma)$ and the norm of $\qty{(s_\ell, \vw_\ell, \zeta_\ell)}_{\ell \ge 0}$ given by \eqref{seq norm} are equivalent), one candidate for the frequency envelope is
		\begin{equation*}
			\begin{split}
				a_j \coloneqq &\max_{m\in\mathbb{N}} 2^{-\delta\abs{j-m}}\qty(2^{2\varkappa m}\norm{(s_m, \vw_m, \zeta_{m})}_{\fkH^0_{q}} + 2^{-2(N-\varkappa)m}\norm{(s_m, \vw_m, \zeta_m)}_{\fkH^{2N}_{q}}) \\
				&+2^{-\delta j} \norm{(q, \vv, \sigma)}_{\fkH^{2\varkappa}_{q}}.
			\end{split}
		\end{equation*}
	\end{remark}
	
	\subsection{Uniform Bounds and Lifespans of Regular Solutions}	
	
	Given a rough initial data $(q_0, \vv_0, \sigma_0) \in \bbH^{2\varkappa}\, (\varkappa > \frac{1}{2} + \varkappa_0)$, the first step is to use the regularization procedures to obtain a series of smooth initial data $(q^h_0, \vv^h_0, \sigma^h_0)$ satisfying Proposition \ref{prop freq env}. Note that each $(q^h_0, \vv^h_0, \sigma^h_0) \in \bbH^{2N}$ for sufficiently large $N \in \mathbb{N}$, so there exist solutions to the compressible Euler equations with initial data $(q^h_0, \vv^h_0, \sigma^h_0)$. An obvious obstacle one encounters at present is that the $\bbH^{2N}$-norms of $ (q^h_0, \vv^h_0, \sigma^h_0)$ are not uniformly bounded for all $h \ge 1$. Thus, one needs to show that there exists a constant $T$ determined by $\norm{(q_0, \vv_0, \sigma_0)}_{\fkH^{2\varkappa}_{q_0}}$ (and, of course, the non-degeneracy constant $c_0$) such that the regular solutions $(q^h, \vv^h, \sigma^h)$ are defined for all $t \in [0, T]$. It suffices to show that
	\begin{equation}\label{uniform bounds h data T}
		\sup_{t \in [0, T]} \norm{(q^h, \vv^h, \sigma^h)}_{\fkH^{2\varkappa}_{q^h}} \lesssim M_0 \coloneqq \norm{(q_0, \vv_0, \sigma_0)}_{\fkH^{2\varkappa}_{q_0}}. 
	\end{equation}
	
	Define the control parameters $(A_{*0}, B_0)$ and $(A^h_*, B^h) $ for the initial data $(q_0, \vv_0, \sigma_0)$ and regular solutions $(q^h, \vv^h, \sigma^h)$ by \eqref{def A*}-\eqref{def B}, respectively. Here one can take $0 < \varepsilon_* < (\varkappa - \varkappa_0 - \frac{1}{2})$ in \eqref{def A*}. Furthermore, denote the non-degeneracy parameters for $q^h$ by
	\begin{equation*}
		c^h(t) \coloneqq \inf_{\overline{x} \in \Gmt^h} \abs{\grad{q^h}}(t, \overline{x}).
	\end{equation*}
	To show the uniform bounds, one may first make the following bootstrap assumptions:
	\begin{equation}
		A^h_* (t) \le 2A_{*0} \qc B^h (t) \le 2B_0 \qc c^h(t) \ge \frac{c_0}{2} \qfor t \in [0, T] \qand h_0 \le h \le h_1,
	\end{equation}
	here $h_0$ is a constant depending only on $M_0$, $h_1$ is finite but arbitrarily large, and $T$ is sufficiently small. The role of $h_0$ is to reduce the errors for the control parameters during the regularization procedures (i.e., to ensure that the bootstrap assumptions hold for the regularized initial data), and that of $h_1$ is to guarantee that one merely needs to handle finitely many quantities at once in the bootstrap arguments. The present goal is to show that one can improve the bootstrap assumptions so long as $T \le T_0$ for some $T_0$ independent of $h_1$. A straightforward candidate for $T_0$ is
	\begin{equation*}
		T_0 \le \frac{1}{C(B_0, c_0)},
	\end{equation*}
	where $C(B_0, c_0)$ is a large constant determined by the control parameter $B_0$ and the non-degeneracy constant $c_0$ for the rough initial data. 
	
	Thanks to Theorem \ref{thm energy est}, the previous bootstrap assumptions imply that the $\bbH^{2l}$ ($l \in \mathbb{N}$) norms of $(q^h, \vv^h, \sigma^h)$ can be controlled by those of the regularized initial data. The difficulty one faces here is that, a priori, the $\bbH^{2\varkappa}$ norms may not propagate for non-integral $\varkappa$. To overcome this shortcoming, one can first derive the estimate by applying Theorem \ref{thm energy est} to the integral indices and Theorem \ref{thm uniqueness} to the difference bounds. 
	
	More precisely, Proposition \ref{prop freq env} and Theorem \ref{thm energy est} yield that
	\begin{equation}\label{bounds h data t}
		\sup_{0 \le t \le T_0}\norm{(q^h, \vv^h, \sigma^h)}_{\fkH^{2(\varkappa + \gamma)}_{q^h}} \lesssim 2^{2\gamma h} a_h \qfor \gamma > 0, \gamma + \varkappa \in \mathbb{N}.
	\end{equation}
	Here one may note that this can be viewed as a high-frequency control, which formally bounds the parts of $(q^h, \vv^h, \sigma^h)$ with dyadic frequency $2^{h}$ or higher. On the other hand, \eqref{diff est qh} yields that
	\begin{equation*}
		\scD \qty{(q_0^{h+1}, \vv_0^{h+1}, \sigma_0^{h+1}); (q_0^h, \vv_0^h, \sigma_0^h)} \lesssim 2^{-4\varkappa h}\abs{a_h}^2,
	\end{equation*}
	where the difference functional $\scD$ is defined by \eqref{diff functional 1}.  Theorem \ref{thm uniqueness} implies that
	\begin{equation}\label{diff functional est qh}
		\sup_{0 \le t \le T_0} \scD \qty{(q^{h+1}, \vv^{h+1}, \sigma^{h+1}); (q^h, \vv^h, \sigma^h)} \lesssim 2^{-4\varkappa h}\abs{a_h}^2.
	\end{equation}
	Therefore, one can apply the high-frequency and the $L^2$-difference bounds to control the $\bbH^{2\varkappa}$ norms of the regular solutions. To do so, one can first consider a telescopic decomposition:
	\begin{equation*}
		(q^h, \vv^h, \sigma^h) = (q^{h_0}, \vv^{h_0}, \sigma^{h_0}) + \sum_{h_0 \le \ell \le h-1} (q^{\ell+1} -q^{\ell}, \vv^{\ell+1}-\vv^\ell, \sigma^{\ell+1} - \sigma^\ell).
	\end{equation*}
	However, one still faces the obstacle that these solutions are defined in different domains. Recall that the regularized quantities are well-defined in an enlarged domain, so one can refine the previous decomposition into	
	\begin{equation}\label{telescope decomp}
		\begin{split}
			(q^h, \vv^h, \sigma^h) = &\,\vps_{\le h} (q^h, \vv^h, \sigma^h) + (\textup{Id} - \vps_{\le h}) (q^h, \vv^h, \sigma^h) \\
			= &\,\vps_{\le h_0} (q^{h_0}, \vv^{h_0}, \sigma^{h_0}) +  (\textup{Id} - \vps_{\le h}) (q^h, \vv^h, \sigma^h) + \\
			&+ \sum_{h_0 \le \ell \le h-1} \qty\Big{\vps_{\le \ell+1}(q^{\ell+1}, \vv^{\ell+1}, \sigma^{\ell + 1}) - \vps_{\le \ell}(q^\ell, \vv^\ell, \sigma^\ell)}.
		\end{split}
	\end{equation}
	The estimates for the second term on the right hand side follows from Proposition \ref{prop reg op}:
	\begin{equation*}
		\norm{ (\textup{Id} - \vps_{\le h}) (q^h, \vv^h, \sigma^h)}_{\fkH_{q^h}^{2N}(\Omega_h)} \lesssim \norm{(q^h, \vv^h, \sigma^h)}_{\fkH^{2N}_{q^h}},
	\end{equation*}
	and
	\begin{equation*}
		\norm{ (\textup{Id} - \vps_{\le h}) (q^h, \vv^h, \sigma^h)}_{\fkH_{q^h}^0(\Omega_h)} \lesssim 2^{-2Nh} \norm{(q^h, \vv^h, \sigma^h)}_{\fkH^{2N}_{q^h}}.
	\end{equation*}
	To estimate the telescopic terms, one can further decompose them into
	\begin{equation}\label{tele decomp 2}
		\begin{split}
			&\vps_{\le \ell+1}(q^{\ell+1}, \vv^{\ell+1}, \sigma^{\ell + 1}) - \vps_{\le \ell}(q^\ell, \vv^\ell, \sigma^\ell) \\
			&\quad= (\vps_{\le\ell+1}-\vps_{\le\ell})(q^{\ell+1}, \vv^{\ell+1}, \sigma^{\ell+1}) + \qty[\vps_{\le\ell}(q^{\ell+1}, \vv^{\ell+1}, \sigma^{\ell + 1}) -\vps_{\le \ell}(q^\ell, \vv^\ell, \sigma^\ell)].
		\end{split}
	\end{equation}
	The estimates for the first term can also be derived from Proposition \ref{prop reg op} that
	\begin{equation*}
		\norm{(\vps_{\le\ell+1}-\vps_{\le\ell})(q^{\ell+1}, \vv^{\ell+1}, \sigma^{\ell+1})}_{\fkH^{2N}_{q^h}(\Omega_h)} \lesssim \norm{(q^{\ell+1}, \vv^{\ell+1}, \sigma^{\ell+1})}_{\fkH^{2N}_{q^{\ell+1}}} \lesssim 2^{2(N-\varkappa)\ell}a_\ell,
	\end{equation*}
	and
	\begin{equation*}
		\norm{(\vps_{\le\ell+1}-\vps_{\le\ell})(q^{\ell+1}, \vv^{\ell+1}, \sigma^{\ell+1})}_{\fkH^{0}_{q^h}(\Omega_h)} \lesssim 2^{-2N\ell}\norm{(q^{\ell+1}, \vv^{\ell+1}, \sigma^{\ell+1})}_{\fkH^{2N}_{q^{\ell+1}}} \lesssim 2^{-2\varkappa\ell}a_\ell.
	\end{equation*}
	To estimate the second term on the right of \eqref{tele decomp 2}, one need first check that the domains of $(q^{\ell+1}, \vv^{\ell+1}, \sigma^{\ell+1})$ and $(q^\ell, \vv^\ell, \sigma^\ell)$ are close enough in the scale $2^{-2\ell}$. Actually, for some parameter $\delta' > 0$, there holds
	\begin{equation}\label{dist Gml Gml+1}
		\dist\qty(\Gamma_\ell, \Gamma_{\ell+1}) \lesssim 2^{-2(1+\delta')\ell}.
	\end{equation}
	Indeed, suppose that $r \coloneqq \dist\qty(\Gamma_\ell, \Gamma_{\ell+1})$. Due to the $L^\infty$ bounds \eqref{diff est qh q L infty}, one can take a small parameter $\epsilon > 0$ independent of $\ell$, so that there exists a ball $B_{\epsilon r} \subset \Omt^\ell \cap \Omt^{\ell+1}$ satisfying
	\begin{equation*}
		q^\ell, q^{\ell +1}, \abs{q^\ell - q^{\ell+1}} \simeq r \qin B_{\epsilon r}.
	\end{equation*}
	Thus, the difference estimate \eqref{diff functional est qh} implies that
	\begin{equation*}
		r^{1+\alpha+d} \lesssim \scD\qty{(q^\ell, \vv^\ell, \sigma^\ell); (q^{\ell+1}, \vv^{\ell+1}, \sigma^{\ell +1})} \lesssim 2^{-4\varkappa\ell}\abs{a_\ell}^2.
	\end{equation*}
	Recall that $2\varkappa_0 = 1 + d + \alpha$ and $\varkappa >\frac{1}{2} + \varkappa_0 $, so \eqref{dist Gml Gml+1} holds. Therefore, the second term on the right of \eqref{tele decomp 2} can be estimated by:
	\begin{equation*}
		\begin{split}
			&\norm{\vps_{\le\ell}(q^{\ell+1}, \vv^{\ell+1}, \sigma^{\ell + 1}) -\vps_{\le \ell}(q^\ell, \vv^\ell, \sigma^\ell)}_{\fkH^{2N}_{q^h}(\Omega_h)} \\
			&\quad \lesssim \norm{(q^{\ell+1}, \vv^{\ell+1}, \sigma^{\ell+1})}_{\fkH^{2N}_{q^{\ell+1}}} + \norm{(q^\ell, \vv^{\ell}, \sigma^\ell)}_{\fkH^{2N}_{q^\ell}} \lesssim 2^{2(N-\varkappa)\ell}a_{\ell}.
		\end{split}
	\end{equation*}
	The $L^2$-difference bounds can be derived from Proposition \ref{prop reg op} and the coercivity arguments for Lemma \ref{lem equiv dist functional}:
	\begin{equation*}
		\begin{split}
			&\norm{\vps_{\le\ell}(q^{\ell+1}, \vv^{\ell+1}, \sigma^{\ell + 1}) -\vps_{\le \ell}(q^\ell, \vv^\ell, \sigma^\ell)}_{\fkH^{0}_{q^h}(\Omega_h)}^2 \\
			&\quad \lesssim  \scD\qty{(q^\ell, \vv^\ell, \sigma^\ell); (q^{\ell+1}, \vv^{\ell+1}, \sigma^{\ell +1})} \lesssim 2^{-4\varkappa\ell}\abs{a_\ell}^2.
		\end{split}
	\end{equation*}
	
	Hence, by plugging the telescopic decomposition \eqref{telescope decomp} into Lemma \ref{lem J interp decomp}, one can conclude from the above bounds and \eqref{bounds h data t} that
	\begin{equation}\label{uniform bound reg sol}
		\sup_{t \in [0, T_0]} \norm{(q^h, \vv^h, \sigma^h)}_{\fkH^{2\varkappa}_{q^h}} \lesssim_{A_{*0}} M_0.
	\end{equation}
	In particular, if $B_0$ is chosen so that $B_0 \gg A_{*0}$ and $B_0 \gg M_0$, the above estimates and the Sobolev embeddings will improve the bootstrap assumptions for $B^h$. 
	
	Next, we turn to the improvements of $A_*$. Since one needs to handle the estimates for pointwise bounds, it would be more convenient to consider the Lagrangian formulation of the compressible Euler equations. Define the flow map $\vb{X}$ by:
	\begin{equation*}
		{\pdv{\vb{X}}{t}} (t, y) = \vv\qty\big(t, \vb{X}(t, y)).
	\end{equation*}
	Denote by $J (t, y) \coloneqq \det(\grad\vb{X})(t, y)$ the Jacobian determinant of the flow map. Then, direct computations yield
	\begin{equation*}
		{\pdv{J}{t}} (t, y)= J(t, y) \cdot (\div\vv)\circ \vb{X}(t, y).
	\end{equation*}
	Therefore, the compressible Euler equations can be rewritten as:
	\begin{equation*}
		\begin{cases*}
			\pd_t \vb{X} = \vv\circ\vb{X}, \\
			\pd_t (q\circ\vb{X}) \cdot J + \beta (q\circ\vb{X}) \cdot \pd_t J = 0, \\
			\pd_t (\vv\circ\vb{X}) + (\sigma\grad{q})\circ\vb{X} = \vb*{0}, \\
			\pd_t (\sigma\circ\vb{X}) = 0.
		\end{cases*}
	\end{equation*}
	The improvements of bounds for $\norm*{q^h}_{C^{\varepsilon_*}}$ and $\norm*{(\vv^h, \sigma^h)}_{C^{\frac{1}{2}+\varepsilon_*}}$ can be deduced from the above evolution equations and the smallness of $T_0$. For the $C^{\varepsilon_*}$-bound of $\grad q^h$, one may first note its evolution equation
	\begin{equation*}
		\Dt \grad{q} + \cL_2[q]\vv = \vb*{0},
	\end{equation*}
	where the operator $\cL_2[q]$ is given by \eqref{def L2}. Then, Lemmas \ref{lem err est A}, \ref{lem J interp decomp}, and the estimate \eqref{uniform bound reg sol} yield that
	\begin{equation*}
		\sup_{0 \le t \le T_0} \norm{\cL_2[q^h]\vv^h}_{H^{2(\varkappa-1), (\varkappa-1)+\alb}_{q^h}(\Omega_h)} \lesssim_{A_{*0}} \sup_{0 \le t \le T_0} \norm{(q^h, \vv^h, \sigma^h)}_{\fkH^{2\varkappa}_{q^h}} \lesssim_{A_{*0}} M_0.
	\end{equation*}
	Note that $\varkappa > (1+\frac{d}{2}+\frac{\alpha}{2})$, so the Sobolev embedding yields that
	\begin{equation*}
		\sup_{0 \le t \le T_0} \norm{\cL_2[q^h]\vv^h}_{C^{\varepsilon_*}} \lesssim_{A_{*0}} M_0.
	\end{equation*}
	Whence, the $C^{\varepsilon_*}$-norm of $\grad{q^h}$ can be improved, as long as $T_0$ is small enough.	Similarly, the improvements of the non-degeneracy parameters $c^h$ can also be established, provided that $T_0$ is sufficiently tiny.
	
	In summary, there exists a constant $T$ depending only on the size of the rough initial data $(q_0, \vv_0, \sigma_0) \in \bbH^{2\varkappa}$, so that the bound \eqref{uniform bounds h data T} holds uniformly for $h \ge 0$. This demonstrates the uniform bounds and lifespans for the family of regular solutions $(q^h, \vv^h, \sigma^h) \in \bbH^{2\varkappa}$.
	
	\subsection{Limiting Solutions: Existence and Continuous Dependence}
	\subsubsection{Existence}
	To show the convergence of $(q^h, \vv^h, \sigma^h)$, as in the previous subsection, one can instead consider the limit
	\begin{equation}\label{conv sol}
		(q, \vv, \sigma) \coloneqq	\lim_{h \to \infty} \vps_{\le h}(q^h, \vv^h, \sigma^h).
	\end{equation}
	Note that the arguments after \eqref{tele decomp 2} also imply that, for any non-degenerate defining function $\wt{q}$ with $\norm{\wt{q}-q^{\ell}}_{L^\infty} \ll 2^{-2\ell}$, the following estimates hold
	\begin{equation}\label{diff general est ql}
		\norm{\vps_{\le \ell+1}(q^{\ell+1}, \vv^{\ell+1}, \sigma^{\ell + 1}) - \vps_{\le \ell}(q^\ell, \vv^\ell, \sigma^\ell)}_{\fkH_{\wt{q}}} \lesssim 2^{-2\varkappa\ell}a_\ell,
	\end{equation}
	and
	\begin{equation}
		\norm{\vps_{\le \ell+1}(q^{\ell+1}, \vv^{\ell+1}, \sigma^{\ell + 1}) - \vps_{\le \ell}(q^\ell, \vv^\ell, \sigma^\ell)}_{\fkH^{2N}_{\wt{q}}} \lesssim 2^{2(N-\varkappa)\ell}a_\ell.
	\end{equation}
	The Sobolev embeddings and Lemma \ref{lem J interp decomp} yield that the limit \eqref{conv sol} exists in the Lipschitz topology. In particular, one can observe further from \eqref{dist Gml Gml+1} that the limit
	\begin{equation*}
			\Omega \coloneqq \lim_{h\to\infty}\Omega_h
	\end{equation*}
	exists, and it has a Lipschitz boundary $\Gamma$. Moreover, there holds
	\begin{equation}\label{est dist gamma gamma h}
		\dist\qty(\Gamma, \Gamma_h) \lesssim 2^{-2(1+\delta')h}.
	\end{equation}
	It follows from the interpolations that the convergence \eqref{conv sol} actually holds in $\fkH^{2\varkappa-}_{q}(\Omega)$. In order to show the convergence in $\fkH^{2\varkappa}_q$, one can also exploit the following telescopic decomposition:
	\begin{equation*}
		(q, \vv, \sigma) = \vps_{\le h_0}(q^{h_0}, \vv^{h_0}, \sigma^{h_0}) + \sum_{\ell \ge h_0} \vps_{\le \ell+1}(q^{\ell+1}, \vv^{\ell+1}, \sigma^{\ell + 1}) - \vps_{\le \ell}(q^\ell, \vv^\ell, \sigma^\ell).
	\end{equation*}
	Then, \eqref{diff general est ql}-\eqref{est dist gamma gamma h} yield that
	\begin{equation*}
		\norm{(q, \vv, \sigma)}_{\fkH^{2\varkappa}_q(\Omega)} \lesssim \norm{a_h}_{l^2},
	\end{equation*}
	and
	\begin{equation}\label{conv sol freq env}
		\norm{(q, \vv, \sigma) - \vps_{\le \ell}(q^\ell, \vv^\ell, \sigma^\ell)}_{\fkH^{2\varkappa}_q(\Omega)} \lesssim \norm{a_{\ge \ell}}_{l^2} \xrightarrow{\ell \to \infty} 0.
	\end{equation}
	To show that the convergence \eqref{conv sol} holds in $\bbH^{2\varkappa}$, it suffices to compare them with the constant sequences $\qty{\vps_{\le m}(q^m, \vv^m, \sigma^m)}$. Observe that the following estimate holds for $\ell \ge m$:
	\begin{equation*}
		\norm{\vps_{\le \ell}(q^\ell, \vv^\ell, \sigma^\ell) -\vps_{\le m}(q^m, \vv^m, \sigma^m)}_{\fkH^{2\varkappa}_q(\Omega)} \lesssim \norm{a_{\ge m}}_{l^2} \xrightarrow{m \to \infty} 0,
	\end{equation*}
	which implies the strong convergence in $\bbH^{2\varkappa}$. Similar arguments yield that the limiting states $(q, \vv, \sigma) \in C\qty([0, T]; \bbH^{2\varkappa})$. Finally, it is routine to check that $(q, \vv, \sigma)$ actually solves the compressible Euler equations.
	
	\subsubsection{Continuous dependence}
	
	Assume that $(q^{(n)}_0, \vv^{(n)}_0, \sigma^{(n)}_0)$ is a sequence of initial data converging to $(q_0, \vv_0, \sigma_0)$ in $\bbH^{2\varkappa}$. To show the convergence of the corresponding solutions, one can first observe that $(q^{(n)}_0, \vv^{(n)}_0, \sigma^{(n)}_0)$ are uniformly bounded in $\bbH^{2\varkappa}$, which yields a uniform lifespan $T$ and a uniform $\bbH^{2\varkappa}$-bound $M$ for the series of solutions.
	
	One strategy to prove the convergence in $C([0, T]; \bbH^{2\varkappa})$ is to exploit the regularizations as in the constructions of rough solutions. More precisely, denote by \begin{equation*}
		(q_0^{(n),h}, \vv_0^{(n),h}, \sigma_0^{(n),h}) \qand (q^h_0, \vv^h_0, \sigma^h_0)
	\end{equation*} 
	the regularized initial data. Then, the definitions and Proposition \ref{prop freq env} yield that
	\begin{equation*}
		(q_0^{(n),h}, \vv_0^{(n),h}, \sigma_0^{(n),h}) \xrightarrow{n\to\infty} (q^h_0, \vv^h_0, \sigma^h_0) \qin C^\infty(\R^d).
	\end{equation*}
	Furthermore, it is clear that $(q_0^{(n),h}, \vv_0^{(n),h}, \sigma_0^{(n),h}) $ and $(q^h_0, \vv^h_0, \sigma^h_0)$ are uniformly bounded in $\bbH^{2\varkappa}$. Thus, the corresponding solutions also have a uniform lifespan and $\bbH^{2\varkappa}$ bound. 
	
	Next, observe that Theorem \ref{thm uniqueness} implies
	\begin{equation*}
		\sup_{0 \le t \le T}\scD\qty{(q^{(n),h}, \vv^{(n),h}, \sigma^{(n),h}); (q^h, \vv^h, \sigma^h)} \xrightarrow{n \to \infty} 0 \qq{uniformly in} h,
	\end{equation*}
	which yields the uniform domain convergence $\Omega_{(n),h} \xrightarrow{n\to\infty}\Omega_{h}$. Moreover, the above estimates also imply the $L^2$-convergence away from a boundary layer of thickness $\simeq 2^{-2h}$. Thus, for each fixed $h$, the uniform-in-$n$ boundedness of $(q^{(n),h}, \vv^{(n),h}, \sigma^{(n),h}) $ in $\bbH^{2j}$, the $L^2$-convergence, the interpolations, and the Sobolev embeddings yield that
	\begin{equation}\label{C infty convergence}
		\vps_{\le h}(q^{(n),h}, \vv^{(n),h}, \sigma^{(n),h}) \xrightarrow{n\to\infty} \vps_{\le h}(q^{h}, \vv^{h}, \sigma^{h}) \qin C^\infty(\R^d)
	\end{equation}
	for every fixed $h$.	
	
	Denote by $\{a_h^{(n)}\}$ and $\qty{a_h}$ the corresponding frequency envelopes for the initial data. Then, it follows from \eqref{conv sol freq env} that
	\begin{equation}\label{H2k approx}
		\norm{(q^{(n)}, \vv^{(n)}, \sigma^{(n)}) - \vps_{\le h}(q^{(n),h}, \vv^{(n),h}, \sigma^{(n),h}) }_{\fkH^{2\varkappa}_{q^{(n)}}} \lesssim \norm{a_{\ge h}^{(n)}}_{l^2}.
	\end{equation}
	In order to conclude the proof, it suffices, for every given $\epsilon > 0$, to choose frequency envelopes so that
	\begin{equation}\label{conv freq enve}
		\limsup_{h \to \infty} \sup_{n} \norm{a_{\ge h}^{(n)}}_{l^2} < \epsilon.
	\end{equation}
	Indeed, one can observe that, for each $\epsilon > 0$, there exists a large integer $L$ so that
	\begin{equation*}
		\norm{(q_0^{(n)}, \vv_0^{(n)}, \sigma_0^{(n)}) - \vps_{\le L}(q_0^{(n)}, \vv_0^{(n)}, \sigma_0^{(n)}) }_{\fkH^{2\varkappa}_{q_0^{(n)}}} \ll \epsilon,
	\end{equation*}
	which can be achieved due to the uniform boundedness of $(q_0^{(n)}, \vv_0^{(n)}, \sigma_0^{(n)})$ in $\bbH^{2\varkappa}$. Thus, one can simply let $\{a_h^{(n)}\}$ be the sum of the frequency envelopes of $\vps_{\le L}(q_0^{(n)}, \vv_0^{(n)}, \sigma_0^{(n)})$ and those of $\qty[\vps_{\le L}(q_0^{(n)}, \vv_0^{(n)}, \sigma_0^{(n)}) - (q_0^{(n)}, \vv_0^{(n)}, \sigma_0^{(n)})]$. The smoothness and uniform boundedness of $\vps_{\le L}(q_0^{(n)}, \vv_0^{(n)}, \sigma_0^{(n)})$ yields \eqref{conv freq enve}.
	
	In summary, the continuous dependence on the initial data follows from \eqref{C infty convergence}-\eqref{conv freq enve}.
	
	\subsection{Lifespans of Rough Solutions}
	Now, we turn to the continuation criteria for solutions in $\bbH^{2\varkappa}$. Given an initial data $(q_0, \vv_0, \sigma_0) \in  \bbH^{2\varkappa}$, let $(q, \vv, \sigma)$ be the corresponding solutions in a time interval $[0, T)$. Assume that
	\begin{equation*}
		\Theta \coloneqq \sup_{0 \le t < T} A_*(t) + \int_0^T B(t) \dd{t} < \infty; \qand c(t) \ge c_0 > 0 \qfor t \in [0, T),
	\end{equation*}
	where the control parameters $A_*$ and $B$ are defined by \eqref{def A*} and \eqref{def B} respectively, and $c(t) \coloneqq \inf_{\Gmt} \abs{\grad{q}}$ is the non-degeneracy constant. Thanks to the local well-posedness result, it suffices to show that
	\begin{equation*}
		\sup_{t\in[0, T)}\norm{(q, \vv, \sigma)}_{\fkH^{2\varkappa}_{q}} < \infty.
	\end{equation*}
	Akin to the previous arguments, one can consider the regularized initial data $(q^h_0, \vv^h_0, \sigma^h_0) \in \bbH^{2\varkappa}$ and the corresponding solutions $(q^h, \vv^h, \sigma^h)$ defined on time intervals $[0, T^h)$. The continuous dependence result yields that
	\begin{equation*}
		\liminf T^h \ge T \qand (q^h, \vv^h, \sigma^h) \to (q, \vv, \sigma) \text{ on } [0, T).
	\end{equation*}
	However, there is no uniform bounds for the control parameters $B^h$. One can still utilize bootstrap arguments. Denote by 
	\begin{equation*}
		\mathscr{A} \coloneqq \sup_{t\in [0, T)} A_*(t).
	\end{equation*}
	One can make the following bootstrap assumptions on a time interval $[0, T_0]$ with $T_0 < T$:
	\begin{equation}\label{bot ass}
		\int_0^{T_0} B^h (t) \dd{t} < 4C_1(\mathscr{A})\Theta \qc A^h_*(t) \le 4P(\Theta), \qand c^h(t) \ge \frac{c_0}{2} \qfor h \ge h_0,
	\end{equation} 
	where $P$ is a large polynomial and $h_0$ is a large constant.
	
	In order to improve the above assumptions, first observe that the bound \eqref{est dist gamma gamma h} still holds for the present case. Therefore, $\vps_{\le h}(q, \vv, \sigma)$ and $\vps_{\le h}(q^h, \vv^h, \sigma^h)$ are both well-defined in the union $\Omega\cup\Omega_h$. In particular, the definition of $\vps_{\le h}$ implies that
	\begin{equation}\label{point est psi h q}
		\norm{\grad\vps_{\le h}(q, \vv, \sigma)}_{\wtC^{0, \frac{1}{2}}\times L^\infty \times L^\infty} \le C_1 B.
	\end{equation}
	Next, one can compare the $B$-parameters for $\vps_{\le h}(q, \vv, \sigma)$ and $\vps_{\le h}(q^h, \vv^h, \sigma^h)$ by invoking a telescopic decomposition:
	\begin{equation*}
		\vps_{\le h}(q, \vv, \sigma) - \vps_{\le h}(q^h, \vv^h, \sigma^h) = \sum_{\ell \ge h}\vps_{\le h}(q^{\ell+1}, \vv^{\ell+1}, \sigma^{\ell+1}) - \vps_{\le h}(q^\ell, \vv^\ell, \sigma^\ell).
	\end{equation*}
	It can be deduced from Proposition \ref{prop freq env} and Theorem \ref{thm uniqueness} that
	\begin{equation*}
		\norm{\vps_{\le h}(q^{\ell+1}, \vv^{\ell+1}, \sigma^{\ell+1}) - \vps_{\le h}(q^\ell, \vv^\ell, \sigma^\ell)}_{\fkH^{2m}_{q^h}} \lesssim_{\Theta} 2^{-2\varkappa\ell}2^{2mh}a_{\ell} \qfor m \ge 0.
	\end{equation*}
	Therefore, direct summations lead to
	\begin{equation*}
		\norm{\vps_{\le h}(q, \vv, \sigma) - \vps_{\le h}(q^h, \vv^h, \sigma^h)}_{\fkH^{2m}_{q^h}} \lesssim_{\Theta} 2^{-2\varkappa h}2^{2mh}a_{h},
	\end{equation*}
	which, together with the Sobolev embeddings, yield that
	\begin{equation}\label{diff est psi q psi qh}
		\norm{\grad[\vps_{\le h}(q, \vv, \sigma) - \vps_{\le h}(q^h, \vv^h, \sigma^h)]}_{\wtC^{0, \frac{1}{2}}\times L^\infty \times L^\infty} \lesssim_{\Theta, M_0} 2^{-\delta_* h},
	\end{equation}
	where $\delta_*> 0$ is a generic constant determined by $\varkappa$ and $\varkappa_0$.
	On the other hand, it follows from Theorem \ref{thm energy est} and Proposition \ref{prop freq env} that
	\begin{equation*}
		\norm{(q^h, \vv^h, \sigma^h) - \vps_{\le h}(q^h, \vv^h, \sigma^h)}_{\fkH^{2m}_{q^h}} \lesssim a_h 2^{-2\varkappa h}2^{2mh} \qfor m \ge 0.
	\end{equation*}
	In particular, the Sobolev embeddings imply that
	\begin{equation}\label{diff est psi qh qh}
		\norm{\grad[(q^h, \vv^h, \sigma^h) - \vps_{\le h}(q^h, \vv^h, \sigma^h)]}_{\wtC^{0, \frac{1}{2}}\times L^\infty \times L^\infty} \lesssim_{\Theta, M_0} 2^{-\delta_* h}.
	\end{equation}
	The combination of \eqref{point est psi h q}-\eqref{diff est psi qh qh} yields that
	\begin{equation*}
		B^h \le C_1 B + C_2(\Theta, M_0) 2^{-\delta_* h}.
	\end{equation*}
	Thus, by taking $h_0 \gg 1$, one can derive that
	\begin{equation*}
		\int_0^{T_0} B^h(t) \dd{t} < 2C_1\Theta \qfor h \ge h_0,
	\end{equation*}
	which improves the first assumption in \eqref{bot ass}. Indeed, similar arguments combined with different Sobolev's embeddings imply that
	\begin{equation*}
		A_*^h \lesssim P(\Theta)A + C_2 2^{-\delta_* h} \qand c^h(t) \ge c(t) - C_2 2^{-\delta_* h}.
	\end{equation*}
	The largeness of $h_0$ concludes the improvements of \eqref{bot ass}.
	
	The bootstrap arguments yield that the solutions $(q^h, \vv^h, \sigma^h)$ can be extended to some $T_1 > T$ for all $h \ge h_0$. The continuous dependence implies that the same extension property holds for $(q, \vv, \sigma)$, which demonstrates the continuation criterion.
	\\	\qed
	
	\section*{Acknowledgments}
	Liu's research is supported by the UM Postdoctoral Fellow scheme under the UM Talent Programme at the University of Macau. Liu extends gratitude to Prof. Changfeng Gui at the University of Macau for his encouragements and supports.
	
	Luo's research is supported by a grant from the Research Grants Council of the Hong Kong Special Administrative Region, China (Project No. 11310023).
	
	
	\subsection*{Data availability}
	This manuscript has no associated data.
	
	\subsection*{Conflict of interest}
	The authors have no conflict of interest to disclose.

	\fancyhead[RO,LE]{\sc{References}}
	
	\bibliographystyle{amsplain0}
	{\small \bibliography{ref}}
\end{document}